\setlist[enumerate]{leftmargin=.5in}
\setlist[itemize]{leftmargin=.5in}
\crefname{hypothesis}{Hypothesis}{Hypotheses}
\title{Dynamics of a state-dependent delay-differential
equation\thanks{Submitted \today\funding{This
        work was supported by the Natural Sciences and Engineering Research Council (NSERC) of Canada (ARH and MCM) and the Alexander von Humboldt Stiftung of Germany (H-OW).
        TG was partially supported by NSF grant DMS-1951510.}}}
\author{Tom\'{a}\v{s} Gedeon\thanks{Department of Mathematics, Montana State University, Bozeman, MT 59717 (\email{gedeon@math.montana.edu})}
\and
Antony R. Humphries\thanks{Department of Mathematics and Statistics and Department of Physiology, McGill University, Montreal, QC, Canada H3A 0B9 (\email{tony.humphries@mcgill.ca})}
\and
Michael C. Mackey\thanks{Departments of Physiology, Physics and Mathematics \& Statistics, McGill University, 3655 Promenade Sir William Osler, Montreal, Quebec H3G 1Y6, Canada (\email{michael.mackey@mcgill.ca})}
        \and
Hans-Otto Walther\thanks{Mathematisches Institut, Universit{\"{a}}t Giessen, Arndtstrasse 2, 35392 Giessen, Germany
        (\email{Hans-Otto.Walther@math.uni-giessen.de}.)}
\and Zhao Wang\thanks{Department of Mathematics and Statistics, McGill University, Montreal, QC, Canada H3A 0B9 (\email{ zhao.wang3@mail.mcgill.ca}.)}
}
\begin{document}

\maketitle

% REQUIRED
\begin{abstract}
We present a detailed study of  a scalar differential equation with threshold state-dependent delayed feedback. This equation arises as a simplification of a gene regulatory model. There are two monotone nonlinearities in the model: one describes the dependence of delay on state, and the other is the feedback  nonlinearity. Both increasing and decreasing nonlinearities are considered.
Our analysis is exhaustive both analytically and numerically as we examine the bifurcations  of the system for  various combinations of increasing and decreasing nonlinearities. We identify rich bifurcation patterns including Bautin, Bogdanov-Takens, cusp, fold, homoclinic, and Hopf bifurcations whose existence  depend on the derivative signs of nonlinearities. Our analysis confirms many of these patterns in the limit where the nonlinearities are  switch-like and change their value abruptly at a threshold.
Perhaps one of the most surprising findings is the existence of a Hopf bifurcation to a periodic solution when the nonlinearity is monotone increasing and the time delay is a decreasing function of the state variable.
\end{abstract}

% REQUIRED
\begin{keywords}
State dependent delay, threshold delay, negative feedback, positive feedback, bifurcation (Bautin, Bogdanov-Takens, cusp, fold, homoclinic, Hopf)
\end{keywords}

% REQUIRED
\begin{AMS}
34K18, 34K43, 37N25 (Primary) 92C37, 92-10, 37G15, 34K13, 34K16 (Secondary)
\end{AMS}

\section{Introduction} \label{sec:intro}

In considering mathematical models for the dynamics of biological feedback systems, the occurrence of delays is almost ubiquitous and this leads to very interesting biological, mathematical, and modeling problems.  These delays arise  because of the 
time required to propagate feedback signals in biological systems.  For physiological examples one need only think of the conduction time of action potentials in a neural feedback circuit \cite{an1981complex,mackey1984dynamics}, the time required to complete DNA synthesis, mitosis and cytokinesis in cell replication \cite{de2019dynamics}, the significant time required to produce mature cells in the hematopoietic system 
\cite{foley2009dynamic,Craig2016}, as well as a myriad of  others \cite{glass2020clocks}.  Examples in engineering and physical settings are rich and abundant ranging from control of  ships  \cite{minorsky1922directional}, vibration control \cite{moon2008chaotic}, and the delays due to the relativistic propagation of signals at the speed of light \cite{wheeler45,wheeler49,LHR2012}.

In this paper we study the dynamics of the positive solutions of the scalar state-dependent delay differential equation (DDE)
\be \label{eq:basic}
x'(t)  = \beta e^{-\mu\tau (t )}
 \frac{v(x(t )))}{v(x (t- \tau (t )))}
 g(x (t -\tau (t ))) - \gamma x (t)
\ee
where for $t\ge0$ the delay $\tau(t)$ is defined by the threshold condition
\be \label{eq:thres}
a =\int^0_{-\tau (t)}v(x(t+s))ds=\int^t_{t-\tau (t)}v(x(s))ds.
\ee
We assume that the constants $\beta, \mu, \gamma$ and $a$ are all positive.

We will consider the system \eqref{eq:basic},\eqref{eq:thres} with both smooth and piecewise
constant functions $v(x)$ and $g(x)$. In the smooth case, we take the functions to be
Hill functions
\be \label{eq:vghill}
g(x) =\dfrac{g^{-}\theta_g^n + g^{+} x^n}{\theta_g^n+x^n}, \qquad
v(x) =\dfrac{v^-\theta_v^m + v^{+} x^m}{\theta_v^m+x^m},
\ee
where the exponents $m$ and $n$ are strictly positive real numbers.
We also assume that $v(x)$ is strictly positive and bounded away from zero:
\be \label{eq:vbounds}
0<v_0\leq v(x)\leq v_U, \qquad\text{where}\quad v_0=\min\{v^-,v^+\}, \quad
v_U=\max\{v^-,v^+\}.
\ee
This ensures that the delay defined by \eqref{eq:thres} satisfies $\tau(t)\in[a/v_U,a/v_0]$, and is thus both bounded and bounded away from zero.
Moreover, applying Leibnitz's rule to \eqref{eq:thres} with $v(x)\geq v_0>0$ shows that
$\tfrac{\phantom{t}d}{dt}(t-\tau(t))\geq v_0/v_U >0$ so $t-\tau(t)$ is a strictly monotonically  increasing function of $t$. Consequently to pose \eqref{eq:basic}-\eqref{eq:thres} as an initial value problem (IVP) it is sufficient to provide an initial function $x(t)=\phi(t)$ for $t\in[-\tau(0),0]$ where $\tau(0)$ is defined by \eqref{eq:thres}.
We will take the function $g$ to be non-negative and bounded with
\be \label{eq:gbounds}
0 < g_0\leq g(x)\leq g_U, \qquad\text{where}\quad g_0=\min\{g^-,g^+\}, \quad
g_U=\max\{g^-,g^+\}.
\ee

We are interested in the dynamics when the Hill coefficients $m$, $n$ are small or large, and also consider the case of piecewise constant functions obtained by taking the limits of $v(x)$ and $g(x)$ as $m$ and $n\to\infty$. In that case we take the limiting functions to be set-valued at the threshold points with
\begin{gather}  \label{eq:gpwconst}
g(x) = \left\{  \begin{array}{cl}
g^-, & x < \theta_g \\
{[g_0,g_U]}, & x = \theta_g \\
g^+, & x > \theta_g
\end{array}\right. \\
\label{eq:vpwconst}
v(x) = \left\{  \begin{array}{cl}
v^-, & x < \theta_v \\
{[v_0,v_U]}, & x = \theta_v \\
v^+, & x > \theta_v
\end{array}\right.
\end{gather}
Equation \eqref{eq:gpwconst} is derived from \eqref{eq:vghill} by regarding the curve $\{x,g(x)\}$ as a subset of $\mathbb{R}^2$ and taking the limit as $n\to\infty$ using the Hausdorff metric, and \eqref{eq:vpwconst} is derived similarly. There is a long history of incorporating set-valued functions into differential equations, resulting in differential inclusions for ordinary differential equations \citep{A&C,A&F}, and in DDEs. Mallet-Paret and Nussbaum (\citep{JMPRNIII,JMPRN11}) consider set-valued limiting solutions in their study of slowly oscillating periodic solutions.

As shown in Appendix \ref{app-reduction}, the system \eqref{eq:basic}-\eqref{eq:thres} arises as a quasi-steady state reduction of the
state-dependent delayed extension of the Goodwin \citep{Goodwin1963} operon model studied in \cite{ghmww2020,wendy-msc,ifacs22}.
Thus the model is taken to describe the regulation of a bacterial operon in which the cells are growing exponentially at a rate $\mu$ and have finite transcriptional and translational velocities that are potentially dependent on the state of the system.
Then the function $g$ denotes the production flux of messenger RNA (mRNA) while  $v$ is the velocity of translation/transcription of the mRNA strand to produce the end product (protein).  $x$ represents the dimensionless effector concentration.

The function $g(x)$ in \eqref{eq:vghill} is monotone increasing when
$g^-<g^+$, which we refer to as a positive feedback case. In the operon context (\citep{Goodwin1963}) this corresponds to an {\it inducible} operon.  The classical example would be the {\it lac} operon regulating bacterial utilization of lactose as an energy source. Conversely, when $g^->g^+$ the function $g(x)$ is monotone decreasing, which we refer to as a negative feedback case, and in the operon setting it would correspond to a {\it repressible} operon.  The immediate example that comes to mind is the {\it tryp} operon regulating the production of the amino acid tryptophan.

In \cite{ghmww2020} it is argued that the transcription velocity for an operon should be an increasing function when $g$ is a decreasing function, and vice versa, while the translation
velocity should always be monotonically decreasing. In the current study, we do not make this assumption and instead consider the different possible  combinations of increasing/decreasing/constant $g$ and $v$ to explore potential dynamics. To avoid confusion we use $g\uparrow$ to indicate that the function $g$ is monotonically  increasing
or equivalently that $g^-<g^+$. Then $g\downarrow$
denotes that $g$ is monotonically decreasing or equivalently that $g^->g^+$, while we use $g\leftrightarrow$ to denote that $g^-=g^+$ and so $g$ is a constant function. With analogous definitions for the function $v$ we denote
different cases of these pairs of functions by $(g\uparrow,v\uparrow)$, $(g\uparrow,v\downarrow)$, $(g\uparrow,v\leftrightarrow)$, etc.

Smith \cite{Smith91,Smith93} showed that a DDE with a threshold delay can be converted through a time transformation to a distributed delay DDE with a constant delay, so the theory of 
those equations is applicable to this model.
Some authors have used this transformation to study threshold models as distributed constant delay DDEs (see for example \cite{KCP16,TIY18}), but other times the existence of the time transformation to constant delay DDE has been used as an excuse to just ignore the threshold delay and treat the delay as if constant.  We will demonstrate that the model \eqref{eq:basic} with the threshold delay \eqref{eq:thres} can display very different dynamics than the same model \eqref{eq:basic} with a constant delay. In this work we will tackle \eqref{eq:basic} with the threshold delay \eqref{eq:thres} directly without transforming to a constant delay DDE. We do this because we are also interested in problems with multiple delays for which the time transformation does not result in a constant delay DDE, and also because the time transformation is trajectory dependent, which creates difficulties in comparing solutions of the time transformed model with the original model. We also point out that when $\mu\ne0$, the term $e^{-\mu\tau}$ in \eqref{eq:basic} would cause the time transformed equation to be a distributed DDE with constant delay, which can create issues when linearizing (see \cite{wendy-phd}).
Finally, we believe it is interesting that the analysis and numerics of these problems can be tackled directly for the problem as formulated.

The outline of this paper is as follows. Section~\ref{sec:semiflow} discusses the semiflow generated by \eqref{eq:basic}-\eqref{eq:thres}. The following Section~\ref{sec:positivity} considers issues related to the positivity of solutions and the existence of a global attractor, and then Section~\ref{sec:linearization} examines the linearization of the system at one of the steady states and the stability of the steady state.

Sections~\ref{sec:onehill} and \ref{sec:twohill} contain a detailed investigation of the rich dynamics generated by the system \eqref{eq:basic}-\eqref{eq:thres}. We combine the verification of essential necessary conditions for local bifurcations from stationary points with numerical studies of one- and two-parameter bifurcations. 
Our numerical techniques are extensively documented in \cite{ghmww2020,wendy-msc,ifacs22}, and summarised in Appendix~\ref{app-numerics}. 
Concerning 
Theorems~\ref{thm:gdown}, \ref{thm:gup}, \ref{thm:vdown} and~\ref{thm:vup}
about fold- and Hopf-bifurcations
the reader should be aware that for most statements we limit ourselves to proving only those parts which make the result plausible and serve as a basis for numerical investigation.
In particular, we do not verify non-degeneracy conditions for the bifurcations. Theorems on Hopf bifurcation for differential equations with state-dependent delay can be found in \cite{Eichmann06,HuWu10,Sieber12}.

We begin in Section~\ref{sec:onehill} by examining the simpler situation in which there is only one nonlinearity, with one of $g$ or $v$ being constant. In Sections~\ref{sec:gdownvconst}
and~\ref{sec:gupvconst} we consider the constant delay cases $(g\downarrow,v\leftrightarrow)$ and $(g\uparrow,v\leftrightarrow)$ respectively. Then in Sections~\ref{sec:gconstvdown} and~\ref{sec:gconstvup}
we consider $(g\leftrightarrow,v\downarrow)$ and $(g\leftrightarrow,v\uparrow)$. In these two cases $v$ and thus $\tau$ are varying, resulting in state-dependent delays. We find that the dynamics are considerably richer and more surprising in the state-dependent delay cases, with qualitative differences in the dynamics depending on whether the growth rate, $\mu$, or the decay rate, $\gamma$, is larger.

We continue in this vein in Section~\ref{sec:twohill} but  considering both  nonlinearities, $g$ and $v$. In Section~\ref{sec:twothetas} we begin by considering the case where $\theta_g \neq \theta_v$.
In principle there should be four cases to consider: $(g\uparrow,v\uparrow)$, $(g\uparrow,v\downarrow)$,$(g\downarrow,v\uparrow)$ and $(g\downarrow,v\downarrow)$, but in practice we find that the dynamics is determined by the cases studied in Section~\ref{sec:onehill}. For example with
$(g\uparrow,v\uparrow,\theta_g\neq\theta_v)$ we find that the dynamics and bifurcations from the steady state $\xi$ are given by the previously studied cases $(g\uparrow,v\leftrightarrow)$ for $\xi\approx\theta_g$
and by $(g\leftrightarrow,v\uparrow)$ for $\xi\approx\theta_v$. Consequently, we illustrate just two of these cases, $(g\downarrow,v\uparrow,\theta_g \neq \theta_v)$ and
$(g\uparrow,v\uparrow,\theta_g \neq \theta_v)$ to show how the dynamics relates to the
previous examples of Section~\ref{sec:onehill}. The case $(g\uparrow,v\uparrow,\theta_g \neq \theta_v)$ is interesting as it can result in up to five co-existing steady states, three of which are stable.

If $|\theta_g-\theta_v|\ll1$ then both functions $g$ and $v$ influence the dynamics, so in
Section~\ref{sec:onetheta} we consider the dynamics when $\theta_g = \theta_v$.
In the cases $(g\uparrow,v\uparrow)$ and $(g\downarrow,v\downarrow)$ both functions are increasing or decreasing,
and no new dynamics arise, beyond what was already seen in Section~\ref{sec:onehill}. However, the cases
$(g\uparrow,v\downarrow,\theta_g = \theta_v)$ and $(g\downarrow,v\uparrow,\theta_g = \theta_v)$ reveal surprising
dynamics in limiting cases.

Section \ref{sec:summary} contains summary remarks as well as comments on possible extensions of this work.

Finally, as noted above the appendices contain the reduction of the model of \cite{ghmww2020} to the situation we consider here (Appendix~\ref{app-reduction}) as well as a brief elaboration in 
Appendix~\ref{app-other} of five previously published models that fall within the context of this paper,
along with a summary and references for our numerical techniques (Appendix~\ref{app-numerics}).

\section{The semiflow of differentiable solution operators generated by the system \eqref{eq:basic}-\eqref{eq:thres}}\label{sec:semiflow}

For delay differential equations a familiar state space is given by the space of continuous functions on a compact interval, see e.g. \cite{HVL,DvGVLW}.
In case of variable, state-dependent delays, however, there is a specific lack of smoothness which means that in general the initial value problem is not well-posed for only continuous initial data, not to speak of, say, smoothness of solutions with respect to initial data and linearization (\cite{W1,HKWW}).

Recall the definition of the segment $x_t$ of a map $x:I\to\mathbb{R}$
for  which the domain $I$ contains the interval $[t-r,t]$ with $t \in \mathbb{R}$ and $r>0$ : $x_t$ is the map $[-r,0]\to\mathbb{R}$ given by $x_t(s)=x(t+s)$ for $-r\le s\le0$. In other words, the restriction of $x$ to $[t-r,t]$ is shifted to the interval $[-r,0]$. In this section we reformulate the system \eqref{eq:basic},\eqref{eq:thres}
as a delay differential equation
\be
x'(t)=G(x_t)
\label{eq:form G}
\ee
with a functional $G:C^1\to\mathbb{R}$ on the Banach space $C^1=C^1([-r,0],\mathbb{R})$ of continuously differentiable maps $[-r,0]\to\mathbb{R}$, for some $r>0$ which is to be determined. The norm on $C^1$ is given by
$$
|\phi|_1=\max_{-r\le t\le 0}|\phi(t)|+\max_{-r\le t\le 0}|\phi'(t)|,
$$

We will also need the Banach space $C\supset C^1$ of continuous maps $[-r,0]\to\mathbb{R}$, with the norm given by
$$
|\phi|=\max_{-r\le t\le 0}|\phi(t)|.
$$
We shall verify the hypotheses from \cite{W1,W2,HKWW} which guarantee the existence, uniqueness, and differentiability, with respect to initial data, of solutions to an initial value problem which is associated with \eqref{eq:form G} in a submanifold of the space $C^1$.

We make the following assumptions.
The function $g:\mathbb{R}\to(0,\infty)$ is continuously differentiable with
$$
0<\inf\,g(\mathbb{R})\le\sup\,g(\mathbb{R})=g_U.
$$
The function $v:\mathbb{R}\to[v_0,\infty)$, with $v_0>0$, is continuously differentiable.
We fix a number $r>a/v_0$ and notice that $r$ is an a-priori bound for $\tau(t)$ in Eq.~\eqref{eq:thres}.

Next we rewrite the system Eq. \eqref{eq:basic},\eqref{eq:thres}
in a form which is more convenient for our purpose.
Using segment notation,  Eq.
\eqref{eq:thres} becomes
\be
a=\int^0_{-\tau(t)}v(x_t(s))ds\label{eq:thres-2}
\ee
with the segment $x_t\in C$. More generally, we consider the equation
\begin{equation}
a=\int^0_{-u}v(\phi(s))ds\label{eq:thres-3}
\end{equation}
for $u\in[0,r]$ and $\phi\in C$. Using positivity of the function $v$ and the Intermediate Value Theorem we infer  that for every $\phi\in C$ there is a uniquely determined solution $u=\delta(\phi)\in(0,r)$ of Eq.~\eqref{eq:thres-3}.
This yields a map $\delta:C\to(0,r)$.

\begin{prop} \label{prop:1}
The map $\delta:C\to(0,r)$ is continuously differentiable with
$$
D\delta(\phi)\chi=-\frac{\int^0_{-\delta(\phi)}v'(\phi(s))\chi(s)ds}{v(\phi(-\delta(\phi)))}.
$$
In case $\phi(s)=\xi$ for all $s\in[-r,0]$,
$$
\delta(\phi)=\frac{a}{v(\xi)}
$$
and
$$
D\delta(\phi)\chi=-\frac{v'(\xi)}{v(\xi)}\int^0_{-a/v(\xi)}\chi(s)ds.
$$
\end{prop}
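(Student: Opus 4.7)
The plan is to apply the Implicit Function Theorem on Banach spaces. Define the functional $F: C \times (0, r) \to \mathbb{R}$ by
$$F(\phi, u) = \int_{-u}^{0} v(\phi(s))\, ds - a,$$
so that $\delta$ is characterized by the relation $F(\phi, \delta(\phi)) = 0$; the existence and uniqueness of $\delta(\phi)$ are already granted in the excerpt via the Intermediate Value Theorem and strict positivity of $v$.

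First I would verify that $F$ is continuously Fr\'echet differentiable on $C \times (0, r)$. The joint continuity of the evaluation map $(\phi, s) \mapsto \phi(s)$ from $C \times [-r, 0]$ to $\mathbb{R}$, together with continuity of $v$, gives the partial derivative
$$\partial_u F(\phi, u) = v(\phi(-u)),$$
which is jointly continuous in $(\phi, u)$. For the Fr\'echet derivative in $\phi$, the expansion $v(\phi(s) + \chi(s)) = v(\phi(s)) + v'(\phi(s))\chi(s) + o(|\chi|)$, uniform in $s \in [-r, 0]$ because $v'$ is uniformly continuous on the bounded range of $\phi$, produces
$$D_\phi F(\phi, u)\chi = \int_{-u}^{0} v'(\phi(s))\chi(s)\, ds,$$
which is a bounded linear functional on $C$. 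Its joint continuity in $(\phi, u)$ follows from the same uniform-continuity argument applied to $v'$, together with a uniform bound for $v'$ on the ranges of nearby $\phi$'s to control a short boundary piece of length $|u_1 - u_2|$.

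The non-degeneracy hypothesis of the IFT is automatic: since $v \ge v_0 > 0$, we have $\partial_u F(\phi, \delta(\phi)) = v(\phi(-\delta(\phi))) > 0$. The IFT then furnishes, locally around each $\phi$, a unique $C^1$ solution map for $u$, which by the global uniqueness of $\delta(\phi)$ must coincide with $\delta$. Implicit differentiation via the chain rule,
$$D\delta(\phi)\chi \;=\; -\,\frac{D_\phi F(\phi, \delta(\phi))\chi}{\partial_u F(\phi, \delta(\phi))},$$
delivers the stated formula. For the constant segment $\phi(s) \equiv \xi$, the threshold equation collapses to $a = v(\xi)\, u$, giving $\delta(\phi) = a/v(\xi)$, and since $v'(\phi(s)) \equiv v'(\xi)$ on the whole integration interval the factor pulls out of the integral to produce the second formula.

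The main technical obstacle is the joint continuity of $D_\phi F$ as a map into the dual of $C$, because the variable endpoint $-u$ of integration interacts with the uniform-continuity estimates for $v' \circ \phi$. The standard remedy is to split the difference $D_\phi F(\phi_1, u_1) - D_\phi F(\phi_2, u_2)$ into an integral over $[-\min(u_1, u_2), 0]$, controlled by the uniform closeness of $v' \circ \phi_1$ and $v' \circ \phi_2$, plus a boundary piece of length $|u_1 - u_2|$, controlled by boundedness of $v'$ on a neighborhood of the range of $\phi_i$. Everything else is routine bookkeeping, and the formula in the constant case is then a direct substitution.
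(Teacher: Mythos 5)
Your proposal is correct and follows essentially the same route as the paper: both apply the Implicit Function Theorem to the threshold relation, use $v\ge v_0>0$ for the non-degeneracy of the partial derivative in $u$, obtain the stated formula from the quotient $-D_\phi F/\partial_u F$, and treat the constant case by direct substitution into $a=\delta(\phi)v(\xi)$. The only difference is in how smoothness of the defining functional is verified: the paper factors $h(u,\phi)=a-ev(I(V(\phi)),-u)$ through the substitution operator $V$, the linear integration operator $I$, and the restricted evaluation map $ev$ on $C^1\times(-r,0)$, each already known to be continuously differentiable, which sidesteps the boundary-piece and joint-continuity estimates you identify as the main technical obstacle and carry out by hand.
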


Before giving the proof recall from \cite[page 47]{W1} or \cite[page 466]{HKWW}
 that the evaluation map
$$
ev_C:C\times[-r,0]\ni(\chi,u)\mapsto\chi(u)\in\mathbb{R}.
$$
is continuous (but not
locally Lipschitz continuous, let alone differentiable),
and that the restricted
evaluation map
$$
ev:C^1\times(-r,0)\ni(\phi,u)\mapsto\phi(u)\in\mathbb{R}
$$
is continuously differentiable with
\be
D\,ev(\phi,u)(\hat{\phi},\hat{u})=D_1ev(\phi,u)\hat{\phi}+D_2ev(\phi,u)\hat{u}=\hat{\phi}(u)+\phi'(u)\hat{u},
\label{eq:eval deriv}
\ee
where $D_1$ and $D_2$ denote partial derivatives with respect to the argument in $C^1$ and in $(-r,0)$, respectively.
The substitution operator
$$
V:C\ni\phi\mapsto v\circ\phi\in C
$$
is continuously differentiable with
$$
(DV(\phi)\hat{\phi})(s)=v'(\phi(s))\hat{\phi}(s)\quad\text{for all}\quad\hat{\phi}\in C,\quad s\in[-r,0],
$$
see for example \cite[Appendix IV, Lemma 1.5]{DvGVLW}.

\begin{proof}[Proof of Proposition~\ref{prop:1}]
For every $\phi\in C$ the value  $u=\delta(\phi)$ is the unique solution of the equation
$h(u,\phi)=0$, where
$h:(0,r)\times C\to\mathbb{R}$
is given by
$$
h(u,\phi)=a-\int^0_{-u}v(\phi(s))ds=a - ev(I(V(\phi))),-u)
$$
with the continuous linear integration operator $I:C\to C^1$ defined by
$$
(I\psi)(t)=\int^0_t\psi(s)ds.
$$
The map $h$ is continuously differentiable with
$$
D_1h(u,\phi)1=-v(\phi(-u))<0
$$
and
\begin{multline*}
D_2h(u,\phi)\chi  =  -D_1ev(I(V(\phi)),-u)DI(V(\phi))DV(\phi)\chi
 = -(DI(V(\phi))DV(\phi)\chi)(-u)\\
 =  -(I(DV(\phi)\chi))(-u)=-\int^0_{-u}v'(\phi(s))\chi(s)ds.
\end{multline*}
The Implicit Function Theorem applies at every $(\delta(\phi),\phi)\in(-r,0)\times C$ and yields that locally,
$\delta$ is given by a continuously differentiable map. Differentiation of  the equation $h(\delta(\phi),\phi)=0$ gives
\begin{displaymath}
D\delta(\phi)\chi  =  -\frac{D_2h(\delta(\phi),\phi)\chi}{D_1h(\delta(\phi),\phi)1}
 =  -\frac{\int^0_{-\delta(\phi)}v'(\phi(s))\chi(s)ds}{v(\phi(-\delta(\phi)))}.
\end{displaymath}
In the case $\phi$ is constant with value $\xi\in\mathbb{R}$, Eq.~\eqref{eq:thres-3}
gives $a=\delta(\phi)v(\xi)$, and by the previous formula,
\begin{displaymath}
D\delta(\phi)\chi=-\frac{v'(\xi)}{v(\xi)}\int^0_{-a/v(\xi)}\chi(s)ds.
\end{displaymath}
\end{proof}

Using the delay functional $\delta:C\to(0,r)$ the  system
\eqref{eq:basic},\eqref{eq:thres-2}
is reduced to the equation
\be
x'(t) = \beta\,e^{-\mu\,\delta(x_t)}\frac{v(x(t))}{v(x(t-\delta(x_t)))}g(x(t-\delta(x_t)))-\gamma\,x(t)\label{eq:single-1}
\ee
with segments $x_t\in C$. Eq. \eqref{eq:single-1} is of the form
$x'(t)=G_C(x_t)\in\mathbb{R}$ with the map $G_C:C\to\mathbb{R}$ given by
\be
G_C(\phi)  = \beta\,e^{-\mu\,\delta(\phi)}\frac{v(\phi(0))}
{v(\phi(-\delta(\phi)))}g(\phi(-\delta(\phi)))-\gamma\,\phi(0)
\label{eq:single-2}
\ee
We observe that $G_C$ is continuous. The restriction $G$ of $G_C$ to $C^1$ is continuously differentiable because, for $\phi\in C^1\subset C$, we have
$ev_C(\phi,-\delta(\phi))=ev(\phi,-\delta(\phi))$.  Here
$ev:C^1\times(0,r)\to\mathbb{R}$ is continuously differentiable, the map $\delta$ is continuously differentiable, and the  evaluation map
$ev_0:C\ni\chi\mapsto\chi(0)\in\mathbb{R}$ is linear and continuous.

To simplify the calculations below we now introduce the
continuous map
$$
E_C:C\to\mathbb{R},\quad E_C(\phi)=ev_C(\phi,-\delta(\phi)),
$$
and the continuously differentiable map
$$
E:C^1\to\mathbb{R},\quad E(\phi)=ev(\phi,-\delta(\phi)),
$$
with the derivative at $\phi\in C^1$ given by
\begin{eqnarray*}
	DE(\phi)\hat{\phi}& = & D_1ev(\phi,-\delta(\phi))\hat{\phi}+D_2ev(\phi,-\delta(\phi))D(-\delta)(\phi)\hat{\phi}\\	
	& = & \hat{\phi}(-\delta(\phi))-\phi'(-\delta(\phi))D\delta(\phi)\hat{\phi}	
\end{eqnarray*}
for all $\hat{\phi}\in C^1$. Notice that the right hand side of the previous equation makes sense also for arguments $\chi\in C$ instead of $\hat{\phi}\in C^1$. Thus they define linear extensions $D_eE(\phi):C\to\mathbb{R}$ of $DE(\phi):C^1\to\mathbb{R}$.  Using the continuity of the map $ev_C$, and the fact that differentiation $C^1\ni\phi\mapsto\phi'\in C$ is linear and continuous we obtain the next result.

\begin{prop}
	The map
	$C^1\times C\ni(\phi,\chi)\mapsto D_eE(\phi)\chi\in\mathbb{R}$
is continuous.	
\end{prop}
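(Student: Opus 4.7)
The plan is to decompose
\[
D_eE(\phi)\chi = \chi(-\delta(\phi)) - \phi'(-\delta(\phi))\,D\delta(\phi)\chi
\]
and verify joint continuity of each summand on $C^1\times C$. The essential ingredients are already at hand: Proposition~\ref{prop:1} gives continuity (and more) of $\delta:C\to(0,r)$; the continuous embedding $C^1\hookrightarrow C$ feeds $\phi\in C^1$ into $\delta$; the full evaluation $ev_C:C\times[-r,0]\to\mathbb{R}$ is continuous; and differentiation $C^1\ni\phi\mapsto\phi'\in C$ is linear and continuous.

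For the first summand, write $\chi(-\delta(\phi))=ev_C(\chi,-\delta(\phi))$: the map $(\phi,\chi)\mapsto(\chi,-\delta(\phi))$ is continuous $C^1\times C\to C\times[-r,0]$, so composing with $ev_C$ gives the claim. For the second summand, the factor $\phi'(-\delta(\phi))=ev_C(\phi',-\delta(\phi))$ is continuous in $\phi$ by the same recipe applied to $\phi\mapsto\phi'$, and I would treat
\[
D\delta(\phi)\chi = -\frac{\int^0_{-\delta(\phi)} v'(\phi(s))\chi(s)\,ds}{v(\phi(-\delta(\phi)))}
\]
separately. The denominator $v(\phi(-\delta(\phi)))=v(ev_C(\phi,-\delta(\phi)))$ is a composition of continuous maps and is bounded below by $v_0>0$, so inversion is harmless; what remains is joint continuity of the numerator, after which a product of continuous scalar-valued maps finishes the argument.

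The numerator is the only nontrivial piece and is where the main obstacle lies: both the integrand $v'(\phi(s))\chi(s)$ and the upper limit $-\delta(\phi)$ move with $(\phi,\chi)$. Given $(\phi_n,\chi_n)\to(\phi,\chi)$ in $C^1\times C$, convergence in $C^1$ forces uniform convergence $\phi_n\to\phi$ on $[-r,0]$, so by continuity of $v'$ on the compact set $\{\phi_n(s)\}\cup\{\phi(s)\}$ one gets $v'(\phi_n(\cdot))\to v'(\phi(\cdot))$ uniformly; combined with $\chi_n\to\chi$ uniformly, the products $f_n:=v'(\phi_n)\chi_n$ converge uniformly to $f:=v'(\phi)\chi$. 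Combining this with $\delta(\phi_n)\to\delta(\phi)$ from Proposition~\ref{prop:1}, the standard splitting
\[
\Bigl|\int_{-\delta(\phi_n)}^0 f_n\,ds - \int_{-\delta(\phi)}^0 f\,ds\Bigr| \le r\,\|f_n-f\|_\infty + \|f\|_\infty\,|\delta(\phi_n)-\delta(\phi)|
\]
drives the numerator to zero. This variable-limit estimate is routine, but it is the one place where the argument is not merely composition of named continuous maps; once it is in hand everything else glues together automatically.
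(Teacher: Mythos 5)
Your proof is correct and follows essentially the same route the paper intends: the paper dispatches this proposition in one line by composing the continuity of $ev_C$, of differentiation $C^1\ni\phi\mapsto\phi'\in C$, and of $\delta$, and your argument is the natural fleshing-out of exactly that decomposition. The only place you do extra work is the variable-limit estimate for the numerator of $D\delta(\phi)\chi$; that step could be replaced by noting that Proposition~\ref{prop:1} gives $\delta\in C^1(C,(0,r))$, so $D\delta$ is norm-continuous and $(\phi,\chi)\mapsto D\delta(\phi)\chi$ is automatically jointly continuous.
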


Incidentally, in the case $\phi\in C^1$ is constant with value $\xi\in\mathbb{R}$ we have
$$
E(\phi)=\xi,\quad DE(\phi)\hat{\phi}=\hat{\phi}(-a/v(\xi)).
$$
With the linear continuous evaluation map $ev_0:C\ni\phi\mapsto\phi(0)\in\mathbb{R}$ we obtain for the restriction $G$ of $G_C$
\be
G(\phi)=  \beta\,e^{-\mu\delta(\phi)}\frac{v(ev_0\phi)}{v(E(\phi))}g(E(\phi))
-\gamma\,ev_0\phi\label{eq:map G}.
\ee
In the sequel we will show that the initial value problem
\begin{equation}
x'(t)=G(x_t)\quad\text{for}\quad t>0,\quad x_0=\phi\label{eq:ivp}
\end{equation}
is well-posed on the set
$$
X_G=\{\phi\in C^1:\phi'(0)=G(\phi)\},
$$
which is a continuously differentiable submanifold of codimension $1$ in the space $C^1$. This result  follows from results in  \cite{W1,W2,HKWW}
	provided that  the following two assertions are verified:
\begin{enumerate}
	\item $X_G\neq\emptyset$;  and
	\item $G$ has the property that
	
	\medskip
	
	{\it each derivative $DG(\phi):C^1\to\mathbb{R}$, $\phi\in C^1$, has a linear extension $D_eG(\phi):C\to\mathbb{R}$, and the map
		$$
		C^1\times C\ni(\phi,\hat{\phi})\mapsto D_eG(\phi)\hat{\phi}\in\mathbb{R}
		$$
		is continuous.}
\end{enumerate}

\medskip

Property 
 (2) is a version of being {\it almost Fr\'echet differentiable}
from \cite{M-PNP}.
In case the delay functional $\delta:C\to(0,r)$ is bounded away from zero, which happens to be true for $v$ bounded also from above as in the subsequent sections, the manifold $X_G$ is simply a graph in the space $C^1$, given by a continuously differentiable map from an open subset of the closed hyperplane $\{\phi\in C^1:\phi'(0)=0\}$ into a complementary line in $C^1$, see \cite[Theorem 2.4]{walther2021solution}.
\medskip

We now proceed to the proof of properties (1) and (2). We first show that  $X_G\neq\emptyset$. The continuous map
$$
[0,\infty)\ni\xi\mapsto\beta\,e^{-\mu\,a/v(\xi)}-\gamma\,\xi\in\mathbb{R}
$$
is positive at $\xi=0$ and tends to $-\infty$ for $\xi\to\infty$. Therefore the Intermediate Value Theorem yields a zero $\zeta>0$ of this map. The constant function $\phi\in C^1$ with value $\zeta$ satisfies $\phi'(0)=0=G(\phi)$, so it belongs to the set $X_G$.
This finishes the proof of property (1).

\medskip

To prepare for the proof of the extension property (2)
we compute the derivatives $DG(\phi)$, $\phi\in C^1$.
For $\delta:C\to\mathbb{R}$ we use the fact that restrictions of differentiable maps $m:C\to\mathbb{R}$ to $C^1$ remain differentiable, with derivatives $D(m|C^1)(\phi):C^1\to\mathbb{R}$ being restrictions of the derivatives $Dm(\phi):C\to\mathbb{R}$, $\phi\in C^1\subset C$. It follows that
\begin{multline} \label{eq:DG}
DG(\phi)\hat{\phi} =   -\gamma\,\hat{\phi}(0)+\mbox{}\\
\shoveright{\beta\biggl\{\frac{1}{[v(E(\phi))]^2}
\left[v'(\phi(0))\hat{\phi}(0)\cdot v(E(\phi))
 -v(\phi(0))v'(E(\phi))DE(\phi)\hat{\phi}\right] e^{-\mu\delta(\phi)}g(E(\phi))}\\
+ \frac{v(\phi(0))}{v(E(\phi))}\left[
-\mu\,e^{-\mu\delta(\phi)}D\delta(\phi)\hat{\phi}\cdot g(E(\phi))
 +e^{-\mu\delta(\phi)}g'(E(\phi))DE(\phi)\hat{\phi}\right]\biggr\}.
\end{multline}
Now we are ready to verify property   (2). In the formula for $DG(\phi)\hat{\phi}$, 
replace the real number $DE(\phi)\hat{\phi}$  by $D_eE(\phi)\chi$ with $\chi\in C$, replace the function $\hat{\phi}$ by $\chi$, and replace $\hat{\phi}(0)$ by $\chi(0)$. This defines $D_eG(\phi)\chi\in\mathbb{R}$ for $\phi\in C^1$ and $\chi\in C$ so that the maps $D_eG(\phi):C\to\mathbb{R}$, $\phi\in C^1$, are linear. Using the continuous differentiability of $\delta:C\to\mathbb{R}$ and Proposition \ref{prop:1} 
one shows that the map
$C^1\times C\ni(\phi,\chi)\mapsto D_eG(\phi)\chi\in\mathbb{R}$
is continuous.
This finishes the proof of property  (2).

\medskip

With (1) and (2) verified,
results from  \cite{W1,W2,HKWW} apply and yield the following.

The set $X_G$ is a continuously differentiable submanifold of the Banach space $C^1$, with codimension 1.  Each $\phi\in X_G$ uniquely determines a maximal continuously differentiable solution $x:[-r,t_x)\to\mathbb{R}$, $0<t_x\le\infty$, of the initial value problem
\eqref{eq:ivp}.  That is, $x$ is continuously differentiable and satisfies $x_0=\phi$ and $x'(t)=G(x_t)$ for all $t\in(0,t_x)$, and any other continuously differentiable function $y:[-r,t_y)\to\mathbb{R}$, $0<t_y\le \infty$,  which satisfies $y_0=\phi$ and $y'(t)=G(y_t)$ for all $t\in(0,t_y)$ is a restriction of $x$. All segments $x_t$, $0\le t<\infty$, belong to $X_G$ (because of Eq.~\eqref{eq:ivp}). Write $x^{\phi}=x$ and $t_{\phi}=t_x$.
Let
$$
\Omega_G=\{(t,\phi)\in[0,\infty)\times X_G:0\le t<t_{\phi}\}.
$$
The equation
$$
S_G(t,\phi)=x_t^{\phi},
$$
defines a continuous semiflow  $S_G:\Omega_G\to X_G$. For each $t\ge0$ the set
$$
\Omega_{G,t}=\{\phi\in X_G:t<t_{\phi}\}
$$
is an open subset of $X_G$ (possibly empty), $\Omega_{G,0}=X_G$, and each map
$$
S_{G,t}:\Omega_{G,t}\ni\phi\mapsto S_G(t,\phi)\in X_G,\quad t\ge0,
$$
on a non-empty domain is continuously differentiable.

\medskip

Moreover, the restriction of the semiflow $S_G$ to the open subset $\{(t,\phi)\in\Omega_G:r<t\}$ of the manifold $\mathbb{R}\times X_G$ is continuously differentiable \citep{W2}.

\section{Positivity, dissipativity, global attractor}\label{sec:positivity}

In addition to the assumptions made in Section~\ref{sec:semiflow},
we assume in this section that the function $v$ is also bounded from above by a real number $v_U\ge v_0$.
Using Eq.~\eqref{eq:thres-3} we infer
$$
\frac{a}{v_U}\le\delta(\phi)\le\frac{a}{v_0}
$$
for all $\phi\in C$.

\begin{prop}\label{prop 4}
	For every $c>0$ there exists $c'>0$ such that for all $\phi\in X_G$ with $|\phi(t)|\le c$ on $[-r,0]$ and for all $t\in[-r,t_{\phi})$ we have $|x^{\phi}(t)|\le c'$.
\end{prop}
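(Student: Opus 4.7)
The plan is to exploit the fact that the nonlinear feedback term in Eq.~\eqref{eq:single-1} is a priori bounded in absolute value by a constant depending only on the parameters and on the sup-bounds of $g$ and $v$, so that the equation reduces to a scalar linear ODE with bounded inhomogeneity for which Gronwall-type (equivalently, variation-of-constants) estimates give the desired global bound.

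First I would record the elementary estimates already available. Since $\mu>0$ and, by the remark preceding the proposition, $\delta(\phi)\in[a/v_U,a/v_0]$ for every $\phi\in C$, we have $0<e^{-\mu\delta(\phi)}\le 1$. Combined with $v_0\le v\le v_U$ and $0<g\le g_U$ this gives
\[
0 < \beta\,e^{-\mu\delta(\phi)}\frac{v(\phi(0))}{v(E(\phi))}g(E(\phi)) \le M, \qquad M := \frac{\beta\,v_U\,g_U}{v_0},
\]
for every $\phi\in C^1$. In particular, along any solution $x=x^{\phi}$ the function
\[
f(t):=\beta\,e^{-\mu\delta(x_t)}\frac{v(x(t))}{v(x(t-\delta(x_t)))}\,g(x(t-\delta(x_t)))
\]
satisfies $0<f(t)\le M$ on $[0,t_{\phi})$.

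Next I would rewrite \eqref{eq:single-1} as the inhomogeneous linear ODE
\[
x'(t)+\gamma\,x(t)=f(t), \qquad t\in[0,t_{\phi}),
\]
and apply the integrating factor $e^{\gamma t}$. Variation of constants gives, for $0\le t<t_{\phi}$,
\[
x(t)=e^{-\gamma t}\,x(0)+\int_0^t e^{-\gamma(t-s)}f(s)\,ds,
\]
and from $|x(0)|=|\phi(0)|\le c$, $0<f\le M$, and $\gamma>0$ we obtain
\[
|x(t)|\le e^{-\gamma t}c+M\,\frac{1-e^{-\gamma t}}{\gamma}\le c+\frac{M}{\gamma}.
\]
For $t\in[-r,0]$ we have $|x(t)|=|\phi(t)|\le c$ by hypothesis, so the choice
\[
c' := c+\frac{M}{\gamma}=c+\frac{\beta\,v_U\,g_U}{\gamma\,v_0}
\]
works on the full interval $[-r,t_{\phi})$, independently of $t_{\phi}$.

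I do not expect a serious obstacle here: the proof reduces to a one-line a priori estimate for the feedback term followed by Duhamel's formula. The one point that warrants care is that the bound on $\delta$ from $v_0\le v\le v_U$ is essential both to conclude $e^{-\mu\delta}\le 1$ (together with $\mu>0$) and to make sure the segment $x_{t-\delta(x_t)}$ is evaluated over bounded delays so that $v$ and $g$ are evaluated only within their range; this is precisely why the additional assumption $v\le v_U$ made at the top of Section~\ref{sec:positivity} is needed and was not needed in Section~\ref{sec:semiflow}. A side benefit of this argument, which I would record as a corollary used later for dissipativity, is the uniform ultimate bound $\limsup_{t\to t_{\phi}}|x(t)|\le M/\gamma$ whenever $t_{\phi}=\infty$, which in particular rules out finite-time blow-up and hence (by the standard continuation result for the semiflow $S_G$) implies $t_{\phi}=\infty$.
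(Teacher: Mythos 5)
Your proposal is correct and follows essentially the same route as the paper: bound the positive feedback term by $\beta g_U v_U/v_0$ (the paper's $d^U$, your $M$), then apply variation of constants to $x'+\gamma x=f$ to get $|x(t)|\le c+d^U/\gamma$ on $[0,t_\phi)$, which together with the hypothesis on $[-r,0]$ gives the same constant $c'=c+d^U/\gamma$. The closing remark about ruling out blow-up is not needed here and is established separately in the paper (Proposition~\ref{prop 5}(i)) with a little more care about continuation at $t=t_\phi$.
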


\begin{proof}
Let $\phi\in X_G$ with $|\phi(t)|\le c$ on $[-r,0]$  be given, and set $x=x^{\phi}$.
The first term on the right hand side of Eq.~\eqref{eq:single-1} is positive and bounded by the constant
$$
d^U=\beta\,g_U\,\frac{v_U}{v_0}.
$$
The variation-of-constants formula yields
\begin{align*}
|x(t)| & = \bigg|x(0)e^{-\gamma\,t} + \! \left. \int_0^t\! e^{-\gamma(t-s)}\! \left[\frac{\beta v(x(s))}{v(x(s-\delta(x_s))}
	e^{-\mu\,\delta(x_s)}g(x(s-\delta(x_s)))\right]^{\!}ds\right|\\
	& \le c+e^{-\gamma\,t}\frac{d^U}{\gamma}\left(e^{\gamma\,t}-1\right)
 \le c+\frac{d^U}{\gamma}\quad\text{for}\quad 0\le t<t_{\phi}.
\end{align*}
Set $c'=c+\frac{d^U}{\gamma}$. With $|x(t)|=|\phi(t)|\le c$ on $[-r,0]$ we obtain $|x(t)|\le c'$ for all $t\in[-r,t_{\phi})$.
\end{proof}

Observe that for any (continuously differentiable) solution $x:[-r,t_x)\to\mathbb{R}$ of Eq.~\eqref{eq:single-1} and for any $t\in[0,t_x)$ the first term on the right hand side of Eq. \eqref {eq:single-1} belongs to the interval
$$
[d^L,d^U]=\left[\beta\frac{v_0}{v_U}e^{-\mu\, a/v_0}g_0,\beta\, g_U\,\frac{v_U}{v_0}\right],
$$
and in the case
	$$
	x(t)>\frac{d^U}{\gamma}\quad\text{we have}\quad x'(t)\le d^U-\gamma\,x(t)<0,
	$$
	while for
	$$
	x(t)<\frac{d^L}{\gamma}\quad\text{we have}\quad x'(t)\ge d^L-\gamma\,x(t)>0.
	$$
Set
$$
Q=\left[\frac{d^L}{\gamma},\frac{d^U}{\gamma}\right]\subset\mathbb{R}
$$
and
$$
R=\{\phi\in C^1:\phi([-r,0])\subset Q\}.
$$

\begin{prop}\label{prop 5}(Global existence, absorption and positive invariance, positivity)
	
	\medskip
	\noindent
	(i) For all $\phi\in X_G$, $t_{\phi}=\infty$.
	
	\medskip
	\noindent
	(ii) For every neighbourhood $N$ of $Q$ in $\mathbb{R}$ and for each $\phi\in X_G$ there exists $t(\phi,N)\in[0,\infty)$ with $x^{\phi}(t)\in N$ for all $t\ge t(\phi,N)$.
	
	\medskip
	\noindent
	(iii) If $\phi\in X_G\cap R$ then $x^{\phi}(t)\in Q$ for all $t\ge0$.
	
	\medskip
	\noindent
	(iv) If $\phi\in X_G$ is strictly positive then $x^{\phi}(t)>0$ for all $t\ge-r$.
\end{prop}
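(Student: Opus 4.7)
The common tool for all four parts is the scalar two-sided differential inequality
\[
d^L - \gamma\,x(t) \;\le\; x'(t) \;\le\; d^U - \gamma\,x(t),
\]
which holds along any solution of Eq.~\eqref{eq:single-1} because the nonnegative first term in that equation lies in $[d^L, d^U]$ on the entire maximal interval of existence.

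For (iv), I would write Eq.~\eqref{eq:single-1} as $x'(t) + \gamma\,x(t) = F(t)$, where $F(t) \ge d^L > 0$ for $t \ge 0$ because $g \ge g_0$ and $v \ge v_0$ are strictly positive. The variation-of-constants formula
\[
x^\phi(t) = \phi(0)\,e^{-\gamma t} + \int_0^t e^{-\gamma(t-s)} F(s)\,ds
\]
is then strictly positive for every $t \ge 0$ once $\phi(0) > 0$, and combined with the hypothesis $\phi > 0$ on $[-r,0]$ this gives strict positivity on $[-r,\infty)$.

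For (iii) and (ii), I would apply scalar comparison to the two-sided inequality. Writing $u(t) = x^\phi(t) - d^U/\gamma$ and $\ell(t) = x^\phi(t) - d^L/\gamma$, the inequality yields $u' \le -\gamma u$ and $\ell' \ge -\gamma \ell$, so $u(t) \le u(0) e^{-\gamma t}$ and $\ell(t) \ge \ell(0) e^{-\gamma t}$. Under the hypothesis of (iii), $u(0) \le 0 \le \ell(0)$, which immediately gives $x^\phi(t) \in Q$ for all $t \ge 0$. For (ii), applied to an arbitrary $\phi \in X_G$, the same exponential estimates give $\limsup_{t\to\infty} x^\phi(t) \le d^U/\gamma$ and $\liminf_{t\to\infty} x^\phi(t) \ge d^L/\gamma$; given a neighborhood $N$ of $Q$, I would choose $\varepsilon > 0$ with $[d^L/\gamma - \varepsilon,\, d^U/\gamma + \varepsilon] \subset N$, observe that the solution eventually enters this enlarged interval, and note that the same comparison at its endpoints (since $x' \le -\gamma\varepsilon < 0$ at the upper end and $x' \ge \gamma\varepsilon > 0$ at the lower end) keeps the solution in it from some $t(\phi,N)$ onwards.

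For (i), Proposition~\ref{prop 4} gives a uniform bound $|x^\phi(t)| \le c'$ on $[-r,t_\phi)$. Inserting this bound into Eq.~\eqref{eq:single-2}, together with $\delta(x^\phi_t) \in [a/v_U,\, a/v_0]$ bounded away from zero and the continuity of the factors appearing in $G$, yields a uniform bound on $x'(t) = G(x^\phi_t)$ on $[0,t_\phi)$. If $t_\phi$ were finite, the family $\{x^\phi_t\}$ of segments would be equicontinuous and bounded in $C^1$, hence relatively compact in $C$, and a limit segment $\psi$ at $t_\phi$ would lie in $X_G$ by passage to the limit in the identity $\psi'(0) = G(\psi)$; the local existence result from Section~\ref{sec:semiflow} could then be applied at $\psi$ to continue $x^\phi$ past $t_\phi$, contradicting maximality. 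This step is the main obstacle I anticipate, because the careful verification that the limit segment truly belongs to $X_G$ and that the local continuation actually concatenates into a classical solution rests on the full strength of the framework in \cite{W1,W2,HKWW}.
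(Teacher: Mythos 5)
Your proposal is correct, and for parts (ii)--(iv) it takes a genuinely different and more quantitative route than the paper. For (ii) and (iii) the paper argues by contradiction at first-crossing times: it shows that once $x(t)\le d^U/\gamma+\epsilon$ the solution cannot re-cross that level (because $x'(u)<d^U-\gamma x(u)<-\epsilon\gamma<0$ at the smallest crossing time $u$), and separately that the solution cannot stay above that level forever. Your Gronwall-type comparison $u'\le-\gamma u\Rightarrow u(t)\le u(0)e^{-\gamma t}$ delivers both the positive invariance of $Q$ and the absorption into any neighbourhood of $Q$ in one stroke, with an explicit exponential rate and an explicit $t(\phi,N)$; the paper's argument is more qualitative but avoids even the (elementary) integrating-factor step. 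For (iv) the paper again uses a first-zero contradiction ($x'(t)>0$ at the first $t$ with $x(t)=0$), whereas your variation-of-constants bound $x(t)\ge\phi(0)e^{-\gamma t}+\tfrac{d^L}{\gamma}(1-e^{-\gamma t})$ gives the strictly stronger conclusion that $x$ is eventually bounded away from $0$. For (i) your route coincides with the paper's; the one detail worth making explicit is how the limit segment at $t_\phi$ is shown to lie in $X_G\subset C^1$: relative compactness of the segments in $C$ alone does not produce a $C^1$ limit. The paper closes this by invoking the continuous extension $G_C:C\to\mathbb{R}$ of $G$: since $x'(t)=G_C(\hat x_t)$ and $t\mapsto\hat x_t$ is continuous into $C$ up to $t_\phi$, the derivative $x'$ itself extends continuously to $t_\phi$, so the endpoint segment is $C^1$ and satisfies the compatibility condition $\psi'(0)=G(\psi)$; the rest of your continuation-past-$t_\phi$ argument then goes through exactly as in the paper.
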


\begin{proof}
	\begin{description}
		\item[(i)]
		Let $\phi\in X_G$ be given. From Proposition \ref{prop 4} the solution $x=x^{\phi}$ is bounded.
		Using this and Eq.~\eqref{eq:single-1} we infer that $x'$ is bounded. It follows that $x$ is Lipschitz continuous. Assume now $t_{\phi}<\infty$. Then Lipschitz continuity yields that $x$ has a limit $\xi\in\mathbb{R}$ at $t=t_{\phi}$ and $x$ extends to a continuous map $\hat{x}:[-r,t_{\phi}]\to\mathbb{R}$. From uniform continuity on the compact interval $[-r,t_{\phi}]$ it follows that the curve $[0,t_{\phi}]\ni t\mapsto \hat{x}_t\in C$ is continuous. Using this and the equation
		$$
		x'(t)=G(x_t)=G_C(\hat{x}_t)\quad\text{for}\quad0\le t<t_{\phi}
		$$
		with the continuous map $G_C:C\to\mathbb{R}$, we also conclude that $x'$ has a limit $\eta\in\mathbb{R}$ at $t=t_{\phi}$. It follows that $\hat{x}$ is continuously differentiable (with $\hat{x}'(t_{\phi})=\eta$), and $\hat{x}'(t_{\phi})=G_C(\hat{x}_{t_{\phi}})=G(\hat{x}_{t_{\phi}})$. In particular,
		$\psi=\hat{x}_{t_{\phi}}$ belongs to $X_G$, and defines a maximal solution $x^{\psi}:[0,t_{\psi})\to\mathbb{R}$ of Eq.~\eqref{eq:form G}, with $0<t_{\psi}\le\infty$. From the semiflow properties, it follows that when $t_{\psi}=\infty$ we have $t_{\phi}=\infty$, in contradiction to the assumption above, while in the case $t_{\psi}<\infty$ we get $t_{\phi}\ge t_{\phi}+t_{\psi}$, which contradicts $t_{\psi}>0$.

		\medskip
		\item[(ii)]
		Let a neighbourhood $N$ of $Q$ in $\mathbb{R}$ and $\phi\in X_G$ be given. Set $x=x^{\phi}$. There exists  $\epsilon>0$ so that
		for
		$$
			d_{-\epsilon}=\frac{d^L}{\gamma}-\epsilon\quad\text{and}\quad
			d_{+\epsilon}=\frac{d^U}{\gamma}+\epsilon
	$$
	we have $d_{-\epsilon}>0$ and $N\supset[d_{-\epsilon},d_{+\epsilon}]$.

		{\bf (ii).1.}  Proof that when $x(t)\le d_{+\epsilon}$ for some $t\ge0$ we have
		$$
		x(s)\le d_{+\epsilon}\quad\text{for all}\quad s\ge t.
		$$
		Otherwise $x(s)>d_{+\epsilon}=\frac{d^U}{\gamma}+\epsilon$ for some $s>t$. For the smallest $u\in[t,s]$ with $x(u)=x(s)$ we have $0\le x'(u)$ and, on the other hand,
		$$
		x'(u)<d^U-\gamma\,x(u)=d^U-\gamma\,x(s)<-\epsilon\,\gamma<0.
		$$
		
		{\bf (ii).2.}  Proof that when $x(s)>d_{+\epsilon}$ for some $s\ge0$ there exists $t>s$ with
		$$
		x(t)\le d_{+\epsilon}.
		$$
		Otherwise $x(t)>d_{+\epsilon}=\frac{d^U}{\gamma}+\epsilon$ on $[s,\infty)$. Hence
		$$
		x'(t)<d^U-\gamma\,x(t)<-\epsilon\,\gamma<0\quad\text{on}\quad[s,\infty),
		$$
		and consequently $x(t)\to-\infty$ as $t\to\infty$, in contradiction to the assumption.
		
		\medskip
		{\bf (ii).3.}  It follows that there exists $t^{\ast}\ge0$ with
		$$
		x(t)\le d_{+\epsilon}\quad\text{for all}\quad t\ge t^{\ast}.
		$$
		Similarly one finds $t(\phi,N)\ge t^{\ast}$ with
		$$
		x(t)\ge d_{-\epsilon}\quad\text{for all}\quad t\ge t(\phi,N).
		$$
		Hence
		$$
		x(t)\in[d_{-\epsilon},d_{+\epsilon}]\subset N\quad\text{for all}\quad t\ge t(\phi,N).
		$$

		\item[(iii)] The proof of assertion (iii) begins with the assumption that for a given $\phi\in X_G\cap R$ there exists $s>0$ with $x^{\phi}(s)>\frac{d^U}{\gamma}$, and is then accomplished by a simplified version of arguments as in Part (ii).1.
		
		\medskip
		
		\item[(iv)]  Let $\phi\in X_G$ be given with $\phi(t)>0$ for all $t\in[-r,0]$. Set $x=x^{\phi}$. The assumption $x(t)\le0$ for some $t>0$ leads to a smallest $t>0$ with $x(t)=0$. Necessarily, $x'(t)\le0$ while Eq.~\eqref{eq:single-1} yields $x'(t)>0$. It follows that $x(t)>0$ for all $t\ge -r$.
	\end{description}
\end{proof}

The solution manifold $X_G$ is a closed subset of the space $C^1$, and thereby a complete metric space with respect to the metric given by the norm on $C^1$.   The next result implies that the semiflow $S_G$ on the complete metric space $X_G$ is point dissipative as defined in \cite{hale88}.

\begin{cor}\label{cor 1}
	There is a bounded open subset $B_G$  of the submanifold $X_G\subset C^1$, with
	$$
	\phi(t)>0\quad\text{for all}\quad \phi\in B_G,\quad t\in[-r,0],
	$$
	such that for every $\phi\in X_G$ there exists $t(\phi)\ge0$ with
	$$
	S_G(t,\phi)\in B_G\quad\text{for all}\quad t\ge t(\phi).
	$$
\end{cor}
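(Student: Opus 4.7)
The plan is to construct $B_G$ as the set of $\phi \in X_G$ whose pointwise values lie in a fixed open neighbourhood $N$ of $Q$ inside $(0,\infty)$, and whose first derivatives are uniformly bounded by a constant $M$ obtained from an a-priori estimate on $G_C$. Concretely, I would fix $\varepsilon>0$ with $d^L/\gamma-\varepsilon>0$, set $N=(d^L/\gamma-\varepsilon,\,d^U/\gamma+\varepsilon)\subset(0,\infty)$, and bound $|G_C(\psi)|$ uniformly over all $\psi\in C$ with $\psi([-r,0])\subset\overline{N}$ using the explicit form \eqref{eq:single-2} together with $\delta\geq a/v_U$, $v_0\leq v\leq v_U$, $g\leq g_U$, and $|\psi(0)|\leq d^U/\gamma+\varepsilon$ (these give, for instance, $|G_C(\psi)|\leq 2d^U+\gamma\varepsilon$). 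Letting $M$ be strictly greater than this bound, I then define
\[
B_G = \bigl\{\phi\in X_G : \phi([-r,0])\subset N \text{ and } \max_{s\in[-r,0]}|\phi'(s)|<M\bigr\}.
\]

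The three structural properties of $B_G$ are then easy to verify: both defining conditions are open in the $C^1$-topology, so $B_G$ is open in $X_G$; its $C^1$-norm is bounded by $\max\{d^L/\gamma-\varepsilon,\,d^U/\gamma+\varepsilon\}+M$, giving boundedness; and strict positivity on $[-r,0]$ is immediate from $N\subset(0,\infty)$.

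The substantive step is the absorption property. Given $\phi\in X_G$, Proposition~\ref{prop 5}(ii) supplies $t(\phi,N)\geq 0$ with $x^\phi(t)\in N$ for all $t\geq t(\phi,N)$, and I would set $t(\phi):=t(\phi,N)+2r$. For $t\geq t(\phi)$, the range $x^\phi([t-r,t])$ already lies in $N$, giving the first membership condition. The slightly delicate point, which is the main obstacle and the reason for the extra interval of length $r$ built into $t(\phi)$, is the derivative bound: from $(x_t^\phi)'(s)=G(x_{t+s}^\phi)$ for $s\in[-r,0]$ one needs each shifted segment $x_{t+s}^\phi$ to itself have values in $\overline{N}$ so that the a-priori bound on $G_C$ applies, and this is precisely what $t-2r\geq t(\phi,N)$ guarantees. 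Hence $|(x_t^\phi)'(s)|<M$, and $S_G(t,\phi)=x_t^\phi\in B_G$ for all $t\geq t(\phi)$, as required.
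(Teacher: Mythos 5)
Your proposal is correct and follows essentially the same route as the paper's proof: both define $B_G$ by intersecting $X_G$ with the set of functions taking values in a fixed open neighbourhood $N$ of $Q$ inside $(0,\infty)$ and having derivative bounded by the uniform bound on $G$ over such functions, both invoke Proposition~\ref{prop 5}(ii) for absorption, and both take $t(\phi)=t(\phi,N)+2r$ so that the whole history $[t-2r,t]$ lies in $N$ and the derivative bound applies on the full segment. The only cosmetic difference is your choice of a two-sided neighbourhood $N$ with an explicit constant in place of the paper's $N=(0,c)$.
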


\begin{proof}
Choose $c>0$ so that $N=(0,c)$ is a neighbourhood of $Q$. Set $\tilde{R}=\{\phi\in C^1:\phi([-r,0])\subset N\}$.
	Let $\phi\in X_G$ be given. Choose $t(\phi,N)$ according to Proposition \ref{prop 5} (ii).
	From Eq.~\eqref{eq:single-1} we see that  the map $G$
	sends the set $\tilde{R}$ (which is not a bounded subset of $C^1$) into a bounded subset of $\mathbb{R}$, say, into
	$(-b,b)$ for some $b>0$. It follows that for all $t\ge t(\phi,N)+r$ we have $|(x^{\phi})'(t)|<b$. For $t\ge t(\phi,N)+2r$ we obtain $x^{\phi}_t\in\{\psi\in X_G\cap\tilde{R}:|\psi'|<b\}=B_G$. The set $B_G$ is an open and bounded
	subset of $X_G$, with $0<\phi(t)$ for all $\phi\in B_G,\,t\in[-r,0]$. Set $t(\phi)=t(\phi,N)+2r$.
\end{proof}

Recall from \cite{hale88} the definition of a global attractor of a semiflow, which in the case of our semiflow $S_G$ is equivalent to saying that a subset $A_G\subset X_G$ is a global attractor if it is
\begin{itemize}
		\item compact,
		\item invariant in the sense that for every $\phi\in A_G$ there exists an entire flowline\footnote{An entire flowline is a curve $\xi:\mathbb{R}\to X_G$ with
			$\xi(t+s)=S_G(t,\xi(s))$ for all $t\ge0$ and $s\in\mathbb{R}$} $\xi:\mathbb{R}\to X_G$ with $\xi(0)=\phi$ and $\xi(\mathbb{R})\subset A_G$, and
		\item if $A_G$ attracts every bounded set $B\subset X_G$ in the sense that given an open neighbourhood  $U\supset A_G$ of $A_G$ in $X_G$ there exists $t_{B,U}\ge0$ such that
		$$
		S_G([t_{B,U},\infty)\times B)\subset U.
		$$
	\end{itemize}

Theorem 3.4.8 of \citep{hale88}
guarantees the existence of such a global attractor provided the semiflow is point-dissipative and there exists $t_1\ge0$ so that the semiflow $S_G$ is \emph{completely continuous for $t\ge t_1$}. The property of being completely continuous (for $t\ge t_1$) is explained after
Lemma 3.2.1 in \citep{hale88}.
It means that
for every $t\ge t_1$ and for every bounded set $B$ with
$S_G([0,t]\times B)$ bounded the set $S_G(\{t\}\times B)$ is precompact.

To prove  the latter property, it is sufficient to verify the slightly stronger property that for every bounded set $B\subset X_G$
\begin{description}
	\item[(i)]
	 there exists $t_1\ge0$
	so that for every $t\ge t_1$ the set $S_G(\{t\}\times B)$ is precompact.
\end{description}

\begin{thm}\label{thm 4-1}
	The semiflow $S_G$ has a global attractor $A_G\subset X_G$, with $\phi(t)>0$ for all $\phi\in A_G,\,t\in[-r,0]$.
\end{thm}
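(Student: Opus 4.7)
The strategy is to apply Hale's Theorem~3.4.8 from \cite{hale88}: point-dissipativity has already been established in Corollary~\ref{cor 1}, and the paper has reduced complete continuity to the precompactness property~(i). So the core task is to verify~(i); the positivity of elements of $A_G$ will then be extracted from Proposition~\ref{prop 5}.

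For property~(i), let $B \subset X_G$ be bounded. I would show that with $t_1 := r$, the family $\{x^\phi_t : \phi \in B\}$ is precompact in $C^1([-r, 0], \mathbb{R})$ for every $t \geq t_1$. By the Ascoli--Arzel\`a theorem this requires uniform boundedness and equicontinuity for both the segments and their derivatives. Proposition~\ref{prop 4} supplies a uniform bound $|x^\phi(t)| \leq c'$ on $[-r, \infty)$, and inserting this into \eqref{eq:single-1}, together with $\delta(\phi) \in [a/v_U, a/v_0]$ and the bounds on $v$ and $g$, yields a uniform bound on $|(x^\phi)'(t)|$, which bounds the derivatives and gives a uniform Lipschitz constant for the segments themselves. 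The technical heart of the proof is equicontinuity of $(x^\phi_t)'$, which I would obtain from a uniform bound on $(x^\phi)''$. Implicit differentiation of the threshold condition \eqref{eq:thres-2} gives $\tau'(t) = 1 - v(x(t))/v(x(t-\tau(t)))$, uniformly bounded by the two-sided bounds on $v$. Differentiating \eqref{eq:single-1} once more in $t$ then expresses $(x^\phi)''(t)$ as a continuous function of $x^\phi(t)$, $x^\phi(t-\tau(t))$, $(x^\phi)'(t)$, $(x^\phi)'(t-\tau(t))$ and $\tau'(t)$, with coefficients built from $v$, $v'$, $g$, $g'$ evaluated on the compact range of $x^\phi$. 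For $t \geq r$ one has $t - \tau(t) \geq r - a/v_0 > 0$, so $(x^\phi)'(t - \tau(t))$ is well-defined via \eqref{eq:single-1}, and every piece in the expression for $(x^\phi)''(t)$ is uniformly bounded; this yields the desired uniform Lipschitz bound on $(x^\phi)'$. Hale's theorem then delivers $A_G$.

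For positivity, I would first upgrade Proposition~\ref{prop 5}(ii) to a statement uniform on bounded sets: the proof of (ii) shows $x'(t) < -\epsilon\gamma$ whenever $x(t) > d_{+\epsilon}$ and symmetrically for the lower threshold, so starting from the uniform bound $|x^\phi(t)| \leq c'$ furnished by Proposition~\ref{prop 4}, the time needed to enter $[d_{-\epsilon}, d_{+\epsilon}]$ and remain there is bounded by a quantity depending only on $c'$ and $\epsilon$, hence uniformly in $\phi \in B$. Given $\phi \in A_G$, invariance supplies an entire flowline $\xi : \mathbb{R} \to A_G$ with $\xi(0) = \phi$, so $\phi = S_G(T, \xi(-T))$ for every $T \geq 0$. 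Applying the uniform absorption to the compact (hence bounded) set $A_G$ and choosing $T$ sufficiently large, one obtains $\phi(s) = x^{\xi(-T)}(T + s) \in [d_{-\epsilon}, d_{+\epsilon}]$ for all $s \in [-r, 0]$. Letting $\epsilon \downarrow 0$ yields $\phi(s) \geq d^L/\gamma > 0$, which is the required strict positivity.

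The principal obstacle is the chain-rule bookkeeping for $(x^\phi)''$ through the state-dependent delay; once that uniform second-derivative bound is in hand, everything else is either immediate from earlier results or a routine uniform refinement of the pointwise estimates in Proposition~\ref{prop 5}.
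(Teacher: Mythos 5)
Your proposal is correct, and the overall skeleton---Hale's Theorem 3.4.8, point dissipativity from Corollary~\ref{cor 1}, reduction to the precompactness property (i), and positivity via the entire flowline through a point of $A_G$---matches the paper's proof. However, you diverge from the paper in the two technical steps that carry the real weight, and both of your alternatives are sound. For the equicontinuity of the derivative segments $S_G(r,\phi)'$, the paper never differentiates the equation a second time: it first shows (via Ascoli--Arzel\`a) that the set of segments $\{S_G(t,\phi): \phi\in B,\ 0\le t\le r\}$ has compact closure $K$ in $C$, and then exploits the identity $(x^\phi)'(t)=G_C(S_G(t,\phi))$ together with the \emph{uniform continuity of $G_C$ on $K$} and the uniform Lipschitz continuity of the curves $t\mapsto S_G(t,\phi)$. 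Your route instead bounds $(x^\phi)''$ by differentiating \eqref{eq:single-1} through the state-dependent delay using $\tau'(t)=1-v(x(t))/v(x(t-\tau(t)))$; this works (all ingredients, including $x'(t-\tau(t))$, are uniformly bounded once Proposition~\ref{prop 4} and the $C^1$-bound on $B$ are in hand) and yields a quantitative Lipschitz constant, but it requires the extra smoothness needed to differentiate the right-hand side, whereas the paper's soft argument does not. For positivity, the paper argues by contradiction with the backward variation-of-constants inequality $x(s)\le x(u)e^{\gamma(u-s)}\to-\infty$ as $s\to-\infty$, using only boundedness of the entire orbit; your argument upgrades Proposition~\ref{prop 5}(ii) to absorption that is uniform over bounded sets (legitimate, since the differential inequalities in parts (ii).1--(ii).2 give an absorption time depending only on $c'$ and $\epsilon$) and pulls back along the flowline. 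Your version buys the stronger quantitative conclusion $\phi(s)\ge d^L/\gamma$ for all $\phi\in A_G$, i.e.\ $A_G\subset R$, at the cost of a slightly longer uniformization step; the paper's version is shorter but only delivers strict positivity.
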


\begin{proof}
	1. We first show that for every bounded subset $B\subset X_G$ there exists $c_B>0$ with
	$$
	|x^{\phi}(t)|\le c_B\quad\text{and}\quad |(x^{\phi})'(t)|\le c_B\quad\text{for all}\quad\phi\in B\quad\text{and}\quad t\ge-r.
	$$
	Let $B\subset X_G$ be bounded (with respect to the norm of the space $C^1$). Proposition \ref{prop 4} guarantees the existence of a constant $c_{B,0}$ with $|x^{\phi}(t)|\le c_{B,0}$ for all $\phi\in B$, $t\ge-r$. Then  \eqref{eq:map G} shows that the set
	$\{G(x^{\phi}_t)\in\mathbb{R}:\phi\in B,t\ge0\}$ is bounded, and Eq.~\eqref{eq:form G} gives that the set
	$\{(x^{\phi})'(t)\in\mathbb{R}:\phi\in B,t\ge0\}$ is bounded. Also the set $\{\phi'(t)\in\mathbb{R}:\phi\in B,-r\le t\le0\}$ is bounded.
	
	\medskip
	
	2. Claim: For every bounded subset $B\subset X_G$ the set $S_G(\{r\}\times B)\subset X_G$ has compact closure in $C^1$.
	
	\medskip
	
	Proof: (a)  Let $B\subset X_G$ be a bounded subset of $C^1$. Due to Part 1 the sets $\{x^{\phi}(t)\in\mathbb{R}:\phi\in B,-r\le t\le r\}$ and
	$\{(x^{\phi})'(t)\in\mathbb{R}:\phi\in B, -r\le t\le r\}$ are bounded. Using the Mean Value Theorem we see that in particular the set $S_G(\{r\}\times B)$ is equicontinuous. As it also is bounded in $C$, the Ascoli-Arz\`{e}la Theorem implies that its closure in $C$ is compact.
	
	\medskip
	
	(b) We turn to the set $\{S_G(r,\phi)'\in C:\phi\in B\}$ of derivatives, which is bounded in $C$, and proceed to show that it is also equicontinuous. As in Part (a) one sees that the closure $K$ of the set
	$$
	\{S_G(t,\phi)\in C:\phi\in B,0\le t\le r\}
	$$
	in the space $C$ is compact. The map $G:C^1\to\mathbb{R}$ is the restriction of the continuous map $G_C:C\to\mathbb{R}$
	which is uniformly continuous on the compact set $K\subset C$. Using the boundedness of the set $\{(x^{\phi})'(t)\in\mathbb{R}:\phi\in B, -r\le t\le r\}$ and the Mean Value Theorem one finds that the curves
	$$
	[0,r]\ni t\mapsto S_G(t,\phi)\in C,\quad\phi\in B,
	$$
	are uniformly Lipschitz continuous, hence equicontinuous. Now
	let $t_0\in [0,r]$ and $\epsilon>0$ be given. There exists $\delta'>0$ with
	$$
	|G_C(\phi)-G_C(\psi)|\le\epsilon\quad\text{for all}\quad\phi,\psi\quad\text{in}\quad S_G([0,r]\times B)\quad\text{with}\quad|\phi-\psi|\le\delta',
	$$
	due to uniform continuity of $G_C$ on $K$. Due to equicontinuity there exists $\eta>0$ with
	$$
	|S_G(t,\phi)-S_G(t_0,\phi)|\le\delta'\quad\text{for all}\,\phi\in B\,\text{and}\, t\in[0,r]\,\text{with}\,|t-t_0|<\eta.
	$$
	Hence
	\begin{align}
	|(x ^{\phi})'(t)-(x^{\phi})'(t_0)| & =|G(S_G(t,\phi))-G(S_G(t_0,\phi))|\nonumber\\
	& =|G_C(S_G(t,\phi))-G_C(S_G(t_0,\phi))|<\epsilon\nonumber
	\end{align}
	for all $\phi\in B$ and $t\in[0,r]$ with $|t-t_0|<\eta$.
	
	\medskip
	
	(c) The Ascoli-Arz\`{e}la Theorem implies that the closure of $\{S_G(r,\phi)'\in C:\phi\in B\}$  in $C$ is compact.
	For the closure of $S_G(\{r\}\times B)$ in $C^1$ to be compact it is sufficient to show that every sequence of points
	$\phi_j\in S_G(\{r\}\times B)$, $j\in\N$, has a subsequence which converges in $C^1$. Let a sequence $(\phi_j)_1^{\infty}$ in $S_G(\{r\}\times B)$ be given. Part a) implies that there is a subsequence which converges in $C$ to some $\phi\in C$. Part b) shows that the subsequence has a secondary subsequence  so that the derivatives of the latter subsequence converge in $C$ to some $\psi\in C$. It follows that $\phi\in C^1$ with $\phi'=\psi$, which in turn yields convergence of the  secondary subsequence to $\phi\in C^1$ as $k\to\infty$.
	
	\medskip
	
	3. We now show that for every bounded subset $B\subset X_G$ and for every $t\ge r$ the set $S_G(\{t\}\times B)\subset X_G$ has compact closure in $C^1$.  Let $B\subset X_G$ be bounded and let $t\ge r$. For every $\phi\in B$,
	$$
	S_G(t,\phi)=S_G(t-r,S_G(r,\phi)),
	$$
	hence
	$$
	S_G(\{t\}\times B)=S_G(t-r,\cdot)(S_G(\{r\}\times B)).
	$$
	Use that the closure of $S_G(\{r\}\times B)$ in $C^1$ is compact and belongs to $X_G$ (since $X_G$ is a closed subset of $C^1$), and that the map $S_G(t-r,\cdot)$ is continuous, and conclude that the closure of $S_G(\{t\}\times B)$ in $C^1$ is contained in a compact subset of $X_G\subset C^1$.
	
	\medskip
	
	4.   Point dissipativity from Corollary \ref{cor 1} in combination with condition (i) from the previous Part 3 of the proof  yield existence of a global attractor, see the remarks preceding Theorem~\ref{thm 4-1}.
	
	\medskip
	
	5. Finally we show  that for all $\phi\in A_G$ and $t\in[-r,0]$ we have $\phi(t)>0$. Let $\phi\in A_G$ be given. There exists a solution $x:\mathbb{R}\to\mathbb{R}$ of Eq.~\eqref{eq:single-1} with $x_0=\phi$ and all segments $x_s$, $s\in\mathbb{R}$, in the compact set $A_G$. It suffices to deduce $x(t)>0$ for all $t\in\mathbb{R}$. Proof of this: First, observe that $x$ is bounded. Assume $x(t)\le0$ for some $t\in\mathbb{R}$. In case $x(t)=0$ Eq.~\eqref{eq:single-1} yields $x'(t)>0-\gamma\,x(t)=0$. It follows that $x(u)<0$ for some $u\in(-\infty,t)$. In case $x(t)<0$ set $u=t$. For every $s\le u$ the variation-of-constants formula  yields
	$$
	x(u)\ge x(s)e^{-\gamma(u-s)}+0,
	$$
	hence
	$$
	x(s)\le x(u)e^{\gamma(u-s)}\qquad(\to-\infty\quad\text{as}\quad s\to-\infty),
	$$
	and we arrive at a contradiction to the boundedness of $x$.
\end{proof}

\section{Linearization}\label{sec:linearization}

We now turn to linearization. At a point $\phi\in X_G$ the tangent space of the manifold $X_G$ is given by
$$
T_{\phi}X_G=\{\chi\in C^1:\chi'(0)=DG(\phi)\chi\}.
$$
For $\phi\in\Omega_{G,t}$ the derivative
$$
DS_{G,t}(\phi):T_{\phi}X_G\to T_{S_{G,t}(\phi)}X_G
$$
is given by
$$
DS_{G,t}(\phi)\chi=w^{\phi,\chi}_t
$$
where $w^{\phi,\chi}=w$ is the unique continuously differentiable solution $[-r,t_{\phi})\to\R$ of the IVP
\bea
w'(t) & = & DG(S_G(t,\phi))w_t\quad\text{for}\quad t>0,
\label{eq:varsyst1}\\
w_0 & = & \chi\in T_{\phi}X_G.
\label{eq:varsyst2}
\eea
Equation \eqref{eq:varsyst1}  is called the linear variational equation along the solution $x^{\phi}$ or  along the flowline
$$
S_G(\cdot,\phi):[0,t_{\phi})\ni t\mapsto S_G(t,\phi)\in X_G.
$$

Suppose that $\phi\in X_G$ is a stationary point of the semiflow $S_G$, that is, $t_{\phi}=\infty$ and $S_G(t,\phi)=\phi$ for all $t\ge0$. Then $\phi$ is constant
since for every $t\ge0$, $x^{\phi}(t)=x^{\phi}_t(0)=S_G(t,\phi)(0)=\phi(0)$, hence
\[ \phi(s)=S_G(r,\phi)(s)=x^{\phi}_r(s)=x^{\phi}(r+s)=\phi(0)\quad \mbox{  for each } s\in[-r,0]. \]

Let $\xi\in\R$ denote the value of $\phi$. Then $x^{\phi}(t)=\xi$ for all $t\ge-r$. To obtain the linear variational equation along this constant solution in terms of $g,v,a,\beta,\mu,\gamma$ we compute the values
$DG(\phi)\hat{\phi},\,\,\hat{\phi}\in C^1$ from the formula \eqref{eq:DG}. Using Proposition \ref{prop:1}
(for the values and for the derivatives of the map
$\delta$ in case of constant arguments) and the calculation of $DE(\phi)\hat{\phi}$ right after Proposition  \ref{prop:1}
(in case of constant arguments), we find
\begin{align}
DG(\phi)\hat{\phi} & = -\gamma \hat{\phi}(0)
+\beta\Bigg\{\frac{1}{[v(\xi)]^2}\left[v'(\xi)\hat{\phi}(0)v(\xi)
-v(\xi)v'(\xi)\hat{\phi}(-a/v(\xi))\right]e^{-\mu a/v(\xi)}g(\xi)\notag\\
& \qquad +\left[-\mu e^{-\mu a/v(\xi)}
\left(-\frac{v'(\xi)}{v(\xi)}\int^0_{-a/v(\xi)}\hat{\phi}(s)ds\right) g(\xi)+ e^{-\mu a/v(\xi)}g'(\xi)\hat{\phi}(-a/v(\xi))\right]\!\Bigg\}
\notag\\
& =  -\gamma \hat{\phi}(0)+A \hat{\phi}(0)
+\mu A \int^0_{-a/v(\xi)}\hat{\phi}(s)ds
+\left(\beta e^{-\mu a/v(\xi)}g'(\xi)-A\right)\hat{\phi}(-a/v(\xi))
\label{eq:DG-1-equi}
\end{align}
with
\be \label{eq:A}
A=\beta\frac{v'(\xi)}{v(\xi)}e^{-\mu a/v(\xi)}g(\xi).
\ee
The variational equation \eqref{eq:varsyst1}
along the constant solution $x^{\phi}:[-r,\infty)\ni t\mapsto\xi\in\R$  becomes
\be
w'(t) =  -\gamma w(t)+A w(t)
+ \left(\beta e^{-\mu a/v(\xi)}g'(\xi)-A\right)w(t-a/v(\xi))
 +\mu A \int^0_{-a/v(\xi)}w(t+s)ds
\label{eq:varsyst1stat}
\ee

The derivatives $T_{G,t}=DS_{G,t}(\phi)$, $t\ge0$, form a strongly continuous semigroup on the closed hyperplane $T_{\phi}X_G$ of the space $C^1$. This semigroup is given by $T_{G,t}\chi=T_{G,e,t}\chi$, where $T_{G,e,t}:C\to C$ is the solution operator associated with the {\it classical} initial value problem
\bea
w'(t) & = & D_eG(\phi)w_t\quad\text{for}\quad t>0,
\label{eq:ivponC1}\\
w_0 & = & \chi\in C,
\label{eq:ivponC2}
\eea
with the continuous linear functional $L:C\to\R$, $L=D_eG(\phi)$, as in the monographs, e.g., \cite{HVL,DvGVLW}. Recall  that by definition the solution $w:[-r,\infty)\to\R$ of the initial value problem
\eqref{eq:ivponC1},\eqref{eq:ivponC2} is only continuous, with the restriction
$w|_{[0,\infty)}$ continuously differentiable and satisfying Eq. \eqref{eq:ivponC1}.

\medskip

The extended derivative $D_eG(\phi):C\to\R$ in the case just considered (where $\phi\in X_G$ is a stationary point with value $\xi$) is given by \eqref{eq:DG-1-equi}, now for $\hat{\phi}\in C$. Therefore the equation \eqref{eq:ivponC1}  coincides with Eq. \eqref{eq:varsyst1stat}, considered for continuous maps $[-r,\infty)\to\R$ whose restrictions to $[0,\infty)$ are differentiable and satisfy Eq. \eqref{eq:varsyst1stat} for all $t\ge0$.

The stability of the zero solution of the linear variational equation
\eqref{eq:varsyst1stat} is determined by the spectrum $\sigma\subset\C$ of the generator of the semigroup $(T_{G,t})_{t\ge0}$ on $T_{\phi}X_G\subset C^1$, which coincides with the spectrum $\sigma_e\subset\C$ of the generator of the semigroup on $C$.

The spectrum $\sigma_e$ consists of the solutions $\lambda\in\C$ of the {\it characteristic equation}, which is obtained from the ansatz $\R\ni t\mapsto e^{\lambda\,t}\in\C$  for a complex-valued  solution of Eq.
	\eqref{eq:varsyst1stat} as follows.
	We write down Eq. \eqref{eq:varsyst1stat} for $w:t\mapsto e^{\lambda t}$, multiply by $e^{-\lambda t}$,  and obtain the equation
$$
\lambda=-\gamma+A + \left(\beta\,e^{-\mu\,a/v(\xi)}g'(\xi)-A\right)e^{-\lambda\,a/v(\xi)}+\mu\,A\,\int^0_{-a/v(\xi)}e^{\lambda\, s}ds,
$$
or equivalently,
\be
0=\lambda+\gamma-A- \left(\beta\,e^{-\mu\,a/v(\xi)}g'(\xi)-A\right)e^{-\lambda\,a/v(\xi)}-\mu\,A\,\int^0_{-a/v(\xi)}e^{\lambda\,s}ds.\label{eq:char}
\ee

To investigate the stability of steady states in different special cases, we make explicit various forms of \eqref{eq:char}. 

If we study the case with a constant delay, i.e. $v$ is a constant function with $v(\xi)=v^-=v^+$ in \eqref{eq:vghill} and \eqref{eq:vpwconst}, then $v'=0$ and hence $A=0$. The characteristic equation reduces to
\begin{equation}
	\lambda=-\gamma+ \beta\,e^{-\mu\,a/v}g'(\xi)e^{-\lambda\,a/v}.
	\label{eq:char_A0g'}
\end{equation}

Similarly, if we study the case with $g$ constant by setting $g^-=g^+$ in \eqref{eq:vghill} and \eqref{eq:gpwconst}, then $g'(\xi)=0$ and the characteristic equation is of the form
\be
\lambda =-\gamma+A -Ae^{-\lambda a/v(\xi)}+\mu A\int^0_{-a/v(\xi)}e^{\lambda s}ds
= -\gamma + A(1- e^{-\lambda a/v(\xi)})\left(1 + \frac{\mu}{\lambda}\right).
	\label{eq:char_Ag'0}
\ee

When we study the limiting case where both $g$ and $v$ are piecewise constant as defined in \eqref{eq:gpwconst} and \eqref{eq:vpwconst}, then there are intervals where $A=v'(\xi)=0$. For a steady state $\xi$ in such an interval, the characteristic equation simplifies to
\begin{equation} \label{eq:char_A0g'0}
\lambda = -\gamma,
\end{equation}
where there is a unique negative real characteristic root and the steady state is stable.

\section{Dynamics with one Hill function}
\label{sec:onehill}

At a steady state $x(t)\equiv\xi\in\R$, equation \eqref{eq:thres} reduces to
\be \label{eq:steadydelay}
\tau(\xi)=\frac{a}{v(\xi)},
\ee
so the delay is still state-dependent. 
Equation \eqref{eq:basic} becomes
\be \label{eq:h}
0  = h(\xi):=\beta e^{-\mu\tau(\xi)}g(\xi) - \gamma\xi
\ee
at a steady state.
With $g(x)$ and $v(x)$ defined by \eqref{eq:vghill}
we have
that
$$\beta g^-\geq h(0)=\beta e^{-\mu a/v^-}g^-\geq 0,
\qquad h(x)
\leq\beta e^{-\mu a/v_U}g_U-\gamma x
\leq\beta g_U-\gamma x.$$
Consequently any steady state satisfies $\xi \in  [0,\beta g_U/\gamma]$, and
there is always at least one such steady state.

The steady states occur at the zeros of $h(\xi)$, which from \eqref{eq:h} occur at the intersections of
$\beta e^{-\mu\tau(\xi)}g(\xi)$ and $\gamma\xi$. Thus the number of steady states depends on the behavior of the term $e^{-\mu\tau(\xi)}g(\xi)$ in \eqref{eq:h}.

We begin by considering the simplified setting where either $g$ or $v$ is a constant function, while the other one is either  a Hill function defined  in \eqref{eq:vghill},
or a piecewise constant function \eqref{eq:gpwconst} or \eqref{eq:vpwconst}.
This leads to the four cases discussed below.

With $v^\pm:=v^-=v^+$ in \eqref{eq:vghill} and \eqref{eq:vpwconst} 
we obtain $v(\xi)=v^\pm$ independent of the value of $\xi$ and consequently
a constant delay
$\tau=a/v^\pm$, and equation \eqref{eq:basic} reduces to
\be \label{eq:constbasic}
x'(t)  = \beta e^{-\mu\tau}g(x (t -\tau)) - \gamma x(t),
\ee
which is a constant delay DDE with a monotone feedback nonlinearity. We consider this case first for decreasing and increasing $g$ in Sections~\ref{sec:gdownvconst} and~\ref{sec:gupvconst}, respectively.
The results which we obtain correspond to semi-local properties of equilibria which are familiar for the constant delay equation \eqref{eq:constbasic}
with a sufficiently smooth nonlinearity $g$.  If $g$ is strictly decreasing then there is a single equilibrium solution, from which periodic solutions bifurcate off in a sequence of Hopf bifurcations when a parameter multiplying $g$ grows to infinity. These periodic solutions can be distinguished by their oscillation frequencies. Stable periodic orbits occur only at the lowest possible frequency, for so-called \emph{slowly oscillating periodic solutions} \cite{sausage}. If $g$ is increasing then multiple equilibria are possible. Hopf bifurcations from these equilibria yield periodic orbits which are all unstable.
For more detailed information about the numerous local and global results on solutions  of autonomous delay differential equations which were achieved during the past decades see, for example, the survey \cite{HOW14}.

Sections~\ref{sec:gdownvconst} and~\ref{sec:gupvconst} contain results which are not covered by the existing theory.  We consider the limiting cases as the smooth function $g$ approaches a piecewise constant function, as these help to understand better the changes which occur in the dynamics when certain parameters grow to infinity. This also affords us the opportunity to present, in the simpler setting of constant delays, the methods we will subsequently use when the delays  are state-dependent in Sections~\ref{sec:gconstvdown} and~\ref{sec:gconstvup}.

With $g^-=g^+$ in \eqref{eq:vghill} and \eqref{eq:gpwconst}, $g$ is a constant function, while $v^-\ne v^+$ results in a state-dependent delay. We consider the two cases of $v$ decreasing or increasing in
Sections~\ref{sec:gconstvdown} and~\ref{sec:gconstvup}, respectively.
The general case
where $g$ and $v$ are both non-constant is studied in Section~\ref{sec:twohill}.

\subsection{Constant delay with decreasing $g$ $(g\downarrow,v\leftrightarrow)$}
\label{sec:gdownvconst}

In this section we  study \eqref{eq:basic}-\eqref{eq:thres} with a constant delay $\tau$ and decreasing $g$.
The DDE reduces to \eqref{eq:constbasic} with $\tau=a/v^\pm$ on
setting $v^\pm=v^-=v^+$ in \eqref{eq:vghill} or \eqref{eq:vpwconst}.
We require $g^->g^+$ in \eqref{eq:vghill} or \eqref{eq:gpwconst} to ensure that $g$ is decreasing.

\begin{figure}[htp!]
	\centering
	\begin{tabular}{cc}
		\begin{tikzpicture}[scale = 1.8]
		\tikzstyle{line} = [-,very thick]
		\tikzstyle{dotted line}=[.]
		\tikzstyle{arrow} = [->,line width = .4mm]
		\tikzstyle{unstable} = [red]
		\tikzstyle{stable} = [blue]
		\tikzstyle{pt} = [circle,draw=black,fill = black,minimum size = 1pt];
		\tikzstyle{bifpt} = [circle,draw=red,fill = red,minimum size = 1pt];
		\tikzstyle{upt} = [circle,draw=black,minimum size = 1pt];
		\def\arrlen{.3}
		
		%draw function
		\draw[line] (0,2) to (1.2,2);
		\draw[line] (1.2,2) to (1.2,1);
		\draw[line] (1.2,1) to (2.6,1);
		\draw[dotted line] (0,1) to (1.2,1);
		\draw[dotted line] (1.2,0) to (1.2,2);
		
		\draw[line] (0,0.2) to (0.5,2.5);
		\draw[line] (0,0.1) to (1,2.5);
		\draw[line] (0,0) to (1.5,2.5);
		\draw[line] (0,-0.08) to (1.83,2.5);
		%\draw[line] (0,-0.125) to (2,2.5);
		\draw[line] (0,-0.165) to (2.13,2.5);
		\draw[line] (0,-0.25) to (2.55,2.5);
		\draw[line] (0.1,-0.25) to (2.6,2);
		\draw[line] (0.2,-0.25) to (2.6,1.4);
		
		\node at (1.2,-.2) {$\theta_g$};
		
		% equilibrium values
		\node at (-.6,2) {$\beta e^{-\mu\tau}g^-$};
		\node at (-.6,1) {$\beta e^{-\mu\tau}g^+$};
		
		\node[bifpt] at (1.2,1) (8) [] {};
		\node[bifpt] at (1.2,2) (8) [] {};
		\node[upt] at (1.2,1.62) (8) [] {};
		\node[upt] at (1.2,1.33) (8) [] {};
		\node[pt] at (1.5,1) (8) [] {};
		\node[pt] at (2,1) (8) [] {};
		\node[pt] at (0.8,2) (12) [] {};
		\node[pt] at (0.4,2) (12) [] {};
		
		\end{tikzpicture}
		
	\end{tabular}
	\caption{Steady states of \eqref{eq:basic}, given by \eqref{eq:h}, occur at the intersections of $\xi\mapsto\beta e^{-\mu\tau}g(\xi)$ and $\xi\mapsto\gamma\xi$.  These are illustrated for various $\gamma$ in the limiting case of \eqref{eq:gpwconst} and \eqref{eq:vpwconst} with 
$v^\pm= v^-=v^+$, so the delay $\tau=a/v^\pm$ is constant, and $g^->g^+$, so $g$ is monotonically decreasing:  $(g \downarrow, v \leftrightarrow)$.}
	\label{fig:1}
\end{figure}
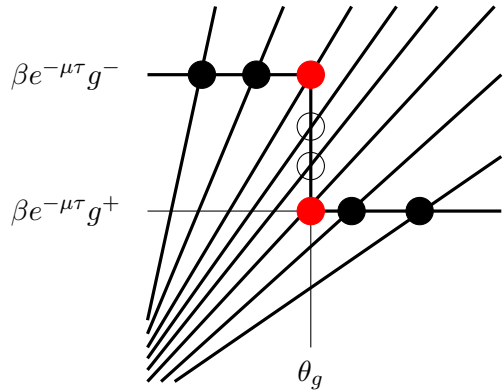

As noted above, if $h(0) \geq 0$ with $g$ decreasing we have $h'(\xi) = \beta e^{-\mu\tau} g'(\xi)-\gamma < 0$.
Therefore, with smooth $g$ defined in \eqref{eq:vghill},
for any fixed values of the parameters there is always exactly one steady state $\xi$. We note for later use that rearranging \eqref{eq:h}, for any fixed value of $\xi>0$, there is also a unique value of $\gamma$ for which $\xi$ is a steady state.

Figure~\ref{fig:1} illustrates the uniqueness of the steady state with a piecewise constant $g$ defined by \eqref{eq:gpwconst} as $\gamma$ varies. At the `corners', the steady state $\xi=\theta_g$ satisfies \eqref{eq:h} with $g=g^+$ and $g=g^-$ respectively, which gives rise to
\begin{equation}
	\gamma_1=\frac{\beta e^{-\mu\tau} g^+}{\theta_g} \quad \text{and} \quad \gamma_2=\frac{\beta e^{-\mu\tau} g^-}{\theta_g}.
	\label{eq:corners_vconst}
\end{equation}
We refer to the steady state where $e^{-\mu\tau}g(\xi)$ and $\gamma\xi$ intersect on the vertical line segment of the curve as a \textit{singular steady state}. The singular steady state exists for $\gamma\in(\gamma_1,\gamma_2)$, where the bounds on $\gamma$ are given by \eqref{eq:corners_vconst}.

Next, we consider the stability of the steady state for the piecewise constant function $g$ defined in \eqref{eq:gpwconst}.
If $\gamma \in (0, \gamma_1 ) \cup (\gamma_2, +\infty)$, the intersection occurs on the horizontal line segments.
Then
$A=g'(\xi)=0$ and the characteristic equation \eqref{eq:char} reduces to \eqref{eq:char_A0g'0}, and hence the
steady state must be stable.
This implies that in the limiting case only a singular steady state may be unstable. However, the characteristic equation is not defined in this situation
and we therefore consider the smooth nonlinearity $g$ defined in \eqref{eq:vghill} for large $n$. As a consequence of the choices made in this section, $A=0$ and  $g'(\xi)<0$ with $n>0$.

It follows immediately from \eqref{eq:char_A0g'} that 
there are no real non-negative characteristic values when $g'(\xi)<0$. So,
suppose $\lambda=\alpha\pm i\omega$, $\omega > 0$ is a solution of the characteristic equation \eqref{eq:char_A0g'}.
Then for $\lambda= \alpha+i\omega$ we have
\begin{displaymath}
\alpha+i\omega = -\gamma+\beta e^{-\mu\tau}g'(\xi)e^{-(\alpha+i\omega)\tau}
	= -\gamma+\beta e^{-\mu\tau}g'(\xi)e^{-\alpha\tau}\left[\cos(\omega\tau)-i\sin(\omega\tau)\right]. 
\end{displaymath}
Equating real and imaginary parts respectively, we obtain
\begin{gather} \label{eq:real}
\alpha+\gamma = \beta e^{-\mu\tau}g'(\xi)e^{-\alpha\tau}\cos(\omega\tau),\\
\omega = -\beta e^{-\mu\tau}g'(\xi)e^{-\alpha\tau}\sin(\omega\tau).
\label{eq:imag}
\end{gather}
Now suppose that $\lambda=\alpha+ i\omega$ is a root of the characteristic equation with $\alpha\geq0$. Then
\begin{displaymath}
\gamma\leq\alpha+\gamma\leq\beta e^{-\mu\tau}e^{-\alpha\tau}|g'(\xi)\cos(\omega\tau)|\leq
\beta e^{-\mu\tau}|g'(\xi)|.
\end{displaymath}
Consequently, if
\be \label{eq:ssstable}
\gamma>\beta e^{-\mu\tau}|g'(\xi)|
\ee
then $\alpha<0$ for all the characteristic values and the steady state is asymptotically stable.
Notice, that since \eqref{eq:h} is also satisfied at a steady state, equation \eqref{eq:ssstable}
is equivalent to
\be \label{eq:xigdashgone}
\left|\frac{\xi g'(\xi)}{g(\xi)}\right|<1.
\ee
We arrive at a sufficient condition for stability of the unique steady state: when \eqref{eq:xigdashgone} is satisfied at a steady state, the steady state is asymptotically stable.

The function $\xi \mapsto  \xi g'(\xi)/g(\xi)$ 
plays a central role in the analysis, so
we study its properties here. Differentiating $g$ in \eqref{eq:vghill} gives
\be \label{eq:gdashxi}
g'(\xi)= \frac{n\theta_g^n(g^{+}-g^{-}) \xi^{n-1}}{(\theta_g^n+\xi^n)^2}
\ee
and hence
\be \label{eq:xigdashghill}
\frac{\xi g'(\xi)}{g(\xi)}=\frac{n(g^+-g^-)(\xi/\theta_g)^n}{(1+(\xi/\theta_g)^n)(g^-+g^+(\xi/\theta_g)^n)}.
\ee
Now, let
\be \label{eq:fxpr}
f(x,p,r)=\frac{p(1-r)x^p}{(1+x^p)(r+x^p)}
\ee
and note from \eqref{eq:xigdashghill} that $f(\xi/\theta_g,n,g^-/g^+)=\xi g'(\xi)/g(\xi)$.
The following proposition will be essential in this and following sections.

\begin{prop} \label{prop:fxpr}
Let $f(x,p,r):\R_{>0}^3\to\R$ be defined by \eqref{eq:fxpr} then
\begin{enumerate}
\item
For fixed $p>0$, $r>0$,
$$\lim_{x\to0}f(x,p,r)=\lim_{x\to\infty}f(x,p,r)=0.$$
\item
For fixed $r>0$ with $r\neq1$ we have $\lim_{p\to\infty}|f(1,p,r)|=\infty$, with
$$f(1,p,r)=\frac{p(1-r)}{2(1+r)},$$
and $|f(1,p,r)|\leq p/2$.
\item
For fixed $r>0$ and fixed $x>0$ with $x\ne1$,
$$\lim_{p\to\infty}f(x,p,r)=0.$$
Moreover, for fixed $r>0$ and any fixed $x^-\in(0,1)$ and $x^+>1$
$$\lim_{p\to\infty}\Bigl(\max_{x\in[0,x^-]}|f(x,p,r)|\Bigr)=
\lim_{p\to\infty}\Bigl(\max_{x\in[x^+,\infty)}|f(x,p,r)|\Bigr)=0.$$
\item
For fixed $r>0$ with $r\ne1$ and fixed $p>0$, regarding $|f(x,p,r)|$ as a function of $x$ only,
$|f(x,p,r)|$ has a unique global maximum at $x=r^{1/2p}$ with
$$f(r^{1/2p},p,r)=\frac{p(1-r^{1/2})}{1+r^{1/2}},$$
and $|f(r^{1/2p},p,r)|\leq p$.
\end{enumerate}
\end{prop}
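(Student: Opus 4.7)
The plan is to reduce the problem to a single-variable rational function via the substitution $y=x^p$, which is a smooth bijection of $(0,\infty)$ onto itself for any fixed $p>0$. In this variable,
\[ f(x,p,r) = F(y,p,r) := \frac{p(1-r)\,y}{(1+y)(r+y)}. \]
All four parts then become routine statements about the behaviour of $F$ as $y$ or $p$ varies.

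For part (1) I would simply evaluate the limits $y\to 0$ and $y\to\infty$ in $F$ with $p,r$ fixed. In the first case the numerator tends to $0$ while the denominator tends to $r>0$; in the second case $F\sim p(1-r)/y\to 0$. Part (2) is a direct substitution $x=1$ (so $y=1$) giving $F(1,p,r)=p(1-r)/[2(1+r)]$, whose modulus tends to infinity with $p$ when $r\ne 1$; the bound $|f(1,p,r)|\le p/2$ follows from the elementary inequality $|1-r|\le 1+r$, valid for $r>0$.

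For part (3) with $x$ fixed and $x\ne 1$ I would distinguish the two regimes. When $x<1$ the quantity $y=x^p$ decays exponentially in $p$, so $py\to 0$ and hence $F\to 0$. When $x>1$ the denominator grows like $y^2$, and $F$ is dominated by $p|1-r|/y$, which again tends to $0$ as $y=x^p\to\infty$. To obtain the uniform statements I would use the explicit bounds
\[ |F(y,p,r)| \le \frac{p|1-r|\,(x^-)^p}{r} \text{ on } [0,x^-], \qquad |F(y,p,r)| \le \frac{p|1-r|}{(x^+)^p} \text{ on } [x^+,\infty), \]
each of which tends to zero as $p\to\infty$.

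The main calculation sits in part (4). Differentiating $F$ in $y$ gives
\[ \frac{\partial F}{\partial y} = \frac{p(1-r)\,(r-y^2)}{\bigl[(1+y)(r+y)\bigr]^{2}}, \]
so the unique positive critical point is $y=\sqrt{r}$, equivalently $x=r^{1/2p}$. Because $F$ does not change sign on $(0,\infty)$ (its sign equals that of $1-r$) and vanishes at both endpoints by part (1), this critical point must realise the unique global extremum of $F$, and therefore the unique global maximum of $|f|$ as a function of $x$. Substituting $y=\sqrt{r}$ and using the factorisation $1-r=(1-\sqrt{r})(1+\sqrt{r})$ yields the stated value $p(1-\sqrt{r})/(1+\sqrt{r})$, and the bound $|f(r^{1/2p},p,r)|\le p$ follows from $|1-\sqrt{r}|\le 1+\sqrt{r}$. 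No step is genuinely difficult; the only mild subtlety is in (4), where one must invoke part (1) to rule out escape of the maximum to the boundary and so identify the interior critical point as the global extremum.
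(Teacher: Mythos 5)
Your proof is correct, and it is exactly the ``elementary differentiation and algebra'' that the paper's one-line proof alludes to, with the convenient substitution $y=x^p$ making each step transparent; the key computations (the value at $x=1$, the derivative $\partial F/\partial y = p(1-r)(r-y^2)/[(1+y)(r+y)]^2$, the critical point $y=\sqrt r$, and the exponential decay bounds on $[0,x^-]$ and $[x^+,\infty)$) all check out. Nothing further is needed.
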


\begin{proof}
Follows using elementary differentiation and algebra.
\end{proof}

\sloppy{Proposition~\ref{prop:fxpr} shows that $\xi\mapsto\xi g'(\xi)/g(\xi)$ is a unimodal function which approaches zero uniformly as $n\to0$, but which resembles a delta-function
with its peak at $\xi=\theta_g(g^-/g^+)^{1/2n}\to\theta_g$
as $n\to\infty$.}

Now \eqref{eq:xigdashgone} and Proposition~\ref{prop:fxpr}(1) show that if $\xi\ll\theta_g$ or
for $\xi\gg\theta_g$ the steady state must be asymptotically stable.
From \eqref{eq:xigdashgone}, a necessary condition for the steady state to be unstable is
that $|\xi g'(\xi)/g(\xi)|\ge1$, but from
Proposition~\ref{prop:fxpr}(4) we have
\begin{displaymath}
\left|\frac{\xi g'(\xi)}{g(\xi)}\right|\leq|f(r^{1/2n},n,r)|=\frac{n|1-r^{1/2}|}{1+r^{1/2}}
\end{displaymath}
where $r=r_g=g^-/g^+$. Thus a necessary condition for the steady state to be unstable is that
$|f(r^{1/2n},n,r)|\ge1$. For $g$ decreasing, the value  $r_g=g^-/g^+>1$, and this necessary condition for instability becomes $n>1$ and
\be \label{eq:ratnec}
r_g=\frac{g^-}{g^+}\geq\left(1+\frac{2}{n-1}\right)^2.
\ee

Since the steady state is unique, a steady state bifurcation cannot occur, and we therefore investigate the existence of Hopf bifurcations.  At a Hopf bifurcation, $\lambda=\pm i\omega$,
and \eqref{eq:real} and \eqref{eq:imag}
reduce to
\begin{gather} \label{eq:realhopf}
\gamma = \beta e^{-\mu\tau}g'(\xi)\cos(\omega\tau),\\
\omega = -\beta e^{-\mu\tau}g'(\xi)\sin(\omega\tau).
\label{eq:imaghopf}
\end{gather}
As $\beta, \gamma, \omega, e^{-\mu\tau} > 0$ and $g'(\xi) < 0$, we must have
$\cos(\omega\tau) < 0 < \sin(\omega\tau)$, and hence
$\omega\tau \in (\pi/2+2k\pi, \pi+2k\pi)$ for $k \in \mathbb{N}$. We denote by $\omega_k$ any solution of  \eqref{eq:realhopf} and \eqref{eq:imaghopf} with the property  that
$\omega_k\tau \in (\pi/2+2k\pi, \pi+2k\pi)$ for $k \in \mathbb{N}$.

Note that at a Hopf bifurcation, in addition to \eqref{eq:realhopf} and \eqref{eq:imaghopf}, equation \eqref{eq:h} must also be satisfied.
These three equations  can be rearranged as
\begin{gather}
\gamma = \frac{\beta}{\xi} e^{-\mu\tau}g(\xi),  \label{eq:gam} \\
\omega\cot(\omega\tau)=-\gamma, \label{eq:omega}  \\
|g'(\xi)| =\frac{\sqrt{\gamma^2+\omega^2}}{\beta e^{-\mu\tau}}.  \label{eq:gdash}
\end{gather}

We will consider these equations sequentially to show that $\gamma$ and $\omega$ can be regarded as functions of $\xi$, resulting in a single equation to solve for $\xi$.
First  note that for a smooth nonlinearity $g(\xi)$, and for arbitrary  $\xi>0$, equation \eqref{eq:gam} gives a unique value
of $\gamma= \gamma(\xi)>0$.
Moreover, since $g$ is monotonically decreasing, we also obtain that
$\xi\mapsto\gamma(\xi)$ is monotonically decreasing.
Because of the remarks following Proposition~\ref{prop:fxpr} we will be particularly interested in the cases where $\xi=\theta_g$ and $\xi=\theta_g(g^-/g^+)^{1/2n}$.
Using \eqref{eq:gam}, \eqref{eq:corners_vconst} and \eqref{eq:vghill}
it is easy to see that $\gamma(\theta_g)=(\gamma_1+\gamma_2)/2$ and
$\gamma(\theta_g(g^-/g^+)^{1/2n})=\sqrt{\gamma_1\gamma_2}/(g^-/g^+)^{1/2n}$.
This shows that the $\gamma$ value corresponding to the extremum of $\xi g'(\xi)/g(\xi)$ converges to the geometric mean of $\gamma_1$ and $\gamma_2$ as $n\to\infty$, while the $\gamma$ value corresponding to $\xi=\theta_g$ is equal to the arithmetic mean of $\gamma_1$ and $\gamma_2$ independent of the value of $n$.

Next, for a given $\gamma>0$, it follows from the properties of the cotangent function that the equation
\eqref{eq:omega} gives a sequence of solutions $\{\omega_k\}_{k\geq0}$
with $\omega_k\tau \in (\pi/2+2k\pi, \pi+2k\pi)$ for $k=0,1,2,\ldots$, with each
$\omega_k$ uniquely defined as a function of $\gamma$.
Recall that $\tau$ is constant because we are considering $v^-=v^+=v$.  Thus
the values  $\omega_k \to \infty$ as $k \to \infty$.  It follows  that solutions  $\omega_k$ of the equation \eqref{eq:omega}
for fixed $\gamma$ must satisfy 
$\cot(\omega_k\tau)\to0$ and hence $\omega_k\tau\to\pi/2+2k\pi$ as $k\to\infty$.

Since $g$ is decreasing, the last equation  \eqref{eq:gdash} then becomes
\be \label{eq:gdash2}
g'(\xi) =-\frac{\sqrt{\gamma^2+\omega_k^2}}{\beta e^{-\mu\tau}}.
\ee
Notice here that through \eqref{eq:gam} we have $\gamma$ as a function of $\xi$, while from \eqref{eq:omega} we can regard each $\omega_k$ as an implicitly defined function of $\gamma$ and hence of $\xi$. Thus it remains only to solve
\eqref{eq:gdash2} for $\xi$, or more precisely,  we need to solve for a
$\xi_k$ for every $\omega_k$.

Combining \eqref{eq:realhopf} and \eqref{eq:gam} we see that at a Hopf bifurcation
\be \label{eq:xigdashghopf}
\frac{\xi g'(\xi)}{g(\xi)} \cos(\omega\tau)=1.
\ee
Thus, the function $\xi\mapsto\xi g'(\xi)/g(\xi)$ also plays a role for Hopf bifurcations.
Consequently, instead of solving \eqref{eq:gdash2} directly for $\xi$, we 
proceed by combining \eqref{eq:gam} and \eqref{eq:gdash2} which leads to
$$\beta e^{-\mu\tau}=\frac{\gamma\xi}{g(\xi)}=-\frac{\sqrt{\gamma^2+\omega_k^2}}{g'(\xi)},$$
and hence
\be \label{eq:xigdashg}
\frac{\xi g'(\xi)}{g(\xi)}=-\sqrt{1+(\omega_k/\gamma)^2}.
\ee
Combining \eqref{eq:xigdashg} and \eqref{eq:xigdashghill}, it remains to find $\xi$ that solves
\be \label{eq:xigdashgeq}
\frac{n(g^+-g^-)(\xi/\theta_g)^n}{(1+(\xi/\theta_g)^n)(g^-+g^+(\xi/\theta_g)^n)}
=-\sqrt{1+(\omega_k/\gamma)^2}.
\ee

We already considered the behaviour of the left-hand side of \eqref{eq:xigdashgeq} in Proposition~\ref{prop:fxpr}. The behaviour of the right-hand side is
considered in the following proposition.

\begin{prop} \label{prop:sxi}
Let $s_k(\xi)=\sqrt{1+(\omega_k/\gamma)^2}$ where $\gamma=\gamma(\xi)$ is defined by \eqref{eq:gam} and $\omega_k$ satisfies $\omega_k\tau\in(\pi/2+2k\pi,\pi+2k\pi)$ and is a function of $\gamma$ and hence of $\xi$ through \eqref{eq:omega}. Then
\begin{enumerate}
\item
$\left( 1+\left(\frac{\pi(2k+1/2)}{\beta\tau e^{-\mu\tau}g_U}\right)^2\xi^2 \right)^{\frac12}
=m_k(\xi) \leq s_k(\xi) \leq M_k(\xi)=
\left( 1+\left(\frac{\pi(2k+1)}{\beta\tau e^{-\mu\tau}g_0}\right)^2\xi^2 \right)^{\frac12}.$
\item
$1 < s_k(\xi) < 1+\frac{\pi(2k+1)}{\beta\tau e^{-\mu\tau}g_0}\xi$ for all $\xi>0$.
\end{enumerate}
\end{prop}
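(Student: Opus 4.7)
The plan is to bound $\omega_k/\gamma$ directly from the defining inequalities $\omega_k\tau\in(\pi/2+2k\pi,\,\pi+2k\pi)$ together with the closed-form expression for $\gamma=\gamma(\xi)$ in \eqref{eq:gam}, and then to convert those bounds on $\omega_k/\gamma$ into the claimed bounds on $s_k(\xi)=\sqrt{1+(\omega_k/\gamma)^2}$. The task is essentially algebraic; no deeper feature of the cotangent equation \eqref{eq:omega} is needed beyond the interval it places $\omega_k\tau$ in.

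First I would use \eqref{eq:gam} to write
\begin{equation*}
\frac{\omega_k}{\gamma}=\frac{\omega_k\,\xi}{\beta\,e^{-\mu\tau}\,g(\xi)}.
\end{equation*}
Combining this with the one-sided inequalities $\omega_k>\pi(2k+1/2)/\tau$ and $g(\xi)\le g_U$ produces the lower bound
\begin{equation*}
\frac{\omega_k}{\gamma}\ge\frac{\pi(2k+1/2)}{\beta\tau\,e^{-\mu\tau}\,g_U}\,\xi,
\end{equation*}
while the opposite estimates $\omega_k<\pi(2k+1)/\tau$ and $g(\xi)\ge g_0$ give the upper bound
\begin{equation*}
\frac{\omega_k}{\gamma}\le\frac{\pi(2k+1)}{\beta\tau\,e^{-\mu\tau}\,g_0}\,\xi.
\end{equation*}
Squaring, adding $1$, and taking square roots yields the two inequalities $m_k(\xi)\le s_k(\xi)\le M_k(\xi)$ of part (1). (The hypotheses $g_0\le g(\xi)\le g_U$ come from \eqref{eq:gbounds}, and are in force throughout Section~\ref{sec:onehill}.)

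For part (2), the strict lower bound $s_k(\xi)>1$ is immediate from $\omega_k>0$ and hence $(\omega_k/\gamma)^2>0$. For the strict upper bound I would use the elementary inequality $\sqrt{1+y^2}<1+y$ valid for every $y>0$ (which follows from $(1+y)^2=1+2y+y^2>1+y^2$), applied to $y=\omega_k/\gamma>0$, combined with the strict upper bound on $\omega_k/\gamma$ derived above (note that $\omega_k<\pi(2k+1)/\tau$ strictly, since $\omega_k\tau$ lies in an open interval). This gives
\begin{equation*}
s_k(\xi)<1+\frac{\omega_k}{\gamma}<1+\frac{\pi(2k+1)}{\beta\tau\,e^{-\mu\tau}\,g_0}\,\xi,
\end{equation*}
which is the required estimate.

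There is no serious obstacle; the only thing one has to be careful about is distinguishing strict from non-strict inequalities. In part (1) the outer bounds $g_0\le g(\xi)\le g_U$ need not be strict (for piecewise-constant $g$ they can be attained), so it is natural to record non-strict inequalities there, whereas in part (2) strictness of the conclusion relies on using the open-interval bound on $\omega_k\tau$ and the strict inequality $\sqrt{1+y^2}<1+y$ for $y>0$.
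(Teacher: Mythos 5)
Your argument is correct and is essentially the same as the paper's: the paper likewise substitutes $\gamma=\beta e^{-\mu\tau}g(\xi)/\xi$ from \eqref{eq:gam} into $s_k(\xi)$, then applies the bounds \eqref{eq:gbounds} on $g$ together with $\omega_k\tau\in(\pi/2+2k\pi,\pi+2k\pi)$ to obtain part (1), and observes that part (2) is a weaker consequence. Your write-up merely spells out the algebra (including the elementary inequality $\sqrt{1+y^2}<1+y$) that the paper leaves implicit.
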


\begin{proof}
Using \eqref{eq:gam}
\begin{displaymath}
s_k(\xi) =
\left( 1+\left(\frac{\omega_k}{\beta e^{-\mu\tau}g(\xi)}\right)^2\xi^2 \right)^{\frac12}.
\end{displaymath}
Recalling \eqref{eq:gbounds} and also using the bounds on $\omega_k\tau$,
the inequalities in Proposition \ref{prop:sxi}(1), and the expressions for the bounds $m_k(\xi)$ and $M_k(\xi)$ follow easily,
while Proposition \ref{prop:sxi}(2)  is weaker than Proposition \ref{prop:sxi}(1).
\end{proof}

Recall that for a Hopf bifurcation, we need to solve \eqref{eq:xigdashg} for $\xi$, but this is the same as solving $f(\xi/\theta_g,n,g^-/g^+)=-s_k(\xi)$, where the relevant properties of $f$ and $s_k$ are  stated in Propositions~\ref{prop:fxpr} and~\ref{prop:sxi}. Propositions~\ref{prop:sxi}(1) defines bands $[m_k(\xi),M_k(\xi)]$
within which each $s_k(\xi)$ lies. Since $|s_k(\xi)|>1$ and 
$\lim_{\xi\to0}f(\xi/\theta_g,n,g^-/g^+)=\lim_{\xi\to\infty}f(\xi/\theta_g,n,g^-/g^+)=0$, for all $\xi$ sufficiently large or small we have $f(\xi/\theta_g,n,g^-/g^+)>-s_k(\xi)$. On the other hand, considering $\xi=\theta_g$, by Proposition~\ref{prop:fxpr}(2),
$f(1,n,g^-/g^+)\to-\infty$ as $n\to\infty$ while $s_k(\theta_g)\leq M_k(\theta_g)$ is bounded. Consequently, for $n$ sufficiently large $f(1,n,g^-/g^+)<-s_k(\xi)$. It follows that there are at least two points $\xi_k^- < \theta_g < \xi_k^+$ for which 
$f(\xi_k^{\pm}/\theta_g,n,g^-/g^+)=-s_k(\xi_k^{\pm})$, and hence which solve \eqref{eq:xigdashgeq}. With the corresponding values of
$\gamma$ and $\omega_k$ defined by \eqref{eq:gam} and \eqref{eq:omega} this defines two solutions of
\eqref{eq:gam}--\eqref{eq:gdash}.

We already noted that the steady state must be stable, and hence cannot undergo a Hopf bifurcation
for $\xi$ sufficiently small or large.
On the other hand, Hopf bifurcations must occur for $n$ sufficiently large, 
as
for large enough $n$ the function $f$ will pierce
through the band $s_k(\xi)\in[m_k(\xi),M_k(\xi)]$ for $\xi\approx1$.
In particular,
a sufficient (but not necessary) condition for this to occur is that
$M_k(\theta_g)<-f(1,n,g^-/g^+)$, or equivalently that
\be \label{eq:nsuffgdown}
n>2\frac{(g^-/g^+)+1}{(g^-/g^+)-1}
\left( 1+\left(\frac{\pi\theta_g(2k+1)}{\beta\tau e^{-\mu\tau}g_0}\right)^2 \right)^{\frac12}.
\ee
This condition follows from evaluation of the function $f$ at $x=1$. A
more complicated but tighter bound can be derived using the maximum of $f$.
Therefore another sufficient condition to ensure that the $k$-th Hopf bifurcation occurs
is that
\begin{displaymath}
M_k(\theta_g(g^-/g^+)^{1/(2n)})<-f((g^-/g^+)^{1/(2n)},n,g^-/g^+).
\end{displaymath}

While there is a unique steady state $\xi$ for the case of decreasing $g$ with constant $\tau$, the location and properties of this steady state will depend on the values of the other parameters. The following theorem collects together our results for this case.

\begin{thm} \label{thm:gdown}
Let $\xi$ be the steady state of the DDE \eqref{eq:basic},\eqref{eq:thres} with constant delay $\tau$
and nonlinearity $g$ defined by \eqref{eq:vghill} with $g$ monotonically decreasing (so $g^->g^+$). Then: 
\begin{enumerate}
\item
If $ |\frac{\xi g'(\xi)}{g(\xi)}|<1$, then the steady state  $\xi$ is asymptotically stable.
\item
If $n\leq1$, or $n>1$ and $r_g=\frac{g^-}{g^+}<\Bigl(1+\frac{2}{n-1}\Bigr)^2$, then the steady state
$\xi$ is asymptotically stable.
\item
For any fixed $n > 1$, and for $0<\xi\ll\theta_g$ or $\xi\gg\theta_g$, or equivalently for $\gamma\gg\gamma_2$ or
$0<\gamma\ll\gamma_1$, the steady state is asymptotically stable.
\item
For any fixed $\xi\ne\theta_g$, let $\gamma= \gamma(n,\xi)$ be the value of $\gamma$ such that \eqref{eq:h} is satisfied and hence $\xi$ is a steady state. 
Or, for any fixed
$\gamma$ with $0<\gamma<\gamma_1$ or $\gamma>\gamma_2$ let $\xi=\xi(n,\gamma)$
satisfy  \eqref{eq:h} and hence
be a steady state.
Then $\xi$ is asymptotically stable for all $n$ sufficiently large.
\item
Let \eqref{eq:nsuffgdown} be satisfied for fixed $n\geq2$ and fixed $k\geq0$. Then as $\gamma$ is varied
\begin{enumerate}
\item
There are two families of (at least) $k+1$ Hopf bifurcations. One exists for
$\gamma<\sqrt{\gamma_1\gamma_2}/(g^-/g^+)^{1/2n}$
and the other
for $\gamma>(\gamma_1+\gamma_2)/2$. In the first family, the characteristic values $\lambda_j=i\omega_j$ satisfy $\omega_j\tau\in(\pi/2+2j\pi,\pi+2j\pi)$ for $j=0,1,\ldots,k$ and
cross  the imaginary axis from left to right as $\gamma$ is increased, while in the second family they cross the imaginary axis from right to left.
\item
For $\xi\in[\theta_g,\theta_g(g^-/g^+)^{1/(2n)}]$ or equivalently for $\gamma\in[\sqrt{\gamma_1\gamma_2}/(g^-/g^+)^{1/2n},(\gamma_1+\gamma_2)/2]$
the steady state is unstable with at least $k+1$ pairs of complex conjugate characteristic values
$\lambda_j=\alpha_j\pm i\omega_j$ with $\alpha_j>0$ and $\omega_j\tau\in(\pi/2+2j\pi,\pi+2j\pi)$
for $j=0,1,\ldots,k$.
\end{enumerate}
\item
Let $\gamma\in(\gamma_1,\gamma_2)$ be fixed. Then as $n$ is increased there is an infinite
sequence of Hopf bifurcations where the real part of $\lambda_k=\alpha_k\pm i\omega_k$ becomes positive
with $\omega_k\tau\in(\pi/2+2k\pi,\pi+2k\pi)$ for $k=0,1,2,\ldots$.
\end{enumerate}
\end{thm}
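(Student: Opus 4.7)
The plan is to exploit the substantial groundwork already laid in Sections~\ref{sec:linearization}--\ref{sec:gdownvconst}. Parts (1)--(4) reduce to the sufficient stability criterion \eqref{eq:ssstable}/\eqref{eq:xigdashgone} together with the shape properties of $f(x,p,r)$ collected in Proposition~\ref{prop:fxpr}. Part (5) requires constructing solutions of the Hopf equations \eqref{eq:gam}--\eqref{eq:gdash} and tracking the direction of crossing, while part (6) uses the same mechanism with $\gamma$ held fixed and $n$ driving the instability. Throughout I would suppress non-degeneracy verification in line with the disclaimer in the introduction.

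For part (1) I would simply note that the bound \eqref{eq:ssstable} was established in the paragraph preceding \eqref{eq:xigdashgone}, and its rewriting as $|\xi g'(\xi)/g(\xi)|<1$ is immediate from the equilibrium relation \eqref{eq:h}. For part (2), Proposition~\ref{prop:fxpr}(4) with $r=r_g=g^-/g^+>1$ yields the uniform bound $|\xi g'(\xi)/g(\xi)|\le n(\sqrt{r_g}-1)/(\sqrt{r_g}+1)$; solving $n(\sqrt{r_g}-1)/(\sqrt{r_g}+1)<1$ for $r_g$ produces exactly the threshold $(1+2/(n-1))^2$ when $n>1$, while for $n\le 1$ the right hand side is already $\le 1+\sqrt{r_g}$ trivially. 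Part (3) follows from Proposition~\ref{prop:fxpr}(1): $f(\xi/\theta_g,n,r_g)\to 0$ as $\xi/\theta_g\to 0$ or $\infty$, and \eqref{eq:gam} converts these limits into $\gamma\gg\gamma_2$ and $\gamma\ll\gamma_1$ respectively. For part (4) I would invoke the uniform-in-compacta statement of Proposition~\ref{prop:fxpr}(3), after checking that $\xi(n,\gamma)$ stays bounded away from $\theta_g$: for fixed $\gamma<\gamma_1$, monotonicity of $g$ and \eqref{eq:gam} give $\xi(n,\gamma)\ge \gamma_1\theta_g/\gamma>\theta_g$ independently of $n$, and the symmetric bound holds for $\gamma>\gamma_2$.

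For part (5) I would proceed exactly along the lines of the derivation of \eqref{eq:xigdashg}--\eqref{eq:xigdashgeq}. Rewriting the Hopf system \eqref{eq:gam}--\eqref{eq:gdash} as the scalar equation $f(\xi/\theta_g,n,r_g)+s_k(\xi)=0$, Proposition~\ref{prop:sxi}(1) supplies the upper envelope $M_k(\xi)$, and Proposition~\ref{prop:fxpr}(2) shows that at $\xi=\theta_g$ the function $f$ tends to $-\infty$ as $n\to\infty$; hypothesis \eqref{eq:nsuffgdown} is precisely the inequality $M_k(\theta_g)<|f(1,n,r_g)|$. Since $f\to 0$ at both tails of $\xi$ while $s_k\ge 1$, the intermediate value theorem yields a pair $\xi_k^-<\theta_g<\xi_k^+$ of Hopf roots for each of $k+1$ branch indices, whose corresponding $\gamma$ values follow from \eqref{eq:gam}. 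The ordering against $\sqrt{\gamma_1\gamma_2}\,r_g^{-1/2n}$ and $(\gamma_1+\gamma_2)/2$ comes from evaluating \eqref{eq:gam} at the maximizer $\xi=\theta_g r_g^{1/2n}$ of $|f|$ (Proposition~\ref{prop:fxpr}(4)) and at $\xi=\theta_g$. The instability claim in (5)(b) then follows by counting: part (3) guarantees stability at $\gamma\to0$ and $\gamma\to\infty$, the number of Hopf crossings along the branch is finite on the relevant window, and the sign of the transversal crossing is forced by consistency between the two stable tails. The main obstacle is this \emph{direction of crossing} assertion; rigorously one should differentiate the characteristic equation \eqref{eq:char_A0g'} implicitly at $\lambda=i\omega_k$ and extract the sign of $\operatorname{Re}(d\lambda/d\gamma)$, but as permitted by the introductory disclaimer I would settle for the plausibility argument based on root-counting between the two stable regimes.

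Part (6) is then an immediate corollary of the same geometric picture, with the roles of $n$ and $\gamma$ swapped. For any fixed $\gamma\in(\gamma_1,\gamma_2)$, inspection of \eqref{eq:gam} together with the Hausdorff convergence of the Hill function to the piecewise-constant graph \eqref{eq:gpwconst} shows that the steady state $\xi(n,\gamma)\to\theta_g$ as $n\to\infty$. Proposition~\ref{prop:fxpr}(2) then forces $|f(\xi(n,\gamma)/\theta_g,n,r_g)|\to\infty$, while each $s_k(\xi(n,\gamma))$ remains bounded by Proposition~\ref{prop:sxi}(1). Consequently, for every $k\ge 0$ the equation $f(\xi/\theta_g,n,r_g)+s_k(\xi)=0$ is eventually solvable as $n$ grows, producing an infinite sequence of Hopf crossings at frequencies $\omega_k\tau\in(\pi/2+2k\pi,\pi+2k\pi)$.
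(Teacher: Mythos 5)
Your proposal is correct and follows essentially the same route as the paper: parts (1)--(4) via the criterion \eqref{eq:xigdashgone} and Proposition~\ref{prop:fxpr}, part (5) via the scalar equation $f(\xi/\theta_g,n,r_g)=-s_k(\xi)$ with the envelope bound from Proposition~\ref{prop:sxi} and the intermediate value theorem, with the crossing-direction and non-degeneracy issues left at the plausibility level exactly as the paper does. The one point to tighten is part (6): Proposition~\ref{prop:fxpr}(2) only controls $f$ at $x=1$ exactly, while Proposition~\ref{prop:fxpr}(3) sends $f$ to zero for any fixed $x\neq1$, so the claim $|f(\xi(n,\gamma)/\theta_g,n,r_g)|\to\infty$ needs the additional observation that for $\gamma$ strictly inside $(\gamma_1,\gamma_2)$ the quantity $(\xi(n,\gamma)/\theta_g)^n$ converges to a finite positive limit (because $g(\xi(n,\gamma))=\gamma\xi(n,\gamma)/(\beta e^{-\mu\tau})$ converges to a value strictly between $g^+$ and $g^-$), whence $|f|$ grows linearly in $n$; the paper instead reaches the same conclusion by showing the $k$-th Hopf loci converge to the corners $\gamma=\gamma_1,\gamma_2$ using the boundedness of $\omega_k$ and $g'(\xi_k)$.
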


\begin{proof} 
(1) and (2) were already shown immediately after the proof of Proposition~\ref{prop:fxpr}.

The first part of (3) then follows from Proposition~\ref{prop:fxpr}(1),
since $\lim_{\xi\to0}f(\xi/\theta_g,n,g^-/g^+)=\lim_{\xi\to\infty}f(\xi/\theta_g,n,g^-/g^+)=0$
implies $|\xi g'(\xi)/g(\xi)| < 1 < s_k(\xi)$
and $\xi g'(\xi)/g(\xi)=-s_k(\xi)$ cannot hold
for $\xi$ sufficiently
small ($0<\xi\ll\theta_g$) or large ($\xi\gg\theta_g$).
The second part follows
from \eqref{eq:gam} on noting that $g$ monotonically decreasing implies that $\gamma(\xi)$ is monotonically decreasing with
$\lim_{\xi\to0}\gamma(\xi)=+\infty$ and $\lim_{\xi\to\infty}\gamma(\xi)=0$.

For the first part of (4), consider a steady state at a fixed value of $\xi$ as $n$ is varied, with the other parameters fixed except for $\gamma=\gamma(n, \xi)$ which is determined by \eqref{eq:h}. The result then follows directly from the first part of 
Proposition~\ref{prop:fxpr}(3).
To prove the second part of (4) note that when $g^->g^+$ from \eqref{eq:vghill} we have
$g^->g(\xi)>g^+$ which using \eqref{eq:corners_vconst} is equivalent to
\begin{displaymath}
\gamma_2 > \frac{\beta e^{-\mu\tau}g(\xi)}{\theta_g} > \gamma_1.
\end{displaymath}
Fix $\gamma>0$ and
using \eqref{eq:gam}, we further rewrite this 
as
\be \label{eq:gambds}
\frac{\gamma_2}{\gamma} > \frac{\xi}{\theta_g} > \frac{\gamma_1}{\gamma}.
\ee
Consequently, for $\xi=\xi(n,\gamma)$ satisfying \eqref{eq:h},
if $\gamma<\gamma_1$ then
 we have
$\xi>\theta_g\gamma_1/\gamma>\theta_g$
while $\gamma>\gamma_2$ implies $\xi<\theta_g\gamma_2/\gamma<\theta_g$.
The result then follows from the second part of Proposition~\ref{prop:fxpr}(3).

For (5), equation \eqref{eq:nsuffgdown} implies that
\begin{displaymath}
-f(1,n,g^-/g^+) > M_k(\theta_g) > M_{k-1}(\theta_g) > \ldots > M_0(\theta_g).
\end{displaymath}
However, $s_j(\xi)\geq1$ for all $\xi\in\R$  and $\lim_{x\to0}f(x,n,g^-/g^+)=\lim_{x\to\infty}f(x,n,g^-/g^+)=0$.
Consequently for each $j\in\{0,1,\ldots,k\}$ there are at least 
two values of $\xi$ which solve
$-f(\xi/\theta_g,n,g^-/g^+)=s_j(\xi)$. The largest such $\xi$ with $\xi>\theta_g(g^-/g^+)^{1/(2n)}$
and the smallest with $\xi<\theta_g$
define the required Hopf bifurcations. Since, as already noted, $\gamma(\xi)$ defined by \eqref{eq:gam} is a monotonically decreasing function of $\xi$ with
$\gamma(\theta_g)=(\gamma_1+\gamma_2)/2$ and $\gamma(\theta_g(g^-/g^+)^{1/(2n)})=\sqrt{\gamma_1\gamma_2}/(g^-/g^+)^{1/2n}$,
the result follows.

\begin{figure}[thp!]
	\centering
	\vspacefig \includegraphics[scale=0.5]{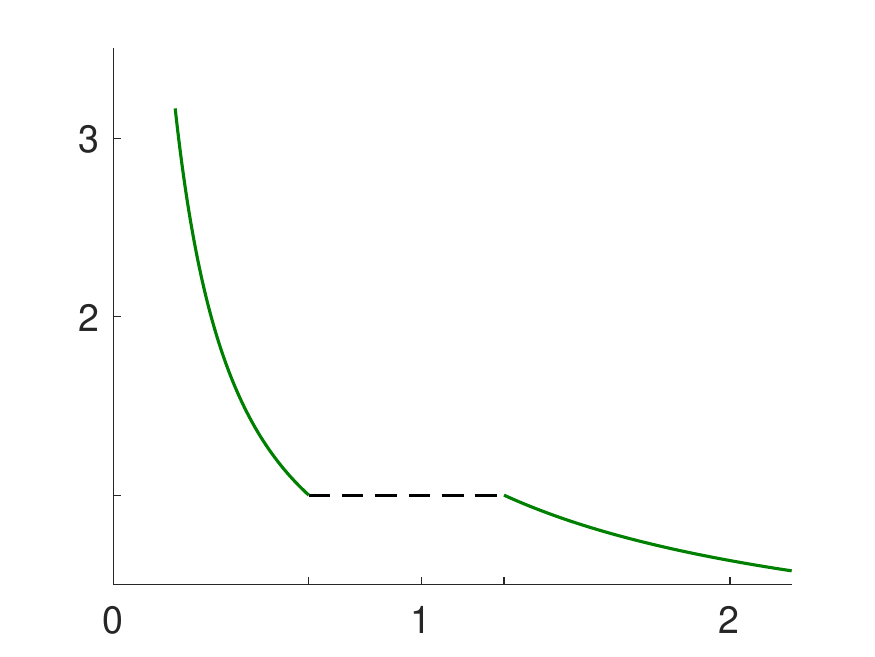}\hspace*{0.5em}\includegraphics[scale=0.5]{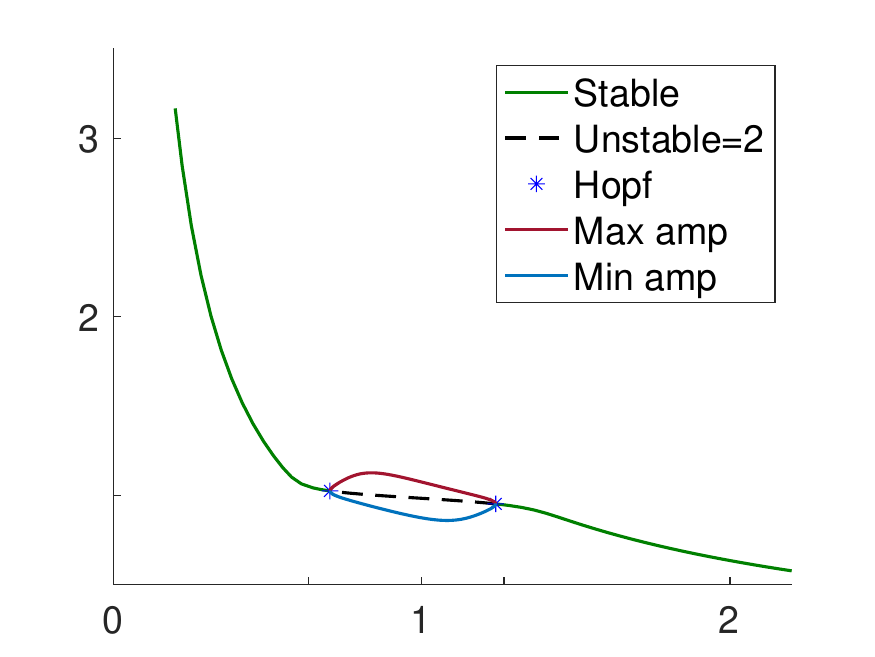}
	\put(-406,140){\rotatebox{90}{$x$}}
	\put(-412,38){$\theta_g$}
	\put(-240,10){$\gamma$}
	\put(-357,7){$\gamma_1$}
	\put(-310,7){$\gamma_2$}
	\put(-333,135){$(a)$}
	\put(-135,135){$(b)$}
	\put(-190,140){\rotatebox{90}{$x$}}
	\put(-196,38){$\theta_g$}
	\put(-24,10){$\gamma$}
	\put(-140,7){$\gamma_1$}
	\put(-93,7){$\gamma_2$}\\
	\vspace*{-0.5em}
	\includegraphics[scale=0.5]{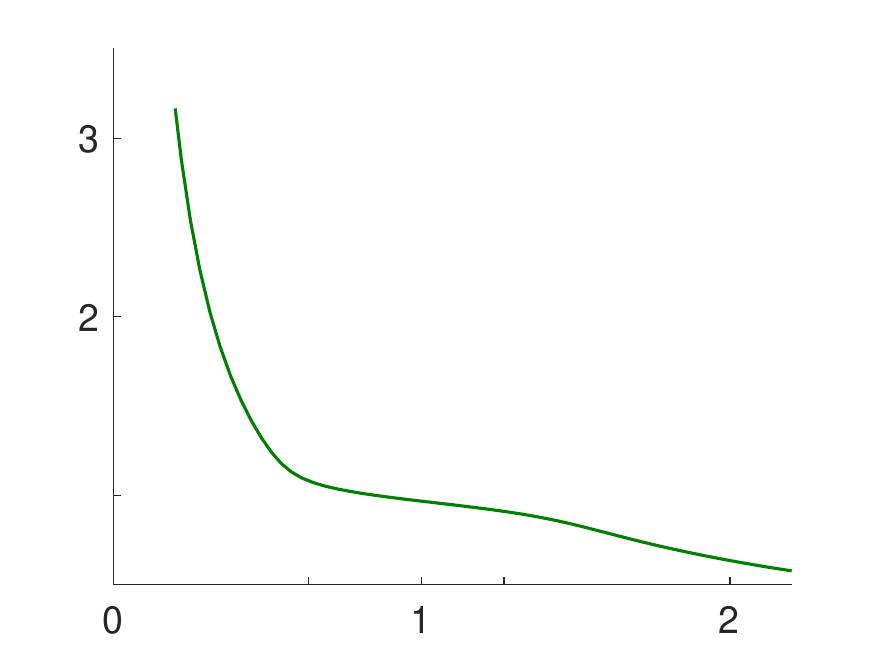}\hspace*{0.5em}\includegraphics[scale=0.5]{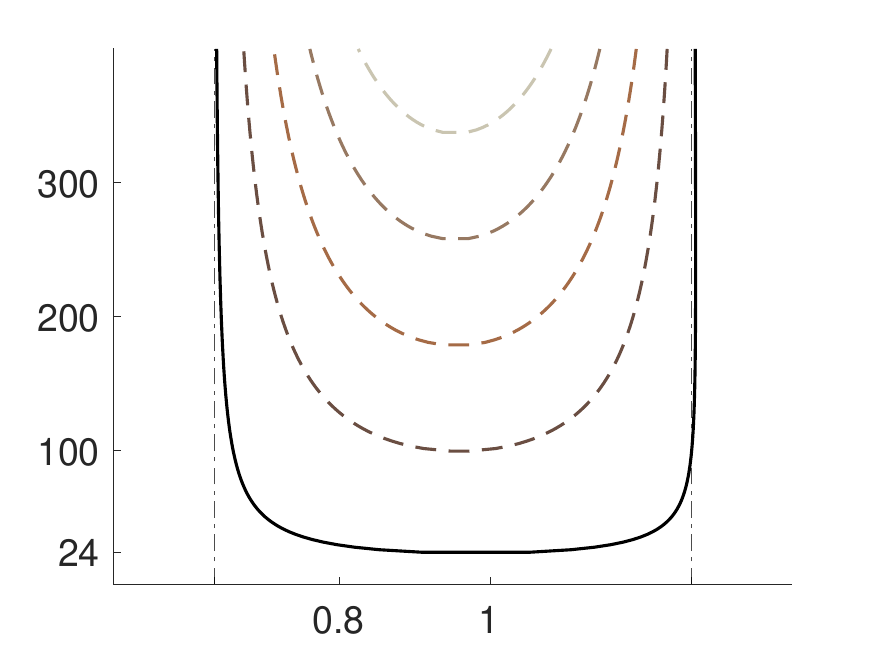}
	\put(-406,140){\rotatebox{90}{$x$}}
	\put(-412,38){$\theta_g$}
	\put(-240,10){$\gamma$}
	\put(-357,7){$\gamma_1$}
	\put(-310,7){$\gamma_2$}
	\put(-333,135){$(c)$}
	\put(-108,137){$(d)$}
	\put(-190,140){\rotatebox{90}{$n$}}
	\put(-24,10){$\gamma$}
	\put(-163,7){$\gamma_1$}
	\put(-48,7){$\gamma_2$}\\
	\vspace*{-0.5em}
	\includegraphics[scale=0.5]{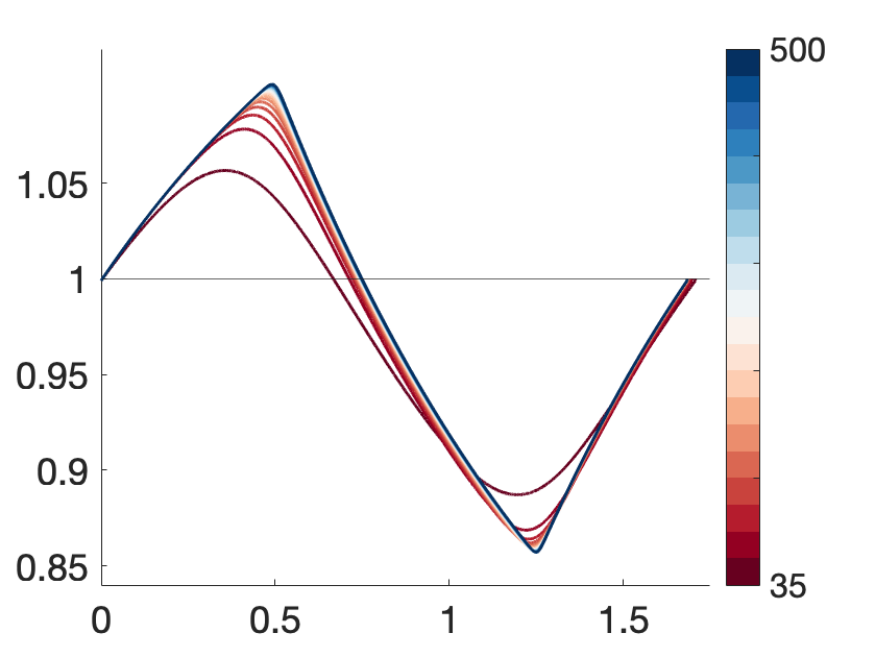}\hspace*{0.5em}\includegraphics[scale=0.5]{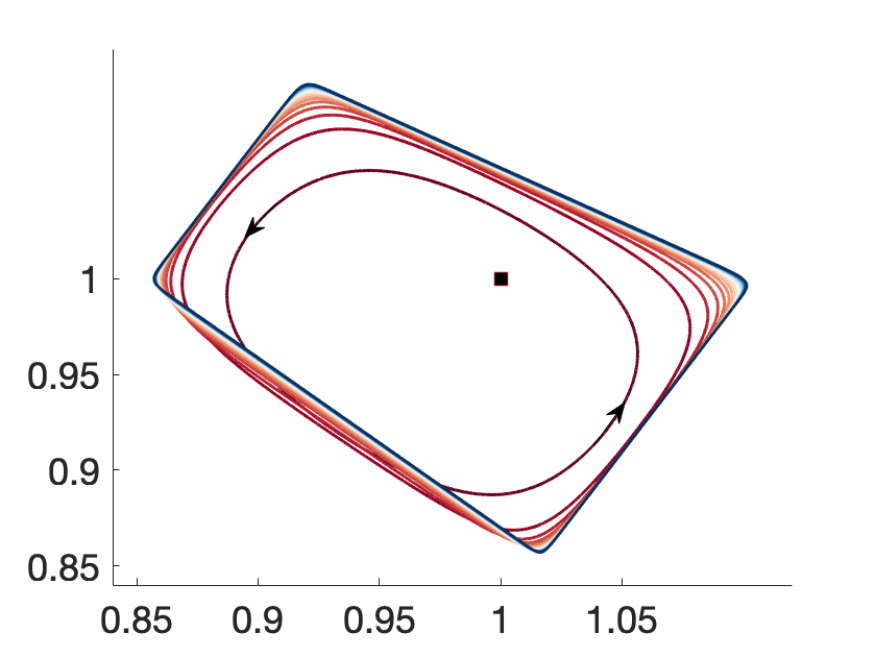}
		\put(-408,140){\rotatebox{90}{$x$}}
		\put(-260,10){$t$}
	    \put(-242,80){$n$}
		\put(-323,135){$(e)$}
		\put(-170,135){$(f)$}
		\put(-195,110){\rotatebox{90}{$x(t-\tau)$}}
		\put(-34,8){$x(t)$}\\
	\vspace*{-0.5em}

\caption{Bifurcations of \eqref{eq:basic}-\eqref{eq:thres}
for $(g\downarrow,v\leftrightarrow)$
with parameters $\beta=1.4$, $\mu=0.2$, $g^-=1$, $g^+=1/2$, $\theta_g=1$, $\gamma=a=1$, and $v=v^-=v^+=2$.
(a) The limiting case with $g$ defined by \eqref{eq:gpwconst} showing the stable (green solid line) and singular (black dashed line) steady states.
(b) With smooth nonlinearity $g$ defined by \eqref{eq:vghill} and $n=50$.
Solid lines denote stable objects including the stable steady state (green) and a stable limit cycle (represented by maximum and minimum of $x(t)$ on the periodic solution). Dashed lines represent unstable steady states which have two eigenvalues with positive real part (in black).
(c) As in (b) but with $n=23$.
(d) Two-parameter continuations in $n$ and $\gamma$ of the Hopf bifurcations defined by
\eqref{eq:gam}-\eqref{eq:gdash} with the other parameters as above.
Solid curves indicate the parts of the branch where there are no characteristic values with positive real part (and hence a stability change at the bifurcation), and dashed lines indicate the parts of the branch where there are already unstable characteristic values.
The outermost curve of Hopf bifurcations is associated with the stability change seen in (b). The dash-dotted vertical black lines denote $\gamma=\gamma_1$ and $\gamma=\gamma_2$, the locations of the Hopf bifurcations in the limiting case as $n\to\infty$. (e) Profiles of the stable periodic orbits from the outermost curve of Hopf bifurcations in (d) at $\gamma=1$ for different values of the continuation parameter $n$.
(f) The same periodic orbits as in (e), but now shown as a projection onto the plane $(x(t),  x(t-\tau))$ where $\tau=0.5$. The arrow indicates the direction of the flow.  The square denotes the singular steady state in the limiting case.
}
\label{fig:gdown_ex1}
\end{figure}

For (6) we consider the behaviour of the Hopf bifurcation points as $n\to\infty$.
Applying Proposition~\ref{prop:fxpr} (and recalling that $g_->g_+$ because $g$ is decreasing),
the function $|\xi g'(\xi)/g(\xi)|$ takes its maximum
at $\xi=\theta_g(g^-/g^+)^{1/2n}>\theta_g$,
while also $|\theta_g g'(\theta_g)/g(\theta_g)|=|f(1,n,g^-/g^+)|\gg1$. Consequently for fixed $k$ the two Hopf bifurcation points, $\xi_k^-$ and $\xi_k^+$ satisfy  $\xi_k^-<\theta_g<\theta_g(g^-/g^+)^{1/2n}<\xi_k^+$.
Using Proposition~\ref{prop:fxpr}(3) we conclude that $\xi_k^\pm\to\theta_g$ as $n\to\infty$.

Also for fixed $k$, as $n\to\infty$, the value of $\omega_k\leq(2k+1)\pi/\tau$ remains bounded,
as does $\gamma$ by (4).
Consequently for \eqref{eq:gdash2} to be satisfied it follows that $g'(\xi_k)$
must also remain bounded as $n\to \infty$.
Then in the limit as $n\to\infty$ the Hopf bifurcations
must converge to the ``corners'' of $g(\xi)$ where $\xi=\theta_g$ and $\gamma=\gamma_1$ or $\gamma=\gamma_2$.
Consequently
for any fixed $\gamma\in(\gamma_1,\gamma_2)$ as $n\to\infty$ there is an infinite sequence of Hopf bifurcations.
\end{proof}

Figure~\ref{fig:gdown_ex1} illustrates the behaviour of \eqref{eq:basic}-\eqref{eq:thres}
for $(g\downarrow,v\leftrightarrow)$.
Panels (b)-(f) were computed numerically using \texttt{ddebiftool} as described in  Appendix~\ref{app-numerics}.

Figure~\ref{fig:gdown_ex1}(a) and (b)  show the
similarities between the dynamics with the piecewise continuous nonlinearity \eqref{eq:gpwconst} and the
smooth Hill function $g$ defined in \eqref{eq:vghill} with $n=50$. The singular steady state in (a) becomes an unstable steady state in the smooth case, with a bubble of stable periodic orbits existing between the pair of Hopf bifurcations where the steady state changes stability. The stable periodic orbits at $\gamma=1$
for increasing values of $n$ are shown as profiles in Figure~\ref{fig:gdown_ex1}(e) and projected onto the
$(x(t),x(t-\tau))$-plane in Figure~\ref{fig:gdown_ex1}(f).
The apparent limiting behaviour that is revealed
is the topic of \cite{sausage}.

When the value of $n$ is decreased, the interval of $\gamma$ values between the Hopf bifurcations shrinks, until for $n$ sufficiently small the steady state is always stable, as seen in Figure~\ref{fig:gdown_ex1}(c). Interestingly, even though there is no bifurcation in this case, the graph in Figure~\ref{fig:gdown_ex1}(c) still has a plateau around where the singular steady states exist in
Figure~\ref{fig:gdown_ex1}(a).

Figure~\ref{fig:gdown_ex1}(d) shows two-parameter continuations in the $(\gamma,n)$ plane of the Hopf bifurcations. This reveals the Hopf bifurcations associated with $\omega_k$ for $k=0,1,\ldots,4$, with each successive Hopf bifurcation only existing for progressively larger values of $n$, as implied
by \eqref{eq:nsuffgdown}.
In particular there is no Hopf bifurcation for $n<24$ and a second Hopf bifurcation is only seen if $n>100$. This is why no Hopf bifurcation is seen in Figure~\ref{fig:gdown_ex1}(c) with $n=23$, and only one pair of Hopf bifurcations is seen in Figure~\ref{fig:gdown_ex1}(b) with $n=50$.

Figure~\ref{fig:gdown_ex1}(d) also illustrates Theorem~\ref{thm:gdown} (points 5 and 6)
where the additional Hopf bifurcations occurring as $n$ increases approach the vertical asymptotes
$\gamma=\gamma_1$ and $\gamma=\gamma_2$ in the limit as $n\to\infty$. Notice also the existence of Hopf bifurcations with $\gamma>\gamma_2$ in Figure~\ref{fig:gdown_ex1}(d); so it \emph{is} possible for the steady state to be unstable outside the interval $\gamma\in[\gamma_1,\gamma_2]$, albeit only for a finite range of values $n$ by Theorem~\ref{thm:gdown} (point 3).

\subsection{Constant delay with $g$ increasing $(g\uparrow,v\leftrightarrow)$}
\label{sec:gupvconst}

In this section we study \eqref{eq:basic}-\eqref{eq:thres} with constant delay again, but in contrast to the previous section we assume that $g$ is increasing.
We thus assume $g^-<g^+$ in \eqref{eq:vghill} and \eqref{eq:gpwconst}, with
$v^-=v^+=v^\pm$ so the delay $\tau=a/v^\pm$ is constant, independent of $\xi$.

When $g$ is increasing, it is possible for multiple steady states to coexist. For example, considering the limiting case where $g$ is given by \eqref{eq:gpwconst}, as shown in Figure~\ref{fig:2}, there are up to three coexisting steady states, including a singular steady state, as the slope of the line $\gamma\xi$ changes. The corners defined by \eqref{eq:corners_vconst} give rise to fold bifurcations due to a change in the number of steady states. Since these bifurcations involve a singular steady state they are not truly fold bifurcations but we use this term since, as we show below, they reflect the presence of true fold bifurcations for the smooth nonlinearity $g$ defined by \eqref{eq:vghill} with $g'(\xi) \gg 0$.
This is illustrated in Figure~\ref{fig:gup_ex1}(a) where there are two fold bifurcations between stable and singular steady states,
with the outer steady states stable, and the middle steady state is singular.

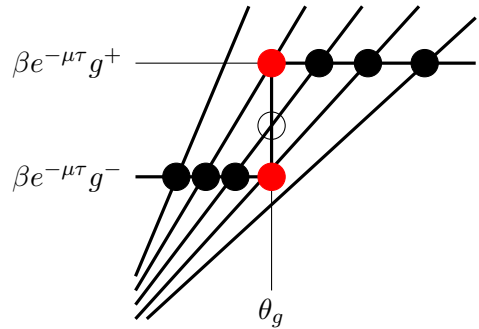
\begin{figure}[tp!]
\centering
\begin{tikzpicture}[scale = 1.5]
		\tikzstyle{line} = [-,very thick]
		\tikzstyle{dotted line}=[.]
		\tikzstyle{arrow} = [->,line width = .4mm]
		\tikzstyle{unstable} = [red]
		\tikzstyle{stable} = [blue]
		\tikzstyle{pt} = [circle,draw=black,fill = black,minimum size = 1pt];
		\tikzstyle{bifpt} = [circle,draw=red,fill = red,minimum size = 1pt];
		\tikzstyle{upt} = [circle,draw=black,minimum size = 1pt];
		\def\arrlen{.3}

		%draw function
		\draw[line] (0,1) to (1.2,1);
		\draw[line] (1.2,1) to (1.2,2);
		\draw[line] (1.2,2) to (3,2);
		\draw[dotted line] (0,2) to (1.2,2);
		\draw[dotted line] (1.2,0) to (1.2,2);
		
		\draw[line] (0,0.125) to (1,2.5);
		\draw[line] (0,0) to (1.5,2.5);
		\draw[line] (0,-0.125) to (2,2.5);
		\draw[line] (0,-0.25) to (2.5,2.5);
		\draw[line] (0.1,-0.25) to (3,2.4);
		
		\node at (1.2,-.2) {$\theta_g$};

		% equilibrium values
		\node at (-.6,2) {$\beta e^{-\mu\tau}g^+$};
		\node at (-.6,1) {$\beta e^{-\mu\tau}g^-$};
		
		\node[bifpt] at (1.2,1) (8) [] {};
		\node[bifpt] at (1.2,2) (8) [] {};
		\node[upt] at (1.2,1.45) (8) [] {};
		\node[pt] at (1.62,2) (8) [] {};
		\node[pt] at (2.05,2) (8) [] {};
		\node[pt] at (2.55,2) (8) [] {};
		\node[pt] at (0.88,1) (12) [] {};
		\node[pt] at (0.62,1) (12) [] {};
		\node[pt] at (0.36,1) (12) [] {};		
\end{tikzpicture}
\caption{Illustration of how the number of steady states of \eqref{eq:basic} given by \eqref{eq:h} changes with the intersections of $\xi\mapsto\beta e^{-\mu\tau}g(\xi)$ and $\xi\mapsto\gamma\xi$. These are shown in the limiting case with $v^\pm =v^-=v^+$ so $\tau=a/v^\pm$ is constant, and
$g^-<g^+$ in \eqref{eq:gpwconst} so $g$ is piecewise constant and monotonically increasing:  $(g\uparrow,v\leftrightarrow)$.}
\label{fig:2}
\end{figure}

\begin{figure}[tp!]
\centering	\includegraphics[scale=0.5]{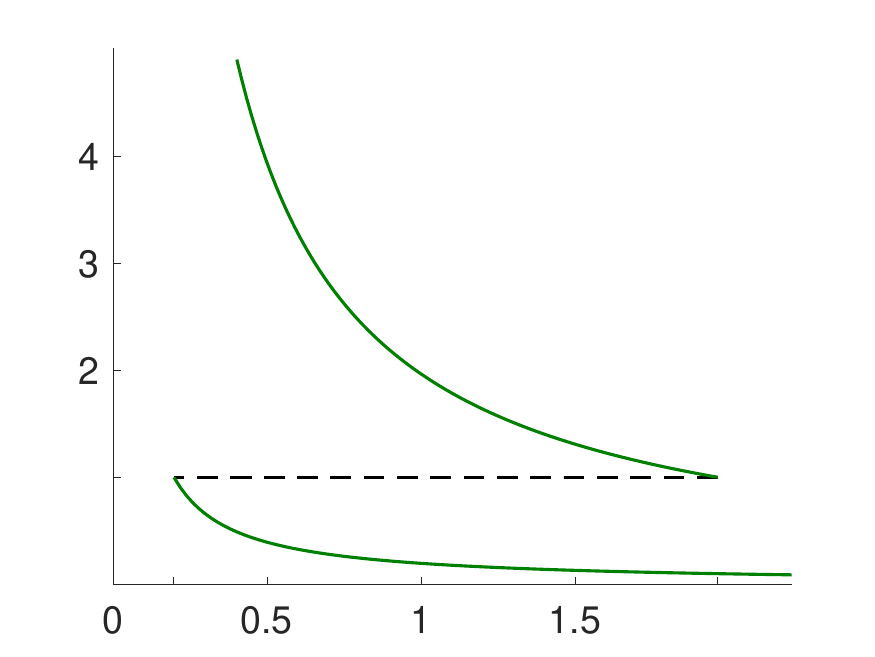}\hspace*{0.5em}\includegraphics[scale=0.5]{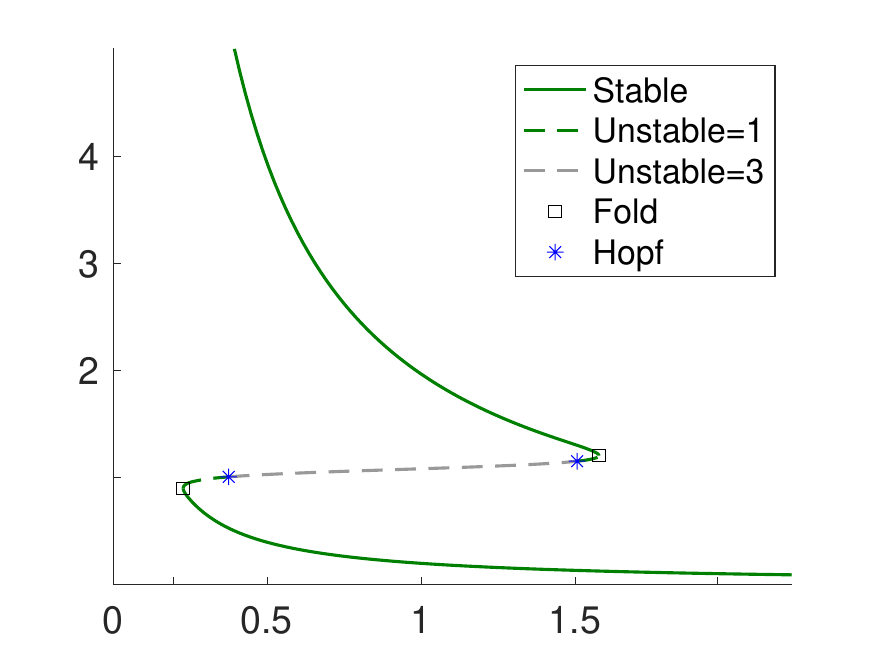}
	\put(-406,140){\rotatebox{90}{$x$}}
	\put(-412,40){$\theta_g$}
	\put(-240,10){$\gamma$}
	\put(-390,7){$\gamma_2$}
	\put(-258,7){$\gamma_1$}
	\put(-322,140){$(a)$}
	\put(-120,140){$(b)$}
	\put(-190,140){\rotatebox{90}{$x$}}
	\put(-196,40){$\theta_g$}
	\put(-24,10){$\gamma$}
	\put(-174,7){$\gamma_2$}
	\put(-42,7){$\gamma_1$}\\

	\includegraphics[scale=0.5]{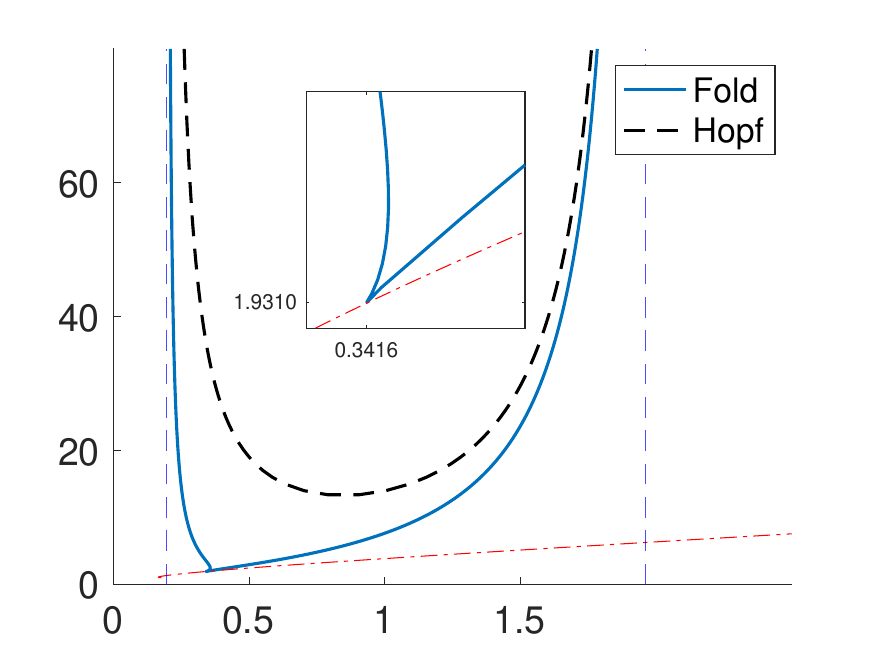}
	\put(-120,140){$(c)$}
	\put(-190,140){\rotatebox{90}{$n$}}
	\put(-24,10){$\gamma$}
	\put(-175,7){$\gamma_2$}
	\put(-59,7){$\gamma_1$}
\caption{Bifurcations of \eqref{eq:basic}-\eqref{eq:thres} with $(g \uparrow, v \leftrightarrow)$ and parameters
$\beta=2$, $\mu=0.02$, $g^-=0.1$, $g^+=1$, $\theta_g=1$, $\gamma=1$, $a=2$ and $v=v^-=v^+=2$.
(a) The limiting case with $g$ defined by \eqref{eq:gpwconst}. Stable steady states are shown as green solid lines, and the singular steady state as a black dashed line.
(b) With a smooth nonlinearity $g$ defined by \eqref{eq:vghill} with $n=30$.
(c) Two-parameter continuations in $n$ and $\gamma$ of the fold (blue) and the Hopf (black) bifurcations
with the other parameters as above.
The dashed vertical lines denote $\gamma=\gamma_1$ and $\gamma=\gamma_2$, the location of the fold bifurcations in the limiting case as $n\to\infty$.
The red dash-dotted curve denotes the bound on the fold bifurcations given by \eqref{eq:foldbound}.}
\label{fig:gup_ex1}
\end{figure}

With $g$ defined by the piecewise constant function \eqref{eq:gpwconst}, as in Section~\ref{sec:gdownvconst}, for the non-singular steady states we have $A=g'(\xi)=0$. The characteristic equation is again of the form \eqref{eq:char_A0g'0}, so these steady states must be stable. Only the singular steady state may be unstable, and since it coexists with two stable steady states, it is natural to regard it as unstable even though the dynamical system is infinite dimensional and the characteristic equation is not defined at the singular steady state. The bifurcation diagram for this case is illustrated in Figure~\ref{fig:gup_ex1}(a).

For the smooth nonlinearity $g(\xi)$  defined by \eqref{eq:vghill},
from \eqref{eq:char_A0g'} the characteristic function is given by
\be \label{eq:charfgup}
\Delta(\lambda)=\lambda+\gamma-\beta e^{-\mu\tau}g'(\xi)e^{-\lambda\tau}.
\ee
Considering $\lambda\in\R$ first, note that $\Delta(\lambda)$ is asymptotic to $\lambda+\gamma$ for $\lambda\gg0$.
On the other hand, using \eqref{eq:h} we find
\be \label{eq:char0gup}
\Delta(0)=\gamma\left(1-\frac{\xi g'(\xi)}{g(\xi)}\right).
\ee
Consequently when
\be \label{eq:xigdashgunstab}
\frac{\xi g'(\xi)}{g(\xi)}>1
\ee
there is always a real characteristic value $\lambda>0$ and the steady state is unstable.
Moreover there is a characteristic value $\lambda=0$ if and only if
\be \label{eq:xigdashgfold}
\frac{\xi g'(\xi)}{g(\xi)}=1,
\ee
and the results developed in Section~\ref{sec:gdownvconst},
and in particular Proposition~\ref{prop:fxpr},
can be applied to the fold bifurcations.
Thus for $n\gg0$ there will be two fold bifurcations $\xi^\pm\approx\theta_g$: one with
$\gamma\approx\gamma_1$ and the other with $\gamma\approx\gamma_2$, where $\gamma_1$ and $\gamma_2$
are defined by \eqref{eq:corners_vconst} with $\gamma_2<\gamma_1$  since $g$ is increasing.

To find the fold bifurcations, equations
\eqref{eq:xigdashgfold} and \eqref{eq:gam} must be solved together.
We first do this numerically. 
Figure~\ref{fig:gup_ex1}(b) shows the resulting bifurcation diagram for the
smooth nonlinearity $g(\xi)$  defined by \eqref{eq:vghill} with $n=30$, revealing, as expected,
two smooth
fold bifurcations, with an intermediate branch of unstable steady states
between the stable steady states.

To further investigate when multiple steady states arise for smoothly increasing $g$,
note that a necessary condition for multiple coexisting steady states is
that  $\max_{\xi\geq0}\{h'(\xi)\} \ge 0$. This imposes a constraint on the parameters as follows. The function $g$ has a single point of inflection $\bar{x}$ with $g''(\bar{x}) = 0$, which can be computed by differentiating $g$ twice
to find
$$\bar{x} = \theta_g\left(\frac{n-1}{n+1}\right)^{1/n}.$$
Note that $\bar{x} > 0$ requires the restriction $n > 1$.
With $\bar{x}$, we can compute the maximal value of the derivative
$$
\max_{\xi\geq0}\{h'(\xi)\} = h'(\bar{x}) = \beta e^{-\mu\tau} g'(\bar{x}) - \gamma \\
	= \beta e^{-\mu\tau}(g^{+}-g^{-})\frac{(n+1)^{1+1/n}(1-n)^{1-1/n}}{4n\theta_g} -\gamma.
$$
Therefore, if there are multiple steady states, the parameters must satisfy
\begin{equation}
	\gamma \leq \beta e^{-\mu\tau}(g^{+}-g^{-})\frac{(n+1)^{1+1/n}(n-1)^{1-1/n}}{4n\theta_g}.
	\label{eq:foldbound}
\end{equation}

Figure~\ref{fig:gup_ex1}(c) shows a two-parameter continuation of the fold bifurcations for increasing $g$. For large $n$ the folds approach the asymptotes $\gamma=\gamma_1$ and $\gamma=\gamma_2$ defined by the limiting case.
As $n$ decreases the two fold bifurcations move closer together until they collide in a cusp bifurcation.
At the cusp, the three steady states coincide, and at this point $h(\xi)$ is a monotonically decreasing
function of $\xi$ with a zero of multiplicity three at the steady state $\bar{\xi}$. Consequently not only is
$h(\bar{\xi})=h'(\bar{\xi})=0$ at this point, but also the function $h'(\xi)$ attains its maximum at $\bar{\xi}$ and
the cusp point lies on the bounding curve defined by \eqref{eq:foldbound}, as seen in
Figure~\ref{fig:gup_ex1}(c).

The analysis of Hopf bifurcations from Section~\ref{sec:gdownvconst} can be repeated with only minor changes for
the case of increasing $g$. Characteristic values again satisfy
\eqref{eq:real} and \eqref{eq:imag}, and at a Hopf bifurcation
\eqref{eq:realhopf} and \eqref{eq:imaghopf}. Since $g'(\xi)>0$, we require
$\sin(\omega\tau) < 0<\cos(\omega\tau)$
and hence
$$\omega\tau \in (2k+3/2)\pi, (2k+2)\pi), \quad k \in \mathbb{N}.$$
Equations \eqref{eq:gam}, \eqref{eq:omega} and \eqref{eq:gdash}
can again be
considered sequentially, and reduced to a single equation to solve for $\xi$,
where for increasing $g$, equation \eqref{eq:gdash} becomes
\be \label{eq:gdashinc}
g'(\xi) =\frac{\sqrt{\gamma^2+\omega_k^2}}{\beta e^{-\mu\tau}}.
\ee
This last equation is most easily considered by combining it with \eqref{eq:gam}
to obtain
\be \label{eq:xigdashgup}
\frac{\xi g'(\xi)}{g(\xi)}=\sqrt{1+(\omega_k/\gamma)^2},
\ee
where $\xi g'(\xi)/g(\xi)$ is still given by \eqref{eq:xigdashghill} and Propositions~\ref{prop:fxpr} and~\ref{prop:sxi} both apply.

We conclude that as $\gamma$ is varied there is a sequence of Hopf bifurcations parameterized by a frequency $\omega_k \to \infty$ satisfying $\omega_k\tau \in ((2k+3/2)\pi,(2k+2)\pi)$ for $k\in\mathbb{N}$, where for each fixed $k$ there is a minimal $n$ at which this bifurcation exists. Furthermore, this minimal $n$ grows with $k$.
Additionally,  for any fixed $k$ as $n\to\infty$ the bifurcation points satisfy $\gamma\to\gamma_1$ and $\gamma\to\gamma_2$.
Figure~\ref{fig:gup_ex1}(b) shows that the steady states lose stability at the fold bifurcation and the resulting unstable steady state undergoes a Hopf bifurcation. This indicates that the resulting periodic orbits are unstable
and thus not consequential for the asymptotic dynamics in contrast to the case of decreasing $g$. The two-parameter continuation of the first Hopf bifurcation is shown in Figure~\ref{fig:gup_ex1}(c).

We collect our results from this section together in the following theorem. While there are many similarities between Theorems~\ref{thm:gdown} and~\ref{thm:gup}, the presence of fold bifurcations introduces some important differences. Note also that since $g$ is increasing, in this section we have
$r_g=g^-/g^+<1$ and also $\gamma_1>\gamma_2$.

\begin{thm} \label{thm:gup}
Let $\xi$ be a steady state of the DDE \eqref{eq:basic},\eqref{eq:thres} with constant delay $\tau$
and nonlinearity $g$ defined by \eqref{eq:vghill} with $g$ monotonically increasing (so $g^-<g^+$). Then
\begin{enumerate}
\item
The steady state $\xi$ is asymptotically stable if $\frac{\xi g'(\xi)}{g(\xi)}<1$, and unstable if
$\frac{\xi g'(\xi)}{g(\xi)}>1$.
\item
For any fixed $\xi\ne\theta_g$ let $\gamma= \gamma(n,\xi)$ be the value of $\gamma$ such that \eqref{eq:h} is satisfied and hence $\xi$ is a steady state.
Or, for any fixed
$\gamma$ with $0<\gamma<\gamma_2$ or $\gamma>\gamma_1$ let $\xi=\xi(n,\gamma)$ satisfy  \eqref{eq:h} and hence
be a steady state.
Then $\xi$ is asymptotically stable for all $n$ sufficiently large.
\item
If $n\leq1$, or $n>1$ and $r_g=\frac{g^-}{g^+}>\Bigl(1-\frac{2}{n+1}\Bigr)^2$,  then the steady state
$\xi$ is asymptotically stable.
\item
If $n>1$ and $r=\frac{g^-}{g^+}<\Bigl(1-\frac{2}{n+1}\Bigr)^2$, then
there exists $\xi^-<\theta_g(g^-/g^+)^{1/2n}<\xi^+$ and $\gamma(\xi^-)<\sqrt{\gamma_1\gamma_2}/(g^-/g^+)^{1/2n}<\gamma(\xi^+)$
such that as $\gamma$ is varied, there is one branch of stable steady states
with $\xi<\xi^-$ and $\gamma>\gamma(\xi^-)$ and another stable branch
with $\xi>\xi^+$ and $\gamma<\gamma(\xi^+)$. For $\gamma\in(\gamma(\xi^-),\gamma(\xi^+))$ the two stable branches of steady states co-exist with a branch of unstable steady states which exists between fold bifurcations at $(\xi,\gamma)=(\xi^-,\gamma(\xi^-))$ and $(\xi,\gamma)=(\xi^+,\gamma(\xi^+))$.
\item
Let fixed $n\geq2$ and fixed $k\geq0$ satisfy
\be \label{eq:nsuffgup}
n>2\frac{1+(g^-/g^+)}{1-(g^-/g^+)}
\left( 1+\left(\frac{2\pi\theta_g(k+1)}{\beta\tau e^{-\mu\tau}g_0}\right)^2 \right)^{\frac12}.
\ee
Then as $\gamma$ is varied
\begin{enumerate}
\item
There are two families of (at least) $k+1$ Hopf bifurcations
from the unstable steady-state.
One exists for $\gamma<\sqrt{\gamma_1\gamma_2}/(g^-/g^+)^{1/2n}$ and the other
for $\gamma>(\gamma_1+\gamma_2)/2$. In the first family, the characteristic values
$\lambda_j=\alpha_j\pm i\omega_j$ with $\omega_j\tau\in(3\pi/2+2j\pi,2\pi+2j\pi)$
for $j=0,1,\ldots,k$
cross the imaginary axis from left to right as $\gamma$ is increased, while in the second family they cross the imaginary axis from right to left.
\item
For $\xi\in[\theta_g(g^-/g^+)^{1/(2n)},\theta_g]$, or equivalently for $\gamma\in(\sqrt{\gamma_1\gamma_2}/(g^-/g^+)^{1/2n},(\gamma_1+\gamma_2)/2)$,
the unstable steady state has one positive real characteristic value and at least $k+1$ pairs of complex conjugate characteristic values
$\lambda_j=\alpha_j\pm i\omega_j$ with $\alpha_j>0$ and $\omega_j\tau\in(3\pi/2+2j\pi,2\pi+2j\pi)$
for $j=0,1,\ldots,k$.
\end{enumerate}
\item
Let $\gamma\in(\gamma_2,\gamma_1)$ be fixed. Then as $n$ is increased there is an infinite
sequence of Hopf bifurcations on the branch of unstable equilibria where the real part of $\lambda_k=\alpha_k\pm i\omega_k$ becomes positive
with $\omega_k\tau\in(3\pi/2+2k\pi,2\pi+2k\pi)$.
\end{enumerate}
\end{thm}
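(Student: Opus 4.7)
The plan is to mirror the proof of Theorem~\ref{thm:gdown}, with the sign changes $g'>0$ and $r_g=g^-/g^+<1$ driving the modifications. For part (1), the stability half follows verbatim from the argument in \eqref{eq:ssstable}--\eqref{eq:xigdashgone}, which only uses $|g'(\xi)|$. For the instability half, when $\xi g'(\xi)/g(\xi)>1$, I would apply the intermediate value theorem to the characteristic function $\Delta$ in \eqref{eq:charfgup}: $\Delta(\lambda)\to +\infty$ as $\lambda\to+\infty$ and, by \eqref{eq:char0gup}, $\Delta(0)=\gamma(1-\xi g'(\xi)/g(\xi))<0$, so a positive real characteristic root exists.

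Part (2) transfers from Theorem~\ref{thm:gdown}(4): fixing $\xi\ne\theta_g$ and invoking Proposition~\ref{prop:fxpr}(3) gives $f(\xi/\theta_g,n,r_g)\to 0$; fixing $\gamma$ outside $[\gamma_2,\gamma_1]$ forces $\xi/\theta_g$ away from $1$ via the analogue of \eqref{eq:gambds} (with the inequality direction adjusted because $g$ is now increasing, so $\gamma_2<\gamma_1$), after which the uniform part of Proposition~\ref{prop:fxpr}(3) applies. Part (3) follows from Proposition~\ref{prop:fxpr}(4): the global maximum of $|f(\cdot,n,r_g)|$ is $n(1-r_g^{1/2})/(1+r_g^{1/2})$, and the inequality $n(1-r_g^{1/2})/(1+r_g^{1/2})\le 1$ rearranges to $r_g\ge((n-1)/(n+1))^2=(1-2/(n+1))^2$; part (1) then concludes.

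For part (4), the opposite inequality pushes $\xi\mapsto\xi g'(\xi)/g(\xi)$ above $1$ somewhere, while Proposition~\ref{prop:fxpr}(1) says it vanishes at $0$ and at $\infty$. The intermediate value theorem therefore yields $\xi^-<\theta_g(g^-/g^+)^{1/2n}<\xi^+$ with $\xi^\pm g'(\xi^\pm)/g(\xi^\pm)=1$. This is both the fold condition \eqref{eq:xigdashgfold} (i.e.\ $\lambda=0$ is a characteristic root) and the vanishing of $\gamma'(\xi)$ for $\gamma(\xi)=\beta e^{-\mu\tau}g(\xi)/\xi$, so $\xi^-$ is a local minimum and $\xi^+$ a local maximum of $\gamma$, producing the S-shaped branch in the $(\gamma,\xi)$-plane. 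Part~(1) then classifies each sub-branch: the outer branches are stable since $\xi g'(\xi)/g(\xi)<1$ off $[\xi^-,\xi^+]$, and the middle branch is unstable since $\xi g'(\xi)/g(\xi)>1$ inside. Evaluating $\gamma$ at $\xi=\theta_g(g^-/g^+)^{1/2n}$ gives the stated value $\sqrt{\gamma_1\gamma_2}/(g^-/g^+)^{1/2n}$.

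Parts (5)--(6) adapt Theorem~\ref{thm:gdown}(5)--(6). With $g'(\xi)>0$, equations \eqref{eq:realhopf}--\eqref{eq:imaghopf} now demand $\cos(\omega\tau)>0$ and $\sin(\omega\tau)<0$, so $\omega\tau\in(3\pi/2+2k\pi,2\pi+2k\pi)$; the upper bound $2\pi(k+1)/\tau$ on $\omega_k$ replaces $\pi(2k+1)/\tau$ and yields the correspondingly modified $M_k$ used in Proposition~\ref{prop:sxi}. Solving \eqref{eq:xigdashgup} reduces to intersecting $f(\cdot/\theta_g,n,r_g)$ with the strictly positive function $s_k$, and intersections occur on either side of the peak at $\theta_g(g^-/g^+)^{1/2n}$ whenever $f(1,n,r_g)=n(1-r_g)/(2(1+r_g))>M_k(\theta_g)$, which rearranges precisely to \eqref{eq:nsuffgup}. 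Part~(6) then follows from $f(1,n,r_g)\to\infty$ as $n\to\infty$ while $M_k(\theta_g)$ stays bounded. The principal obstacle, absent in the decreasing case, is to reconcile these Hopf points with the fold structure from part~(4): I would need to verify that for the parameter window in (5)(b) the Hopf bifurcations lie on the middle, already-unstable branch (thereby carrying the extra positive real characteristic root supplied by part~(1)), and to check that $\omega_k(\gamma)$ stays inside its prescribed cotangent interval as $\xi$ sweeps across the S-fold.
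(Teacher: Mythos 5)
Your proposal is correct and follows essentially the same route as the paper's proof: part (1) via \eqref{eq:xigdashgone} together with the sign of $\Delta(0)$ in \eqref{eq:char0gup}, parts (2)--(3) by transferring the Theorem~\ref{thm:gdown} arguments with the reversed inequality $r_g<1$, part (4) via the sign of $\gamma'(\xi)$ in \eqref{eq:gamdash}, and parts (5)--(6) by the shifted cotangent interval $\omega_k\tau\in(3\pi/2+2k\pi,2\pi+2k\pi)$ and the condition $M_k(\theta_g)<f(1,n,g^-/g^+)$ yielding \eqref{eq:nsuffgup}. The residual issue you flag (locating the Hopf points on the unstable middle branch) is also left at the level of plausibility in the paper, consistent with its stated scope of not verifying non-degeneracy conditions.
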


\begin{proof}
The first part of (1) follows from \eqref{eq:xigdashgone}, while the second part follows from
\eqref{eq:charfgup}--\eqref{eq:xigdashgunstab}.

The proof of the first part of (2) is identical to the proof of the first part of (4) of Theorem~\ref{thm:gdown}.
To show the second part of (2),
note that when $g^+>g^-$ from \eqref{eq:vghill} we have $g^+>g(\xi)>g^-$ which, using \eqref{eq:corners_vconst},
is equivalent
to
\begin{displaymath}
 \gamma_1 > \frac{\beta e^{-\mu\tau}g(\xi)}{\theta_g} > \gamma_2.
\end{displaymath}
Fix $\gamma>0$ and using \eqref{eq:gam}, we further rewrite this as
\begin{displaymath}
 \frac{\gamma_1}{\gamma} > \frac{\xi}{\theta_g} > \frac{\gamma_2}{\gamma}.
\end{displaymath}
Hence,
for $\xi=\xi(n,\gamma)$ satisfying \eqref{eq:h}
if $\gamma>\gamma_1$ then
$\xi<\theta_g\gamma_1/\gamma<\theta_g$.
On the other hand,  if
$\gamma<\gamma_2$ then $\xi>\theta_g\gamma_2/\gamma>\theta_g$.
The result follows from (1) and the second part of Proposition~\ref{prop:fxpr}(3)

Statement (3) is shown similarly to the corresponding result
in Theorem~\ref{thm:gdown}, noting that since $g$ is increasing $r_g=g^-/g^+<1$, which results in a different inequality
than the one found in \eqref{eq:ratnec}.

To show (4) consider the curve of steady states $(\xi,\gamma(\xi))$ for $\xi>0$ where $\gamma(\xi)$ is defined by
\eqref{eq:gam}. Using Proposition~\ref{prop:fxpr}, the conditions of (4) imply that $\xi g'(\xi)/g(\xi)$
has a maximum value larger than $1$ at $\xi=\theta_g(g^-/g^+)^{1/2n}$. Let
$\xi^-<\theta_g(g^-/g^+)^{1/2n}<\xi^+$ be the points where
$\xi^- g'(\xi^-)/g(\xi^-)=\xi^+ g'(\xi^+)/g(\xi^+)=1$. Then the steady state is stable for $\xi<\xi^-$ and
$\xi>\xi^+$, and unstable for $\xi\in(\xi^-,\xi^+)$.
Now differentiating \eqref{eq:gam} we find that
\be \label{eq:gamdash}
\gamma'(\xi)=-\frac{\beta}{\xi^2}e^{-\mu\tau}g(\xi)+\frac{\beta}{\xi}e^{-\mu\tau}g'(\xi)
=\frac{\beta}{\xi^2}e^{-\mu\tau}g(\xi)\left[\frac{\xi g'(\xi)}{g(\xi)}-1\right]
=\frac{\gamma(\xi)}{\xi}\left[\frac{\xi g'(\xi)}{g(\xi)}-1\right].
\ee
Thus $\gamma(\xi)$ is a decreasing function of $\xi$ when the steady state is stable, and an increasing function when it is unstable. From this (4) follows.

The proofs of (5) and (6) are similar to the proof of Theorem~\ref{thm:gdown}, with the main difference being that for a Hopf bifurcation from the unstable steady state we require
$\omega_k\tau\in(3\pi/2+2k\pi,2\pi+2k\pi)$, while for $g$ increasing a sufficient condition to obtain
a solution  of \eqref{eq:xigdashgup} is that $M_k(\theta_g)<f(1,n,g^-/g^+)$.
\end{proof}

It would be interesting if the Hopf and fold bifurcations could exchange positions on the branch, so that the steady state lost stability in a Hopf bifurcation instead of a fold bifurcation. However, this cannot happen with a constant delay because,  as the theorem shows, stability is always lost when $\xi g'(\xi)/g(\xi)=1$ at which point there is a zero characteristic value giving rise to a fold bifurcation.

\subsection{State-dependent delay with $v$ decreasing and $g$ constant $(g\leftrightarrow,v\downarrow)$}\label{sec:gconstvdown}

To understand the dynamics of \eqref{eq:basic}-\eqref{eq:thres} with a state-dependent delay,  
we impose $g^\pm:=g^-=g^+$ in \eqref{eq:vghill} and \eqref{eq:gpwconst},
so $g(\xi)=g^\pm$ is a constant function.
We first consider a decreasing function $v$ with $v^->v^+$ in \eqref{eq:vghill} and \eqref{eq:vpwconst}.

\begin{figure}[tp!]
	\centering
		\begin{tikzpicture}[scale = 1.8]
		\tikzstyle{line} = [-,very thick]
		\tikzstyle{dotted line}=[.]
		\tikzstyle{arrow} = [->,line width = .4mm]
		\tikzstyle{unstable} = [red]
		\tikzstyle{stable} = [blue]
		\tikzstyle{pt} = [circle,draw=black,fill = black,minimum size = 1pt];
		\tikzstyle{bifpt} = [circle,draw=red,fill = red,minimum size = 1pt];
		\tikzstyle{upt} = [circle,draw=black,minimum size = 1pt];
		\def\arrlen{.3}

		%draw function
		\draw[line] (0,2) to (1.2,2);
		\draw[line] (1.2,2) to (1.2,1);
		\draw[line] (1.2,1) to (2.6,1);
		\draw[dotted line] (0,1) to (1.2,1);
		\draw[dotted line] (1.2,0) to (1.2,2);
		
		\draw[line] (0,0.2) to (0.5,2.5);
		\draw[line] (0,0.1) to (1,2.5);
		\draw[line] (0,0) to (1.5,2.5);
		\draw[line] (0,-0.08) to (1.83,2.5);
		\draw[line] (0,-0.165) to (2.13,2.5);
		\draw[line] (0,-0.25) to (2.55,2.5);
		\draw[line] (0.1,-0.25) to (2.6,2);
		\draw[line] (0.2,-0.25) to (2.6,1.4);
		
		\node at (1.2,-.2) {$\theta_v$};

		% equilibrium values
		\node at (-.6,2) {$\beta g e^{-\mu\tau^-}$};
		\node at (-.6,1) {$\beta g e^{-\mu\tau^+}$};
		
		\node[bifpt] at (1.2,1) (8) [] {};
		\node[bifpt] at (1.2,2) (8) [] {};
		\node[upt] at (1.2,1.62) (8) [] {};
		\node[upt] at (1.2,1.33) (8) [] {};
		\node[pt] at (1.5,1) (8) [] {};
		\node[pt] at (2,1) (8) [] {};
		\node[pt] at (0.8,2) (12) [] {};
		\node[pt] at (0.4,2) (12) [] {};
		
\end{tikzpicture}
\caption{Steady states of \eqref{eq:basic} are given by \eqref{eq:h}, and hence  occur at the intersections of $\xi\mapsto\beta e^{-\mu\tau(\xi)}g$ and $\xi\mapsto\gamma\xi$.  These are illustrated for various $\gamma>0$ in the limiting case of \eqref{eq:gpwconst} and \eqref{eq:vpwconst} with $g^\pm= g^-=g^+$, so $g(\xi)=g^\pm$ is a constant function, and $v^->v^+$, so $v$ is 
piecewise constant and monotonically decreasing : $(g\leftrightarrow,v\downarrow)$. Then $\tau(\xi)=a/v(\xi)$
is state-dependent; $\tau(\xi)=\tau^-=a/v^-$ for $\xi<\theta_v$, $\tau(\xi)=\tau^+=a/v^+$ for $\xi>\theta_v$ and $\tau(\xi)$ is set-valued when $\xi=\theta_v$.}
\label{fig:3}
\end{figure}

It is convenient to let $\tau^+:=a/v^+$ and $\tau^-:=a/v^-$.
Then at a steady state $\xi$ with $v^->v^+$, equation \eqref{eq:steadydelay}
implies $\tau(\xi)$ is an increasing function of $\xi$
with $\tau^-<\tau(\xi)<\tau^+$, so $e^{-\mu\tau(\xi)}$ is a decreasing function of $\xi$.
Under these circumstances, $h(\xi)$ is monotonically decreasing and equation \eqref{eq:h} has exactly one solution, and so there is always a unique steady state.

Figure~\ref{fig:3} illustrates the uniqueness of the steady state
in the limiting case when $v$ defined by \eqref{eq:vpwconst} is piecewise constant.
The steady states at the corners are associated with
\begin{equation}
\gamma_3=\frac{\beta g^\pm e^{-\mu\tau^+}}{\theta_v} \quad\text{and}\quad
\gamma_4=\frac{\beta g^\pm e^{-\mu\tau^-}}{\theta_v}.
\label{eq:corners_gconst}
\end{equation}
As discussed in Section~\ref{sec:gdownvconst}, the steady state is stable if $\gamma \in (0,\gamma_3) \cup (\gamma_4, \infty)$ as then $A=g'(\xi)=0$ and the characteristic equation \eqref{eq:char}
reduces to \eqref{eq:char_A0g'0} with exactly one negative real characteristic value. Consequently, the steady state may only be
unstable in the singular case for $\gamma \in (\gamma_3, \gamma_4)$.

Since $A$, which is given by \eqref{eq:A}, is undefined for singular steady states, the characteristic equation \eqref{eq:char_Ag'0} cannot be used to study the stability of the singular steady states. So, instead we consider the stability of the steady states for the smooth velocity nonlinearity $v$ defined in \eqref{eq:vghill}. There will be some similarities to the analysis in Section~\ref{sec:gdownvconst}, but the problem studied in this section with state-dependent delay is significantly more complicated than the constant delay problem considered before.

With $g$ constant and $v$ defined by \eqref{eq:vghill} the characteristic equation is of the form \eqref{eq:char_Ag'0}. Recalling the definition of $A$ from \eqref{eq:A} and using \eqref{eq:h},
we obtain at a steady state $\xi$
\begin{equation} \label{eq:Axi}
A= \beta \frac{v'(\xi)}{v(\xi)}e^{-\mu\tau(\xi)} g^\pm = \gamma\frac{\xi v'(\xi)}{v(\xi)}<0,
\end{equation}
since $v^->v^+$ implies $v'(\xi)<0$. It follows that the right-hand side of \eqref{eq:char_Ag'0} is negative when $\lambda\geq0$, and so there are no non-negative real characteristic values. This is not surprising since we already know that the steady state is unique, and therefore there are no steady-state bifurcations.

Furthermore, since $v(\xi)$ is a Hill function, it follows that
\begin{equation} \label{eq:xivdashvf}
\frac{\xi v'(\xi)}{v(\xi)}=f(\xi/\theta_v,m,v^-/v^+),
\end{equation}
where $f$ is defined by \eqref{eq:fxpr}. Thus Proposition~\ref{prop:fxpr} will be  relevant in what follows.

To investigate the stability of the steady state we consider complex characteristic values.
Let $\lambda=\alpha+i\omega$, $\omega>0$ then \eqref{eq:char_Ag'0} implies
\begin{alignat*}{2}
	\alpha+i\omega =-\gamma&+A(1-e^{-(\alpha+i\omega)\tau(\xi)})(1+\frac{\mu}{\alpha+i\omega})\\
=-\gamma&+\frac{A}{\alpha^2+\omega^2}\big[(1-e^{-\alpha\tau(\xi)}\cos{\omega\tau(\xi)})(\alpha^2+\omega^2+\mu \alpha)+e^{-\alpha\tau(\xi)}\sin{\omega\tau(\xi)}\mu \omega]&&\\
	& +i\frac{A}{\alpha^2+\omega^2}[e^{-\alpha\tau(\xi)}\sin{\omega\tau(\xi)}(\alpha^2+\omega^2+\mu \alpha)-(1-e^{-\alpha\tau(\xi)}\cos{\omega\tau(\xi)})\mu \omega]&&.
\end{alignat*}
Equating the real and imaginary parts yields
\begin{align*}
	\alpha+\gamma &=\frac{A}{\alpha^2+\omega^2}\Big[(1-e^{-\alpha\tau(\xi)}\cos{\omega\tau(\xi)})(\alpha^2+\omega^2+\mu \alpha)+e^{-\alpha\tau(\xi)}\sin{\omega\tau(\xi)}\mu \omega\Big], \\
	\omega &=\frac{A}{\alpha^2+\omega^2}\Big[e^{-\alpha\tau(\xi)}\sin{\omega\tau(\xi)}(\alpha^2+\omega^2+\mu \alpha)-(1-e^{-\alpha\tau(\xi)}\cos{\omega\tau(\xi)})\mu \omega\Big].
\end{align*}
Isolating $e^{-\alpha\tau(\xi)}\sin{\omega\tau(\xi)}$ and $1-e^{-\alpha\tau(\xi)}\cos{\omega\tau(\xi)}$ respectively gives
\begin{align}
	A\big(1-e^{-\alpha\tau(\xi)}\cos{\omega\tau(\xi)}\big)\big[(\alpha+\mu)^2+\omega^2\big] &= (\alpha+\gamma)(\alpha^2+\omega^2+\mu \alpha)-\mu \omega^2, \label{eq:cos}\\
	Ae^{-\alpha\tau(\xi)}\sin{\omega\tau(\xi)}\big[(\alpha+\mu)^2+\omega^2\big] &= \omega\Big((2\alpha+\gamma)\mu+\alpha^2+\omega^2\Big). \label{eq:sin}
\end{align}

Notice that since $A<0$, for $\alpha\geq0$ the left-hand side of \eqref{eq:cos} is non-positive.
On the other hand, when
\be \label{eq:gamgtmu}
\gamma>\mu
\ee
and $\alpha\geq0$ then the right-hand side of \eqref{eq:cos} is strictly positive. Consequently if \eqref{eq:gamgtmu}
holds, then all the characteristic values must have $\alpha<0$ and the steady state is asymptotically stable.

Next we show that the steady state is also stable if $\gamma\leq\mu$ and
\begin{equation}
	\left|\frac{\xi v'(\xi)}{v(\xi)}\right| < \frac{1}{\mu\tau(\xi)}.
	\label{eq:xivdashvmutau}
\end{equation}
To do so, for contradiction suppose that $\lambda=\alpha+i\omega$ is
a characteristic value with $\alpha\geq0$ and $\omega>0$
where $\gamma\leq\mu$ and \eqref{eq:xivdashvmutau} holds.
Then $|\sin{\omega\tau(\xi)}|\leq|\omega\tau(\xi)|$ and \eqref{eq:xivdashvmutau} implies
\begin{equation} \label{eq:ineqomegamu}
\Big|\frac{\xi v'(\xi)}{v(\xi)}\sin{\omega\tau(\xi)}\Big|
< \frac{\omega}{\mu}.
\end{equation}
Consequently, using \eqref{eq:Axi},
\begin{align*}
\Big|Ae^{-\alpha\tau(\xi)}\sin{\omega\tau(\xi)}\big[(\alpha+\mu)^2+\omega^2\big]\Big|
& < \frac{\gamma\omega}{\mu}\big((\alpha+\mu)^2+\omega^2\big) \\
& = \omega\Big((2\alpha+\mu)\gamma+\frac{\gamma}{\mu}(\alpha^2+\omega^2)\Big) \\
& \leq \omega\big((2\alpha+\gamma)\mu+(\alpha^2+\omega^2)\big).
\end{align*}
However,  this contradicts the assumption that $\lambda=\alpha+i\omega$ satisfies \eqref{eq:sin}. We thus conclude that the steady state is asymptotically stable whenever \eqref{eq:gamgtmu} or \eqref{eq:xivdashvmutau} is satisfied.

Since the right-hand side of \eqref{eq:xivdashvmutau} is bounded below by $1/\mu\tau^+$, it follows from
Proposition~\ref{prop:fxpr}(1) that the steady state is stable for $0<\xi\ll \theta_v$ and
for $\xi\gg\theta_v$.

To determine when the steady state may be unstable we investigate the basic spectral condition for Hopf bifurcation, namely, the existence of a pair of complex conjugate eigenvalues on the imaginary axis. A proof that Hopf bifurcations actually occur for our equations  would, of course, require  in addition that a pair of eigenvalues crosses the imaginary axis at nonzero speed, that a nonresonance condition is satisfied, and furthermore that the right hand side of the delay differential equation has certain higher order smoothness properties, see \cite{Eichmann06,HuWu10,Sieber12} for the case of state-dependent delays.

Assume that $\lambda=\pm i\omega$, $\omega>0$ solves equation \eqref{eq:char_Ag'0}.
Then with $\alpha=0$ equations \eqref{eq:cos} and \eqref{eq:sin} reduce to
\begin{align}
A(1-\cos(\omega\tau))(\omega^2+\mu^2)=\omega^2 (\gamma-\mu),
\label{eq:sin_cancelled}
\\
A\sin(\omega\tau)(\omega^2+\mu^2) = \omega(\omega^2+\gamma\mu). \label{eq:cos_cancelled}
\end{align}
Since $A<0$ and the right-hand side of \eqref{eq:cos_cancelled} is positive, at any Hopf bifurcation we must have  $\sin(\omega\tau)<0$ to satisfy \eqref{eq:cos_cancelled}.
Moreover, the left-hand side of
\eqref{eq:sin_cancelled} is negative, and so a Hopf bifurcation is only possible if the
right-hand side is also negative, that is if
\be \label{eq:gam<mu}
\gamma<\mu.
\ee

In Section~\ref{sec:gdownvconst} it was so simple to rewrite \eqref{eq:real} and \eqref{eq:imag} as \eqref{eq:omega} and \eqref{eq:gdash} that we did so without comment.
Equation \eqref{eq:gdash} involves the derivative $g'(\xi)$; here the analogous term is $v'(\xi)$ which is part of $A$.
We want to rewrite
\eqref{eq:sin_cancelled} and \eqref{eq:cos_cancelled} as one equation for $\omega$ which is independent
of $v'(\xi)$ and one equation for $v'(\xi)$ which contains no trigonometric functions. To accomplish this we make use of half-angle formulae.

Let $U = \omega\tau/2$ then
\begin{displaymath}
\tan\frac{\omega\tau}{2}=\tan U = \frac{2\sin^2 U}{2\sin U\cos U}=\frac{1-\cos(\omega\tau)}{\sin(\omega\tau)},
\end{displaymath}
and hence
using \eqref{eq:cos_cancelled} and \eqref{eq:sin_cancelled}
\be \label{eq:tanU}
\tan\frac{\omega\tau}{2}=\frac{\omega(\gamma-\mu)}{\omega^2+\gamma\mu}.
\ee

We next simplify \eqref{eq:cos_cancelled} using another half-angle formula.
Still with $U =\omega\tau/2$, from the standard formula
$$
\sin\omega\tau=\frac{2\tan U}{1+\tan^2U},
$$
on substituting for $\tan U$ from \eqref{eq:tanU} we obtain
$$
\sin\omega\tau=\frac{2\omega(\gamma-\mu)(\omega^2+\gamma\mu)}{(\omega^2+\gamma^2)(\omega^2+\mu^2)}.
$$
Substituting this into \eqref{eq:cos_cancelled}, rearranging and using \eqref{eq:Axi} gives
\be \label{eq:AU}
\frac{\xi v'(\xi)}{v(\xi)}=\frac{\omega^2+\gamma^2}{2\gamma(\gamma-\mu)}.
\ee

At a Hopf bifurcation equations \eqref{eq:h}, \eqref{eq:sin_cancelled} and \eqref{eq:cos_cancelled}
must all be satisfied.
This is equivalent to solving
\be \label{eq:gamv}
\gamma = \frac{\beta g^\pm}{\xi} e^{-\mu\tau(\xi)},
\ee
along with \eqref{eq:tanU} and
\eqref{eq:AU}.

We will follow similar steps as in Section~\ref{sec:gdownvconst}, and 
consider \eqref{eq:gamv},\eqref{eq:tanU} and \eqref{eq:AU} sequentially, using the first two equations to define $\gamma$ and $\omega_k$ as functions of $\xi$, so that it only remains to solve \eqref{eq:AU} for $\xi$.
But because of the state-dependent delay and the  constraint \eqref{eq:gam<mu} the situation is not as simple as in the constant delay case considered in the previous two sections.

Note first that for any $\xi>0$ equation \eqref{eq:gamv}
gives a unique value of $\gamma= \gamma(\xi)$. Moreover, since as already noted, $\tau$ is monotonically increasing it follows that $\gamma$ is a monotonically decreasing function of $\xi$.

For $\gamma$ satisfying \eqref{eq:gam<mu}, the right-hand side of \eqref{eq:tanU} is negative. Then because of the properties of the $\tan$ function in \eqref{eq:tanU} there will be at least one solution $\omega \tau$ to
\eqref{eq:tanU} satisfying $\omega\tau \in((2k+1)\pi,(2k+2)\pi)$ for $k=0,1,2,\ldots$. We denote by $\omega_k$ any solution of \eqref{eq:tanU} for which $\omega_k\tau(\xi)\in((2k+1)\pi,(2k+2)\pi)$.

At this point, we have defined $\gamma$ and $\omega_k$ as functions of $\xi$. We still need to solve
for $\xi$, or  $\xi_k$ from \eqref{eq:AU}.
We already considered the behaviour of the left-hand side of \eqref{eq:AU} in
Proposition~\ref{prop:fxpr}, so we now consider the behaviour of the right-hand side. Let
\be \label{eq:rk}
r_k(\gamma)=\frac{\omega_k^2+\gamma^2}{2\gamma(\gamma-\mu)}.
\ee
Usually we will take $\gamma=\gamma(\xi)$, defined by \eqref{eq:gamv}, but in the proposition below we consider
$r_k(\gamma)$ for general $\gamma$. Note that a solution of \eqref{eq:AU} corresponds to a solution
of $f(\xi/\theta_v,m,v^-/v^+)=r_k(\gamma(\xi))$, and since $v$ is decreasing both $f$ and $r_k$ will be negative at such a solution, thus from
\eqref{eq:rk} we require $\gamma<\mu$.

\begin{prop} \label{prop:rgamma}
Let $r_k(\gamma)$ be defined by \eqref{eq:rk} and $\omega_k$ satisfy
$\omega_k\tau\in((2k+1)\pi,(2k+2)\pi)$ for integer $k\geq0$. Then for $\gamma\in(0,\mu)$ it holds that
\begin{enumerate}
\item
$$|r_k(\gamma)|\geq2\left(\frac{\omega_k}{\mu}\right)^2.$$
\item
$r_k(\gamma)\to-\infty$ as $\gamma\to0$ or $\gamma\to\mu$,
\item
$$|r_k(\gamma)| \leq \frac{(2k+2)^2(\pi/\tau^-)^2+\gamma^2}{2\gamma(\mu-\gamma)}.$$
\end{enumerate}
\end{prop}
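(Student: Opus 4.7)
\textbf{Proof plan for Proposition~\ref{prop:rgamma}.}

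The key preliminary observation is that for $\gamma\in(0,\mu)$ the numerator $\omega_k^2+\gamma^2$ is strictly positive while the denominator $2\gamma(\gamma-\mu)$ is strictly negative, so $r_k(\gamma)<0$ and
\[
|r_k(\gamma)|=\frac{\omega_k^2+\gamma^2}{2\gamma(\mu-\gamma)}.
\]
All three bounds will then follow by comparing numerator and denominator separately, using the elementary fact that $\gamma(\mu-\gamma)\le \mu^2/4$ on $(0,\mu)$ (with equality at $\gamma=\mu/2$), together with the interval constraint $(2k+1)\pi<\omega_k\tau<(2k+2)\pi$ and the a-priori bounds $\tau^-\le\tau(\xi)\le\tau^+$ recorded at the start of Section~\ref{sec:positivity}.

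For part~(1), the plan is to apply the quadratic bound $2\gamma(\mu-\gamma)\le \mu^2/2$ in the denominator and then discard the nonnegative $\gamma^2$ term in the numerator. This gives
\[
|r_k(\gamma)|\ge\frac{\omega_k^2+\gamma^2}{\mu^2/2}\ge\frac{2\omega_k^2}{\mu^2}.
\]
No branch information on $\omega_k$ is used here, only that $\omega_k$ is real.

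For part~(2), the plan is to combine the bound $|r_k(\gamma)|\ge (\omega_k^2+\gamma^2)/(2\gamma(\mu-\gamma))$ with a strictly positive lower bound on $\omega_k^2$. Since $\omega_k\tau>(2k+1)\pi$ and $\tau\le\tau^+$ we have $\omega_k>(2k+1)\pi/\tau^+>0$, so the numerator is uniformly bounded below by a positive constant as $\gamma\to 0^+$ or $\gamma\to\mu^-$. The denominator $2\gamma(\mu-\gamma)$ tends to $0$ in both limits, so $|r_k(\gamma)|\to\infty$, and since $r_k<0$ on $(0,\mu)$ we conclude $r_k(\gamma)\to-\infty$.

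For part~(3), the plan is symmetric: use the upper bound $\omega_k<(2k+2)\pi/\tau\le (2k+2)\pi/\tau^-$ in the numerator to obtain
\[
|r_k(\gamma)|=\frac{\omega_k^2+\gamma^2}{2\gamma(\mu-\gamma)}\le\frac{(2k+2)^2(\pi/\tau^-)^2+\gamma^2}{2\gamma(\mu-\gamma)}.
\]
The only nontrivial ingredient in the whole proof is the uniform lower bound on $\omega_k$ used in part~(2), which relies on the a-priori upper bound $\tau\le\tau^+$ from Section~\ref{sec:positivity}; the rest is a routine sign analysis and substitution of the interval endpoints, so I do not anticipate any real obstacle.
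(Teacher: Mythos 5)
Your argument is correct and is exactly the elementary computation the paper intends: the paper's own proof of Proposition~\ref{prop:rgamma} is simply the statement ``The proof is elementary,'' and your sign analysis ($r_k<0$ on $(0,\mu)$), the bound $\gamma(\mu-\gamma)\le\mu^2/4$ for part~(1), the uniform lower bound $\omega_k>(2k+1)\pi/\tau^+$ for part~(2), and the substitution $\omega_k<(2k+2)\pi/\tau^-$ for part~(3) fill in those details correctly. The only cosmetic slip is in part~(2), where the expression you call a ``bound'' on $|r_k(\gamma)|$ is in fact an equality.
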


\begin{proof}
The proof is elementary.
\end{proof}

While there is a unique steady state $\xi$ for the case of decreasing $v$ with constant $g$, just as in 
Section~\ref{sec:gdownvconst}
the location and properties of this steady state will depend on the values of the other parameters. The following theorem collects together our results for this case.

\begin{thm} \label{thm:vdown}
Let $\xi$ be the steady state of the DDE \eqref{eq:basic},\eqref{eq:thres} with $g(\xi)=g^\pm$ constant,
and $v$ monotonically decreasing, so $v^->v^+$ and the state-dependent delay $\tau$ defined by \eqref{eq:vghill}
evaluated at the steady state is $\tau(\xi)=a/v(\xi)$.
Then
\begin{enumerate}
\item
If $\left|\frac{\xi v'(\xi)}{v(\xi)}\right|<\frac{1}{\mu\tau(\xi)}$ then the steady state  $\xi$ is asymptotically stable.
\item
The steady state
$\xi$ is asymptotically stable
if $m\leq1/(\mu\tau(\xi))$, or if both  $m>1/(\mu\tau(\xi))$ and
$r=r_v=\frac{v^-}{v^+}<\Bigl(1+\frac{2}{m\mu\tau(\xi)-1}\Bigr)^2$.
\item
For any fixed $m>0$, and for $0<\xi\ll\theta_v$ or $\xi\gg\theta_v$, or equivalently for $\gamma\gg\gamma_4$ or
$0<\gamma\ll\gamma_3$, the steady state is asymptotically stable.
\item
If $\gamma>\mu$ the steady state is asymptotically stable.
\item
For any fixed $\xi\ne\theta_v$ let $\gamma= \gamma(m,\xi)$ be the value of $\gamma$ such that \eqref{eq:h} is satisfied and hence $\xi$ is a steady state.
Or, for any fixed
$\gamma$ with $0<\gamma<\gamma_3$ or $\gamma>\gamma_4$ let $\xi=\xi(m,\gamma)$ satisfy  \eqref{eq:h} and hence
be a steady state.
Then $\xi$ is asymptotically stable for all $m$ sufficiently large.
\item
Let $\mu>\gamma_3$.
\begin{enumerate}
\item
For any $k>0$, for all $m=m(k)$ sufficiently large there are
two families of (at least) $k+1$ Hopf bifurcations as $\gamma$ is varied.
In the first family, the characteristic values
$\lambda_j=\alpha_j\pm i\omega_j$ with $\omega_j\tau\in((2j+1)\pi,(2j+2)\pi)$ for $j=0,1,\ldots,k$
cross the imaginary axis from left to right as $\gamma$ increases, while in the second family they cross the imaginary axis from right to left.
\item
Let $\gamma\in(\gamma_3,\min\{\gamma_4,\mu\})$ be fixed. Then as $m$ is increased there is an infinite
sequence of Hopf bifurcations where the real part of $\lambda_k=\alpha_k\pm i\omega_k$ becomes positive
with $\omega_k\tau\in((2k+1)\pi,(2k+2)\pi)$.
\end{enumerate}
\end{enumerate}
\end{thm}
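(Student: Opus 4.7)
My plan is to dispatch (1)--(5) using the spectral analysis already performed above together with Proposition~\ref{prop:fxpr}, and to handle the Hopf part (6) by reducing the three conditions \eqref{eq:gamv}, \eqref{eq:tanU}, \eqref{eq:AU} to a single scalar equation in $\xi$ and comparing its two sides via Propositions~\ref{prop:fxpr} and~\ref{prop:rgamma}. Parts (1) and (4) simply repackage the inequalities \eqref{eq:xivdashvmutau} and \eqref{eq:gamgtmu} whose sufficiency for stability was established in the discussion leading up to the theorem. For (2), using \eqref{eq:xivdashvf} and Proposition~\ref{prop:fxpr}(4), the global maximum of $|\xi v'(\xi)/v(\xi)|$ is $m(r_v^{1/2}-1)/(r_v^{1/2}+1)$, and the two sub-hypotheses each force this maximum strictly below $1/(\mu\tau(\xi))$, at which point (1) applies. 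For (3), Proposition~\ref{prop:fxpr}(1) gives $|f(\xi/\theta_v,m,r_v)| \to 0$ as $\xi \to 0^+$ or $\xi \to \infty$ while $1/(\mu\tau(\xi)) \geq 1/(\mu\tau^+) > 0$; the equivalent statement for extremes of $\gamma$ follows because $\gamma(\xi)$ defined by \eqref{eq:gamv} is strictly decreasing with limits $+\infty$ and $0$ at $\xi \to 0^+$ and $\xi \to \infty$ respectively. For (5), a fixed $\xi \neq \theta_v$ is handled by the pointwise limit $|f| \to 0$ as $m \to \infty$ in Proposition~\ref{prop:fxpr}(3); for fixed $\gamma \notin (\gamma_3, \gamma_4)$ the bounds $\tau(\xi) \in [\tau^-,\tau^+]$ combined with \eqref{eq:gamv} confine $\xi$ to $[\theta_v\gamma_3/\gamma, \theta_v\gamma_4/\gamma]$, uniformly away from $\theta_v$, and the uniform part of Proposition~\ref{prop:fxpr}(3) concludes.

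For the Hopf part (6), I would follow the template of Theorem~\ref{thm:gdown}. Using \eqref{eq:gamv} to define $\gamma(\xi)$, and \eqref{eq:tanU} to define branches $\omega_k(\xi)$ with $\omega_k\tau(\xi) \in ((2k+1)\pi,(2k+2)\pi)$ (solvable precisely because $\gamma < \mu$), the spectral Hopf condition \eqref{eq:AU} reduces to
\[
 f(\xi/\theta_v, m, v^-/v^+) \;=\; r_k(\gamma(\xi)),
\]
with both sides negative. By Proposition~\ref{prop:fxpr}(4), the left-hand side attains $-m(r_v^{1/2}-1)/(r_v^{1/2}+1)$ at $\xi = \theta_v r_v^{1/2m}$, which diverges to $-\infty$ as $m \to \infty$; by Proposition~\ref{prop:rgamma}(3), the right-hand side stays bounded on any compact subinterval of $\gamma \in (0,\mu)$. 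Since $\mu > \gamma_3$, the interval $(\gamma_3, \min\{\gamma_4,\mu\})$ is nonempty, so for $m$ large enough the left-hand side pierces each of the first $k+1$ curves $\xi \mapsto r_j(\gamma(\xi))$, $j=0,\ldots,k$, at two points straddling $\theta_v$. Translating back through the strictly decreasing map $\gamma(\xi)$, this produces two families of $k+1$ Hopf values of $\gamma$, proving (6a); the direction in which $\lambda = \alpha \pm i\omega_j$ crosses the imaginary axis in each family is obtained by implicit differentiation of \eqref{eq:cos}--\eqref{eq:sin} with respect to $\gamma$ at $\alpha=0$. For (6b), fixing $\gamma \in (\gamma_3, \min\{\gamma_4,\mu\})$, continuity and the singular-limit identity $\xi = \theta_v$ force $\xi(m,\gamma) \to \theta_v$ as $m \to \infty$, whence $|f(1,m,r_v)| = m(r_v-1)/(2(r_v+1)) \to \infty$ by Proposition~\ref{prop:fxpr}(2) while each $|r_k|$ remains bounded, so the $k$-th curve is crossed for all $m \geq m_k$, giving the infinite sequence.

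The main obstacle is controlling the global count of intersections rather than proving their mere existence: to obtain \emph{at least} $k+1$ Hopf crossings in each family one needs a monotonicity or bracketing property for $\xi \mapsto r_k(\gamma(\xi))$ on each side of $\theta_v$, which is not automatic because $\gamma(\xi)$, $\tau(\xi)$ and hence $\omega_k$ all depend on $\xi$ while the peak of $|f|$ itself slides with $m$ toward $\theta_v$. A secondary caveat, already flagged in the introduction, is that this argument verifies only the spectral condition; the transversality and non-resonance needed to conclude the existence of genuine periodic orbits via a Hopf theorem for state-dependent delay equations (cf.\ \cite{Eichmann06,HuWu10,Sieber12}) would have to be checked separately.
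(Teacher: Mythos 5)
Parts (1)--(5) of your argument coincide with the paper's proof: (1) and (4) restate \eqref{eq:xivdashvmutau} and \eqref{eq:gamgtmu}, (2) and (3) invoke Proposition~\ref{prop:fxpr}(4) and (1) respectively, and (5) uses the confinement $\gamma_3/\gamma<\xi/\theta_v<\gamma_4/\gamma$ exactly as the paper does. These are fine.

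For part (6) there is a genuine gap. Your piercing argument compares the peak value $|f(r_v^{1/2m},m,r_v)|\sim m$ of the left-hand side against the bounded right-hand side $|r_j(\gamma(\xi))|$, but the two curves can only intersect where $r_j$ is actually negative, i.e.\ where $\gamma(\xi)<\mu$ (recall \eqref{eq:gam<mu} and \eqref{eq:rk}). The peak of $|f|$ sits at $\xi=\theta_v r_v^{1/2m}\to\theta_v$, and the corresponding decay rate $\gamma(\theta_v)=\beta g^\pm e^{-\mu\tau(\theta_v)}/\theta_v$ is a fixed number in $(\gamma_3,\gamma_4)$ independent of $m$. The hypothesis $\mu>\gamma_3$ does \emph{not} guarantee $\gamma(\theta_v)<\mu$; when $\gamma_3<\mu\le\gamma(\theta_v)<\gamma_4$ the peak of $|f|$ lies in the region where the steady state is unconditionally stable by part (4), so the comparison at the peak is vacuous and your two intersection points cannot be claimed to ``straddle $\theta_v$'' (indeed in this regime both Hopf points occur for $\xi$ on one side of $\theta_v$, in the set $\{\xi:\gamma(\xi)<\mu\}$). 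The paper splits into the two cases $\gamma(\theta_v)<\mu$ and $\gamma_3<\mu\le\gamma(\theta_v)$; in the second it fixes $\gamma\in(\gamma_3,\mu)$, observes that $v(\xi(m,\gamma))$ must converge to a value strictly between $v^+$ and $v^-$ (so that $\xi(m,\gamma)$ is pinned inside the shrinking transition layer of the Hill function), and deduces $v'(\xi(m,\gamma))\to-\infty$, hence $\xi v'(\xi)/v(\xi)\to-\infty$, before running the intermediate-value argument in both directions from that base point. Your part (6b) has the same defect: from $\xi(m,\gamma)\to\theta_v$ you jump to $|f(1,m,r_v)|\to\infty$, but $f(1,m,r_v)$ is the value at $\xi=\theta_v$ exactly, not at $\xi(m,\gamma)$, and since $f(\cdot,m,r_v)$ concentrates at $\theta_v$ as $m\to\infty$ the convergence $\xi(m,\gamma)\to\theta_v$ alone does not imply $|f(\xi(m,\gamma)/\theta_v,m,r_v)|\to\infty$; the quantitative localisation of $\xi(m,\gamma)$ inside the interface, via the convergence of $v(\xi(m,\gamma))$, is the missing ingredient. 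Your two flagged caveats are, by contrast, not real obstacles at the level the theorem is stated: the count of $k+1$ bifurcations per family comes from one IVT crossing for each of the distinct branches $r_0,\dots,r_k$, not from counting multiple roots of a single equation, and the paper explicitly restricts itself to the spectral condition without verifying transversality or non-degeneracy.
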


\begin{proof}
Statements (1) and (4) were already shown; see equations \eqref{eq:Axi}, \eqref{eq:xivdashvmutau} and \eqref{eq:gamgtmu}.

Claim (2) follows from (1) using Proposition~\ref{prop:fxpr}, since
\begin{displaymath}
\left|\frac{\xi v'(\xi)}{v(\xi)}\right|\leq|f(r_v^{1/2m},m,r_v)|=\frac{m|1-r_v^{1/2}|}{1+r_v^{1/2}}
\end{displaymath}
where $r_v=v^-/v^+>1$.

Claim (3) also follows from (1), similarly to the proof of Theorem~\ref{thm:gdown}(3).

Statement (5) is derived similarly to Theorem~\ref{thm:gdown} (4), by using \eqref{eq:gamv} and
\eqref{eq:corners_gconst} to show that
$e^{-\mu\tau^+}<e^{-\mu\tau(\xi)}<e^{-\mu\tau^-}$ implies that
$\gamma_3/\gamma<\xi/\theta_v<\gamma_4/\gamma$.

Finally, (6) is more delicate to prove. As noted before Proposition~\ref{prop:rgamma},
to find a Hopf bifurcation we need to solve
$f(\xi/\theta_v,m,v^-/v^+)=r_k(\gamma(\xi))$. Since for $\xi\ne\theta_v$ the function $f(\xi/\theta_v,m,v^-/v^+)\to0$ as $m\to\infty$,
we begin by considering $\xi=\theta_v$. Recall that
$\gamma(\xi)$ defined by \eqref{eq:gamv} is a monotonically decreasing function of $\xi$. Moreover, using
\eqref{eq:vghill}
we see that $\tau(\theta_v)=2a/(v^-+v^+)\in(\tau^-,\tau^+)$, which is independent of the value of $m$. Then
\be \label{eq:gamthetav}
\gamma(\theta_v)=\frac{\beta g^\pm}{\theta_v}e^{-\mu\tau(\theta_v)} \in (\gamma_3,\gamma_4),
\ee
and $\gamma(\theta_v)$ is also independent of the value of $m$.

Now there are two cases to consider. First consider the case where $\gamma(\theta_v)<\mu$.
If
\be \label{eq:msuffvdown}
m > 2\frac{(v^-/v^+)+1}{(v^-/v^+)-1}
\left( \frac{(2k+2)^2(\pi/\tau(\theta_v))^2+\gamma(\theta_v)^2}{2\gamma(\theta_v)(\mu-\gamma(\theta_v))}\right),
\ee
then
\begin{align*}
f(1,m,v^-/v^+)=-\frac{m(v^-/v^+-1)}{2(v^-/v^++1)}
 & < \left( \frac{(2k+2)^2(\pi/\tau(\theta_v))^2+\gamma(\theta_v)^2}{2\gamma(\theta_v)(\gamma(\theta_v)-\mu)}\right)\\
& \leq r_j(\gamma(\theta_v)), \quad j=0,1,\ldots,k.
\end{align*}
Here the equality comes from definition of $f$, the strict inequality from \eqref{eq:msuffvdown} and the last inequality follows from a similar argument that proves Proposition~\ref{prop:rgamma}(3), the only difference being that here we use the actual value of $\tau(\theta_v)$ in the inequality, rather than the bound $\tau^-$.

With this inequality as the starting point, we examine what happens when we increase $\xi$ away from $\theta_v$.
If $\xi$ is increased then $\gamma(\xi)$ decreases with
$\lim_{\xi\to\infty}\gamma(\xi)=0$. But $r_j(\gamma(\xi))$ and $f(\xi/\theta_v,m,v^-/v^+)$ are both
continuous functions of $\xi$ with
$\lim_{\xi\to\infty}r_j(\gamma(\xi))=-\infty$ and
$\lim_{\xi\to\infty}f(\xi/\theta_v,m,v^-/v^+)=0$.
Consequently for each $j=0,1,\ldots,k$ there exists a $\xi$ such that
$f(\xi/\theta_v,m,v^-/v^+)=r_j(\gamma(\xi))$.

If instead $\xi$ is decreased from $\theta_v$
then $\lim_{\xi\to0}\gamma(\xi)=+\infty$, so $\gamma(\xi)>\mu$ for $\xi$ sufficiently small.
However, for $\gamma(\xi)<\mu$ we have
$\lim_{\gamma\nearrow\mu}r_j(\gamma)=-\infty$, while $f(\xi/\theta_v,m,v^-/v^+)$ is bounded, so again
for each $j=0,1,\ldots,k$ there exists a $\xi$ such that
$f(\xi/\theta_v,m,v^-/v^+)=r_j(\gamma(\xi))$.

Solutions of this equation define the Hopf bifurcation points, which
gives the required Hopf bifurcations when $\gamma(\theta_v)<\mu$.
To summarize the argument up to this point, for a fixed $k$ and any large enough  $m=m(k)$ satisfying \eqref{eq:msuffvdown}, we found two families of $k+1$ Hopf bifurcations, one for $\xi_j < \theta_v$ and one for $\xi_j > \theta_v$, by finding appropriate $\gamma(\xi_j)$ that satisfy $f(\xi_j/\theta_v,m,v^-/v^+)=r_j(\gamma(\xi_j))$ for each $j=0, \ldots, k$. See Figure~\ref{fig:vdown_ex2}(d) for illustration of these families as functions of the parameters $m$ and $\gamma$.

Now consider the more delicate case
where $\gamma_3<\mu\leq \gamma(\theta_v)<\gamma_4$. The above argument fails in that case as
$|f(\xi/\theta_v,m,v^-/v^+)|\gg0$ for $\xi=\theta_v$ but the corresponding $\gamma$ is $\gamma(\theta_v)$,
with $\gamma(\theta_v)>\mu$, and by (4) the steady state would be asymptotically stable.
Instead, noting that $\gamma(\xi)$ defined by \eqref{eq:gamv} is monotonically decreasing, this function is invertible and we can instead consider $\xi = \xi(\gamma) $ as a function of $\gamma$. Fix $\gamma\in(\gamma_3,\mu)$, and consider the behaviour as $m\to\infty$. In this case the function $v(\xi)$ defined by \eqref{eq:vghill}
approaches the piecewise constant function \eqref{eq:vpwconst}, and the steady-state function $h(\xi)$
(recall \eqref{eq:h}) approaches the case illustrated in Figure~\ref{fig:3}.
Since $\gamma$ is fixed with $\gamma\in(\gamma_3,\mu)\subset(\gamma_3,\gamma_4)$,
we find $\xi\to\theta_v$ and $v'(\xi)\to-\infty$ while $v(\xi)$ and $\xi$ remain bounded and bounded away from zero. Consequently,  for $m$ sufficiently large
$\frac{\xi v'(\xi)}{v(\xi)}<r_j(\gamma)$ for $j=0,1,\ldots,k$. From here the argument proceeds as in the
case $\gamma(\theta_v)<\mu$. Statement (6)b also follows trivially in the case that
$\gamma_3<\mu\leq \gamma(\theta_v)<\gamma_4$.
\end{proof}

Although Theorem~\ref{thm:vdown}(6) is stated for one-parameter continuation in $m$ or $\gamma$, we can also draw conclusions for two-parameter continuation of the Hopf bifurcations in the $(\gamma,m)$-parameter plane.

The argument used in the proof of Theorem~\ref{thm:vdown}(6) in the case when  $\mu\in(\gamma_3,\gamma_4)$
shows that for any $\gamma_- > \gamma_3$ the $k$-th Hopf bifurcation occurs for $\gamma\in(\gamma_3,\gamma_-)$ for all $m$ sufficiently large. Then because of Theorem~\ref{thm:vdown}(5)
the $k$-th Hopf bifurcation approaches $\gamma=\gamma_3$ as $m\to\infty$, and in the $(\gamma,m)$-parameter plane the left side of the Hopf bifurcation curves asymptote to $\gamma=\gamma_3$ as $m\to\infty$. Similarly
for any $\gamma_+ < \mu$ the other instance of the $k$-th Hopf bifurcation occurs for $\gamma\in(\gamma_+,\mu)$ for all $m$ sufficiently large, and the right
side of the Hopf bifurcation curves asymptote to $\gamma=\mu$ as $m\to\infty$.

In the case that $\mu>\gamma_4$, a similar argument can be applied to show that the $k$-th Hopf bifurcation curve asymptotes to $\gamma=\gamma_3$ and $\gamma=\gamma_4$ in the
$(\gamma,m)$-parameter plane.

There are nevertheless differences between
the cases where
$\gamma$ converges to $\gamma_3$ or $\gamma_4$ as $m\to\infty$ and the case where
$\gamma\to\mu$ as $m\to\infty$.
To see this consider for fixed $k$ the limit as
$m \to \infty$ when $v$ approaches the piecewise constant function \eqref{eq:vpwconst}.
Then from Theorem~\ref{thm:vdown}, we have $\xi\to\theta_v$, and $\gamma\in[\gamma_3,\min\{\gamma_4,\mu\}]$.
Since $\tau(\xi)\in[a/v_U,a/v_L]$ and $\omega_k\tau(\xi)\leq(2k+2)\pi$ it also follows that $\omega_k$ is bounded. Thus the numerator of the right-hand side of \eqref{eq:AU} also remains bounded.
Now there are two cases to consider.

First suppose that as $m\to\infty$ and $\xi\to\theta_v$ that $v'(\xi)$ becomes unbounded,
that is $v'(\xi)\to-\infty$,
or equivalently that $A \to -\infty$. Then the left-hand side of \eqref{eq:AU} becomes unbounded in the limit as $m\to\infty$.
Since  the numerator of the right-hand side is bounded,
we must have  $\gamma -\mu = \mathcal{O}(1/A)\to0$ to satisfy
equation~\eqref{eq:AU}.
To summarize, if $v'(\xi) \to-\infty$ as $m\to\infty$ we must have that $\gamma\to\mu$ in this limit.

On the other hand, if $A<0$ remains finite as $m\to \infty$, because $\gamma$, $\xi$ and $v(\xi)$ are bounded and bounded away from zero in the limit, the only possibility in \eqref{eq:Axi}   is that $v'(\xi)$ also remains finite.
But as $m\to\infty$ the function $v(\xi)$ approaches a step function, and the only places where $v'(\xi)$ is non-zero and finite are near the corners of the limiting velocity nonlinearity.
Consequently, the only possibility for a Hopf bifurcation to exist for arbitrary $m$ is that the steady state at which this Hopf bifurcation happens converges to the corners of the limiting velocity nonlinearity. That is,
in the limit as $m \to \infty$ with $A<0$ finite, we must have that $\gamma\to\gamma_3$ or $\gamma\to\gamma_4$
with $\xi\to\theta_v$, where $\gamma_3$ and $\gamma_4$ are defined by \eqref{eq:corners_gconst}.

Below we illustrate the different possible behaviours allowed by Theorem~\ref{thm:vdown} in the three cases: $\mu<\gamma_3$, $\mu\in(\gamma_3,\gamma_4)$ and $\mu>\gamma_4$.

\begin{figure}[tp!]
	\centering \includegraphics[scale=0.5]{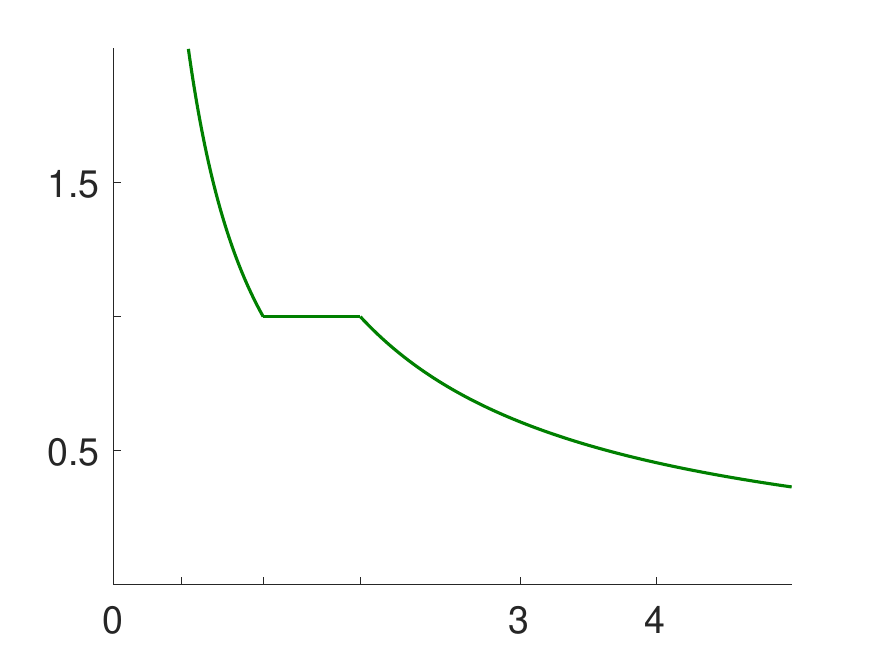}\hspace*{0.5em}\includegraphics[scale=0.5]{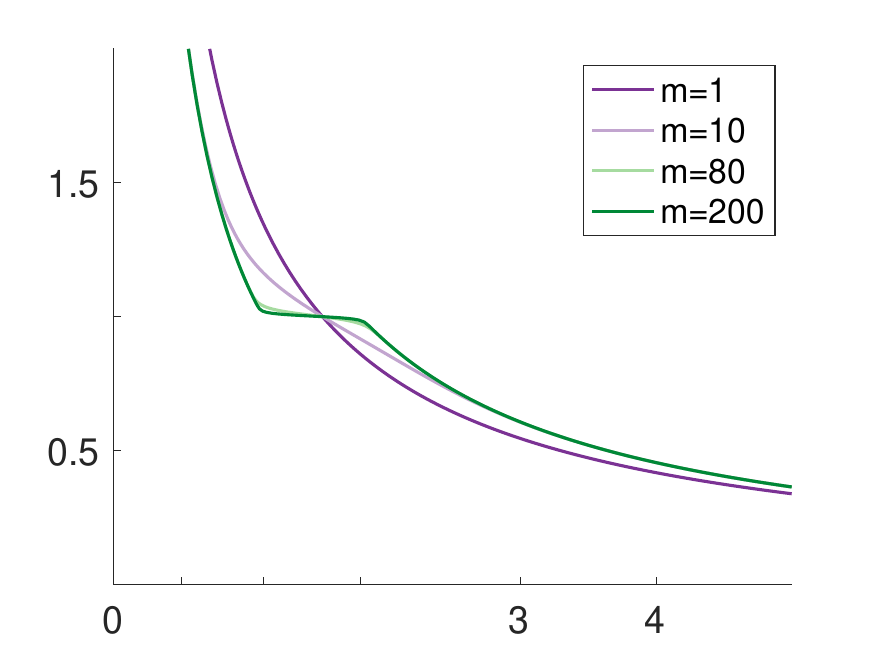}
	\put(-406,140){\rotatebox{90}{$x$}}
	\put(-412,78){$\theta_v$}
	\put(-240,10){$\gamma$}
	\put(-387,7){$\mu$}
	\put(-368,7){$\gamma_3$}
	\put(-344,7){$\gamma_4$}
    \put(-333,135){$(a)$}
	\put(-125,135){$(b)$}
	\put(-190,140){\rotatebox{90}{$x$}}
	\put(-196,78){$\theta_v$}
	\put(-24,10){$\gamma$}
	\put(-171,7){$\mu$}
	\put(-152,7){$\gamma_3$}
	\put(-128,7){$\gamma_4$}
	\caption{Bifurcation diagram of \eqref{eq:basic}-\eqref{eq:thres}
		with $(g \leftrightarrow, v \downarrow)$ and parameters $\beta=3, \mu=0.5, g^-=g^+=1, \gamma=1, \theta_v=1, a=2, v^-=2$ and $v^+=1$. (a) The limiting case with $v$ defined by \eqref{eq:vpwconst}. The stable steady state is shown as a green solid line.
		(b) With smooth velocity nonlinearity $v$ defined by \eqref{eq:vghill} with $m=1$, $10$, $80$, $200$ as indicated by color. The steady state is always stable. }
	\label{fig:vdown_ex3}
\end{figure}

{\bf Case 1:}
We begin with the case $\mu<\gamma_3$. By Theorem~\ref{thm:vdown} (point 4) the steady state must be stable whenever $\gamma>\mu$, while for $\gamma\leq\mu$ we have $\gamma\leq\mu<\gamma_3$ so by Theorem~\ref{thm:vdown}(point 5) the steady state is stable for all $m$ sufficiently large, or by Theorem~\ref{thm:vdown} (point 3) it is stable for all $\gamma$ sufficiently small.

Figure~\ref{fig:vdown_ex3} shows an example of the behaviour of \eqref{eq:basic}-\eqref{eq:thres} for $(g \leftrightarrow, v \downarrow)$ with $\mu < \gamma_3 < \gamma_4$.
Panel (b) shows the smooth case for several different values of $m$, which reveals that the steady state is always stable. Panel (a) shows the behavior in the limiting case with \eqref{eq:vpwconst}.
In this case the singular steady state can only become a stable steady state for large finite $m$.

In the model \eqref{eq:basic}-\eqref{eq:thres} we consider the parameters $\gamma$ and $\mu$ to be independent, but depending on how the model is derived, that may not be the case.
For example, the current model \eqref{eq:basic}-\eqref{eq:thres} can be derived as a reduction of the
operon model in \cite{ghmww2020}. In the model in \cite{ghmww2020} the parameter equivalent to $\gamma$ is an effective removal rate which is the sum of the actual degradation rate with the
dilution because of the growth rate $\mu$. Consequently,
in the model of \cite{ghmww2020}, and in similar systems modelling gene regulatory dynamics in a growing cell, we obtain the natural parameter constraint $\gamma>\mu$, and expect
dynamics corresponding to Figure~\ref{fig:vdown_ex3}.

\begin{figure}[htp!]
	\centering
	\vspacefig
	\includegraphics[scale=0.5]{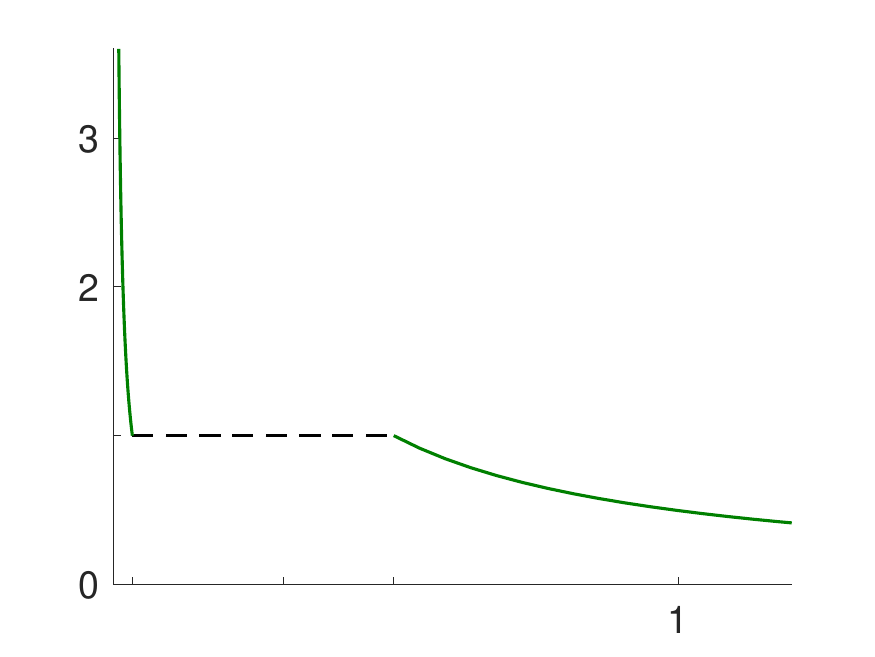}\hspace*{0.5em}\includegraphics[scale=0.5]{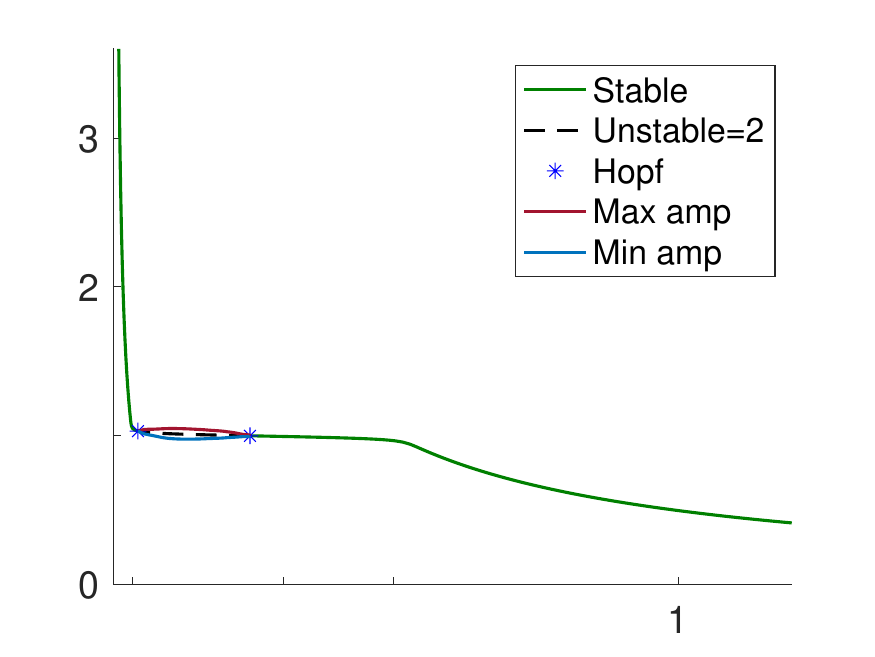}
	\put(-406,140){\rotatebox{90}{$x$}}
	\put(-412,50){$\theta_v$}
	\put(-240,10){$\gamma$}
	\put(-399,7){$\gamma_3$}
	\put(-362,7){$\mu$}
	\put(-336,7){$\gamma_4$}
    \put(-333,135){$(a)$}
	\put(-135,135){$(b)$}
	\put(-190,140){\rotatebox{90}{$x$}}
	\put(-196,50){$\theta_v$}
	\put(-24,10){$\gamma$}
	\put(-183,7){$\gamma_3$}
	\put(-146,7){$\mu$}
	\put(-120,7){$\gamma_4$}\\
	\vspace*{-0.5em}
	\includegraphics[scale=0.5]{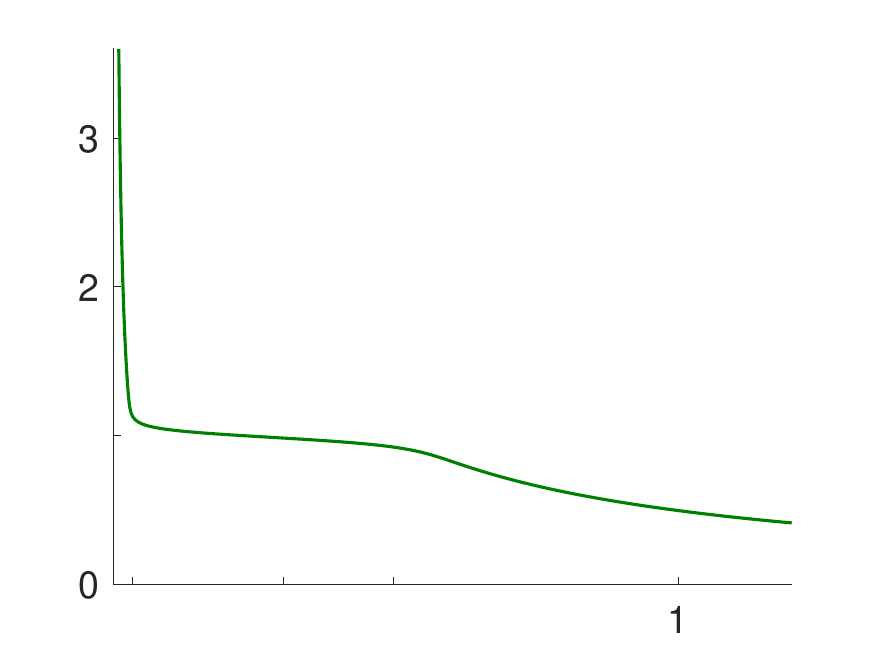}\hspace*{0.5em}\includegraphics[scale=0.5]{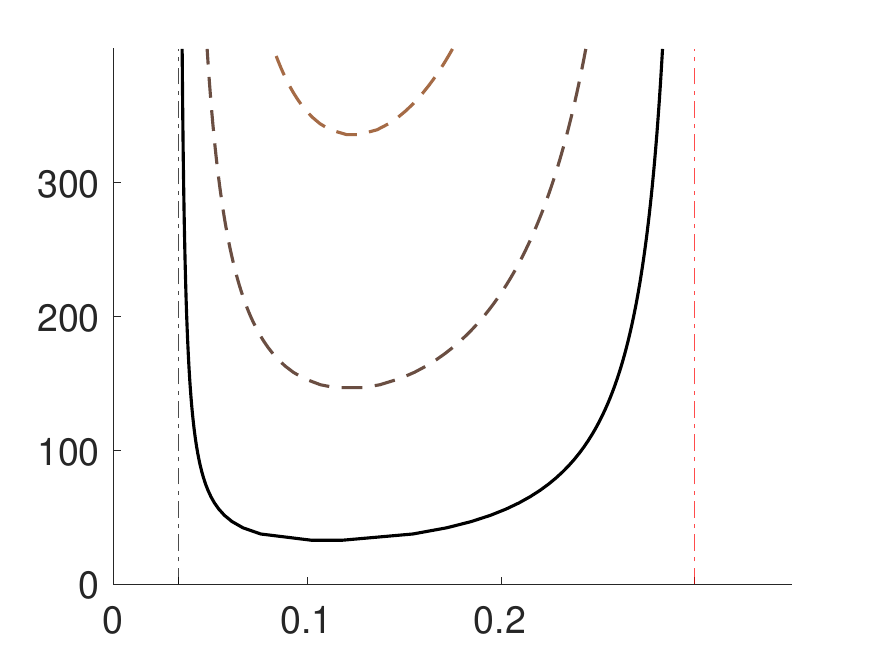}
	\put(-406,140){\rotatebox{90}{$x$}}
	\put(-412,50){$\theta_v$}
	\put(-240,10){$\gamma$}
	\put(-399,7){$\gamma_3$}
	\put(-362,7){$\mu$}
	\put(-336,7){$\gamma_4$}
    \put(-333,135){$(c)$}
	\put(-132,140){$(d)$}
	\put(-190,140){\rotatebox{90}{\footnotesize$m$}}
	\put(-24,10){$\gamma$}
	\put(-172,7){$\gamma_3$}
	\put(-46,7){$\mu$}\\
	\vspace*{-0.5em}
	\includegraphics[scale=0.5]{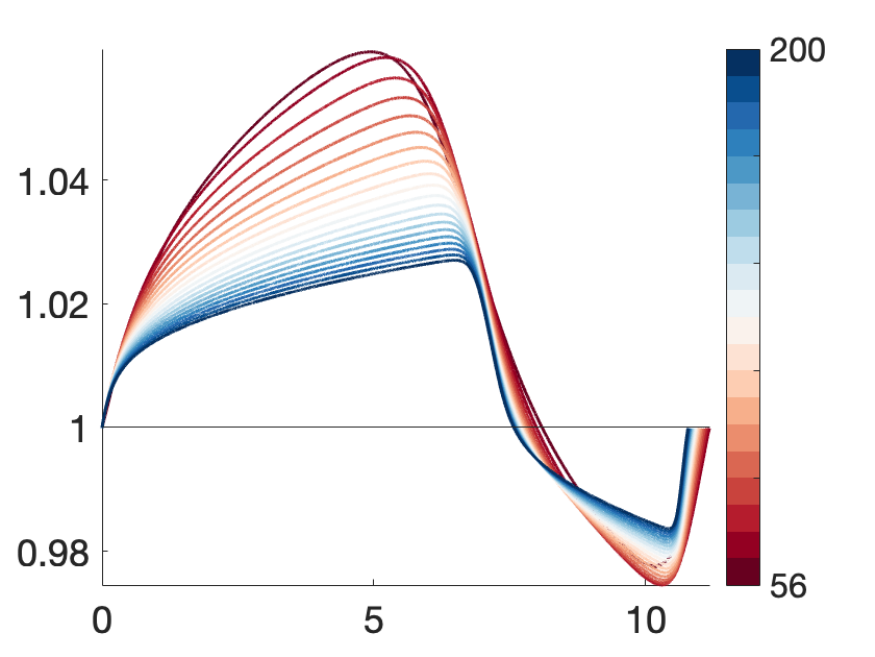}\hspace*{0.5em}\includegraphics[scale=0.5]{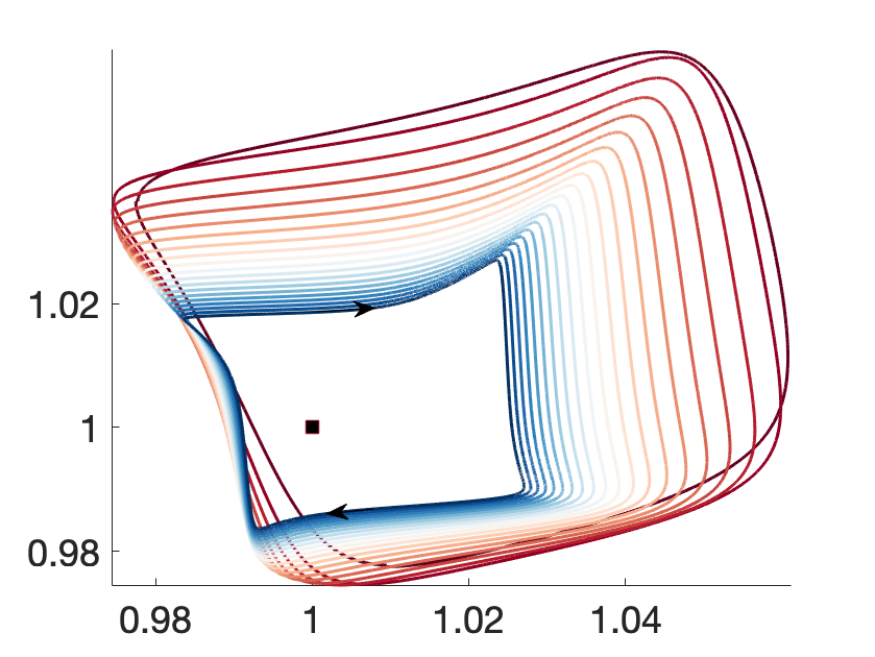}
	\put(-408,140){\rotatebox{90}{$x$}}
	\put(-260,10){$t$}
   \put(-240,80){$m$}
	\put(-380, 140){$(e)$}
	\put(-150, 140){$(f)$}
	\put(-195,110){\rotatebox{90}{$x(t-\hat\tau)$}}
	\put(-34,8){$x(t)$}
	\caption{Bifurcation diagram of \eqref{eq:basic}-\eqref{eq:thres}
		with $(g \leftrightarrow, v \downarrow)$ and parameters $\beta=3, \mu=0.3, g^-=g^+=1, \gamma=1, \theta_v=1, a=3, v^-=0.5$ and  $v^+=0.2$.
		(a) The limiting case with $v$ defined by \eqref{eq:vpwconst}. The stable steady state is shown as a green solid line, and the singular steady state as a black dashed line.
		(b) With a smooth velocity nonlinearity $v$ defined by \eqref{eq:vghill} with $m=100$. Solid lines represent stable objects including the stable steady state (in green), and envelope of the periodic orbit (in red and blue). Dashed lines represent unstable steady states which have two eigenvalues with positive real part (in black).
		(c) As in (b) but with $m=32$.
		(d) Two-parameter continuations in $m$ and $\gamma$ of the Hopf bifurcations (shown as solid curves) with the other parameters as above. The outermost curve of Hopf bifurcations is associated with the stability change seen in (b). The black dash-dotted line denotes $\gamma=\gamma_3=0.0333$ and the red dash-dotted line denotes $\gamma=\mu=0.3$, the location of the Hopf bifurcations in the limiting case as $m\to\infty$.
Note that $\gamma(\theta_v)=0.2293 < \mu<\gamma_4=0.4959$.
	(e) Profiles of the stable periodic orbits from the Hopf bifurcations in (b) at $\gamma=0.13$
           and $m\in[56,200]$ as indicated by the color map.
		(f) Projection of the phase space dynamics into the $(x(t),  x(t-\hat\tau))$ plane at $\gamma=0.13$ where $\hat\tau=\tau(\theta_v)=60/7$. The arrow indicates the direction of the flow. The square marks the unstable steady state in the limiting case at the threshold. }
	\label{fig:vdown_ex2}
\end{figure}

{\bf Case 2:}
Next,  in
Figure~\ref{fig:vdown_ex2}, we illustrate  the behavior of \eqref{eq:basic}-\eqref{eq:thres} for $(g
\leftrightarrow, v \downarrow)$ with $\mu\in(\gamma_3,\gamma_4)$.  Panel (a) depicts the behavior of the limiting case with piecewise constant $v$ defined by \eqref{eq:vpwconst}. Panel (b) shows the case of the smooth velocity nonlinearity \eqref{eq:vghill} with $m=100$.
The steady state close to $x=\theta_v$ undergoes a pair of Hopf bifurcations creating a bubble of stable periodic orbits which coexist with the unstable steady state for an interval of values of $\gamma$ which is a subinterval of $(\gamma_3,\mu)$.
As required by Theorem~\ref{thm:vdown}(4) the steady state is asymptotically stable for $\gamma>\mu$.
Thus, in contrast to the previous case, the singular steady state may become either a stable or unstable steady state for very large finite values of $m$.
Panel (c) is similar to panel (b) but for $m=32$. In this case the steady state is always stable, and
no Hopf bifurcations are seen.

Figure~\ref{fig:vdown_ex2}(d) shows a two-parameter continuation of the first three Hopf bifurcations illustrating Theorem~\ref{thm:vdown}(4-6).
The steady state is stable below and outside the outermost Hopf curve and unstable otherwise,
and  Hopf bifurcations appear sequentially for increasing values of $m$.
The bifurcation curve of first Hopf bifurcation (corresponding to $k=0$ in the analysis above) is clearly seen to asymptote to $\gamma=\gamma_3$ and $\gamma=\mu$ as $m\to\infty$. The subsequent Hopf bifurcations also approach these limits as $m$ becomes larger, but do so more slowly.

For the parameter values shown in Figure~\ref{fig:vdown_ex2} the inequalities
$\gamma_3<\gamma(\theta_v)< \mu<\gamma_4$ hold. Consequently, \eqref{eq:msuffvdown} applies and gives a sufficient condition on $m$ to ensure that the $k$-th Hopf bifurcation arises. For the given parameters (see the caption of Figure~\ref{fig:vdown_ex2}) these sufficient conditions are approximately $m>84$, $316$ and $703$ for $k=0,1,2$ respectively, whereas in Figure~\ref{fig:vdown_ex2} the corresponding Hopf bifurcation curves have minimal $m$ values of approximately $m=34$, $147$, and $336$. Therefore, at least in this case, the sufficient condition is not tight.
No Hopf bifurcations are seen for $m<34$ in panel (d), which explains why no Hopf
bifurcations were detected for $m=32$ in (c).

As $m$ increases, the stable periodic orbits created in the Hopf bifurcation at $\gamma=0.13\in(\gamma_3,\mu)$ are shown in Figure~\ref{fig:vdown_ex2}(e) and (f). Panel (e) shows the profiles of these orbits, while in (f) we show their projection into the $(x(t),x(t-\hat\tau))$ plane. Note that since the delay $\tau$ is state-dependent we do not use the actual delay for this projection, but we took $\hat\tau=\tau(\theta_v)$.
The limiting behaviour as $m\to\infty$ is quite different from the constant delay case considered in Section~\ref{sec:gdownvconst}. Not only is the shape of the profile different, but comparing Figure~\ref{fig:vdown_ex2}(f) and Figure~\ref{fig:gdown_ex1}(f) we see that the direction of rotation of the orbits, as indicated by the arrows, is reversed in the two examples.

\begin{figure}[tp!]
	\centering
	\vspacefig
	\includegraphics[scale=0.5]{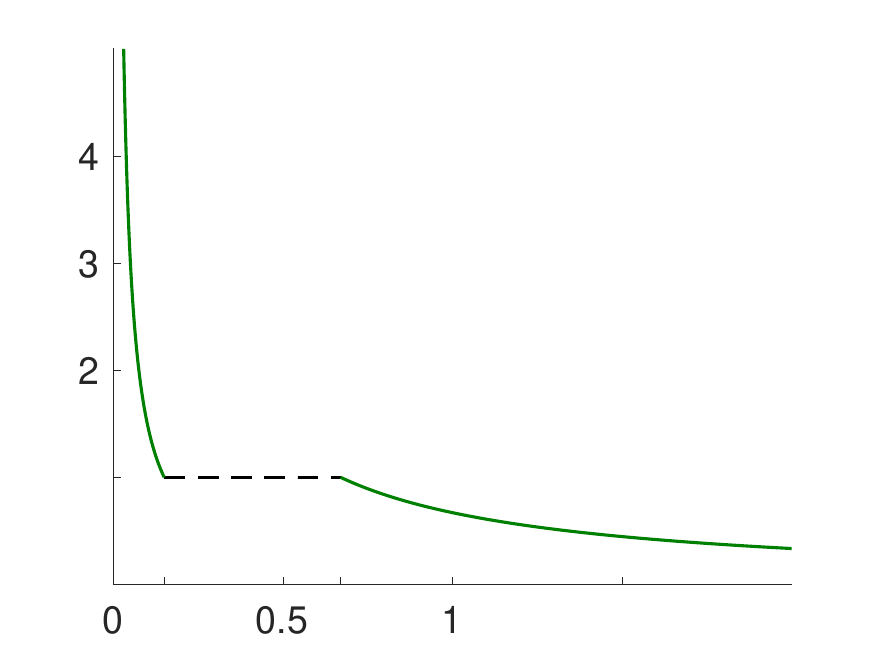}\hspace*{0.5em}\includegraphics[scale=0.5]{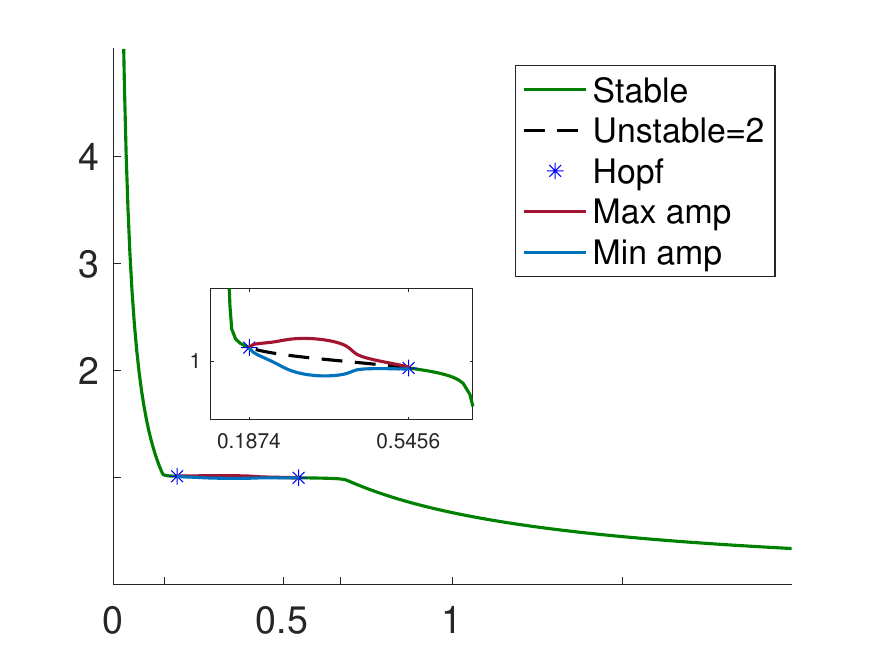}
	\put(-406,140){\rotatebox{90}{$x$}}
	\put(-412,40){$\theta_v$}
	\put(-240,10){$\gamma$}
	\put(-392,7){$\gamma_3$}
	\put(-349,7){$\gamma_4$}
	\put(-280,7){$\mu$}
    \put(-333,135){$(a)$}
	\put(-140,135){$(b)$}
	\put(-190,140){\rotatebox{90}{$x$}}
	\put(-196,40){$\theta_v$}
	\put(-24,10){$\gamma$}
	\put(-176,7){$\gamma_3$}
	\put(-132,7){$\gamma_4$}
	\put(-64,7){$\mu$}\\
	\vspace*{-0.5em}
	\includegraphics[scale=0.5]{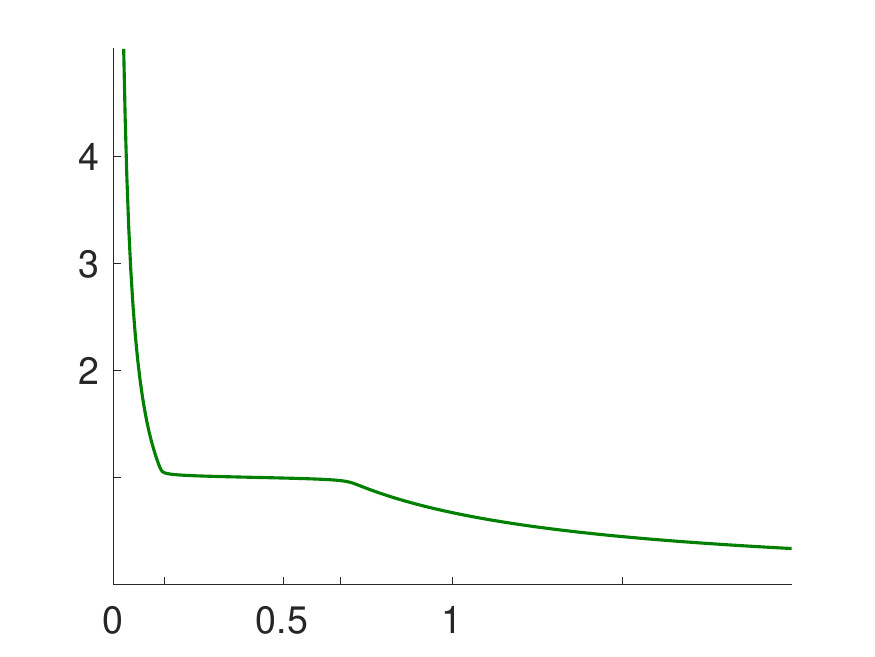}\hspace*{0.5em}\includegraphics[scale=0.5]{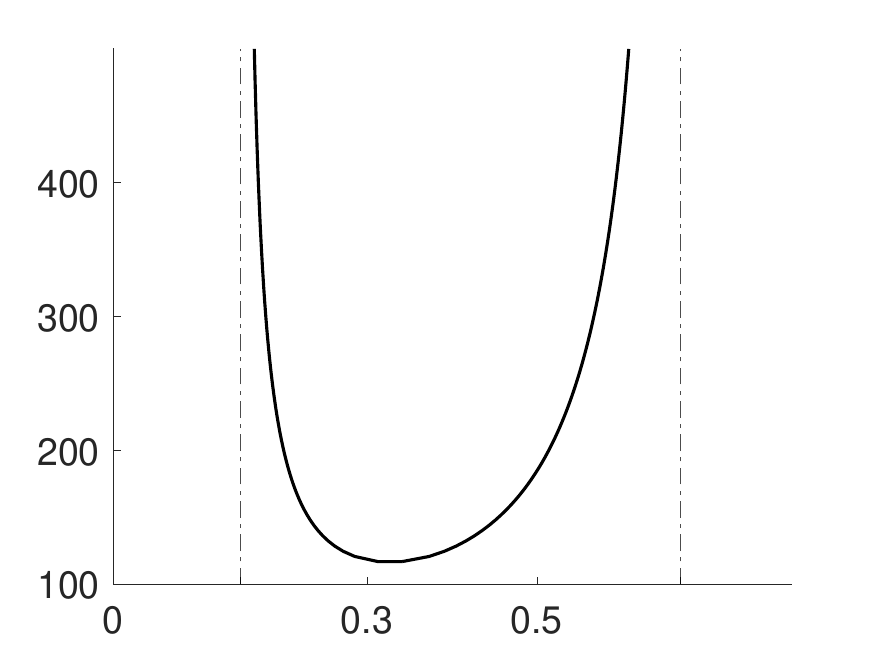}
	\put(-406,140){\rotatebox{90}{$x$}}
	\put(-412,40){$\theta_v$}
	\put(-240,10){$\gamma$}
	\put(-392,7){$\gamma_3$}
	\put(-349,7){$\gamma_4$}
	\put(-280,7){$\mu$}
    \put(-333,135){$(c)$}
	\put(-125,135){$(d)$}
	\put(-190,140){\rotatebox{90}{\footnotesize$m$}}
	\put(-24,10){$\gamma$}
	\put(-157,7){$\gamma_3$}
	\put(-50,7){$\gamma_4$}\\
	\vspace*{-0.5em}
	\includegraphics[scale=0.5]{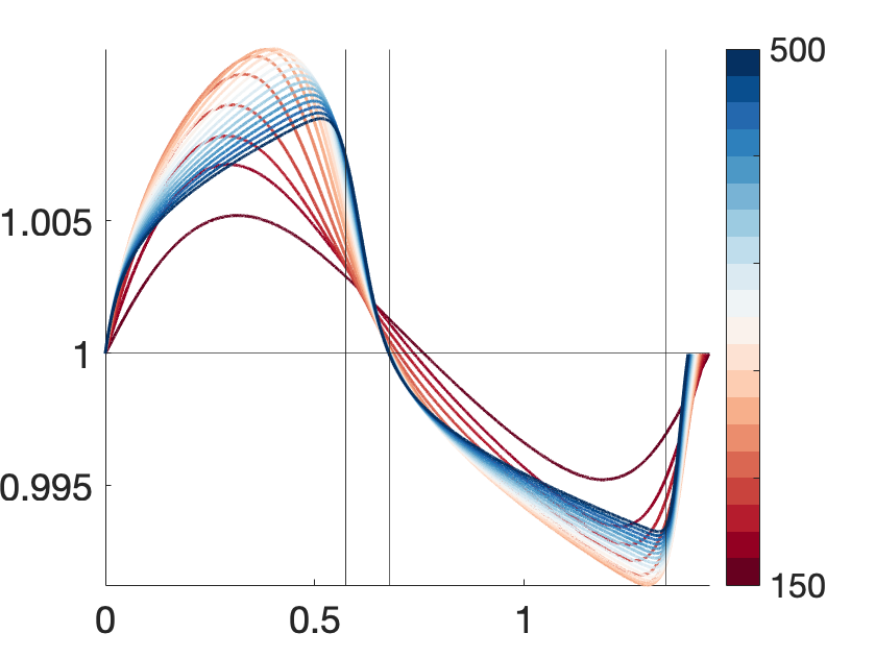}\hspace*{0.5em}\includegraphics[scale=0.5]{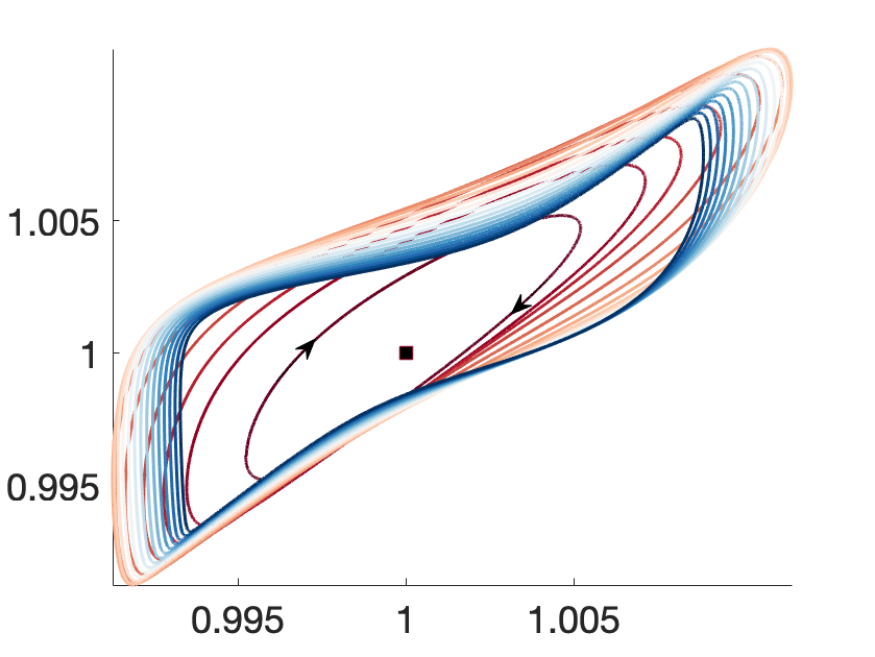}
	\put(-408,140){\rotatebox{90}{$x$}}
	\put(-260,10){$t$}
   \put(-240,80){$m$}
    \put(-315,135){$(e)$}
	\put(-140,135){$(f)$}
	\put(-195,110){\rotatebox{90}{$x(t-\hat\tau)$}}
	\put(-34,8){$x(t)$}
	\caption{Bifurcation diagram of \eqref{eq:basic}-\eqref{eq:thres}
		with $(g \leftrightarrow, v \downarrow)$ and parameters $\beta=3, \mu=1.5, g^-=g^+=1, \gamma=1, \theta_v=1, a=2, v^-=2$ and $v^+=1$.
		(a) The limiting case with $v$ defined by \eqref{eq:vpwconst}. The stable steady state is shown as a green solid line, and the singular steady state as a black dashed line.
		(b) With a smooth velocity nonlinearity $v$ defined by \eqref{eq:vghill} with $m=250$.
		Solid lines represent stable objects including the stable steady state (in green) and a stable limit cycle (represented by maximum and minimum of $x(t)$ on the periodic solution). Dashed lines represent unstable steady states which have two eigenvalues with positive real part (in black).
		(c) As in (b) but with $m=100$.		
		(d) Two-parameter continuations in $m$ and $\gamma$ of the Hopf bifurcations seen in (b). The dash-dotted lines denote $\gamma=\gamma_3=0.1494$ and $\gamma=\gamma_4=0.6694$, the location of the Hopf bifurcations in the limiting case as $m\to\infty$.
		(e) Profile of the stable periodic orbits from the Hopf bifurcations in (b) at $\gamma=0.4$.  The color map indicates values of the continuation parameter $m$.
		(f) Projection of the phase space dynamics into the $(x(t),  x(t-\hat\tau))$ plane at $\gamma=0.4$ where $\hat\tau=\tau(\theta_v)=4/3$. The arrow indicates the direction of the flow. The square marks the unstable steady state in the limiting case at the threshold. }
	\label{fig:vdown_ex1}
\end{figure}

{\bf Case 3:}
The final case to consider for  \eqref{eq:basic}-\eqref{eq:thres} with $(g \leftrightarrow, v \downarrow)$
is when $\gamma_3 < \gamma_4 <\mu$, and this is illustrated in Figure~\ref{fig:vdown_ex1}.
Panels (a)-(c) show the dynamics for the limiting case with the piecewise constant velocity nonlinearity \eqref{eq:vpwconst} and for the smooth case with the Hill function from \eqref{eq:vghill} with $m=250$ and $m=100$ respectively. For the limiting case in (a) the singular steady state with $x=\theta_v$ appears as a straight line segment for $\gamma\in(\gamma_3,\gamma_4)$. The singular steady states become unstable steady states in the smooth case as shown in (b), and there is a bubble of stable periodic orbits between the pair of Hopf bifurcations.

Figure~\ref{fig:vdown_ex1}(d) shows the locus of the principal Hopf bifurcation for a two-parameter continuation in $m$ and $\gamma$. Below and outside the curve of Hopf bifurcations the steady state is stable, while between the Hopf bifurcations it is unstable. We see that the Hopf bifurcation curves approach $\gamma_3$ and $\gamma_4$ as $m\to\infty$, as follows from Theorem~\ref{thm:vdown}. Since $\gamma_4<\mu$ and necessarily $\gamma(\theta_v)\in(\gamma_3,\gamma_4)$ we are guaranteed that
$\gamma(\theta_v)<\mu$, so for all such examples Theorem~\ref{thm:vdown} guarantees that there are infinitely many Hopf bifurcations. However, they may occur for very large values of $m$. For the example depicted in
Figure~\ref{fig:vdown_ex1}, only one Hopf bifurcation is observed with $m\geq117$. For the parameter values of this example the sufficient condition \eqref{eq:msuffvdown} (with $k=0,1$) ensures a first Hopf bifurcation for $m>152$, while a second
one is ensured for $m>602$, which is outside the parameter range considered in Figure~\ref{fig:vdown_ex1}.

If the value of the Hill coefficient $m$ is reduced sufficiently then there are no longer any Hopf bifurcations and the steady state remains stable, as seen in Figure~\ref{fig:vdown_ex1}(c). Interestingly, even though there are no longer any Hopf bifurcations in this case, the dependence of the steady state $x$ on $\gamma$ is still very similar to (a) and (b), with an obvious plateau visible in the graph of $x$ in Figure~\ref{fig:vdown_ex1}(c) for $\gamma\in(\gamma_3,\gamma_4)$.

Figure~\ref{fig:vdown_ex1}(e) and (f) respectively show the profiles and the $(x(t), x(t-\hat\tau))$-space projections of the stable periodic orbits at $\gamma=0.4$ as $m$ increases, where we choose $\hat\tau=\tau(\theta_v)$.
The periodic orbits approach a certain structure as $m\to\infty$.
Note that the direction of rotation in both Figure~\ref{fig:vdown_ex1}(f) and  Figure~\ref{fig:vdown_ex2}(f) is clockwise, while in Figure~\ref{fig:gdown_ex1}(f) it is counterclockwise.

\subsection{State-dependent delay with $v$ increasing and $g$ constant $(g\leftrightarrow,v\uparrow)$}
\label{sec:gconstvup}

Here we again consider the model \eqref{eq:basic}-\eqref{eq:thres} with a constant function $g(\xi)=g^\pm$ and
state-dependent delay, but now we consider the case of increasing $v$, so $v^-<v^+$ in \eqref{eq:vghill} and \eqref{eq:vpwconst}:  $(g \leftrightarrow, v \uparrow)$.

\begin{figure}[htbp!]
	\centering
	\begin{tabular}{cc}
		\begin{tikzpicture}[scale = 1.5]
		\tikzstyle{line} = [-,very thick]
		\tikzstyle{dotted line}=[.]
		\tikzstyle{arrow} = [->,line width = .4mm]
		\tikzstyle{unstable} = [red]
		\tikzstyle{stable} = [blue]
		\tikzstyle{pt} = [circle,draw=black,fill = black,minimum size = 1pt];
		\tikzstyle{bifpt} = [circle,draw=red,fill = red,minimum size = 1pt];
		\tikzstyle{upt} = [circle,draw=black,minimum size = 1pt];
		\def\arrlen{.3}

		%draw function
		\draw[line] (0,1) to (1.2,1);
		\draw[line] (1.2,1) to (1.2,2);
		\draw[line] (1.2,2) to (3,2);
		\draw[dotted line] (0,2) to (1.2,2);
		\draw[dotted line] (1.2,0) to (1.2,2);
		
		\draw[line] (0,0.125) to (1,2.5);
		\draw[line] (0,0) to (1.5,2.5);
		\draw[line] (0,-0.125) to (2,2.5);
		\draw[line] (0,-0.25) to (2.5,2.5);
		\draw[line] (0.1,-0.25) to (3,2.4);
		
		\node at (1.2,-.2) {$\theta_g$};

		% equilibrium values
		\node at (-.6,2) {$\beta e^{-\mu\tau^+}g$};
		\node at (-.6,1) {$\beta e^{-\mu\tau^-}g$};
		
		\node[bifpt] at (1.2,1) (8) [] {};
		\node[bifpt] at (1.2,2) (8) [] {};
		\node[upt] at (1.2,1.45) (8) [] {};
		\node[pt] at (1.62,2) (8) [] {};
		\node[pt] at (2.05,2) (8) [] {};
		\node[pt] at (2.55,2) (8) [] {};
		\node[pt] at (0.88,1) (12) [] {};
		\node[pt] at (0.62,1) (12) [] {};
		\node[pt] at (0.36,1) (12) [] {};
\end{tikzpicture}		
\end{tabular}
\caption{Illustration of how the number of steady states of \eqref{eq:basic}, which are given by \eqref{eq:h}, changes with the intersections of $\xi\mapsto\beta e^{-\mu\tau(\xi)}g$ and $\xi\mapsto\gamma\xi$.  These are shown in the limiting case of \eqref{eq:gpwconst} and \eqref{eq:vpwconst} with 
$g^\pm=  g^-=g^+$, so $g(\xi)=g^\pm$ is a constant function, 
and $v^-<v^+$ so $v$ is 
piecewise constant 
and monotonically increasing: $(g\leftrightarrow,v\uparrow)$. Then $\tau(\xi)=a/v(\xi)$
is state-dependent; $\tau(\xi)=\tau^-=a/v^-$ for $\xi<\theta_v$, $\tau(\xi)=\tau^+=a/v^+$ for $\xi>\theta_v$ and $\tau(\xi)$ is set-valued when $\xi=\theta_v$.}
	\label{fig:4}
\end{figure}
%%%

As illustrated in Figure~\ref{fig:4}, it is possible for up to three steady states to coexist due to the fold bifurcations at the corners associated with $\gamma=\gamma_3$ and $\gamma=\gamma_4$.
Note that in Section~\ref{sec:gconstvdown} we  had $\gamma_3 < \gamma_4$. Since the function $v$ is increasing in this section, it follows from the  definition \eqref{eq:corners_gconst} that now $\gamma_4 < \gamma_3$.
In the limiting case where $v$ is defined in \eqref{eq:vpwconst},
at the non-singular steady states
the characteristic equation is of the form \eqref{eq:char_A0g'0}, and hence
these steady states are stable. For $\gamma\in(\gamma_4,\gamma_3)$ the two stable steady states co-exist with a singular steady state, at which the characteristic function is not defined. This leads us to consider the smooth velocity nonlinearity \eqref{eq:vghill}.

Because the smooth velocity nonlinearity approaches the step function shown in Figure~\ref{fig:4} as $m\to\infty$, it follows that for all $m$ sufficiently large there will be a pair of fold bifurcations at
$(\xi,\gamma)=(\xi^-,\gamma(\xi^-))$ and $(\xi,\gamma)=(\xi^+,\gamma(\xi^+))$,
with $(\xi^-,\gamma(\xi^-))\to(\theta_v,\gamma_4)$ and
$(\xi^+,\gamma(\xi^+))\to(\theta_v,\gamma_3)$ as $m\to\infty$.
To study the associated bifurcations consider
the characteristic equation which has the form \eqref{eq:char_Ag'0}.
Writing the characteristic function $\Delta(\lambda)$ as
\begin{equation} \label{eq:charfn54}
	\Delta(\lambda) = \lambda+\gamma-A+Ae^{-\lambda\tau(\xi)}-\mu A\int_{-a/v(\xi)}^{0} e^{\lambda s} ds,
\end{equation}
it follows from \eqref{eq:h} that at a steady state
\begin{align}  \notag
	\Delta(0)  & = \gamma-\mu A\int_{-a/v(\xi)}^{0} 1 ds = \gamma - \mu\tau(\xi) A
= \gamma - \mu\tau(\xi)\gamma\frac{\xi v'(\xi)}{v(\xi)} \\
& = \gamma\mu\tau(\xi)\Big(\frac{1}{\mu\tau(\xi)}-\frac{\xi v'(\xi)}{v(\xi)}\Big).
\label{eq:charfn540}
\end{align}
Hence, $\lambda=0$ is a characteristic value if and only if
\be  \label{eq:vuplam0}
	\frac{\xi v'(\xi)}{v(\xi)} - \frac{1}{\mu\tau(\xi)} = 0.
\ee
In addition, note that
\be \label{eq:vupposlam}
\frac{\xi v'(\xi)}{v(\xi)} > \frac{1}{\mu\tau(\xi)}
\ee
implies that $\Delta(0)<0$. At the same time,  it is easily seen that for real $\lambda$ the characteristic function satisfies $\Delta(\lambda)\to+\infty$ as $\lambda\to+\infty$. We conclude that  when \eqref{eq:vupposlam} is satisfied there is a real positive characteristic value and hence the steady state is unstable.

We now show that fold bifurcations occur when \eqref{eq:vuplam0} is satisfied. To that end, consider the curve of
solutions $(\gamma(\xi),\xi)$ where $\gamma(\xi)$ is defined by \eqref{eq:gamv}, which by \eqref{eq:h} is  the locus of steady states in $(\gamma,\xi)$-plane. Differentiating this
relationship with respect to $\xi$ we find
$$\gamma'(\xi)
= -\frac{\gamma(\xi)}{\xi}\Big(1+\mu\xi\tau'(\xi)\Big).
$$
However, $\tau(\xi)=a/v(\xi)$ implies that
\be \label{eq:taudashxi}
\tau'(\xi) = -\frac{av'(\xi)}{v(\xi)^2}=-\frac{v'(\xi)\tau(\xi)}{v(\xi)}.
\ee
Thus
\be \label{eq:gammadashvup}
\gamma'(\xi)
 = -\frac{\gamma(\xi)}{\xi}\Big(1-\mu\xi\frac{v'(\xi)\tau(\xi)}{v(\xi)}\Big)
 = \frac{\mu\tau(\xi)\gamma(\xi)}{\xi}\Big(\frac{\xi v'(\xi)}{v(\xi)} - \frac{1}{\mu\tau(\xi)}\Big).
\ee
Hence $\sgn(\gamma'(\xi))=\sgn\Big(\frac{\xi v'(\xi)}{v(\xi)} - \frac{1}{\mu\tau(\xi)}\Big)$,
and $\gamma'(\xi)$ changes sign when the left-hand side of \eqref{eq:vuplam0} changes sign, which ensures
that fold bifurcations occur at these points in the $(\gamma,\xi)$-plane.

While the above analysis appears similar to the constant delay case considered in Section~\ref{sec:gupvconst}, the state-dependency of the delay introduces two significant differences.

First, with a state-dependent delay and increasing $v$, fold bifurcations occur when \eqref{eq:vuplam0} is satisfied, whereas in Section~\ref{sec:gupvconst} with increasing $g$ we found fold bifurcations when \eqref{eq:xigdashgfold} is satisfied.
Thus for the constant delay case the fold bifurcations occur when the unimodal function $\xi\mapsto f(\xi/\theta_g,n,g^-/g^+)=\xi g'(\xi)/g(\xi)$ is equal to $1$, and so there is at most one pair of fold bifurcations. In contrast, for the state-dependent delay case, fold bifurcations occur when the unimodal function \eqref{eq:xivdashvf} is equal to $1/\mu\tau(\xi)$. For increasing $v$, the function $\xi\mapsto 1/\mu\tau(\xi)$ is also an increasing function, thus the fold bifurcation occurs at the intersections of a
unimodal function with an increasing function. Clearly, such functions may have multiple pairs of intersections and hence multiple pairs of fold bifurcations. From Proposition~\ref{prop:fxpr}, for the Hill function $v(\xi)$ defined by \eqref{eq:vghill} we have that 
$\xi\mapsto\xi v'(\xi)/v(\xi)$ is monotonically decreasing for $\xi>\theta_v(v^-/v^+)^{1/2m}$, so there will be at most one intersection with the increasing function $\xi\mapsto1/\mu\tau(\xi)$ on this interval. Thus if there are additional fold bifurcations they must occur for $\xi<\theta_v(v^-/v^+)^{1/2m}<\theta_v$. In the current work, we will not look for these additional fold bifurcations, nor will we show that they cannot exist. Even if it were possible to show that they cannot exist when $v$ is defined by \eqref{eq:vghill}, we would expect that additional fold bifurcations could be induced by suitable modifications to the velocity nonlinearity.

A second difference between the state-dependent and constant delay cases concerns the stability of the steady state near  the fold bifurcation.
Note that the argument after \eqref{eq:xivdashvmutau} in Section~\ref{sec:gconstvdown} still applies to show that the steady state is stable if $\gamma\leq\mu$ and \eqref{eq:xivdashvmutau} holds. Consequently, a fold bifurcation which occurs for
$\gamma<\mu$ must involve a stable steady state which loses stability at the fold bifurcation. However, this argument does not apply when $\gamma>\mu$, and we will see examples where stable steady states lose stability in a Hopf bifurcation with
$\gamma>\mu$, and also fold bifurcations where the steady state is unstable  on  both sides of the bifurcation (with different numbers of unstable characteristic values). This contrasts with the constant delay case in Section~\ref{sec:gupvconst} where we
showed that a stable steady state can only lose stability at a fold bifurcation.

As was the case when $v$ was decreasing, Hopf bifurcations are governed by equations \eqref{eq:sin_cancelled} and \eqref{eq:cos_cancelled}. However, since $v$  is increasing we now have $A>0$.
At a Hopf bifurcation the right-hand side of \eqref{eq:cos_cancelled} is strictly positive, and hence
$\sin(\omega\tau)>0$.
Since $\sin(\omega\tau)>0$ implies $\cos(\omega\tau)<1$, it then follows that
the left-hand side of \eqref{eq:sin_cancelled} is also strictly positive,
and a Hopf bifurcation is only possible if the
right-hand side is also strictly positive, that is if
\be \label{eq:gam>mu}
\gamma>\mu.
\ee

As in Section~\ref{sec:gconstvdown}, we find Hopf bifurcations by sequentially
solving \eqref{eq:gamv}, \eqref{eq:tanU} and \eqref{eq:AU}.
Equation \eqref{eq:tanU} has  infinitely many solutions $\omega_k(\gamma(\xi))$
for $k=0,1,\ldots$ with $\omega_k\tau\in(2k\pi,(2k+1)\pi)$.
Finally, to solve \eqref{eq:AU} with $v'(\xi)>0$, we need to determine $\xi$ that satisfies
$f(\xi/\theta_v,m,v^-/v^+) = r_k(\gamma(\xi))>0$ (defined in \eqref{eq:rk}).
Usually we will take $\gamma=\gamma(\xi)$, defined by \eqref{eq:gamv}, but in the proposition below we consider
$r_k(\gamma)$ for general $\gamma$.

\begin{prop} \label{prop:rgammaup}
Let $r_k(\gamma)$ be defined by \eqref{eq:rk} and $\omega_k$ satisfy
$\omega_k\tau\in(2k\pi,(2k+1)\pi)$ for integer $k\geq0$. Then for $\gamma>\mu$ it holds that $r_k(\gamma)>0$ and
\begin{enumerate}
\item
$r_k(\gamma)$ is monotonically decreasing,
\item
$r_k(\gamma)\to+\infty$ as $\gamma\to\mu$ and $r_k(\gamma)\to1/2$ as $\gamma\to+\infty$,
\item
$$r_k(\gamma) \leq \frac{(2k+1)^2(\pi/\tau^+)^2+\gamma^2}{2\gamma(\gamma-\mu)}.$$
\end{enumerate}
\end{prop}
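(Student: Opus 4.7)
The proof proceeds by elementary computation using the implicit relation \eqref{eq:tanU} that defines $\omega_k(\gamma)$. My plan is to first establish that $\omega_k$ is a smooth single-valued function of $\gamma$ on each branch, then handle parts (2) and (3) as direct consequences of the uniform bound $\omega_k<(2k+1)\pi/\tau$, and finally tackle the monotonicity claim in (1), which is the heart of the matter.

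For the preliminary step, observe that for $\gamma>\mu$ the right-hand side of \eqref{eq:tanU}, viewed as a function of $\omega$, is strictly positive on $(0,\infty)$ with a unique maximum at $\omega=\sqrt{\gamma\mu}$, while on each branch $\omega\tau/2\in(k\pi,(k+\tfrac12)\pi)$ the left-hand side $\tan(\omega\tau/2)$ increases strictly from $0$ to $+\infty$. This yields exactly one solution $\omega_k=\omega_k(\gamma)$ on each branch, smooth in $\gamma$ by the implicit function theorem, and $r_k>0$ is then immediate from $\gamma>\mu$.

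Part (2) follows from limit analysis. As $\gamma\to\mu^+$ the right-hand side of \eqref{eq:tanU} tends to $0^+$, forcing $\tan(\omega_k\tau/2)\to 0^+$, hence $\omega_k\tau/2\to k\pi^+$ and $\omega_k\to 2k\pi/\tau$, a finite nonnegative limit. The numerator of $r_k$ then approaches $(2k\pi/\tau)^2+\mu^2>0$ while $2\gamma(\gamma-\mu)\to 0^+$, giving $r_k\to+\infty$. As $\gamma\to+\infty$, the uniform bound from part (3) keeps $\omega_k$ bounded, so $r_k(\gamma)\sim\gamma^2/(2\gamma^2)=1/2$. For part (3), the inclusion $\omega_k\tau<(2k+1)\pi$ together with $\tau(\xi)=a/v(\xi)\ge a/v^+=\tau^+$ (since $v$ increasing gives $v(\xi)\le v^+$) yields $\omega_k<(2k+1)\pi/\tau^+$; substitution into the numerator of $r_k$, combined with $\gamma(\gamma-\mu)>0$ in the denominator, gives the stated bound.

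The main obstacle is part (1), the monotonicity. The idea is to use \eqref{eq:tanU} to eliminate $\omega_k^2+\gamma\mu$, yielding the cleaner identity
\[ r_k(\gamma) = \tfrac{1}{2} + \frac{\omega_k}{2\gamma\tan(\omega_k\tau/2)}, \]
and then to reparameterize the branch by $\phi=\omega_k\tau/2$. Solving \eqref{eq:tanU} algebraically for $\gamma$ gives
\[ \gamma = \frac{p(pq+\mu)}{p-\mu q}, \qquad r_k-\tfrac{1}{2} = \frac{p-\mu q}{2q(pq+\mu)}, \]
with $p=2\phi/\tau$ and $q=\tan\phi$; both expressions are defined on the subinterval $(k\pi,\phi_k^*)$ where $p-\mu q>0$, and the map $\phi\mapsto\gamma(\phi)$ is a bijection from this subinterval onto $(\mu,+\infty)$. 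Monotonicity of $r_k(\gamma)$ then reduces to showing that $\gamma(\phi)$ is strictly increasing and $r_k(\phi)$ strictly decreasing on $(k\pi,\phi_k^*)$. Differentiating with $p'=2/\tau$ and $q'=1+q^2$ reduces each sign check to a polynomial inequality in $(p,q,\mu,\tau)$ on the region $p-\mu q>0$, verifiable by direct algebra. An alternative route, which avoids reparameterization, is implicit differentiation of \eqref{eq:tanU} to compute $\omega_k'(\gamma)$, followed by substitution into $dr_k/d\gamma$ and simplification via the identity $\sec^2(\omega_k\tau/2)=(\omega_k^2+\mu^2)(\omega_k^2+\gamma^2)/(\omega_k^2+\gamma\mu)^2$. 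Either approach yields $r_k'(\gamma)<0$, completing the proof.
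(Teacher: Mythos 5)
The paper offers no argument for this proposition beyond the declaration that the proof is elementary, so there is nothing to compare against line by line; what matters is whether your argument is sound, and it essentially is. Parts (2) and (3) are handled correctly: the bound $\omega_k<(2k+1)\pi/\tau\le(2k+1)\pi/\tau^+$ gives (3) at once, and together with $r_k(\gamma)\ge\gamma/(2(\gamma-\mu))$ it gives both limits in (2). For part (1), your reparameterization by $\phi=\omega_k\tau/2$ does work: writing $p=2\phi/\tau$, $q=\tan\phi$, one finds
\[
\frac{d\gamma}{d\phi}\,(p-\mu q)^2=p\Bigl[\tfrac{q}{\phi}\bigl(p^2-2\mu pq-\mu^2\bigr)+(1+q^2)(p^2+\mu^2)\Bigr],
\]
and the bracket, viewed as a quadratic in $p$, has positive leading coefficient and discriminant $4\mu^2(1+q^2)\bigl(q^2/\phi^2-(1+q^2)\bigr)=4\mu^2(1+q^2)\sec^2\phi\,\bigl(\sin^2\phi/\phi^2-1\bigr)<0$, so $\gamma'(\phi)>0$; similarly the sign of $\tfrac{d}{d\phi}\bigl(r_k-\tfrac12\bigr)$ reduces to $\mu(q/\phi+q^2-1)<2pq$, which follows from $p>\mu q$ and $\sin 2\phi<2\phi$. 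So the ``direct algebra'' you defer to does close. Two small imprecisions deserve attention. First, your opening claim that the comparison of the two sides of \eqref{eq:tanU} ``yields exactly one solution on each branch'' is not justified by the observation you give (a strictly increasing function can cross a bounded positive one more than once); uniqueness instead follows a posteriori from the strict monotonicity of $\phi\mapsto\gamma(\phi)$ that you establish later, so the logic should be reordered. Second, the map $\phi\mapsto\gamma(\phi)$ is onto $(\mu,+\infty)$ only for $k\ge1$; for $k=0$ one has $\gamma\to 2\mu/(2-\mu\tau)>\mu$ as $\phi\to0^+$, so the $k=0$ branch exists only for $\gamma>2\mu/(2-\mu\tau)$ and the limit $\gamma\to\mu$ in part (2) is vacuous for $k=0$. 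That looseness is already present in the paper's statement, but your bijection claim should not paper over it.
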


\begin{proof}
The proof is elementary.
\end{proof}

The following lemma will be needed in the proof of Theorem~\ref{thm:vup}.

\begin{lem} \label{lem:lambertw}
For $x>0$ let $f(x)=\tfrac1x(1-e^{-x})$ then $f(x)$ is strictly monotonically decreasing for $x>0$
with
\be \label{eq:flamw}
1=\lim_{x\to0}f(x) > \lim_{x\to+\infty}f(x)=0.
\ee
\end{lem}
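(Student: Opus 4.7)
The plan is to dispatch both the limits and the monotonicity by elementary calculus, with the integral representation giving the cleanest route to monotonicity.

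First I would establish the two limits. For $x\to 0^+$, expand $e^{-x}=1-x+\tfrac{x^2}{2}+O(x^3)$, so $1-e^{-x}=x-\tfrac{x^2}{2}+O(x^3)$ and $f(x)=1-\tfrac{x}{2}+O(x^2)\to 1$. For $x\to\infty$, note that $0<1-e^{-x}<1$ is bounded while $1/x\to 0$, so $f(x)\to 0$ by the squeeze theorem.

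For strict monotonicity, the cleanest route is to observe that
\[
f(x)=\frac{1-e^{-x}}{x}=\int_0^1 e^{-xs}\,ds,
\]
since $\int_0^1 e^{-xs}\,ds=\tfrac{1}{x}(1-e^{-x})$. Differentiation under the integral sign (justified since the integrand and its $x$-derivative are continuous on $[0,1]\times(0,\infty)$ and bounded uniformly on compact subsets in $x$) gives
\[
f'(x)=-\int_0^1 s\,e^{-xs}\,ds<0 \quad\text{for all}\quad x>0,
\]
since the integrand is strictly positive on $(0,1]$. This proves strict monotonic decrease on $(0,\infty)$.

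As an alternative, I could differentiate directly to obtain $f'(x)=\tfrac{1}{x^2}\bigl[e^{-x}(1+x)-1\bigr]$ and then reduce the sign condition to the standard inequality $e^x>1+x$ for $x>0$, which follows from $g(x)=e^x-1-x$ satisfying $g(0)=0$ and $g'(x)=e^x-1>0$. Either approach is straightforward and I expect no real obstacle; the only thing to be mildly careful about is that $f$ extends continuously to $x=0$ with $f(0)=1$, so the limits \eqref{eq:flamw} are attained in the one-sided sense stated.
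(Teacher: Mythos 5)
Your proof is correct, and your main route is genuinely different from the paper's. The paper differentiates to get $f'(x)=-\tfrac{1}{x^2}\bigl[1-(x+1)e^{-x}\bigr]$, observes that $f'(x)=0$ would force $e^x=1+x$, which is impossible for $x\neq 0$, and then uses continuity of $f'$ together with the two limits in \eqref{eq:flamw} to conclude that the (constant, nonzero) sign of $f'$ must be negative. Your integral representation $f(x)=\int_0^1 e^{-xs}\,ds$ short-circuits all of this: $f'(x)=-\int_0^1 s e^{-xs}\,ds<0$ is immediate, and you never need to invoke the limits to pin down the sign — they are then pure bookkeeping. Your stated alternative (reducing to $e^x>1+x$ for $x>0$) is essentially a tightened version of the paper's argument, since it determines the sign of $f'$ directly rather than inferring it from the boundary behaviour. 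Either of your routes is a complete proof; the integral form is arguably the cleanest of the three.
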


\begin{proof}
The two limits in \eqref{eq:flamw} are easily verified. To show the monotonicity of $f(x)$ first differentiate to obtain 
$$f'(x)=-\frac{1}{x^2}\Bigl[1 -(x+1)e^{-x}\Bigr]$$
The assumption $f'(x)=0$ for some $x>0$ yields $e^x=1+x$ which is impossible for $x\neq0$. Using continuity we infer that on $(0,\infty)$ there is no sign change of $f'(x)$, and that $f'(x)$ is strictly monotonic for $x>0$. 
Because of \eqref{eq:flamw} $f$ is decreasing. 
\end{proof}

The following theorem collects our results for the case of increasing $v$
with constant $g$.

\begin{thm} \label{thm:vup}
Let $\xi$ be a steady state of the DDE \eqref{eq:basic},\eqref{eq:thres} with $g(\xi)=g^\pm$ constant,
and $v$ monotonically increasing, so $v^-<v^+$, and the state-dependent delay $\tau$ defined by \eqref{eq:vghill} evaluated at the steady state is $\tau(\xi)=a/v(\xi)$.
Then
\begin{enumerate}
\item
If $\frac{\xi v'(\xi)}{v(\xi)}<\frac{1}{\tau(\xi)\max\{\mu,\gamma\}}$
then the steady state  $\xi$ is asymptotically stable,
while if $\frac{\xi v'(\xi)}{v(\xi)}>\frac{1}{\mu\tau(\xi)}$ it is unstable.
\item
For any fixed $\xi\ne\theta_v$ let $\gamma= \gamma(m,\xi)$ be the value of $\gamma$ such that \eqref{eq:h} is satisfied and hence $\xi$ is a steady state.
Or, for any fixed
$\gamma$ with $0<\gamma<\gamma_4$ or $\gamma>\gamma_3$ let $\xi=\xi(m,\gamma)$ satisfy  \eqref{eq:h} and hence
be a steady state.
Then $\xi$ is asymptotically stable for all $m$ sufficiently large.
\item
The steady state $\xi$ is asymptotically stable
if $m\leq1/(\tau(\xi)\max\{\mu,\gamma\})$, or both  $m>1/(\tau(\xi)\max\{\mu,\gamma\})$ and
$1\geq \frac{v^-}{v^+}>\Bigl(1-\frac{2}{m\tau(\xi)\max\{\mu,\gamma\}+1}\Bigr)^2$.
\item
For any fixed $m>0$, and for $\xi\gg\theta_v$, or equivalently for $0<\gamma\ll\gamma_4$, the steady state is asymptotically stable.
For any fixed $m>1$, and for $0<\xi\ll\theta_v$ or equivalently for $\gamma\gg\gamma_3$ the steady state is asymptotically stable.
\item
If $m>1/(\mu\tau^+)$
and $\frac{v^-}{v^+}<\Bigl(1-\frac{2}{1+m\mu\tau^+}\Bigr)^2$ then
there exist $\xi^-<\theta_v(v^-/v^+)^{1/2m}<\xi^+$ with
$\gamma(\xi^-)<\gamma(\xi^+)$
such that as $\gamma$ is varied there is a branch of steady states
with $\xi<\xi^-$ and $\gamma>\gamma(\xi^-)$ and another branch
with $\xi>\xi^+$ and $\gamma<\gamma(\xi^+)$.
For $\gamma\in(\gamma(\xi^-),\gamma(\xi^+))$ these two branches of steady states co-exist with a branch of unstable steady states which exists between fold bifurcations at $(\xi,\gamma)=(\xi^-,\gamma(\xi^-))$ and $(\xi,\gamma)=(\xi^+,\gamma(\xi^+))$.
\item
Let $\mu<\gamma_3$.
\begin{enumerate}
\item
For any $k>0$, for all $m=m(k)$ sufficiently large there are
two families of (at least) $k+1$ Hopf bifurcations as $\gamma$ is varied. In the first family, the characteristic values
$\lambda_j=\alpha_j\pm i\omega_j$ with $\omega_j\tau\in(2j\pi,(2j+1)\pi)$ for $j=0,1,\ldots,k$
cross the imaginary axis from left to right as $\gamma$ is increased, while in the second family they cross the imaginary axis from right to left.
For all $j$ sufficiently large these bifurcations occur on the branch of unstable steady states between the fold bifurcations identified in (5).
\item
Let $\gamma\in(\max\{\gamma_4,\mu\},\gamma_3)$ be fixed. Then as $m$ is increased there is an infinite
sequence of Hopf bifurcations where the real part of $\lambda_k=\alpha_k\pm i\omega_k$ becomes positive
with $\omega_k\tau\in((2k\pi,(2k+1)\pi)$. All but finitely many of these bifurcations are located on the branch of unstable steady states between the fold bifurcations.
\end{enumerate}
\end{enumerate}
\end{thm}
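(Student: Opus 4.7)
The plan is to mirror the structure of the proof of Theorem~\ref{thm:vdown}, exploiting the symmetry between the increasing and decreasing cases, but with sign adjustments ($A>0$ now), and tracking the new complication that a stable equilibrium may lose stability either at a fold bifurcation (as in Theorem~\ref{thm:gup}) or at a Hopf bifurcation. For (1), I would start from the real and imaginary part equations \eqref{eq:cos}, \eqref{eq:sin}, assume an unstable characteristic value $\lambda=\alpha+i\omega$ with $\alpha\ge 0$, $\omega>0$ and use $|\sin(\omega\tau)|\le\omega\tau$ together with the hypothesis $\xi v'(\xi)/v(\xi)<1/(\tau(\xi)\max\{\mu,\gamma\})$, which gives $A\tau(\xi)<\min\{1,\gamma/\mu\}$. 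A short case analysis ($\gamma\le\mu$ versus $\gamma>\mu$) on \eqref{eq:sin} then yields a contradiction in both regimes; the instability half follows immediately from \eqref{eq:charfn540}, since \eqref{eq:vupposlam} forces $\Delta(0)<0$ while $\Delta(\lambda)\to+\infty$ as $\lambda\to+\infty$. Parts (2)--(4) then reduce to substituting the Hill form \eqref{eq:xivdashvf} into the criterion of (1) and invoking the relevant clauses of Proposition~\ref{prop:fxpr}: (4) uses item (1) of the proposition, (3) uses item (4), and (2) uses item (3) together with the elementary observation from \eqref{eq:gamv}--\eqref{eq:corners_gconst} that $\xi<\theta_v\gamma_4/\gamma$ or $\xi>\theta_v\gamma_3/\gamma$ when $\gamma$ is fixed outside $(\gamma_4,\gamma_3)$.

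For (5) I would study the curve $(\gamma(\xi),\xi)$ of equilibria in the $(\gamma,\xi)$ plane. The derivative \eqref{eq:gammadashvup} shows that $\sgn(\gamma'(\xi))=\sgn\bigl(\xi v'(\xi)/v(\xi)-1/(\mu\tau(\xi))\bigr)$, so fold bifurcations occur exactly where \eqref{eq:vuplam0} holds. Under the hypothesis of (5), Proposition~\ref{prop:fxpr}(4) yields a maximum of $\xi v'(\xi)/v(\xi)$ at $\xi=\theta_v(v^-/v^+)^{1/2m}$ exceeding $1/(\mu\tau^+)\ge 1/(\mu\tau(\xi))$, so by continuity and the limits in Proposition~\ref{prop:fxpr}(1) there exist $\xi^-<\theta_v(v^-/v^+)^{1/2m}<\xi^+$ with \eqref{eq:vuplam0} satisfied; their ordering in $\gamma$ follows from $\gamma'(\xi)>0$ between them. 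Combining with (1) gives the branch structure.

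For (6) I would follow the strategy of the proof of Theorem~\ref{thm:vdown}(6), but now with $A>0$. Solve \eqref{eq:gamv}, \eqref{eq:tanU}, \eqref{eq:AU} sequentially: \eqref{eq:gamv} gives $\gamma(\xi)$ decreasing with $\gamma(\theta_v)\in(\gamma_4,\gamma_3)$; under $\mu<\gamma_3$ one has $\gamma(\theta_v)>\mu$ on a subinterval, so the constraint \eqref{eq:gam>mu} is compatible with a neighbourhood of $\xi=\theta_v$. For each $k$, \eqref{eq:tanU} supplies $\omega_k$ with $\omega_k\tau\in(2k\pi,(2k+1)\pi)$. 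It remains to solve $f(\xi/\theta_v,m,v^-/v^+)=r_k(\gamma(\xi))$; Proposition~\ref{prop:rgammaup}(3) bounds $r_k$ while Proposition~\ref{prop:fxpr}(2) gives $f(1,m,v^-/v^+)\to+\infty$ as $m\to\infty$ and $f\to+\infty$ as $\gamma(\xi)\to\mu$, so for $m$ large enough $f-r_k$ changes sign on both sides of $\xi=\theta_v$, producing two families of Hopf bifurcations. Moving $m\to\infty$ at fixed $\gamma\in(\max\{\gamma_4,\mu\},\gamma_3)$ gives an infinite sequence, exactly as in Theorem~\ref{thm:vdown}(6). The transversality direction on the imaginary axis is determined by the sign of the variation of $\gamma$, just as in the previous theorems.

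The main obstacle is reconciling (5) and (6): the same family of equilibria now carries both folds (Theorem~\ref{thm:gup}-style) and Hopfs (Theorem~\ref{thm:vdown}-style), and we must argue that for every sufficiently large $k$ the $k$-th Hopf bifurcation lies on the unstable branch of (5) rather than on a stable outer branch. This requires comparing the $\xi$-locations of the Hopf roots of $f(\xi/\theta_v,m,v^-/v^+)=r_k(\gamma(\xi))$ with the fold locations $\xi^\pm$ characterised by $f=1/(\mu\tau(\xi))$, using that $r_k(\gamma)\ge r_0(\gamma)$ for $\gamma>\mu$ (from Proposition~\ref{prop:rgammaup}(1),(2)) to pull the large-$k$ Hopf crossings inside $(\xi^-,\xi^+)$. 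Lemma~\ref{lem:lambertw} will be helpful here when estimating the real-root structure of $\Delta(\lambda)$ near $\lambda=0$ at the folds, so that we can count, on each side, the exact number of unstable real versus complex eigenvalues arising from the successive Hopf crossings.
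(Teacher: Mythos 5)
Your overall route is the paper's: part (1) via the characteristic function, parts (2)--(4) by feeding the Hill-function identity \eqref{eq:xivdashvf} and Proposition~\ref{prop:fxpr} into (1), part (5) from the sign of $\gamma'(\xi)$ in \eqref{eq:gammadashvup}, and part (6) by the sequential solution of \eqref{eq:gamv}, \eqref{eq:tanU}, \eqref{eq:AU} exactly as in Theorem~\ref{thm:vdown}(6), including the same split into the cases $\gamma(\theta_v)>\mu$ and $\gamma_4<\gamma(\theta_v)\le\mu<\gamma_3$. However, there is a genuine gap in your proof of the stability half of (1). You only assume an unstable root of the form $\lambda=\alpha+i\omega$ with $\omega>0$ and argue from \eqref{eq:cos}--\eqref{eq:sin}; this leaves real nonnegative roots untreated, and \eqref{eq:sin} is vacuous when $\omega=0$. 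In the decreasing-$v$ case that exclusion was free because $A<0$ makes the right-hand side of \eqref{eq:char_Ag'0} negative for $\lambda\ge0$; here $A>0$, and real positive roots are precisely the mechanism of instability in the second half of (1), so ruling them out under the hypothesis $\frac{\xi v'(\xi)}{v(\xi)}<\frac{1}{\tau(\xi)\max\{\mu,\gamma\}}$ is a substantive step. The paper does it by evaluating the integral in \eqref{eq:charfn54} to write $\Delta(\lambda)=\lambda+\gamma-(1-e^{-\lambda\tau(\xi)})\frac{A}{\lambda}(\lambda+\mu)$, bounding $A<\gamma/(\tau(\xi)\max\{\mu,\gamma\})$, and invoking Lemma~\ref{lem:lambertw} to get $\frac{1}{\lambda\tau(\xi)}\bigl(1-e^{-\lambda\tau(\xi)}\bigr)<1$, whence $\Delta(\lambda)>0$ for all $\lambda>0$ in both regimes $\gamma>\mu$ and $\gamma\le\mu$. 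You cite Lemma~\ref{lem:lambertw} only in connection with part (6), where it plays no role; its actual job is here. Since (2)--(4) all rest on (1), the gap propagates; it is fixable, but it is the one genuinely new ingredient relative to Theorem~\ref{thm:vdown} and your sketch misses it.

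Two smaller points. In part (4) the threshold in (1) is $1/(\tau(\xi)\max\{\mu,\gamma\})$, and as $\xi\to0$ one has $\gamma(\xi)\to\infty$, so it is not enough that $f\to0$ by Proposition~\ref{prop:fxpr}(1): you need $f(\xi/\theta_v,m,v^-/v^+)\,\gamma(\xi)\to0$, which is exactly where the hypothesis $m>1$ enters (for $\xi\gg\theta_v$ the threshold stays bounded away from zero and $m>0$ suffices). Your concern about placing the large-$j$ Hopf points on the unstable branch between the folds in (6) is legitimate, but the paper itself leaves that claim essentially at the level of plausibility, consistent with its stated policy of verifying only the necessary spectral conditions; your proposed comparison of the roots of $f=r_j(\gamma(\xi))$ with the fold locations would, if carried out, be more than the paper does.
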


\begin{proof}
The last part of (1) is shown after \eqref{eq:vupposlam}.
To establish asymptotic stability we show that $Re(\lambda)<0$ for all of the characteristic values that solve \eqref{eq:char_Ag'0}. From $\xi v'(\xi)/v(\xi)<1/\mu\tau(\xi)$ and \eqref{eq:charfn540} it follows immediately that $\lambda=0$ is not a characteristic value. To show that $\lambda>0$ is not a characteristic value, evaluating the integral in \eqref{eq:charfn54} we obtain
$$\Delta(\lambda)=\lambda+\gamma-(1-e^{-\lambda\tau(\xi)})\frac{A}{\lambda}(\lambda+\mu).$$
But
$$A=\gamma\frac{\xi v'(\xi)}{v(\xi)}<\frac{\gamma}{\tau(\xi)\max\{\mu,\gamma\}},$$
hence
$$\Delta(\lambda)
>\lambda+\gamma-\frac1{\lambda\tau(\xi)}(1-e^{-\lambda\tau(\xi)})\frac{\gamma}{\max\{\mu,\gamma\}}(\lambda+\mu)
>\lambda+\gamma-\frac{\gamma}{\max\{\mu,\gamma\}}(\lambda+\mu),$$
where the last inequality follows from Lemma~\ref{lem:lambertw}. Now there are two cases to consider.
If $\gamma>\mu$ then
$$\Delta(\lambda)>\lambda+\gamma-(\lambda+\mu)=\gamma-\mu>0.$$
On the other hand,   
if $\gamma\leq\mu$ then
$$\Delta(\lambda)
>\lambda+\gamma-\frac{\gamma}{\mu}(\lambda+\mu)=\lambda\Bigl(1-\frac{\gamma}{\mu}\Bigr)\geq0.$$
In both cases the characteristic function satisfies $\Delta(\lambda)>0$ for all $\lambda>0$ so there are no real positive characteristic values.

To complete the proof of (1) it remains to show that there are no complex characteristic values
$\lambda=\alpha+i\omega$ with $\alpha\geq 0$ and $\omega>0$. For the case $\gamma\leq\mu$
the proof is the same as in Section~\ref{sec:gconstvdown} (where \eqref{eq:xivdashvmutau} holds independent of the sign of $v'(\xi)$).
For the remaining case, if
$\gamma>\mu$ the assumption of the theorem reads $\frac{\xi v'(\xi)}{v(\xi)}<\frac{1}{\gamma\tau(\xi)}$. Let
 $\lambda=\alpha+i\omega$ be  a characteristic value with $\alpha\geq 0$ and $\omega>0$ then
\begin{align*}
|Ae^{-\alpha\tau(\xi)}\sin{\omega\tau(\xi)}\big[(\alpha+\mu)^2+\omega^2\big]|
& \leq A\omega\tau(\xi)\big[(\alpha+\mu)^2+\omega^2\big]\\
& = \gamma\frac{\xi v'(\xi)}{v(\xi)}\omega\tau(\xi)\big[(\alpha+\mu)^2+\omega^2\big]\\
& < \omega\big[(\alpha+\mu)^2+\omega^2\big]\\
& < \omega\Big((2\alpha+\gamma)\mu+\alpha^2+\omega^2\Big).
\end{align*}
Consequently \eqref{eq:sin} is violated, and so there is no such characteristic value. Thus $\alpha<0$ for all characteristic values and the steady state is asymptotically stable.

Statement (2) is derived similarly to Theorem~\ref{thm:gdown}(4), by first using \eqref{eq:gamv} and
\eqref{eq:corners_gconst} to show that
$e^{-\mu\tau^-}<e^{-\mu\tau(\xi)}<e^{-\mu\tau^+}$
 implies that
$\gamma_4/\gamma<\xi/\theta_v<\gamma_3/\gamma$, then applying Proposition~\ref{prop:fxpr}
and (1).

Statement (3) follows from (1) using Proposition~\ref{prop:fxpr} (point 4), since
\begin{displaymath}
\frac{\xi v'(\xi)}{v(\xi)}\leq f(r_v^{1/2m},m,r_v)=\frac{m(1-r_v^{1/2})}{1+r_v^{1/2}},
\end{displaymath}
where $r_v=v^-/v^+\in(0,1)$.
Statement (4) also follows from (1), using Proposition~\ref{prop:fxpr} (point 1), similar to 
Theorem~\ref{thm:gdown} and Theorem~\ref{thm:vdown},
where for the second part of (4),
$m>1$ ensures that
$$f(\xi/\theta_v,m,v^-/v^+)\gamma(\xi)
=\frac{m(1-v^-/v^+)(\xi/\theta_v)^m}{\big(1+(\xi/\theta_v)^m\big)\big(v^-/v^+ +(\xi/\theta_v)^m\big)   }\,\frac{\beta g^\pm}{\xi}e^{-\mu\tau(\xi)}\to0\;\text{as}\;\xi\to0.$$

To show (5), note that the parameter constraints ensure that
$f(\xi/\theta_v,m,v^-/v^+)=\frac{\xi v'(\xi)}{v(\xi)}>\frac{1}{\mu\tau(\xi)}$ when
$\xi=\theta_v(v^-/v^+)^{1/2m}$. Then since $f(\xi/\theta_v,m,v^-/v^+)\to0$
as $\xi\to0$ and as $\xi\to\infty$ it follows that there exists
$\xi^+>\theta_v(v^-/v^+)^{1/2m}$ and $\xi^-<\theta_v(v^-/v^+)^{1/2m}$
which both solve \eqref{eq:vuplam0}. We take $\xi^-$ to be the largest $\xi$ which solves \eqref{eq:vuplam0} with $\xi^-<\theta_v(v^-/v^+)^{1/2m}$, while as discussed after
\eqref{eq:gammadashvup}, $\xi^+$ is unique.
Then it follows from
\eqref{eq:vuplam0}, \eqref{eq:gammadashvup} and the adjacent arguments
that there is a pair of fold bifurcations at
$(\xi,\gamma)=(\xi^-,\gamma(\xi^-))$ and $(\xi,\gamma)=(\xi^+,\gamma(\xi^+))$, which are connected by a branch of unstable steady states, and no other fold bifurcation (besides $\xi=\xi^+$) with $\xi>\xi^-$.
For $\xi>\xi^+$,  it follows from \eqref{eq:gammadashvup} that $\gamma'(\xi)<0$
and hence this branch exists for $\xi\in(\xi^+,\infty)$ and $\gamma\in(0,\gamma(\xi^+))$. If $\gamma(\xi^+)<\mu$ then the whole of this branch of steady states is stable, otherwise by (1) it is stable for all $\xi$ sufficiently large,
given by condition $\gamma(\xi)<\mu$.

For the branch which exists for $\xi\in(0,\xi^-)$,
 it follows  from \eqref{eq:gamv} that $\gamma(\xi)\to+\infty$ as $\xi\to0$.
Hence $\gamma(\xi)$ takes all values in $[\gamma(\xi^-),+\infty)$  for $\xi\in(0,\xi^-)$. However, as discussed after \eqref{eq:gammadashvup}, it is possible for this branch to have additional fold bifurcations. If there are no additional fold bifurcations and $\gamma(\xi^-)<\mu$ then the branch of steady states is stable for $\gamma\in(\gamma(\xi^-),\mu)$.
Also, if $m>1$ by (4) it is stable for all $\xi$ sufficiently small, or, equivalently, for all $\gamma(\xi)$ sufficiently large.

To show (6), first consider the case when $\gamma(\theta_v)$ defined by
\eqref{eq:gamthetav}
satisfies $\gamma(\theta_v)>\mu$. If
\be \label{eq:msuffvup}
m > 2\frac{1+(v^-/v^+)}{1-(v^-/v^+)}
\left( \frac{(2k+1)^2(\pi/\tau(\theta_v))^2+\gamma(\theta_v)^2}{2\gamma(\theta_v)(\gamma(\theta_v)-\mu)}\right),
\ee
then
\begin{align*}
f(1,m,v^-/v^+)=\frac{m(1-v^-/v^+)}{2(1+v^-/v^+)}
 & > \left( \frac{(2k+1)^2(\pi/\tau(\theta_v))^2+\gamma(\theta_v)^2}{2\gamma(\theta_v)(\gamma(\theta_v)-\mu)}\right)\\
& \geq r_j(\gamma(\theta_v)), \quad j=0,1,\ldots,k.
\end{align*}
Here the equality comes from the definition of $f$, the strict inequality from \eqref{eq:msuffvup} and the last inequality follows from a similar argument
as in the proof of Proposition~\ref{prop:rgammaup} (point 3), but using the actual value of $\tau(\theta_v)$ in the inequality, rather than the bound $\tau^+$.

\sloppy{With this inequality as the starting point, increasing $\xi$ away from $\theta_v$
we have
$\lim_{\xi\to\infty}\gamma(\xi)=0$. But $r_j(\gamma(\xi))$ and $f(\xi/\theta_v,m,v^-/v^+)$ are both
continuous functions of $\xi$ with
$\lim_{\xi\to\infty}r_j(\gamma(\xi))=\infty$ and
$\lim_{\xi\to\infty}f(\xi/\theta_v,m,v^-/v^+)=0$.
Consequently for each $j=0,1,\ldots,k$ there exists a $\xi$ such that
$f(\xi/\theta_v,m,v^-/v^+)=r_j(\gamma(\xi))$.}

Similarly, for $\xi$ decreasing from $\theta_v$ (see also the proof of Theorem~\ref{thm:vdown})
it follows for each $j=0,1,\ldots,k$ that there exists a $\xi$ such that
$f(\xi/\theta_v,m,v^-/v^+)=r_j(\gamma(\xi))$. As in the proof of
Theorem~\ref{thm:vdown} this defines the required Hopf points for
the two families of $k+1$ Hopf bifurcations.

This argument fails in the more delicate case where $\gamma_4<\gamma(\theta_v)\leq \mu <\gamma_3$. In this
case fix $\gamma\in(\mu,\gamma_3)$, and consider the behaviour as $m\to\infty$.
Since $\gamma(\xi^+)\to\gamma_4$ and $\gamma(\xi^-)\to\gamma_3$, it follows that
$\gamma\in(\gamma(\xi^+),\gamma(\xi^-))$ for all $m$ sufficiently large.
Then since $\gamma(\xi)$ defined by \eqref{eq:gamv} is monotonically increasing
for $\xi\in(\xi^-,\xi^+)$, this function is locally invertible and we can instead consider $\xi = \xi(\gamma) $ as a function of $\gamma$. From here the argument proceeds as in the proof of  Theorem~\ref{thm:vdown}(6).
\end{proof}

We now present several examples to illustrate the complex dynamics allowed by Theorem~\ref{thm:vup}. There are essentially three main cases to consider depending on
whether $\mu$ falls above, below or within the interval $(\gamma_4,\gamma_3)$. As in Section~\ref{sec:gconstvdown} we start with the simplest case where there are no Hopf bifurcations.
\begin{figure}[thp!]
	\centering

\includegraphics[scale=0.5]{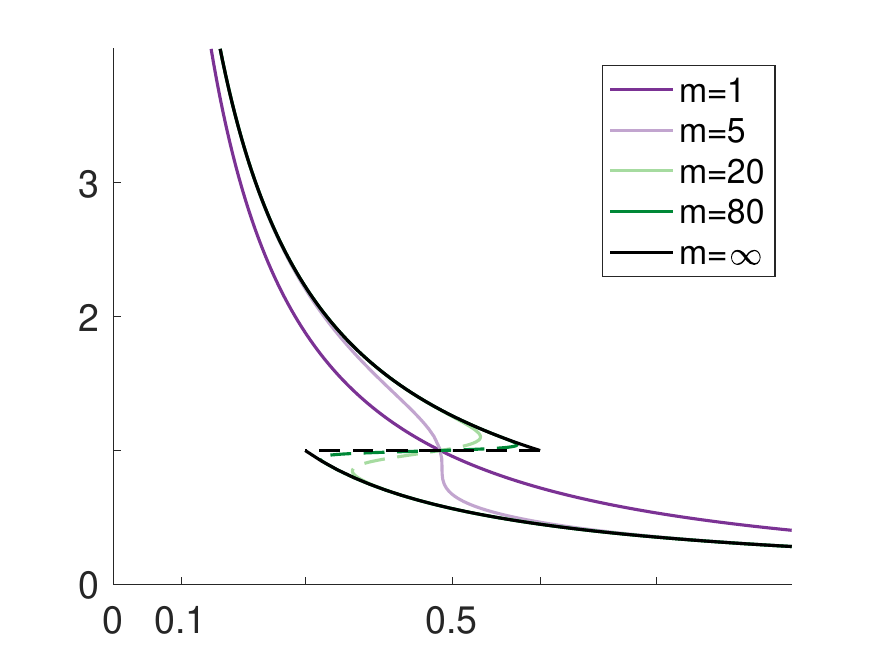}\hspace*{0.5em}\includegraphics[scale=0.5]{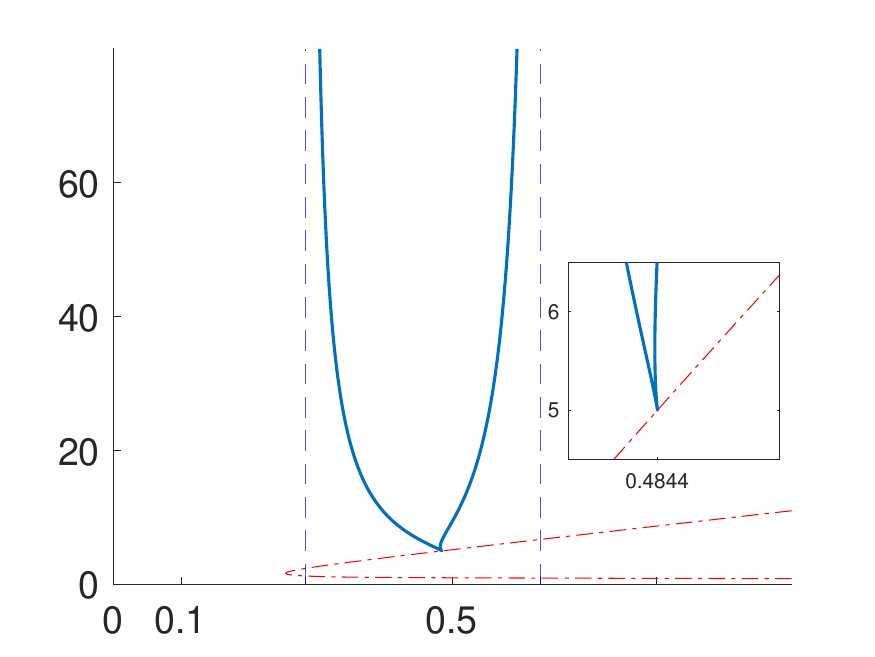}
	\put(-408,140){\rotatebox{90}{$x$}}
	\put(-412,45){$\theta_v$}
	\put(-240,7){$\gamma$}
	\put(-357,7){$\gamma_4$}
	\put(-301,7){$\gamma_3$}
	\put(-272,7){$\mu$}
	\put(-330,135){$(a)$}
	\put(-170,135){$(b)$}
	\put(-192,138){\rotatebox{90}{$m$}}
	\put(-24,7){$\gamma$}
	\put(-141,7){$\gamma_4$}
	\put(-85,7){$\gamma_3$}
	\put(-56,7){$\mu$}
	\caption{Bifurcation diagram of \eqref{eq:basic}-\eqref{eq:thres}
	with $(g \leftrightarrow, v \uparrow)$ and parameters $\beta=1.4$, $\mu=0.8$, $g^-=g^+=1$, $\gamma=\theta_v=a=1$, $v^-=0.5$ and $v^+=1$.
By \eqref{eq:corners_gconst} this implies
$\gamma_4 = 0.2827 < \gamma_3 = 0.6291 < \mu$ (Case 1).
(a) One parameter continuation of the steady state in $\gamma$ for the velocity functions in \eqref{eq:vpwconst} and in \eqref{eq:vghill} with different fixed values of $m$. For $m<5$ the steady state is always stable. For $m>5$ it is unstable between two fold bifurcations, and for large $m$ approaches the limiting case \eqref{eq:vpwconst} (denoted by $m=\infty$ in the figure).
(b) Two-parameter continuation of the fold bifurcations in the $(\gamma,m)$-plane,
with a
cusp point at $(\gamma, m)=(0.4844, 5.0002)$. The red dash-dotted curve denotes the bound on the fold bifurcations given by \eqref{eq:foldboundvup}.}
\label{fig:vup_ex3}
\end{figure}

{\bf Case 1:}  $\gamma_4 < \gamma_3 < \mu$.  This is illustrated in Figure~\ref{fig:vup_ex3}. Panel (a) shows one-parameter continuations of the steady-states in $\gamma$ for several fixed values of $m$, along with the limiting case \eqref{eq:vpwconst},
while panel (b) shows the two-parameter continuation of fold bifurcations of steady-states in the
$(\gamma,m)$ plane.
We see that for all $m$ sufficiently large there is a pair of fold bifurcations
at $(\xi,\gamma)=(\xi^-,\gamma(\xi^-))$ and $(\xi,\gamma)=(\xi^+,\gamma(\xi^+))$,
with $(\xi^-,\gamma(\xi^-))\to(\theta_v,\gamma_4)$ and
$(\xi^+,\gamma(\xi^+))\to(\theta_v,\gamma_3)$ as $m\to\infty$.
As required by Theorem~\ref{thm:vup}, the steady state is always unstable between the fold bifurcations, while we observe it to always be stable otherwise.

Similar to the analysis in Section~\ref{sec:gupvconst} that led to \eqref{eq:foldbound},
imposing $\max\{h'(x)\}\geq0$
leads to the necessary condition
\be \label{eq:foldboundvup}
\gamma \leq -\beta\mu\tau'(\xi)e^{-\mu\tau(\xi)}g^\pm
\ee
for the coexistence of three steady states. However, the algebra to turn this into an explicit condition (compare \eqref{eq:foldboundvup} with \eqref{eq:foldbound})
is challenging with a state-dependent delay \eqref{eq:steadydelay}, so instead we
apply this condition numerically.
The red dash-dotted curve in Figure~\ref{fig:vup_ex3}(b) depicts the condition \eqref{eq:foldboundvup}. For any fixed $m$, this curve provides an upper bound on $\gamma$ for the existence of fold bifurcations and hence multiple coexisting steady states.
By the same argument as in Section~\ref{sec:gupvconst} the cusp bifurcation must lie on this curve. This can be seen in Figure~\ref{fig:vup_ex3}(b) where the cusp point
$(\gamma, m)=(0.4844, 5.0002)$ lies on this bounding curve.

Because of \eqref{eq:gam>mu}, there can be no Hopf bifurcations for $\gamma<\mu$, and
since the fold bifurcations all occur for $\gamma<\mu$, there can be no Hopf bifurcations between the folds, as seen in Figure~\ref{fig:vup_ex3}.
For $\gamma>\mu$, Theorem~\ref{thm:vup} ensures that the steady state is stable for $\gamma$ sufficiently large or for $m$ sufficiently large. In Figure~\ref{fig:vup_ex3} we see that actually there are no Hopf bifurcations at all and the steady state is stable for all values of $\gamma>\mu$ and  for all $m>0$.

\begin{figure}[thp!]
	\centering
	\includegraphics[scale=0.5]{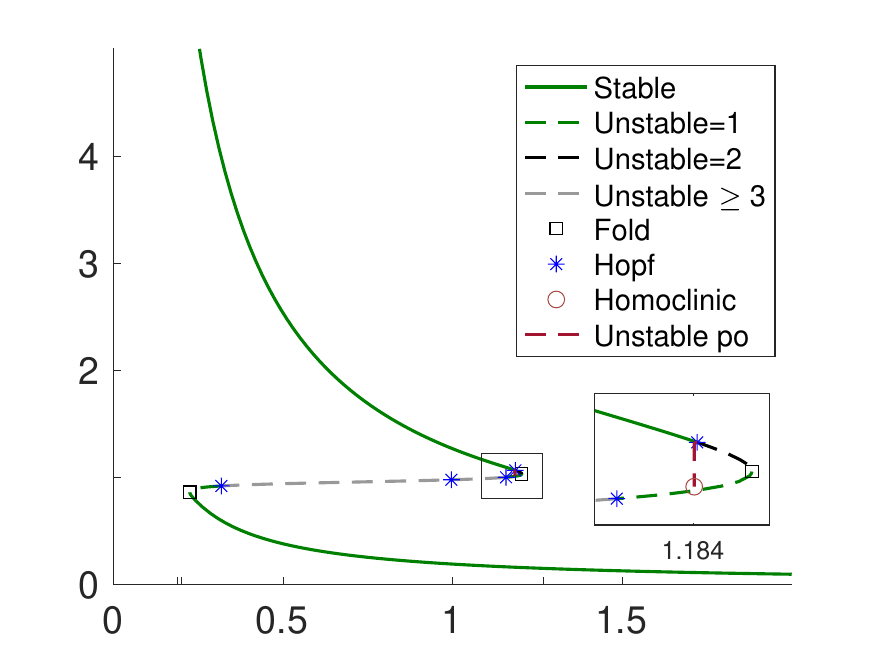}\hspace*{0.5em}\includegraphics[scale=0.5]{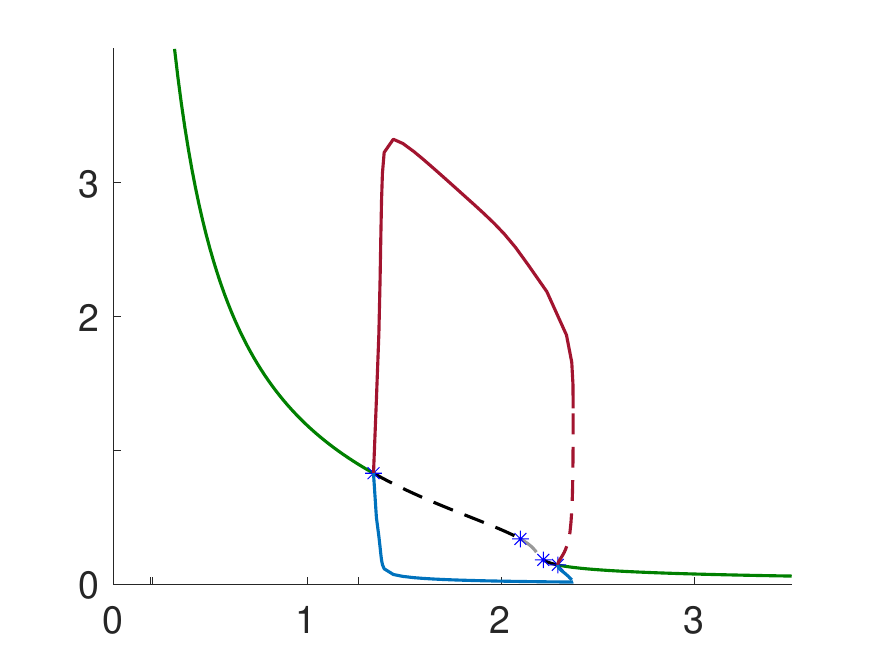}
	\put(-406,140){\rotatebox{90}{$x$}}
	\put(-412,40){$\theta_v$}
	\put(-388,7){$\gamma_4$}
	\put(-300,7){$\gamma_3$}
	\put(-250,8){$\gamma$}
	\put(-340,140){$(a)$}
	\put(-100,140){$(b)$}
	\put(-195,45){$\theta_v$}
    \put(-190,140){\rotatebox{90}{$x$}}
	\put(-25,8){$\gamma$}

	\caption{One parameter bifurcation diagrams of \eqref{eq:basic}-\eqref{eq:thres} with $(g \leftrightarrow, v \uparrow)$ for
       varying $\gamma$    and smooth increasing $v$ defined by \eqref{eq:vghill},
       with the exponent $m$ in the velocity nonlinearity defined by (a) $m=50$ and (b) $m=2$. The other parameters are taken to be
        $\beta=1.4$, $\mu=0.2$, $g^-=g^+=1$, $\theta_v=1$, $a=1$, $v^-=0.1$ and $v^+=2$.
        By \eqref{eq:corners_gconst} this implies
        $\gamma_4 = 0.1895 < \mu < \gamma_3 = 1.2668$ (Case 2). 
        Solid lines represent stable objects (with stable steady states shown in green), and
        dashed lines represent unstable objects (with unstable steady states shown in green, black or grey depending on the number of unstable characteristic values). In (a) a branch of unstable periodic orbits is represented by the 2-norm \eqref{eq:2norm} of the periodic solution (red dashed line in inset), while in (b) red and blue lines represent the maximum and minimum values of $x(t)$ on the periodic orbit. }
	\label{fig:vup_ex1a}
\end{figure}

{\bf Case 2.} The case $\mu\in(\gamma_4,\gamma_3)$ is illustrated in Figures~\ref{fig:vup_ex1a}-\ref{fig:vup_ex1e}.
Figure~\ref{fig:vup_ex1a} shows two one-parameter continuations in $\gamma$ for different values of $m$. At first glance, Figure~\ref{fig:vup_ex1a}(a) for which $m\gg0$, looks much like the corresponding constant delay case with increasing $g$ illustrated in Figure~\ref{fig:gup_ex1}(b) in Section~\ref{sec:gupvconst}. In both of these examples there are two fold bifurcations, leading to an interval of bi-stability of steady states, and Hopf bifurcations between the fold bifurcations leading to unstable periodic orbits. However, as shown in the inset of Figure~\ref{fig:vup_ex1a}(a), for the state-dependent delay with $\mu<\gamma_3$ the upper branch of stable steady states, which exists for $x>\theta_v$, loses stability in a (subcritical) Hopf bifurcation, and not at the fold bifurcation. Recall that for the constant delay case considered in  Section~\ref{sec:gupvconst} we showed that the steady-state could only lose stability in a fold bifurcation.

Figure~\ref{fig:vup_ex1a}(b) illustrates the dynamics for a much smaller value of $m$, but with the other parameters unchanged. In this case $m$ is too small for fold bifurcations to occur, but we still find a pair of Hopf bifurcations, one supercritical and one subcritical, leading to a stable periodic orbit, and also a very narrow interval of bistability of this periodic orbit and a stable steady state between the subcritical Hopf bifurcation and a saddle-node of periodic orbits bifurcation. This is quite different from the dynamics seen before where, for decreasing $g$ or $v$ respectively in Sections~\ref{sec:gdownvconst} and~\ref{sec:gconstvdown}, only supercritical Hopf bifurcations were observed, and for increasing $g$ in Section~\ref{sec:gconstvdown} where Hopf bifurcations only occur between the pair of fold bifurcations.

The branches of periodic orbits in Figure~\ref{fig:vup_ex1a}(a) and (b) are represented differently. In Figure~\ref{fig:vup_ex1a}(a) we plot the $L_2$ norm of the periodic solution
of period $T$, defined as
\be \label{eq:2norm}
\|x\|_2=\left(\frac1T\int_{t=0}^{T}|x(t)|^2 dt\right)^{\!1/2}.
\ee
In contrast, in Figure~\ref{fig:vup_ex1a}(b) we display the branch of periodic orbits by
plotting both $\max x(t)$ and $\min x(t)$ over the periodic orbit, which clearly shows the amplitude of the periodic solution. Both representations can be useful on bifurcation diagrams since at a Hopf bifurcation all three expressions are equal to the steady state value and  therefore  the periodic orbits emanate from the steady states at Hopf bifurcations. However, the representation \eqref{eq:2norm} has the additional property
that $\|x\|_2\to x_s$ as the solution approaches a homoclinic orbit to a saddle steady state $x_s$. An example of this is seen in Figure~\ref{fig:vup_ex1a}(a) where the branch of unstable periodic orbits emanating from the subcritical Hopf bifurcation appears to terminate in a homoclinic bifurcation at the middle steady state. We will investigate
homoclinic bifurcations below, and so we will mainly use the $L_2$ norm \eqref{eq:2norm} to represent periodic orbits.

\begin{figure}[thp!]
	\centering
	\includegraphics[scale=0.5]{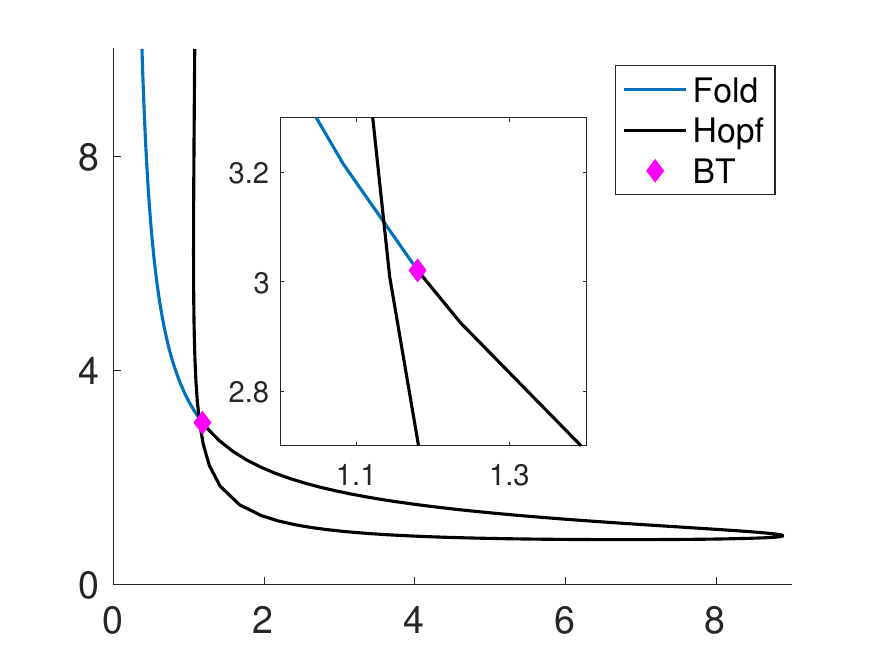}\hspace*{0.5em}\includegraphics[scale=0.5]{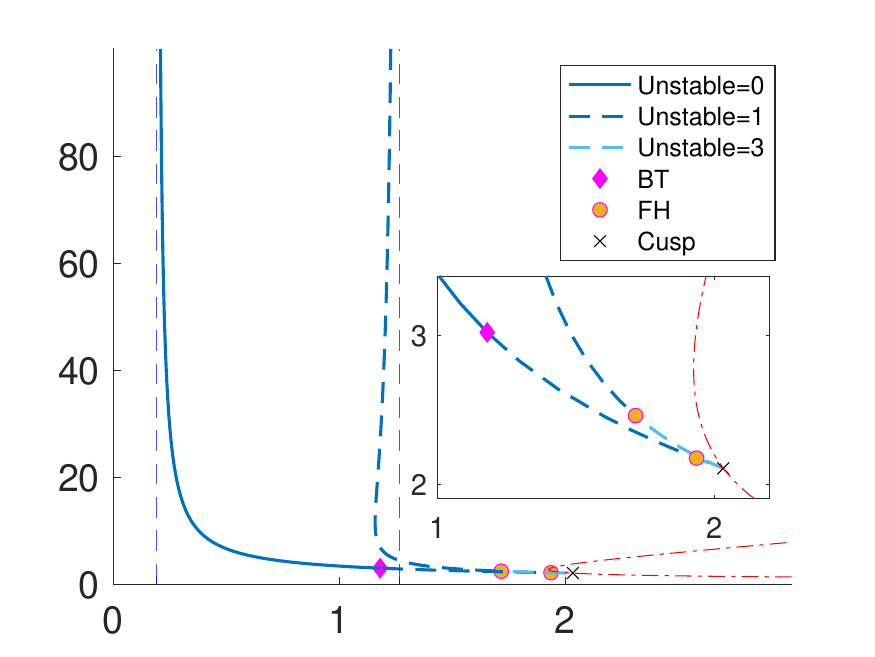}
	\put(-406,140){\rotatebox{90}{$m$}}
	\put(-247,8){$\gamma$}
	\put(-340,140){$(a)$}
	\put(-363,35){$0$}
	\put(-260,40){$1$}
	\put(-390,30){$1$}
	\put(-389,130){$2$}
	\put(-150,140){$(b)$}
	\put(-190,140){\rotatebox{90}{$m$}}
	\put(-40,8){$\gamma$}
	\put(-178,7){$\gamma_4$}
	\put(-120,7){$\gamma_3$}\\

\includegraphics[scale=0.5]{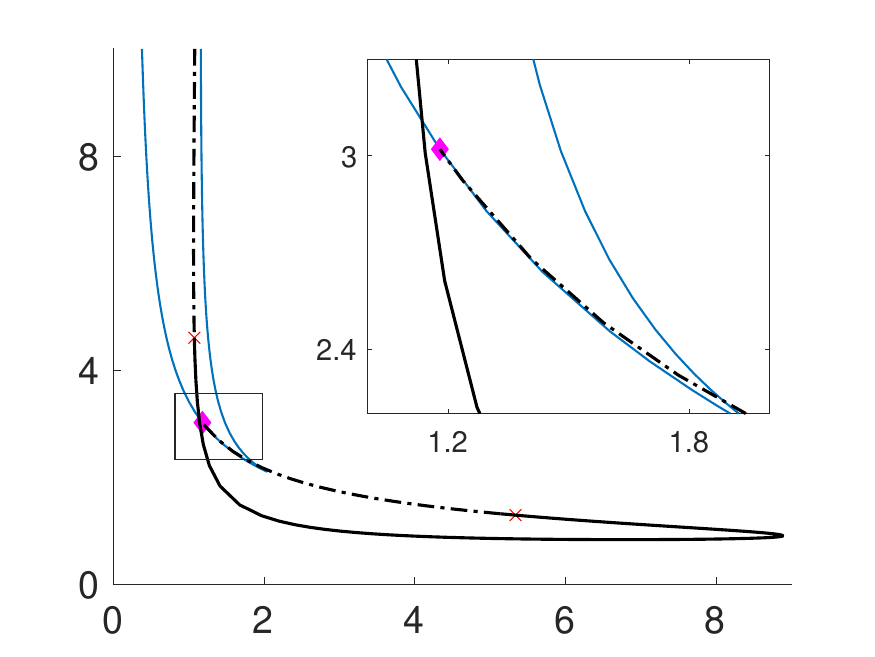}\hspace*{0.5em}\includegraphics[scale=0.5]{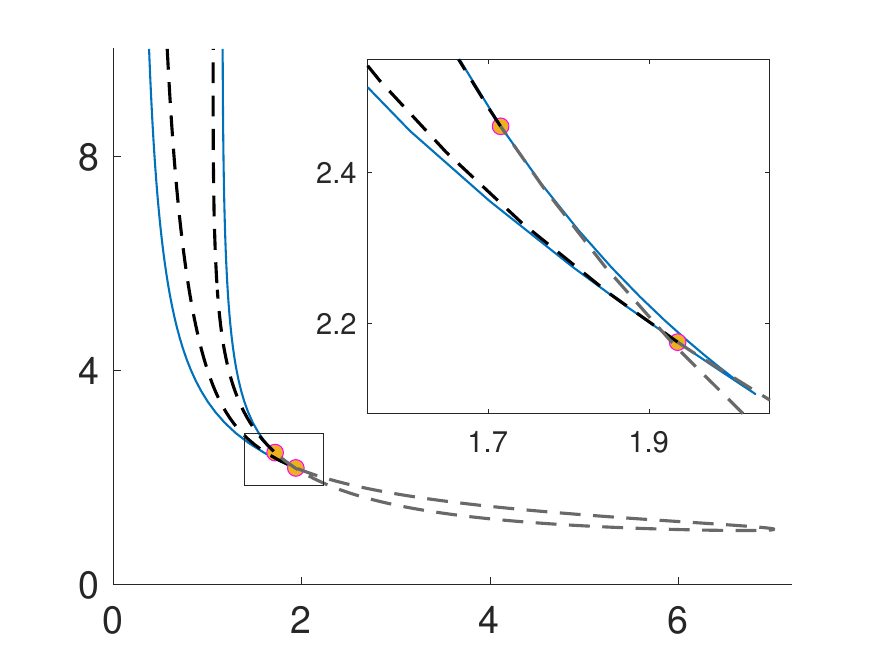}
	\put(-396,22){$(c)$}
	\put(-406,140){\rotatebox{90}{$m$}}
	\put(-244,8){$\gamma$}
	\put(-180,22){$(d)$}
	\put(-190,140){\rotatebox{90}{$m$}}
	\put(-30,8){$\gamma$}

	\caption{Two parameter (Case 2) bifurcation diagrams of \eqref{eq:basic}-\eqref{eq:thres}
      with $(g \leftrightarrow, v \uparrow)$ and smooth increasing $v$ defined by \eqref{eq:vghill}.
      The bifurcation parameters are $\gamma$ and $m$, with the other parameters for all the panels
      taken as stated in the caption to Figure~\ref{fig:vup_ex1a}.
       (a) The parameter space is divided into regions where there are $0$, $1$ or $2$ stable steady states as indicated. These regions are partly bounded by a curve of Hopf bifurcations (black),
       and partly by a curve of fold bifurcations (blue). These two curves meet in a Bogdanov-Takens (BT) bifurcation point at $(\gamma, m)=(1.1795, 3.0209)$. 
      (b) Two-parameter continuation of the fold bifurcations only. The fold curve is drawn according to the number of characteristic values with positive real part; solid blue for zero, dashed blue for one, and dashed light blue for three. The dashed vertical blue lines denote $\gamma=\gamma_3$ and $\gamma=\gamma_4$, given by \eqref{eq:corners_gconst}, the location of the fold bifurcations in the limiting case as $m\to\infty$. The red dash-dotted curve denotes the bound on the fold bifurcations given by \eqref{eq:foldboundvup}. There is a cusp point at $(\gamma, m)=(2.0321, 2.1058)$.
The same BT point is detected again as well as two fold-Hopf points at $(\gamma, m)=(1.7153, 2.4612)$ and $(\gamma, m)=(1.9354, 2.1748)$.  
(c) Continuation of the branch of Hopf bifurcations emanating from the BT point (in black), with the branch of fold bifurcations shown in blue for scale. These Hopf bifurcations always involve a stable steady state that loses stability, either in a supercritical Hopf bifurcation to a stable periodic orbit (the solid black curve), or in a subcritical Hopf bifurcation to an unstable periodic orbit
(the dotted-dashed black curve). Two Bautin bifurcations at $(\gamma, m)=(1.0721, 4.6069)$
and $(5.3352, 1.2929)$ where the criticality of the Hopf bifurcation changes are denoted by red stars.
(d) Continuation of the branch of Hopf bifurcations which passes through both fold-Hopf points. This bifurcation curve only involves unstable steady states.
The dashed black curve represents Hopf bifurcations with one unstable eigenvalue, and the dashed gray curve with two unstable eigenvalues. The change in number of eigenvalues with positive real part occurs at the fold-Hopf points.}
\label{fig:vup_ex1b}
\end{figure}

To investigate this example further in Figure~\ref{fig:vup_ex1b} we present two-parameter continuations of the bifurcations in $\gamma$ and $m$, with all the other parameter values the same as in Figure~\ref{fig:vup_ex1a}.

Figure~\ref{fig:vup_ex1b}(a) shows how the $\gamma$-$m$ parameter plane is divided into regions where there are $0$, $1$ or $2$ stable steady states. These regions are partly bounded by a curve of fold bifurcations, but mainly by a curve of Hopf bifurcations, with the two bifurcation curves meeting at a
Bogdanov-Takens (BT) bifurcation point, at which the characteristic equation has a double zero characteristic value.

Figure~\ref{fig:vup_ex1b}(b) shows the locus of the fold bifurcations, along with codimension-two bifurcations which occur on this branch. Different line types/colours are used to indicate the number of unstable characteristic values (not counting the zero characteristic value associated with the fold bifurcation itself).
Since $(\xi^+,\gamma(\xi^+))\to(\theta_v,\gamma_4)$ as $m\to\infty$, and $\mu>\gamma_4$,
it follows that  $\gamma(\xi^+) < \mu$ for all $m$ sufficiently large.  As
explained before 
Proposition~\ref{prop:rgammaup}, these fold bifurcations which occur for
$\gamma<\mu$ always involve a stable steady state.
The fold bifurcations where a stable steady state
is created/destroyed are shown as the solid blue segment of the curve of fold bifurcations which is asymptotic to $\gamma=\gamma_4$ in Figure~\ref{fig:vup_ex1b}(b). In contrast, the part of the branch asymptotic to $\gamma=\gamma_3$ as $m\to\infty$ consists of fold bifurcations of two unstable steady states (with different numbers of unstable characteristic values).
The BT point, already seen in Figure~\ref{fig:vup_ex1b}(a), separates these two parts of the branch of fold bifurcations. Consequently, only part of the branch of fold bifurcations delineates the boundary between regions of parameter space with different numbers of stable steady states. This contrasts starkly with the constant delay case considered in Section~\ref{sec:gupvconst} and the state-dependent case with $\mu>\gamma_3$ (shown in Figure~\ref{fig:vup_ex3}),
for both of which steady states only lose stability in a fold bifurcation.

Figure~\ref{fig:vup_ex1b}(c) shows the Hopf curve which terminates at the BT point. We verified numerically that at all points on this curve the Hopf bifurcation is from a stable steady state that loses stability. Close to the BT point, and at the other end of this branch as $m\to\infty$, the Hopf bifurcation is subcritical leading to an unstable periodic orbit. But on a large segment of this curve (and in particular for the smallest values of $m$ on the curve) the Hopf bifurcation is supercritical leading to a stable periodic orbit. Two Bautin bifurcation points, where the Hopf bifurcation changes criticality, separate the sub and supercritical segments of the curve. Figure~\ref{fig:vup_ex1b}(a) shows the number of stable steady states in the $\gamma$-$m$ parameter plane, and just the parts of the bifurcation curves that delineate these regions. The region with no stable steady states contains a stable periodic orbit (because of the supercritical Hopf bifurcation on the left-side of this region).

\begin{figure}[thp!]
	\centering	
	\includegraphics[scale=0.5]{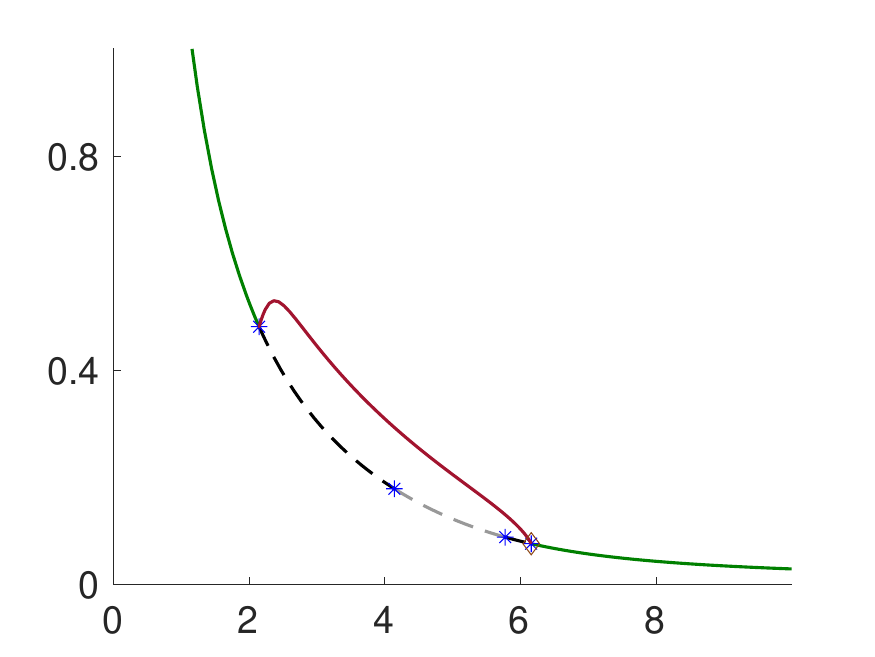}\hspace*{-1cm}\includegraphics[scale=0.5]{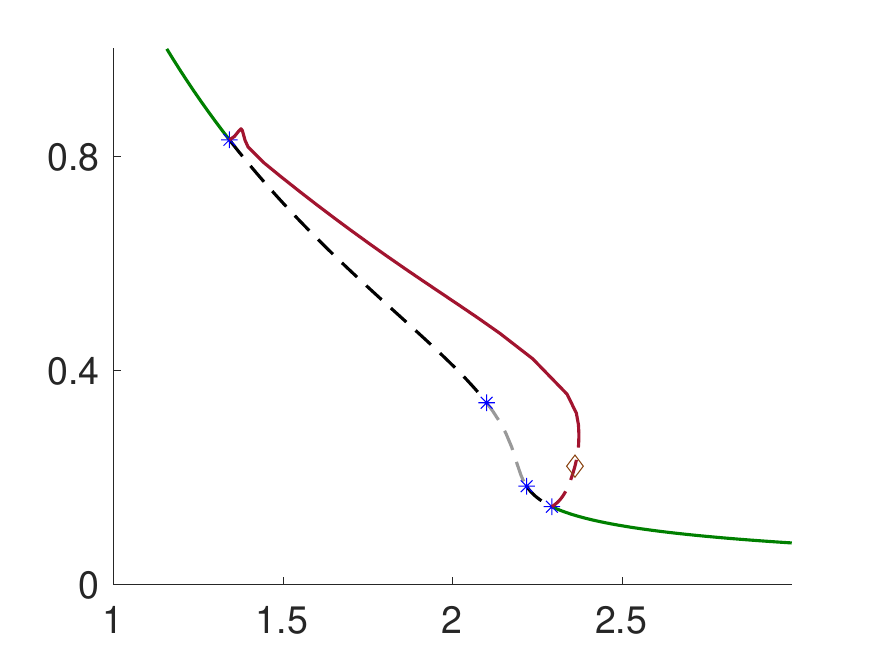}
	\put(-372,140){\rotatebox{90}{$x$}}
	\put(-220,10){$\gamma$}
	\put(-190,140){\rotatebox{90}{$x$}}
	\put(-30,10){$\gamma$}
    \put(-332,90){\small$T_l=5.48$}
    \put(-275,40){\small$T_r=T_{\textit{max}}=16.95$}
    \put(-145,122){\small$T_l=6.28$}
    \put(-110,22){\small$T_r=33.77$}
    \put(-68,42){\small$T_{\textit{max}}=41.62$}
	\put(-305,140){$(a) \quad m=1.2$}
    \put(-115,140){$(b) \quad m=2$}\\ 

\includegraphics[scale=0.5]{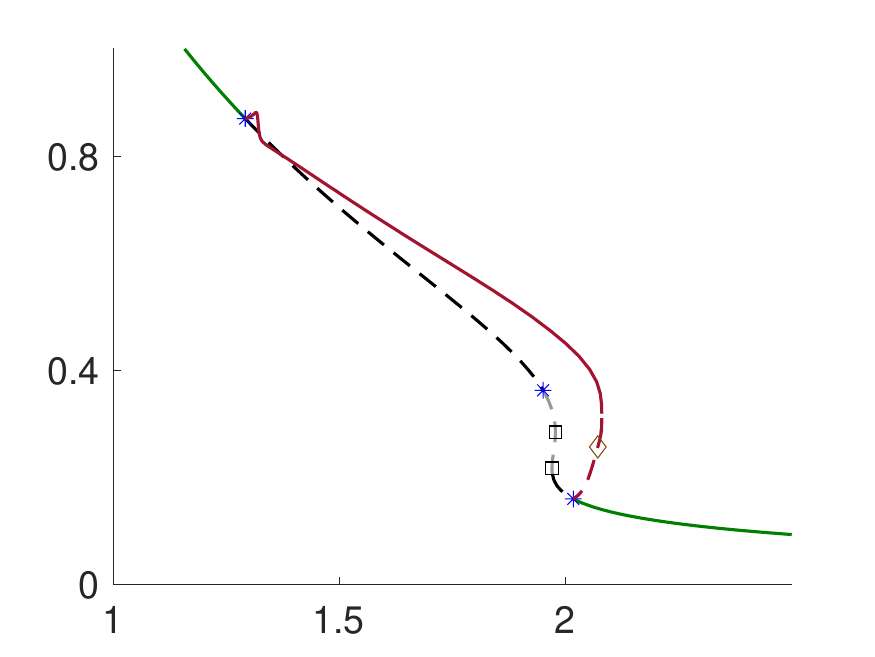}\hspace*{-1cm}\includegraphics[scale=0.5]{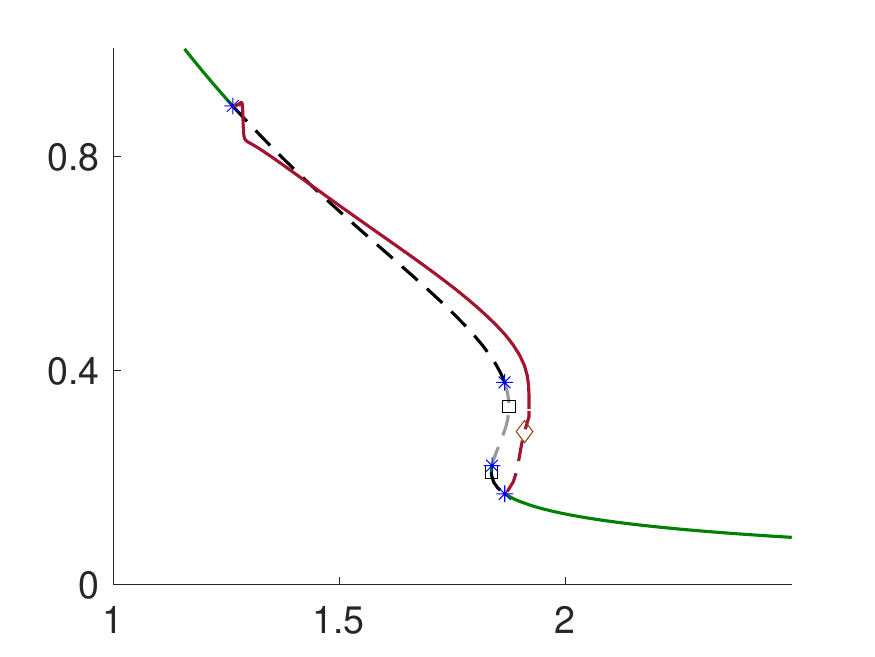}
	\put(-372,140){\rotatebox{90}{$x$}}
	\put(-220,10){$\gamma$}
	\put(-190,140){\rotatebox{90}{$x$}}
	\put(-30,10){$\gamma$}
    \put(-325,125){\small$T_l=5.26$}
    \put(-285,25){\small$T_r=37.74$}
    \put(-245,47){\small$T_{\textit{max}}=58.99$}
    \put(-147,127){\small$T_l=4.87$}
    \put(-130,27){\small$T_r=42.33$}
    \put(-80,51){\small$T_{\textit{max}}=95.37$}
	\put(-305,140){$(c) \quad m=2.15$} % POfold: T\approx 40$}
	\put(-115,140){$(d) \quad m=2.25$}\\ % POfold: T\approx 50$}
	\includegraphics[scale=0.5]{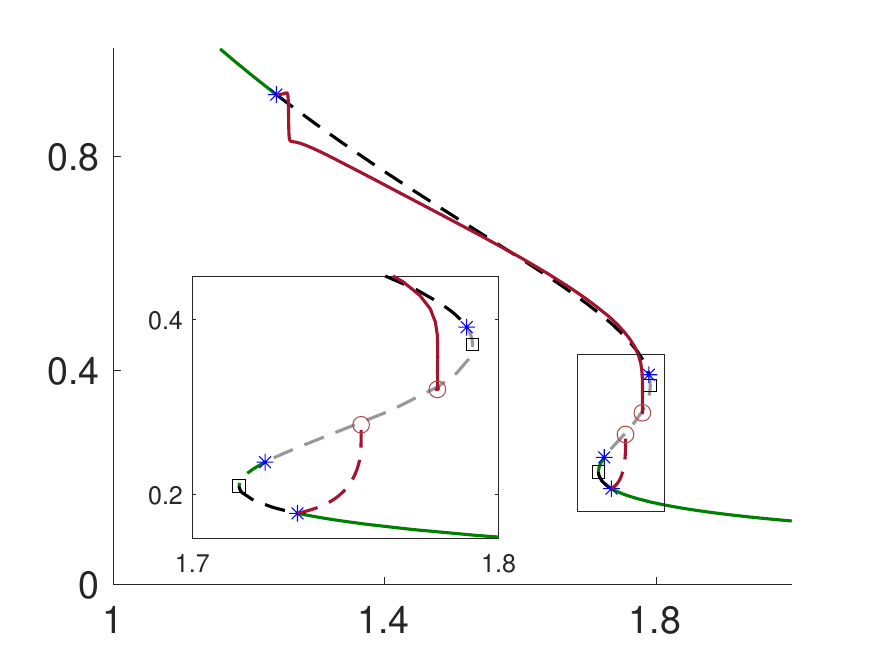}\hspace*{-1cm}\includegraphics[scale=0.5]{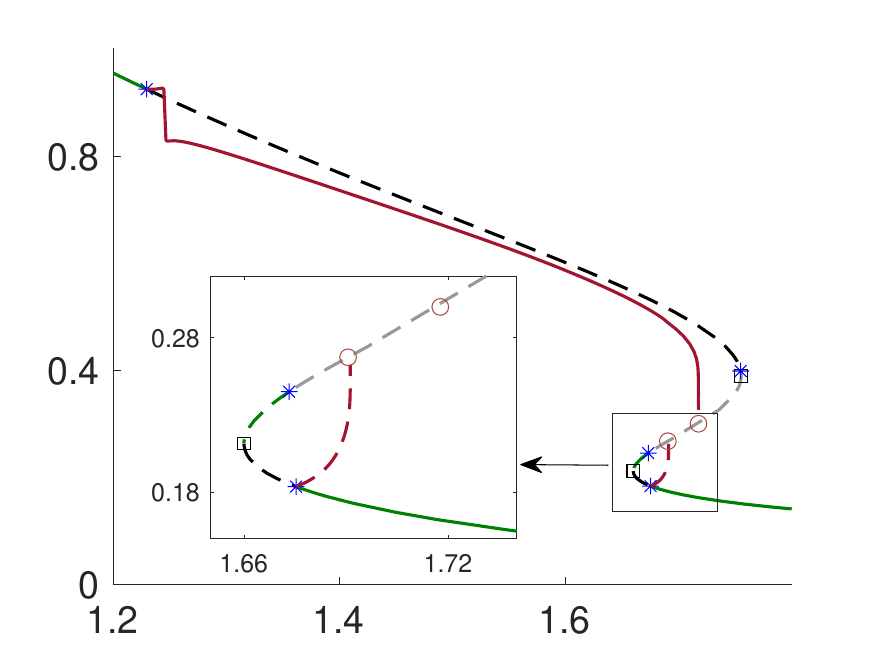}
	\put(-372,140){\rotatebox{90}{$x$}}
	\put(-220,10){$\gamma$}
	\put(-190,140){\rotatebox{90}{$x$}}
	\put(-30,10){$\gamma$}
   \put(-365,124){\small$T_l=5.24$}
    \put(-260,23){\small$T_r=44.04$}
    \put(-165,135){\small$T_l=5.19$}
    \put(-75,23){\small$T_r=45.88$}
 	\put(-305,140){$(e) \quad m=2.35$}
	\put(-115,140){$(f) \quad m=2.4$}\\
	\caption{Case 2 one-parameter bifurcation diagrams for $(g \leftrightarrow, v \uparrow)$ with varying $\gamma$ and different fixed values of $m$, with the other parameters the same as in Figures~\ref{fig:vup_ex1a} and~\ref{fig:vup_ex1b}.
Solid lines represent stable objects including stable steady states (in green) and stable periodic orbits (represented by the 2-norm \eqref{eq:2norm}). Dashed lines represent unstable objects including unstable steady states (depending on the number of eigenvalues with positive real part, green for one, black for two and gray for three and more) and unstable periodic orbits (represented by 2-norm \eqref{eq:2norm}). Hopf bifurcations are marked by blue stars, fold bifurcations of steady states by black squares, and homoclinic bifurcations by red circles.
For the branch of periodic orbits, $T_l$ is the period at the left Hopf bifurcation, and $T_r$ at the right Hopf bifurcation. The point on the branch where the largest period, $T_{\textit{max}}$, occurs is marked by
a brown diamond. }
\label{fig:vup_ex1c}
\end{figure}

Figure~\ref{fig:vup_ex1b}(d) shows another curve of Hopf bifurcations for the same problem. This curve passes through the two fold-Hopf bifurcation points seen on the curve of fold bifurcations in Figure~\ref{fig:vup_ex1b}(b). We notice that while these points are close together on the branch of fold bifurcations (they are both close to, but on different sides of, the cusp point), as the inset shows, they are on different legs of the branch of Hopf points, and hence far from each other on this branch of Hopf bifurcations.
The codimension-two fold-Hopf bifurcation points are of interest, as such bifurcations are impossible for the constant delay model of Section~\ref{sec:gupvconst}. However the Hopf bifurcations seen in this example all generate unstable periodic orbits bifurcating from unstable steady states, and we will not study them further.

%%%%%%%%%%%%%%%%%%%

In Figure~\ref{fig:vup_ex1c} we continue our study of the dynamics seen in Figures~\ref{fig:vup_ex1a} and~\ref{fig:vup_ex1b} by showing one-parameter continuations of the dynamics and bifurcations as $\gamma$ is varied for different fixed values of the steepness parameter $m$ in the Hill function \eqref{eq:vghill}.
For $m$ small there is a unique stable steady state for each value of $\gamma$ and no bifurcations occur.
For $m>0.8355$ 
there is still a unique steady state, but it is unstable between a pair of super-critical Hopf bifurcations, where a branch of stable periodic orbits exists.
This is illustrated in Figure~\ref{fig:vup_ex1c}(a) for $m=1.2$,
where we see that the period of the periodic orbits is monotonically increasing from the left Hopf point to the right Hopf point.

Increasing $m$ further for $m>1.2929$ we pass the lower Bautin bifurcation seen in Figure~\ref{fig:vup_ex1b}(c)
and the right Hopf point becomes subcritical. The branch of periodic orbits is still contiguous, but a fold bifurcation of periodic orbits born in the Bautin bifurcation divides the branch of periodic orbits into an upper stable branch and a lower unstable branch. The period is no longer monotonically increasing on the branch of periodic orbits, but attains a maximum period near to the fold bifurcation of periodic orbits, but on the
unstable branch of periodic orbits (as illustrated for $m=2$ in Figure~\ref{fig:vup_ex1c}(b)).

For $m>2.1058$ the cusp bifurcation seen in  Figure~\ref{fig:vup_ex1b}(b) introduces two fold bifurcations of steady states (seen for $m=2.15$ in Figure~\ref{fig:vup_ex1c}(c)). As $m$ is increased further the maximum period seen on the branch of periodic orbits increases dramatically and the point where the maximum period occurs approaches the intermediate steady state (see Figure~\ref{fig:vup_ex1c}(d)).

Comparing panels (d) and (e) of Figure~\ref{fig:vup_ex1c} suggests that
\eqref{eq:basic} undergoes a codimension-two global bifurcation between these panels.
The branches of periodic orbits emanating from the Hopf bifurcations where the steady states lose stability no longer join up in panels (e) and (f).  Instead, each branch ends at a homoclinic bifurcation to the intermediate steady state (at different values of the parameter $\gamma$). The   panels shown suggest, but do not prove, that as $m$ is increased the branch of periodic orbits
approaches 
the fold point with $T_{\textit{max}}\to\infty$ at the first point where a homoclinic bifurcation occurs. Further evidence is presented in the two-parameter continuation of Figure~\ref{fig:vup_ex1e}(a) where the homoclinic curves seem to approach the curve of folds. For larger values of $m>2.3$, codimension-one homoclinic bifurcations to the intermediate saddle steady state are found. Examples of dynamical systems with a single branch of homoclinic bifurcations terminating at a fold bifurcation of steady states can be found in \cite{AlDarabsahCampbell21,Kuznetsov4ed}. Our example is somewhat different from those, as we have two branches of homoclinic bifurcations.

\begin{figure}[tbp!]
	\includegraphics[scale=0.5]{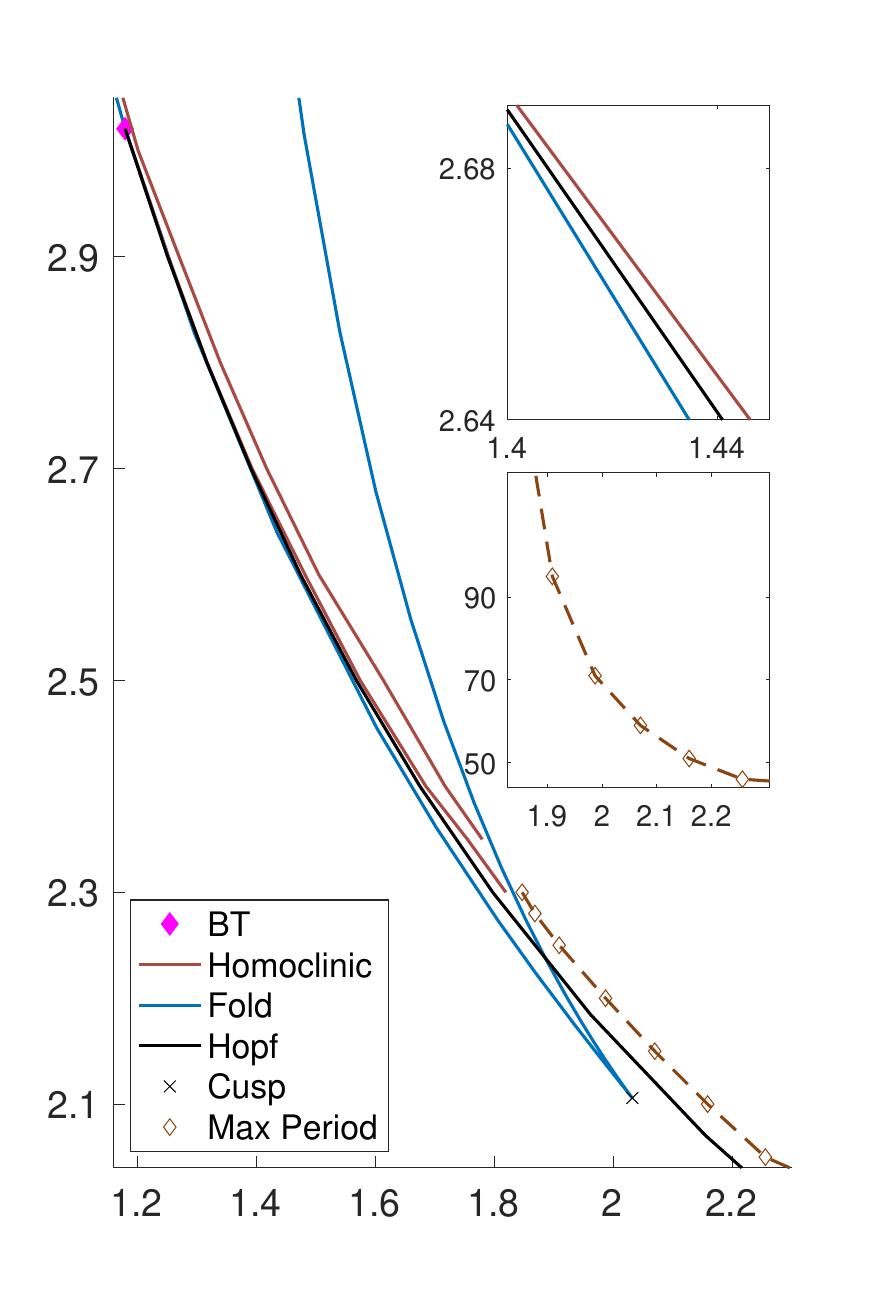}\hspace*{0.1em}
	\put(-192,285){\rotatebox{90}{$m$}}
	\put(-20,25){$\gamma$}
	\put(-165,285){$(a)$}
	\put(-100,181){\small\rotatebox{90}{$T_{\textit{max}}$}}
	\put(-30,117){\small$\gamma$}
    \put(-30,206){\small$\gamma$}
    \put(-98,284){\rotatebox{90}{\small$m$}}
    \parbox[b]{8cm}{\includegraphics[scale=0.5]{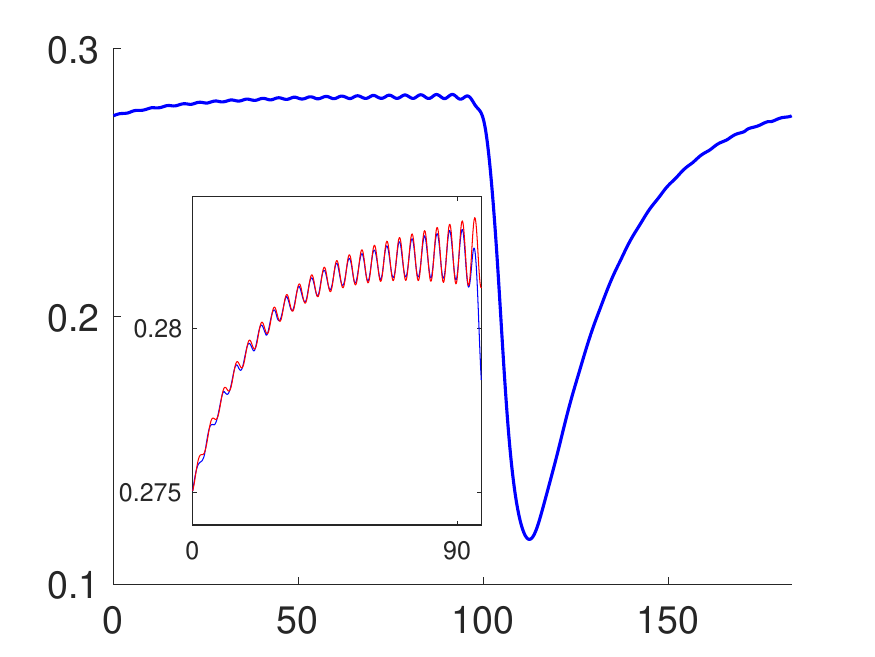}
  	\put(-80,135){$(b)$}
    \put(-23,6){$t$}
	\put(-203,127){$x(t)$}
    \\
    \includegraphics[scale=0.5]{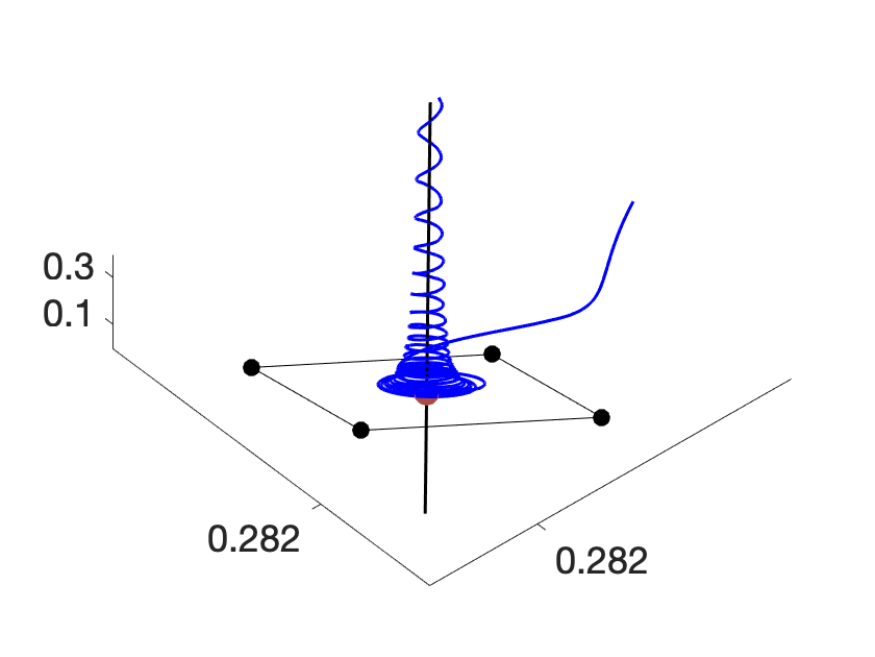}
	\put(-150,127){(c)}
	\put(-180,48){$x(t)$}
	\put(-40,46){$x(t-\tau/2)$}
	\put(-215,102){$x(t-\tau)$}}

	\caption{(a) Bifurcation diagram near the Bogdanov-Takens point shown in Figure~\ref{fig:vup_ex1b} for $(g \leftrightarrow, v \uparrow)$ shows a branch of Hopf bifurcations and a branch of homoclinic bifurcations emerging from the BT point. The dashed brown line denotes the point in parameter space where the largest period was encountered on the branches of periodic orbits shown in Figure~\ref{fig:vup_ex1c}, with the period shown in the lower inset. The upper inset shows the ordering of the curves emerging from the BT point.
(b) Profile of the unstable periodic orbit with parameters $\beta=1.4$, $\mu=0.2$, $g^-=g^+=1$, $m=2.35$, $\gamma=1.7550$, $\theta_v=1$, $a=1$, $v^-=0.1$, and $v^+=2$. The blue curve shows the periodic orbit with period $T=183$.
In the inset the part of the periodic orbit (in blue) near to the intermediate steady state is overlayed by a linear approximation to the dynamics in red (see text for details).
(c) Projection of the periodic orbit from (b) onto the space $(x(t), x(t-\tau/2), x(t-\tau))$ where $\tau=5.1926$ is chosen to be the delay at the intermediate steady state represented by the orange dot. The solid black line represents the dominant one-dimensional linear stable manifold associated with the characteristic value $-0.045$. The parallelogram denotes the plane associated with
the dominant unstable characteristic values $\lambda = 0.0184 \pm 1.4738i$.}
\label{fig:vup_ex1e}
\end{figure}

The homoclinic bifurcation on the branch emanating from the lower Hopf point
in Figures~\ref{fig:vup_ex1c}(e) and (f) does not persist for large $m$, but
instead the homoclinic bifurcation and the Hopf bifurcation itself move towards the fold bifurcation and terminate at the BT-point found previously. This is illustrated in a two-parameter continuation in Figure~\ref{fig:vup_ex1e}(a), which shows the branch of fold bifurcations of steady states passing through the BT point, next to a branch of Hopf bifurcations, then a branch of homoclinic bifurcations, the latter two branches both terminating at the BT point tangential to the branch of fold bifurcations at that point.
This is well-known behaviour for Bogdanov-Takens bifurcations, and can be seen in the normal form diagram for this bifurcation in Section 8.4 of \cite{Kuznetsov4ed}.
Bogdanov-Takens bifurcations have recently been analysed for constant delay DDEs in
\cite{BK_BT2022} (where Figures 5 and S8 resemble the part of Figure~\ref{fig:vup_ex1e}(a) close to the BT point),
but we are not aware of any systematic study of them in state-dependent DDE problems.

While in the classical unfolding the bifurcation curves extend arbitrarily far from the BT point, in our example in Figure~\ref{fig:vup_ex1e}(a) both the branch of homoclinic orbits and the branch of fold bifurcations terminate.  The branch of fold bifurcations terminates at a cusp bifurcation with the other branch of fold bifurcations. The proximity of the cusp point to the BT point suggests that our system may be close to a codimension-three Bogdanov-Takens-cusp (BTC) point. While we are not aware of a systematic study of this bifurcation,  they have been observed in a neuron model in \cite{AlDarabsahCampbell21}, and some of the bifurcation structures that we find resemble those in \cite{AlDarabsahCampbell21}.

The homoclinic bifurcation on the branch emanating from the upper Hopf point
in Figures~\ref{fig:vup_ex1c}(e) and (f) is also shown in Figure~\ref{fig:vup_ex1e}(a) and
persists for arbitrary large values of $m$ (it was already seen with $m=50$ in  Figure~\ref{fig:vup_ex1a}(a)), but there is a change in stability on this branch for $m>4.6069$ due to the Bautin bifurcation seen in Figure~\ref{fig:vup_ex1b}(c).

The maximum period of orbits from the one-parameter continuations described in Figure~\ref{fig:vup_ex1c}
are also shown as a curve in two-parameter space in Figure~\ref{fig:vup_ex1b}(a). This curve approaches the right most curve of fold bifurcations with the period becoming unbounded as it does so. Our computations of the two branches of homoclinic bifurcations also terminate close to this point. We conjecture that the co-dimension two of the homoclinic orbits already described in Figure~\ref{fig:vup_ex1c} that occurs where the branch of maximum period orbits terminates
will occur on the branch of fold bifurcations. This would be consistent with
the previously mentioned examples
of curves of homoclinic bifurcations which terminate at fold bifurcations
\cite{AlDarabsahCampbell21,Kuznetsov4ed}.

In Figure~\ref{fig:vup_ex1e}(b) and (c) we display one of the periodic orbits for $m=2.35$ from the lower branch of
periodic orbits shown in Figure~\ref{fig:vup_ex1c}(e). 
The orbit is shown close to the homoclinic bifurcation at the end of the branch, for which the period is large.
Figure~\ref{fig:vup_ex1e}(c) shows a phase space projection of the part of the orbit close to the intermediate steady
state. In this projection the orbit approaches the saddle steady state from above close to the dominant stable direction (associated with the characteristic value with negative real part closest to zero which is $\lambda=-0.045$), but with a growing oscillation about this point in the plane defined by the dominant unstable direction (associated with the
characteristic value with positive real part closest to zero which is $\lambda=0.0184 \pm 1.4738i$). To further confirm
that this linear behaviour is determining the dynamics near to the intermediate steady state $x^*$, in the inset of
Figure~\ref{fig:vup_ex1e}(b) we plot in red the curve
$$x(t)=x^*+k_1 e^{-0.045t}+k_2 e^{0.0184t}\cos(1.4738t),$$
for suitably chosen coefficients $k_j$, and observe that it overlays the periodic solution over most of the time interval shown.
This behavior is similar to that associated with Shilnikov  type complex dynamics, but in contrast with the construction of a chaotic attractor of Shilnikov type, the periodic orbit  pictured in  Figure~\ref{fig:vup_ex1e}(b) and (c) is unstable.

\begin{figure}[thp!]
	\centering
	\includegraphics[scale=0.5]{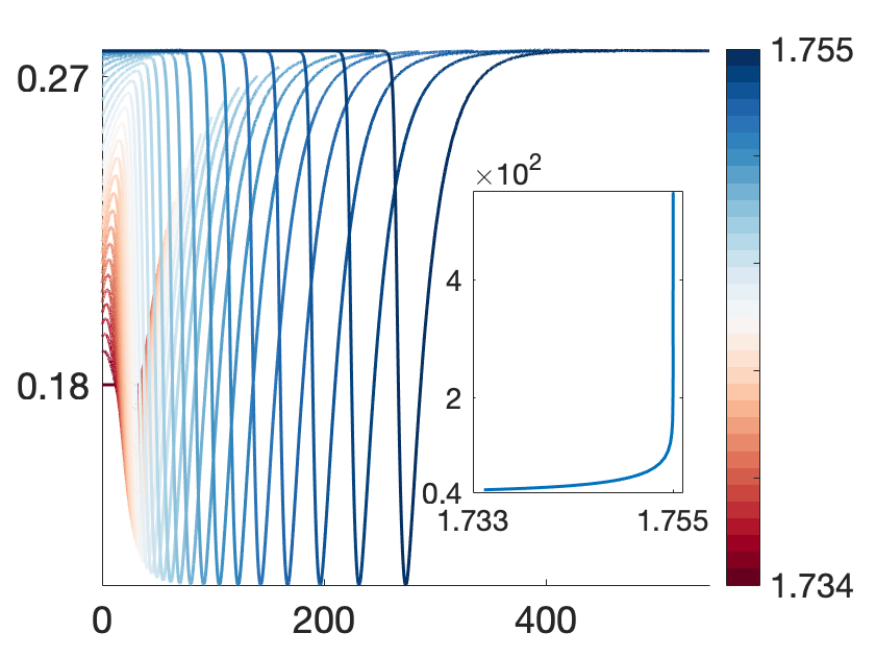}\hspace*{0.5em}\includegraphics[scale=0.5]{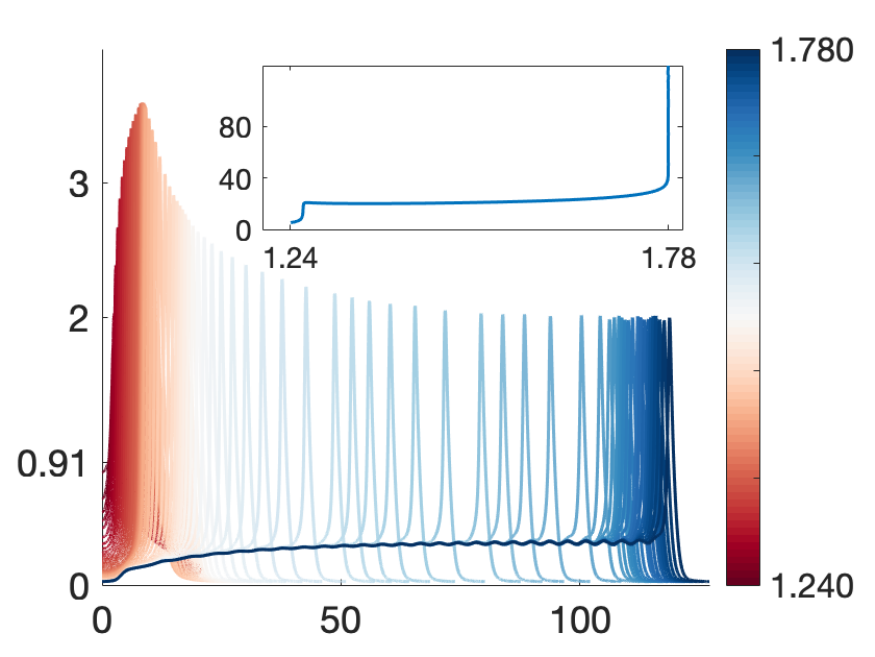}
	\put(-408,145){\rotatebox{90}{$x$}}
	\put(-261,8){$t$}
	\put(-242,80){$\gamma$}
	\put(-320,110){\scriptsize\rotatebox{90}{$T$}}
	\put(-288,33){\scriptsize$\gamma$}
	\put(-300,135){$(a)$}
	\put(-178,140){$(b)$}
	\put(-192,140){\rotatebox{90}{$x$}}
	\put(-45,8){$t$}
	\put(-26,80){$\gamma$}
	\put(-154,135){\scriptsize\rotatebox{90}{$T$}}
	\put(-100,95){\scriptsize$\gamma$} 	
\caption{Profiles and periods of periodic orbits on the branches that approach homoclinic bifurcations in Figure~\ref{fig:vup_ex1c}(e) with $m=2.35$ for
(a) the unstable periodic orbits on the branch emanating from the subcritical Hopf bifurcation at $\gamma=1.734$, and (b) the stable periodic orbits on the branch emanating from the supercritical Hopf bifurcation at $\gamma=1.240$.
 }
	\label{fig:vup_ex1d}
\end{figure}

Figure~\ref{fig:vup_ex1d}(a) and (b) shows the profiles and periods on the two branches of periodic solutions seen in Figure~\ref{fig:vup_ex1c}(e). Comparing these two panels we see that the periodic orbits on the two branches are very different, with the unstable orbit in Figure~\ref{fig:vup_ex1d}(a)  having a small amplitude and with $x(t)$ below the steady state value along the whole orbit on the entire branch. In contrast stable periodic orbits shown in panel (b) have much larger amplitude.

%%%%%%%%%%%%%%%%%%%%%%%%%%%%%%%%%%%%%%%%%%%%%%%%%%%%%%%%%%%%%%%%%%%%%%%%%%%%%%
% mu < gamma_4

\begin{figure}[thp!]
	\centering
	\vspacefig
	\includegraphics[scale=0.5]{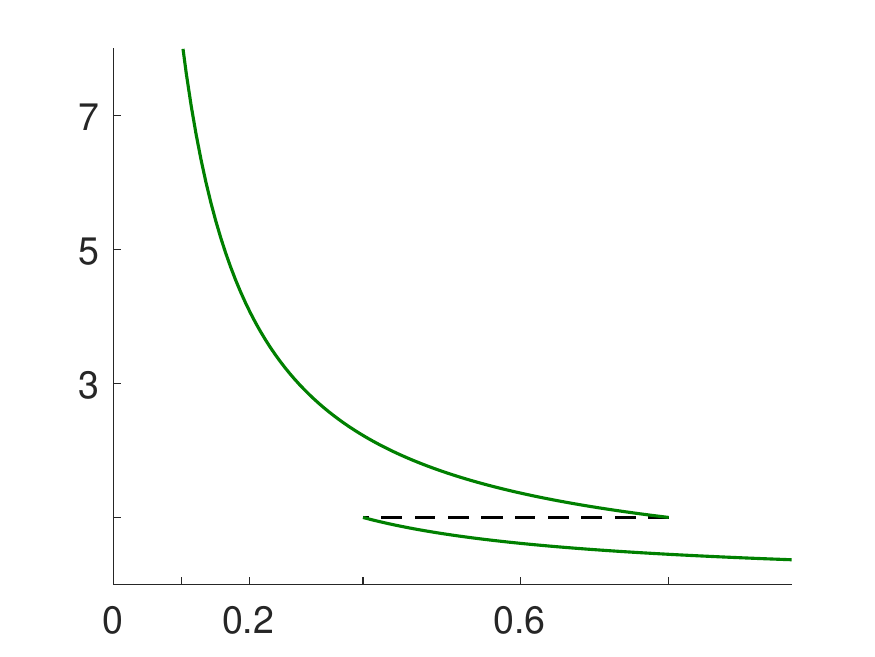}\hspace*{0.1em}\includegraphics[scale=0.5]{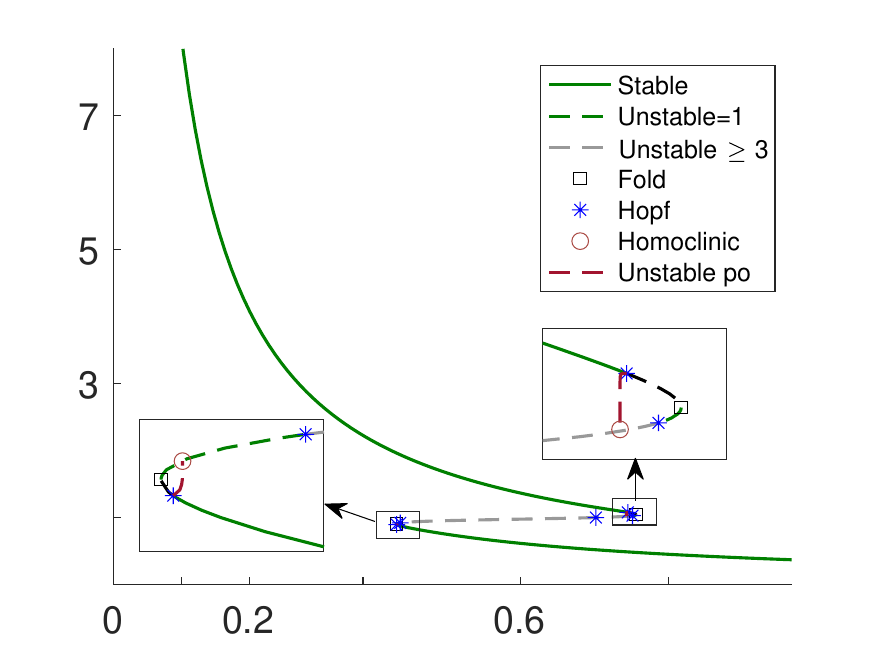}
	\put(-403,140){\rotatebox{90}{$x$}}
	\put(-408,30){$\theta_v$}
	\put(-240,9){$\gamma$}
	\put(-385,7){$\mu$}
	\put(-342,7){$\gamma_4$}
	\put(-268,7){$\gamma_3$}
	\put(-322,140){$(a)$}
	\put(-120,140){$(b)$}
	\put(-190,140){\rotatebox{90}{$x$}}
	\put(-196,30){$\theta_v$}
	\put(-24,9){$\gamma$}
	\put(-171,7){$\mu$}
	\put(-128,7){$\gamma_4$}
	\put(-54,7){$\gamma_3$}\\
	\vspace*{-0.5em}
	\includegraphics[scale=0.5]{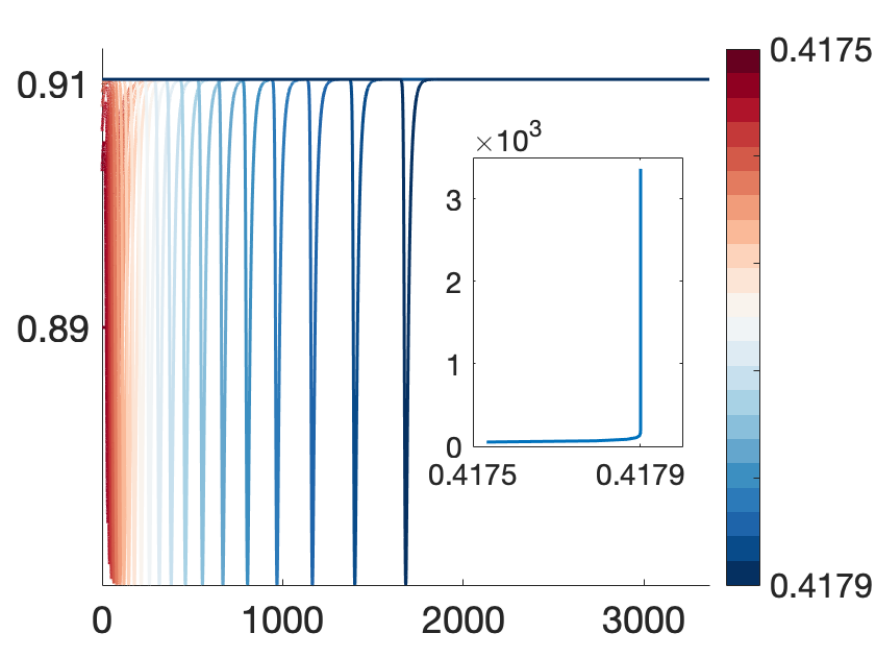}\hspace*{0.5em}\includegraphics[scale=0.5]{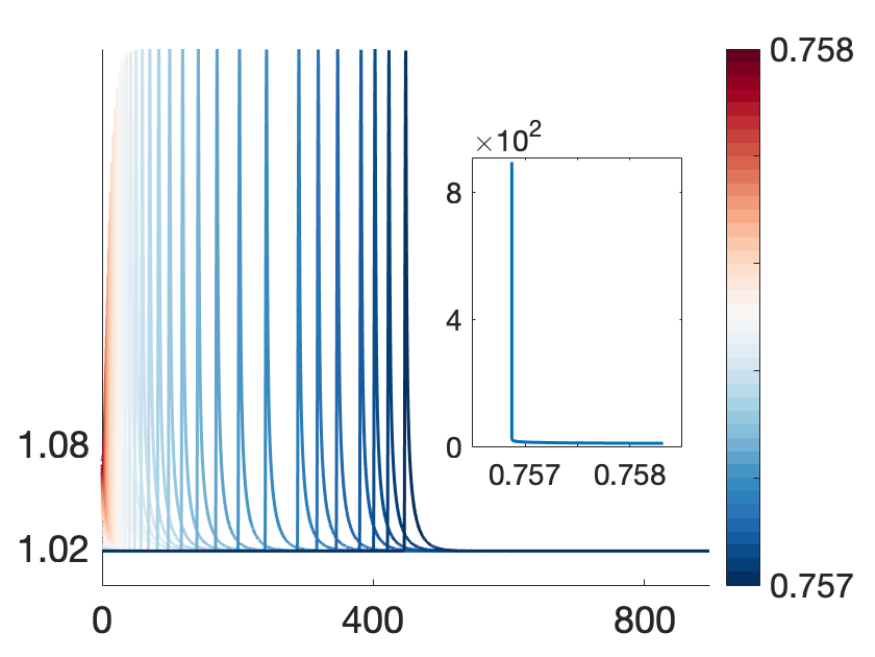}
	\put(-408,143){\rotatebox{90}{$x$}}
	\put(-258,10){$t$}
	\put(-242,80){$\gamma$}
	\put(-320,116){\scriptsize\rotatebox{90}{$T$}}
	\put(-294,45){\scriptsize$\gamma$}
	\put(-300,22){$(c)$}
	\put(-90,140){$(d)$}
	\put(-192,140){\rotatebox{90}{$x$}}
	\put(-42,10){$t$}
	\put(-26,80){$\gamma$}
	\put(-104,116){\scriptsize\rotatebox{90}{$T$}}
	\put(-73,45){\scriptsize$\gamma$}\\
	\vspace*{-0.5em} \includegraphics[scale=0.5]{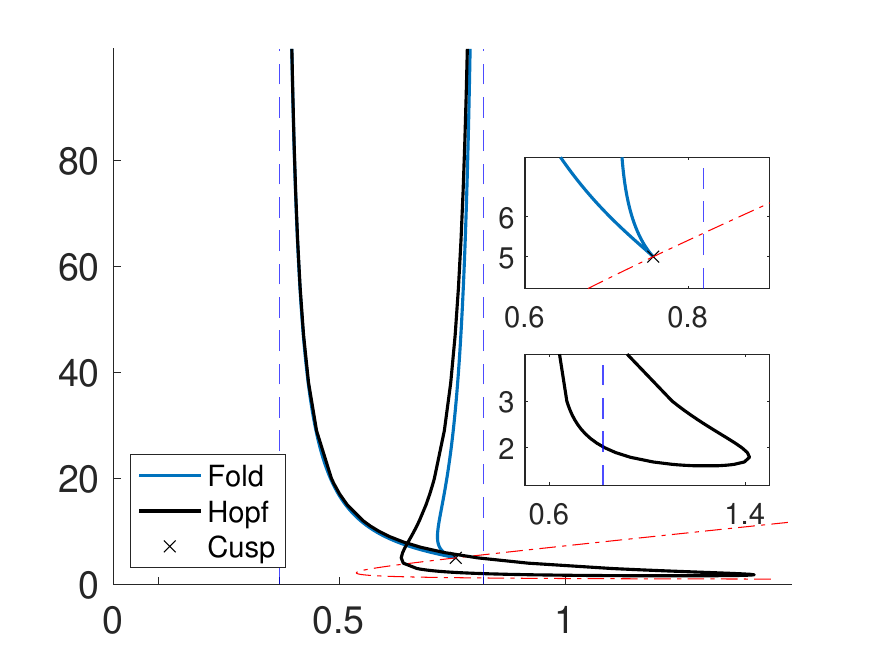}\hspace*{0.5em}\includegraphics[scale=0.5]{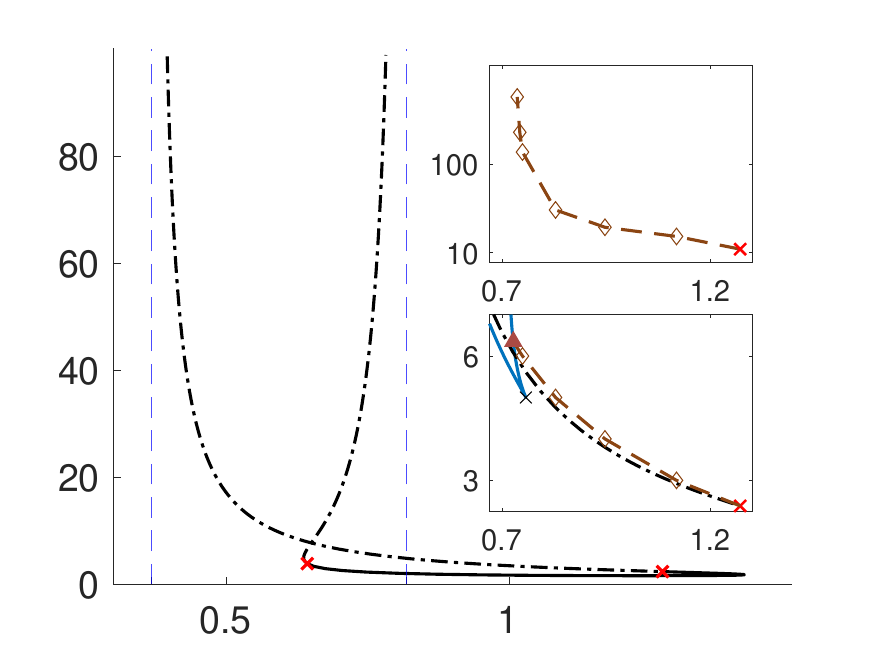}
	\put(-386,138){$(e)$}
	\put(-150,138){$(f)$}
	\put(-192,138){\rotatebox{90}{$m$}}
	\put(-407,138){\rotatebox{90}{$m$}}
	\put(-100,126){\scriptsize\rotatebox{90}{$T_{max}$}}
	\put(-32,90){\scriptsize{$\gamma$}}
	\put(-32,30){\scriptsize{$\gamma$}}
    \put(-98,55){\scriptsize\rotatebox{90}{$m$}}
	\put(-83,74){\scriptsize$\textit{Fh}_2$}
	\put(-24,10){$\gamma$}
	\put(-176,7){$\gamma_4$}
	\put(-120,7){$\gamma_3$}
    \put(-240,7){$\gamma$}
	\put(-313,7){$\gamma_3$}
	\put(-365,7){$\gamma_4$}
	\put(-391,7){$\mu$}
	\put(-124,30){$1$}
	\put(-150,85){$2$}
	\put(-168,30){$1$}

	\caption{Bifurcation diagrams of \eqref{eq:basic}-\eqref{eq:thres} with $(g \leftrightarrow, v \uparrow)$ and parameters $\beta=1$, $\mu=0.1$, $g^-=g^+=1$, $\theta_v=1$, $a=2$, $v^-=0.2$ and $v^+=1$.
By \eqref{eq:corners_gconst} this implies  $\mu<\gamma_4 = 0.3679<\gamma_3 = 0.8187$ (Case 3).
(a) The limiting case with $v$ defined by \eqref{eq:vpwconst}. The stable steady state is shown as a green solid line, and the singular steady state as a black dashed line.
(b) With smooth velocity nonlinearity $v$ defined by \eqref{eq:vghill} with $m=50$. Solid lines represent stable steady states (in green). Dashed lines represent unstable objects including unstable steady states (depending on the number of eigenvalues with positive real part, green for one, and gray for three and more) and unstable periodic orbits (represented by 2-norm \eqref{eq:2norm}).
(c) and (d) Profiles and periods of the periodic orbits on the branches shown in (b) that
terminate at homoclinic bifurcations and emanate from the subcritical Hopf bifurcations
at (c) $\gamma=0.4175$ and (d) $\gamma=0.758$.
(e) Two-parameter continuations in $m$ and $\gamma$ of the fold and the Hopf bifurcations with other parameters as above. The dashed vertical blue lines denote $\gamma=\gamma_3$ and $\gamma=\gamma_4$ the location of the fold bifurcations in the limiting case as $m\to\infty$. The red dash-dotted curve denotes the bound on the fold bifurcations.
(f) Two parameter continuation of the curve of Hopf bifurcations, with the two Bautin bifurcation points and the criticality of the Hopf bifurcation indicated. The Hopf bifurcations delineate the parameter space into regions where there is $0$, $1$ or two stable steady states, as indicated. Also shown in the insets is the maximum period curve (similar to Figures~\ref{fig:vup_ex1c} and~\ref{fig:vup_ex1e}), which terminates at the point marked
$\textit{Fh}_2$ with infinite period.
}
\label{fig:vup_ex2}
\end{figure}

{\bf Case 3:}
It remains to consider the case where $\mu < \gamma_4 < \gamma_3$. Making a small change to the parameters considered in the example in Figures~\ref{fig:vup_ex1a}
to~\ref{fig:vup_ex1d} by changing the value of $\beta$ from $\beta=1.4$ to $\beta=2$
but leaving the values of the other parameters unchanged, results in
$\mu=0.2<\gamma_4 = 0.2707<\gamma_3 = 1.8097$. However, although $\mu<\gamma_4$ in this case $\mu$ is close to $\gamma_4$ and the dynamics and bifurcations are very similar to those shown in Figure~\ref{fig:vup_ex1a}, so we  do not show them here. Instead in Figure~\ref{fig:vup_ex2} we consider an example with $\mu\ll\gamma_4<\gamma_3$.

Theorem~\ref{thm:vup} suggests that stability may be lost in either a Hopf or a fold bifurcation, but for the example in Figure~\ref{fig:vup_ex2} with $\mu\ll\gamma_4$ we see that the steady state always loses stability at a Hopf bifurcation. Unlike the previous example, there is no longer a BT point, and for all $m$ sufficiently large the steady state loses stability in a subcritical Hopf bifurcation close to the fold bifurcation, resulting in a short branch of periodic orbits that terminates in a homoclinic bifurcation to the intermediate steady state created at the fold bifurcation, as illustrated for $m=50$ in  Figure~\ref{fig:vup_ex2}(b)-(d). For $m$ small, below the cusp bifurcation at $m=5.0007, \gamma=0.7572$, the Hopf bifurcations are supercritical leading to a branch of stable periodic orbits between the Hopf bifurcations, similar to Figure~\ref{fig:vup_ex1c}(a).
As seen in Figure~\ref{fig:vup_ex2}(b) the Hopf bifurcations and homoclinic points are very close to the fold bifurcations. This makes numerical continuation of the branches of homoclinic bifurcations very delicate, and so we do not show the curves of homoclinic bifurcations in the two parameter continuations in Figure~\ref{fig:vup_ex2}(e) and (f). Instead in the insets in Figure~\ref{fig:vup_ex2}(f) we show the curve formed by the periodic orbits of maximum periods from the one-parameter continuations in $\gamma$; as was done in case (b) in Figures~\ref{fig:vup_ex1c} and~\ref{fig:vup_ex1e}.
This curve is born from the Bautin bifurcation point and terminates at the point labelled
$\textit{Fh}_2$ on the curve of fold bifurcations, where the period becomes infinite. We expect two curves of homoclinic bifurcations to be born at this point, and to persist for arbitrarily large $m$ (as seen in (b) for $m=50$).

\subsection{Summary}\label{summary one hill}

We briefly summarize the results of Section~\ref{sec:onehill}. We have separately studied cases with constant delay and decreasing (Section~\ref{sec:gdownvconst}) and increasing (Section~\ref{sec:gupvconst}) function $g$, and then the cases with constant $g$ but with state dependent delay with velocity $v$ decreasing (Section~\ref{sec:gconstvdown}) or increasing (Section~\ref{sec:gconstvup}).
These results can be compared between constant and state-dependent cases but with the same
type of nonlinearity (Section~\ref{sec:gdownvconst} and~\ref{sec:gconstvdown}, and Section~\ref{sec:gupvconst}
and~\ref{sec:gconstvup}),
or between the two types of nonlinearity with the same type of delay
(Section~\ref{sec:gdownvconst} and~\ref{sec:gupvconst}, and Section~\ref{sec:gconstvdown}
and~\ref{sec:gconstvup}), as we will do below.

In general, the dynamics with state-dependent delay is significantly richer than that of the corresponding constant delay case.  For decreasing non-linearity (Section~\ref{sec:gdownvconst} vs Section~\ref{sec:gconstvdown}) in the constant delay case there is a range $(\gamma_1,\gamma_2)$ of the parameter $\gamma$ where the unique equilibrium undergoes a series of Hopf bifurcations as the Hill coefficient $n \to \infty$ (Figure~\ref{fig:gdown_ex1}).
In contrast, in the state dependent case a new constraint becomes important as  we show that Hopf bifurcations can only occur for $\gamma < \mu$. Therefore, if the $\mu$ is below the range  $(\gamma_3,\gamma_4)$ the unique  equilibrium is always stable (Figure \ref{fig:vdown_ex3}), when it is above $(\gamma_3,\gamma_4)$ we recover the result from
Section~\ref{sec:gdownvconst} (Figure \ref{fig:vdown_ex1}), and when $\mu \in (\gamma_3,\gamma_4)$, the result of Section~\ref{sec:gdownvconst} is confined to  values of $\gamma \in (\gamma_3,\gamma_4)\cap \{\gamma <\mu\}=(\gamma_3,\mu)$; see Figure~\ref{fig:vdown_ex2}.

When the feedback nonlinearity is increasing with constant delay (Section~\ref{sec:gupvconst}) the system has multiple equilibria for $\gamma$ in an interval which approaches $(\gamma_2,\gamma_1)$ in the limit as the feedback nonlinearity approaches a piecewise constant function. The stable equilibria always lose stability at fold bifurcations at the ends of the interval, leading to bistability of steady states for this interval of $\gamma$ values. Hopf bifurcations can only occur on the middle branch of unstable equilibria.

As documented in Section~\ref{sec:gconstvup}, this scenario changes dramatically for state dependent delays.
For increasing $v$ there is again a constraint
and we show that Hopf bifurcations can only occur for $\gamma > \mu$. Consequently if $\mu>\gamma_3$ there are no Hopf bifurcations. On the other hand, when $\mu<\gamma_3$ new dynamics are observed.
In this case stable equilibria can now lose stability in a Hopf bifurcation and the interaction of the Hopf and fold bifurcation curves in two-parameter continuation in $\gamma$ and the Hill coefficient leads to existence of Bogdanov-Takens points, fold-Hopf points and a possible double homoclinic bifurcation.  This last scenario, documented in Figure~\ref{fig:vup_ex1c}, suggests that a curve of periodic orbits connecting subcritical and supercritical Hopf bifurcations of the stable equilibria, collides with a fold bifurcation where the equilibrium also admits a homoclinic orbit. Continuation of the curve of periodic orbits in two-parameter space suggests that the  curve splits into two curves of periodic orbits each terminating at a homoclinic bifurcation at the middle equilibrium branch, but at different values of parameter $\gamma$. A less dramatic result, but perhaps more important for applications, is the observation of  bistability between a stable fixed point and a stable periodic orbit in Figure~\ref{fig:vup_ex1c}(b), a scenario  that is impossible 
in the constant delay case of the Goodwin operon model considered in Sections~\ref{sec:gdownvconst}
and~\ref{sec:gupvconst}.

\section{Dynamics with two Hill functions} \label{sec:twohill}

We now consider the model \eqref{eq:basic},\eqref{eq:thres} with both the functions $g$ and $v$ non-constant at the same time. In the limiting case these functions are defined by
\eqref{eq:gpwconst} and \eqref{eq:vpwconst}, and when $\theta_v\ne\theta_g$, the steps in the functions occur for different values of $x$, so they reduce to the cases considered in
Section~\ref{sec:onehill} with $v$  constant for $x$ in a neighbourhood of $\theta_g$,
and $g$  constant for $x$ in a neighbourhood of $\theta_v$. With smooth nonlinearities
defined by \eqref{eq:vghill} and large exponents $m\gg0$, $n\gg0$, when
$\theta_v\ne\theta_g$ we
will see dynamics and bifurcations near $x=\theta_g$ similar to those
observed in Sections~\ref{sec:gdownvconst} and~\ref{sec:gupvconst}, and similar to
Sections~\ref{sec:gconstvdown} and~\ref{sec:gconstvup} near $x=\theta_v$.
On the other hand, when $\theta_v=\theta_g$ new dynamics are observed.

Recall the notation $(g\downarrow,v\downarrow)$ that indicates the monotonicity type of functions $g$ and $v$, introduced in Section~\ref{sec:intro}. Note that if $v^->v^+$ then $v(x)$ is a decreasing function of $x$, then  $\tau(x)$ given by
\eqref{eq:steadydelay} is an increasing function of $x$, and therefore  $e^{-\mu\tau(x)}$ is a decreasing function of $x$.
It follows immediately that for $(g\downarrow,v\downarrow)$ the function $h(x)$ (see \eqref{eq:h}) is strictly monotonically decreasing and hence for $g$ and $v$ both monotonically decreasing there is exactly one steady state.

If either or both of $g$ or $v$ is increasing then it is possible to have fold bifurcations and additional steady states. To see this, consider a steady state $\xi$ with both $v$ and $g$ given by
\eqref{eq:vghill}, in which case the characteristic equation
has the form \eqref{eq:char}, which can be written as $\Delta(\lambda)=0$ with
\be \label{eq:charvg}
\Delta(\lambda) = \lambda+\gamma(\xi)-A(\xi)-(Q(\xi)-A(\xi))e^{-\lambda\tau(\xi)}-\mu A(\xi)\int_{-\tau(\xi)}^{0} e^{\lambda s} ds
\ee
where (recalling \eqref{eq:A} and \eqref{eq:gam})
\begin{displaymath}
Q(\xi):= \beta e^{-\mu\tau(\xi)} g'(\xi)=\gamma\frac{\xi g'(\xi)}{g(\xi)}, \qquad A(\xi)=\beta\frac{v'(\xi)}{v(\xi)}e^{-\mu\tau(\xi)} g(\xi)=\gamma\frac{\xi v'(\xi)}{v(\xi)}.
\end{displaymath}
The properties of $A(\xi)$ and $Q(\xi)$ were already described in Proposition~\ref{prop:fxpr}.
Now
\begin{align*}
\Delta(0) & = \gamma(\xi)-Q(\xi)-\mu A(\xi)\int_{-\tau(\xi)}^{0}\hspace*{-1em} 1 ds
= \gamma(\xi)-Q(\xi)-\mu\tau(\xi) A(\xi)\\
& =\gamma(\xi)\Bigl(1-\frac{\xi g'(\xi)}{g(\xi)}-\mu\tau(\xi)\frac{\xi v'(\xi)}{v(\xi)}\Bigr).
\end{align*}
Hence, $\lambda=0$ is a characteristic value if and only if
\be  \label{eq:vglam0}
	M(\xi):=\frac{\xi g'(\xi)}{g(\xi)} + \mu\tau(\xi)\frac{\xi v'(\xi)}{v(\xi)}-1=0.
\ee
In addition, we note that
$M(\xi)>0$
implies that $\Delta(0)<0$. At the same time, for real $\lambda$ the characteristic function satisfies $\Delta(\lambda)\to+\infty$ as $\lambda\to+\infty$. We conclude that when $M(\xi)>0$ 
 there is a real positive characteristic value and hence the steady state is unstable.

We now show that fold bifurcations occur when \eqref{eq:vglam0} is satisfied. To that end, consider the curve of
solutions $(\gamma(\xi),\xi)$ where $\gamma(\xi)$ is defined by
\be \label{eq:gamvg}
\gamma(\xi) = \frac{\beta g(\xi)}{\xi} e^{-\mu\tau(\xi)},
\ee
which by \eqref{eq:h} is the locus of steady states in the $(\gamma,\xi)$-plane. Differentiating this
relationship with respect to $\xi$ we find
$$\gamma'(\xi)=-\frac{\gamma(\xi)}{\xi}+\frac{\gamma(\xi)g'(\xi)}{g(\xi)}-\gamma(\xi)\mu\tau'(\xi).$$
Using \eqref{eq:taudashxi}
we obtain
\be \label{eq:xigammadashvg}
\xi\gamma'(\xi)=\gamma(\xi)\Big(\frac{\xi g'(\xi)}{g(\xi)} + \mu\tau(\xi)\frac{\xi v'(\xi)}{v(\xi)} -1\Big)=\gamma(\xi)M(\xi).
\ee
Hence $\sgn(\gamma'(\xi))=\sgn(M(\xi))$,
and $\gamma'(\xi)$ changes sign when $M(\xi)$ does, which ensures that
fold bifurcations occur at these points in the $(\gamma,\xi)$-plane.

Note that this is a direct generalisation of the fold bifurcations found in Section~\ref{sec:onehill}
as \eqref{eq:vglam0} reduces to \eqref{eq:xigdashgfold} when $v$ is a constant function, and to \eqref{eq:vuplam0} when $g$ is constant.

Next we determine when Hopf bifurcations may arise
for the general case where both $g$ and $v$ are non-constant. For $\lambda\ne0$, using \eqref{eq:charvg}
we can rewrite the characteristic equation $\Delta(\lambda)=0$ as
\begin{equation} \label{eq:char2}
\lambda = -\gamma + A(\xi)(1- e^{-\lambda\tau(\xi)})( 1+ \frac{\mu}{\lambda}) + Q(\xi)e^{-\lambda\tau(\xi)}.
\end{equation}
At a Hopf bifurcation we set $\lambda = i\omega$, with $\omega \neq 0$,
and from \eqref{eq:char2} obtain
\begin{align*}
i\omega &= -\gamma + A(\xi)(1- e^{-i\omega \tau})( 1+ \frac{\mu}{i\omega }) + Q(\xi)e^{-i\omega  \tau}  \\
&= -\gamma + A(\xi)(1- \cos \omega \tau + i \sin \omega \tau)( \frac{\omega^2 - i\omega\mu}{\omega^2 }) + Q(\xi) ( \cos \omega \tau - i \sin \omega \tau).
\end{align*}
Taking real and imaginary parts, this reduces to
\begin{align} \label{eq:char_vg1x}
\gamma &= A(\xi) ( 1-\cos \omega \tau + \frac{\mu}{\omega} \sin \omega \tau) + Q(\xi) \cos \omega \tau \\
\omega &= A(\xi) ( \sin \omega \tau - \frac{\mu}{\omega} ( 1-\cos \omega \tau) ) - Q(\xi) \sin \omega \tau.
\label{eq:char_vg2x}
\end{align}
Note that when $\omega\tau=2k\pi$ for $k\in\mathbb{Z}$ the last equation reads $\omega=0$, which contradicts the requirement that $\omega \neq 0$ for a Hopf bifurcation. It follows that at a Hopf bifurcation
\begin{equation} \label{eq:cos-tomas}
|\cos(\omega\tau)| <1 .
\end{equation}

The equations \eqref{eq:char_vg1x} and \eqref{eq:char_vg2x} can each be rearranged in two ways: first, as $A(\xi)(1-\cos \omega \tau)=$ \emph{other terms}, or, second, as
$A(\xi) \sin \omega \tau=$ \emph{other terms}. Doing so, then equating the same  expressions from the two equations  gives
\begin{align}
A(\xi) \sin  \omega \tau (\omega^2+\mu^2) &= \omega(\gamma \mu + \omega^2) + Q(\xi)(\omega \sin \omega \tau - \mu \cos \omega \tau)  \label{eq:char_vg1}\\
A(\xi) (1-\cos  \omega \tau) (\omega^2+\mu^2) &= \omega^2(\gamma-\mu) - Q(\xi)(\omega^2 \cos \omega \tau +\mu\omega  \sin \omega \tau).  \label{eq:char_vg2}
\end{align}
Note that when $g^+=g^-$ so $g$ is constant and $Q(\xi)=0$ these equations reduce to
\eqref{eq:sin_cancelled} and \eqref{eq:cos_cancelled}.
On the other hand when $v^+=v^-$ so $v'(\xi)=0$ and hence $A(\xi)=0$, equations
\eqref{eq:char_vg1} and \eqref{eq:char_vg2} can be shown to reduce to
\eqref{eq:realhopf} and \eqref{eq:imaghopf}.

Using \eqref{eq:cos-tomas}, the left-hand side of \eqref{eq:char_vg2} has the sign of $A(\xi)$
(recall also \eqref{eq:sin_cancelled} which has an identical left-hand side).

When $Q(\xi)=0$, the right-hand side of \eqref{eq:char_vg2}
has the sign of $\gamma-\mu$.
For the general case with $Q(\xi)\ne0$,
since  $|\cos\omega\tau|\leq1$ and $|\sin\omega\tau|\leq\omega\tau$, we obtain
$$
|Q(\xi)(\omega^2\cos\omega\tau+\mu\omega\sin\omega\tau)|
 \leq |Q(\xi)|(\omega^2|\cos\omega\tau|+\mu\omega|\sin\omega\tau|) \leq
 |Q(\xi)|\omega^2(1+\mu\tau).
$$
Then,  when $A(\xi)<0$, and hence $v$ is a decreasing function, the left-hand side of \eqref{eq:char_vg2} is negative, while the right-hand side satisfies
\begin{align} \notag
\omega^2(\gamma-\mu) - Q(\xi)(\omega^2\cos\omega\tau+\mu\omega\sin\omega\tau)
& \geq \omega^2(\gamma-\mu) - |Q(\xi)|\omega^2(1+\mu\tau)\\
& = \omega^2(\gamma-\mu- |Q(\xi)|(1+\mu\tau)). \label{eq:gam>mumodboth}
\end{align}
Consequently, there can be no Hopf bifurcations if
$\gamma>\mu+|Q(\xi)|(1+\mu\tau)$.

Similarly for the case that $A(\xi)>0$ (and hence $v$ is an increasing function), the left-hand side of \eqref{eq:char_vg2} is positive, while the right-hand side satisfies
\begin{align} \notag
\omega^2(\gamma-\mu) - Q(\xi)(\omega^2\cos\omega\tau+\mu\omega\sin\omega\tau)
& \leq \omega^2(\gamma-\mu) + |Q(\xi)|\omega^2(1+\mu\tau) \\
& = \omega^2(\gamma-\mu+|Q(\xi)|(1+\mu\tau)). \label{eq:gam<mumodboth}
\end{align}
Thus, when $A(\xi)>0$ there can be no Hopf bifurcations if
$\gamma<\mu-|Q(\xi)|(1+\mu\tau)$.

Note that both the conditions here reduce to the previous ones in the limit as $Q(\xi)\to0$, or equivalently as $g'(\xi)\to0$.
As in Section~\ref{sec:onehill}, we will deal with this by considering the different combinations of increasing and decreasing functions $v$ and $g$  separately.

\subsection{Two Hill functions with $\theta_g \neq \theta_v$}
\label{sec:twothetas}

With $g$ and $v$ both varying and $\theta_g \neq \theta_v$, the dynamics will be comparable to a combination of the four simplified cases as discussed in
Section~\ref{sec:gdownvconst}-\ref{sec:gconstvup}. In particular, if both $g$ and $v$ are decreasing functions, there is always exactly one steady state. The dynamics in the case
$(g \downarrow, v \downarrow)$ will be similar to the case in Section~\ref{sec:gdownvconst} near to $x=\theta_g$ and similar to
the case in Section~\ref{sec:gconstvdown} near to $x=\theta_v$.
The singular steady states in the limiting case give rise to steady states in the smooth case which may undergo Hopf bifurcations
where the stability of steady state changes, as illustrated in Sections~\ref{sec:gdownvconst} and~\ref{sec:gconstvdown}.

For the cases when $g$ and $v$ have opposite monotonicity, there are up to three coexisting steady states. The bifurcations occurring near the two respective thresholds agree with the examples in Section~\ref{sec:onehill}. Since the dynamics can be explained using the results of
Section~\ref{sec:onehill} we only briefly consider the dynamics with $\theta_g\ne\theta_v$
and present two examples,  one with $(g \downarrow, v \uparrow)$ in Section~\ref{sec:gdownvup}, and another for  $(g \uparrow, v \uparrow)$ in
Section~\ref{sec:gupvup}.
In this last example, it is possible for up to five steady states to coexist with sufficiently
steep  nonlinearity.

The constants $\gamma_j$ for $j=1,2,3,4$ introduced in (\ref{eq:corners_vconst}) and (\ref{eq:corners_gconst}) in Section~\ref{sec:onehill} will also play a role here. When $\theta_v\ne\theta_g$ we again define these using
\eqref{eq:corners_vconst} and \eqref{eq:corners_gconst}. However, since both the functions $g$ and $v$ are non-constant we need to define a value for $\tau$ in \eqref{eq:corners_vconst}
and for $g$ in \eqref{eq:corners_gconst}. We proceed as follows. For
$\gamma_1$ and $\gamma_2$ given by \eqref{eq:corners_vconst}, the required value of $\tau$ is $\tau(\theta_g)$ and hence we take $\tau=\tau^-$ if $\theta_v>\theta_g$ and
$\tau=\tau^+$ if $\theta_v<\theta_g$. Similarly in \eqref{eq:corners_gconst} we take
$g=g^-$ if $\theta_g>\theta_v$ and
$g=g^+$ if $\theta_g<\theta_v$.

\subsubsection{Decreasing $g$ and increasing $v$ $(g \downarrow, v \uparrow, \theta_g \neq \theta_v)$} \label{sec:gdownvup}
When $g$ and $v$ have opposite monotonicity, either $(g \downarrow, v \uparrow)$ or $(g \uparrow, v \downarrow)$,  and $\theta_g \neq \theta_v$, there are up to three coexisting steady states in the limiting case. Figure~\ref{fig:5} shows such an example in the case $(g \downarrow, v \uparrow,\theta_v < \theta_g)$.

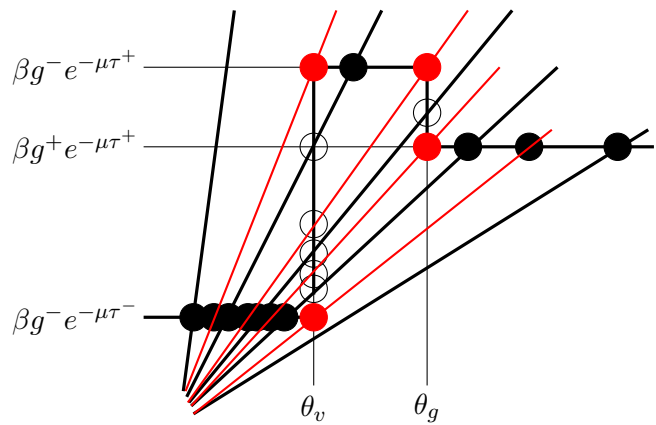
\begin{figure}[thp!]
	\centering
\begin{tikzpicture}[scale = 1.5]
	\tikzstyle{line} = [-,very thick]
	\tikzstyle{upt} = [circle,draw=black,minimum size = 1pt];
	\tikzstyle{dotted line}=[.]
	\tikzstyle{uline}=[.,draw=red,thick]
	\tikzstyle{arrow} = [->,line width = .4mm]
	\tikzstyle{unstable} = [red]
	\tikzstyle{stable} = [blue]
	\tikzstyle{pt} = [circle,draw=black,fill = black,minimum size = 1pt];
	\tikzstyle{bifpt} = [circle,draw=red,fill = red,minimum size = 1pt];
	\def\arrlen{.3}
	%draw function
	\draw[line] (-0.5,-0.2) to (1,-0.2);
	\draw[line] (1,0) to (1,2);
	\draw[line] (1,2) to (2,2);
	\draw[line] (2,2) to (2,1.2);
	\draw[line] (2,1.3) to (4,1.3);
	\draw[dotted line] (-0.5,2) to (1,2);
	\draw[dotted line] (1,-0.8) to (1,2);
	\draw[dotted line] (2,-0.8) to (2,2);
	\draw[dotted line] (-0.5,1.3) to (2,1.3);
	
	%labels
	\node at (1,-1) {$\theta_v$};
	\node at (2,-1) {$\theta_g$};
	\node at (-1.1,2) {$\beta g^- e^{-\mu\tau^+}$};
	\node at (-1.1,1.3) {$\beta g^+ e^{-\mu\tau^+}$};
	\node at (-1.1,-0.2) {$\beta g^- e^{-\mu\tau^-}$};
	
	%draw node
	\node[upt] at (1,1.3) (12) [] {};
	\node[upt] at (1,0.36) (12) [] {};
	\node[upt] at (1,0.05) (12) [] {};
	\node[upt] at (2,1.6) (12) [] {};
	
	\node[upt] at (1,0.62) (12) [] {};
	\node[upt] at (1,0.18) (12) [] {};
	
	\node[pt] at (-0.06,-0.2) (12) [] {};
	\node[pt] at (0.25,-0.2) (12) [] {};
	\node[pt] at (0.5,-0.2) (12) [] {};
	\node[pt] at (0.74,-0.2) (12) [] {};
	\node[pt] at (1.35,2) (12) [] {};
	\node[pt] at (2.36,1.3) (12) [] {};
	\node[pt] at (3.68,1.3) (12) [] {};
	
	\node[pt] at (0.12,-0.2) (12) [] {};
	\node[pt] at (0.42,-0.2) (12) [] {};
	\node[pt] at (0.62,-0.2) (12) [] {};
	\node[pt] at (2.9,1.3) (12) [] {};
	
	\node[bifpt] at (1,-0.2) (8) [] {};
	\node[bifpt] at (1,2) (8) [] {};
	\node[bifpt] at (2,1.3) (8) [] {};
	\node[bifpt] at (2,2) (8) [] {};
	
	%draw line
	\draw[line] (-0.15,-0.85) to (0.3,2.5);
	\draw[line] (-0.12,-0.9) to (1.6,2.5);
	\draw[line] (-0.1,-0.95) to (2.75,2.5);
	\draw[line] (-0.08,-0.98) to (3.15,2);
	\draw[line] (-0.06,-1.05) to (3.95,1.45);
	
	%fold line
	\draw[uline] (-0.13,-0.85) to (1.2,2.5);
	\draw[uline] (-0.11,-0.95) to (2.36,2.5);
	\draw[uline] (-0.08,-0.98) to (2.64,2);
	\draw[uline] (-0.06,-1.05) to (3.1,1.45);
\end{tikzpicture}
\caption{
Steady states of \eqref{eq:basic}, given by \eqref{eq:h}, occur at the intersections
of $\xi \mapsto\beta e^{-\mu\tau(\xi)}g(\xi)$ and $\xi \mapsto\gamma\xi$.  These are illustrated
for various $\gamma$ in the limiting case of \eqref{eq:gpwconst} and \eqref{eq:vpwconst} for $(g \downarrow, v \uparrow,\theta_v < \theta_g)$. The red lines denote the slopes $\gamma_j$, illustrated here with
$\gamma_4<\gamma_1<\gamma_2<\gamma_3$.
There is a singular steady state at $\xi=\theta_v$ for $\gamma\in(\gamma_4,\gamma_3)$
and at $\xi=\theta_g$ for $\gamma\in(\gamma_1,\gamma_2)$.}
\label{fig:5}
\end{figure}

\begin{figure}[htp!]
	\centering \includegraphics[scale=0.5]{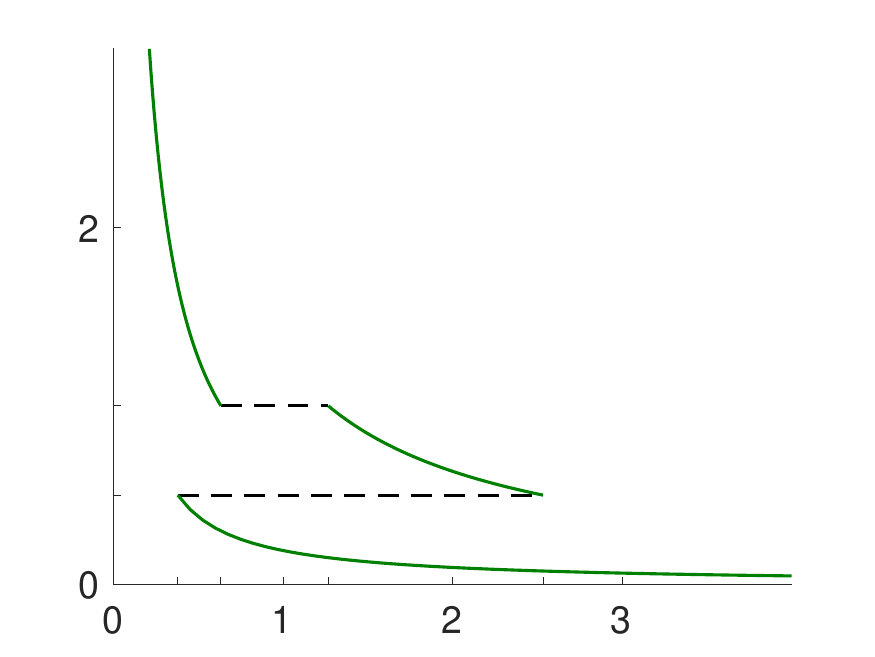}\hspace*{0.5em}\includegraphics[scale=0.5]{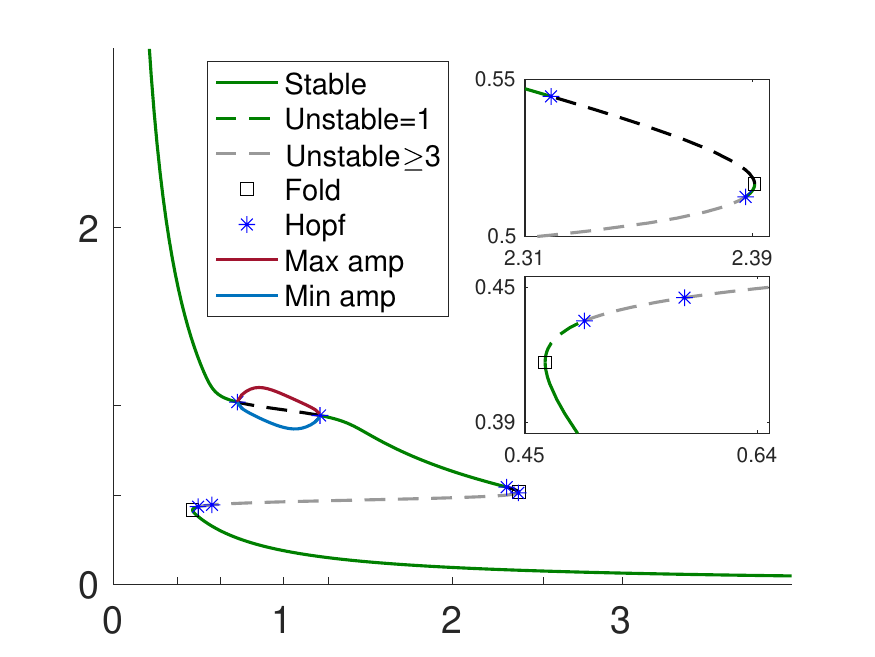}
	\put(-406,140){\rotatebox{90}{$x$}}
	\put(-240,10){$\gamma$}
	\put(-391,7){$\gamma_4$}
	\put(-378,7){$\gamma_1$}
	\put(-352,7){$\gamma_2$}
	\put(-300,7){$\gamma_3$}
	\put(-412,57){$\theta_g$}
	\put(-412,35){$\theta_v$}
	\put(-330,135){$(a)$}
	\put(-60,30){$(b)$}
	\put(-190,140){\rotatebox{90}{$x$}}
	\put(-24,10){$\gamma$}
	\put(-175,7){$\gamma_4$}
	\put(-162,7){$\gamma_1$}
	\put(-136,7){$\gamma_2$}
	\put(-84,7){$\gamma_3$}
	\put(-196,57){$\theta_g$}
	\put(-196,35){$\theta_v$}\\
	\includegraphics[scale=0.5]{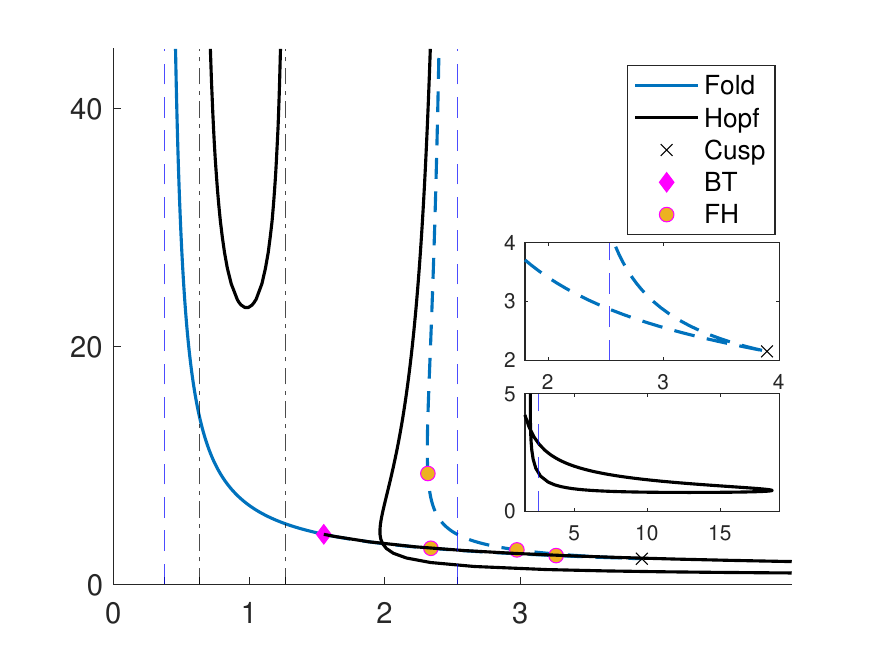}\hspace*{0.5em}\includegraphics[scale=0.5]{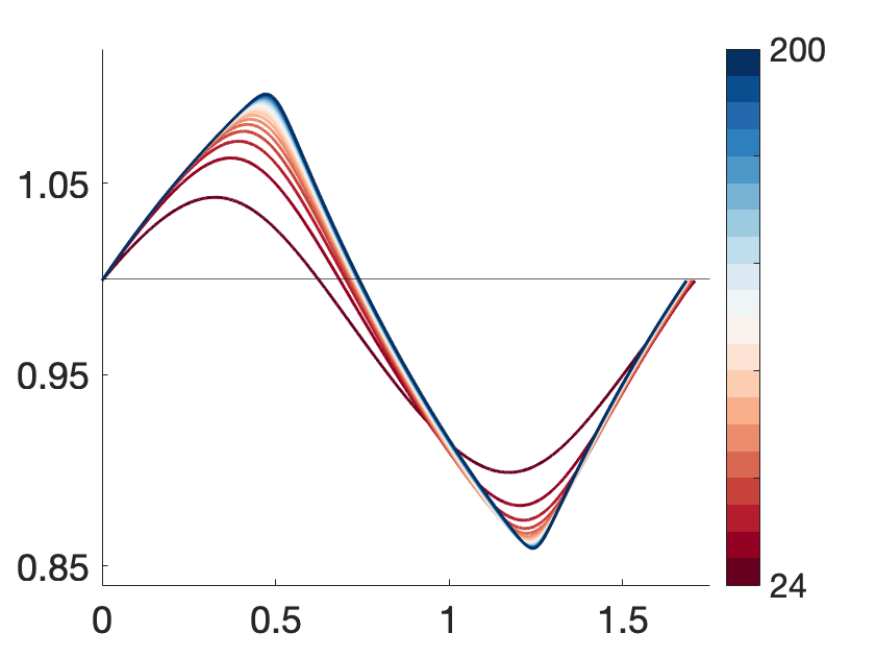}
	\put(-406,100){\rotatebox{90}{\scriptsize$m=n$}}
	\put(-394,7){$\gamma_4$}
	\put(-382,7){$\gamma_1$}
	\put(-362,7){$\gamma_2$}
	\put(-320,7){$\gamma_3$}
	\put(-240,10){$\gamma$}
	\put(-305, 140){$(c)$}
	\put(-110, 140){$(d)$}
	\put(-192,140){\rotatebox{90}{$x$}}
	\put(-198,86){$\theta_g$}
	\put(-42,8){$t$}
	\put(-25,80){$m$}
	\caption{Bifurcations of \eqref{eq:basic}-\eqref{eq:thres}
in the case 
$(g \downarrow, v \uparrow,\theta_v < \theta_g)$
with parameters $\beta=1.4$, $\mu=0.2$, $g^-=1>g^+=0.54$, $\theta_v=0.5<\theta_g=1$, $a=1$, and $v^-=0.1<v^+=2$.
With these parameters $\mu<\gamma_4=0.3789<\gamma_1=0.6334<\gamma_2=1.2668<\gamma_3=2.5335$.
(a) The limiting case with $g$ defined by \eqref{eq:gpwconst} and $v$ defined by \eqref{eq:vpwconst}. The stable steady state is shown as a green solid line, and the singular steady state as a black dashed line.
(b) With smooth nonlinearity $g$ and $v$ defined by \eqref{eq:vghill} with $m=n=40$. Periodic solutions are
represented by maximum (in red) and minimum (in blue) of x(t) on the solution.
(c) Two-parameter continuations in $m=n$ and $\gamma$ of the fold and the Hopf bifurcations with the other parameters as above. There is a cusp point at $(\gamma, m)=(3.8948, 2.1535)$, a BT point at $(\gamma, m)=(1.5502, 4.2073)$ and fold-Hopf points at $(\gamma, m)=(2.3399, 3.0346), (3.2632, 2.4198), (2.9741, 2.8976)$ and $(2.3178, 9.3191)$. (d) Profile of the stable periodic orbits at $\gamma=1$ emanating from the Hopf bifurcations in (c).
The color map indicates values of the continuation parameter $m=n$.} \label{fig:gdownvup}
\end{figure}

Figure~\ref{fig:5} shows that if $\gamma\in(\gamma_4,\gamma_3)$ then there are three steady states in the limiting case. At most two of these steady states will be stable, with the intermediate steady state at $x=\theta_v$ singular. If the intervals $(\gamma_1,\gamma_2)$
and $(\gamma_4,\gamma_3)$ overlap then it is possible to have one stable steady state
coexisting with two singular steady states. Such a scenario is illustrated in Figure~\ref{fig:gdownvup}(a) which has two singular steady states and one stable steady state for $\gamma\in(\gamma_1,\gamma_2)$.
The number of steady states changes when $\gamma\xi$ intersects $\beta e^{-\mu\tau(\xi)}g(\xi)$ at the corners along $\theta_v$ at parameter values $\gamma_3$ and $\gamma_4$.
The singular `fold' bifurcations give rise to classical fold bifurcations in the case
of smooth nonlinearities defined by \eqref{eq:vghill}, as seen in
Figure~\ref{fig:gdownvup}(b) for the case $m=n=40$.
We note from \eqref{eq:vglam0} that since $g'$ and $v'$ have opposite signs, we require $\xi v'(\xi)/v(\xi)>1/(\mu\tau(\xi))$ for a fold bifurcation to occur, and so
for $m=n$ a somewhat larger value of the steepness parameter $m$ is required to obtain a fold bifurcation than would have been the case with constant $g$ (for which, by \eqref{eq:vuplam0}, fold bifurcations occur when $\xi v'(\xi)/v(\xi)=1/(\mu\tau(\xi))$ is satisfied).

In the example presented in Figure~\ref{fig:gdownvup} the function $v$ is increasing with
$\mu<\gamma_4$, and so we expect to see similar behaviour for $x\approx\theta_v$ as in Case 3 of
Section~\ref{sec:gconstvup}, while since $g$ is decreasing the behaviour
for $x\approx\theta_g$ should be similar to that seen in
Section~\ref{sec:gdownvconst}. This is indeed what is observed.

Since $v$ is increasing, from \eqref{eq:char_vg2} and \eqref{eq:gam<mumodboth} it is necessary that $\gamma>\mu-|Q(\xi)|(1+\mu\tau)$ for Hopf bifurcations to occur.
However, this does not impose any additional constraints 
as in this example $\mu-|Q(\xi)|(1+\mu\tau) < \mu < \min_{j=\{1,2,3,4\}}\gamma_j$,
and as we already saw in Section~\ref{sec:onehill}, Hopf bifurcations typically occur for $\gamma> \min_{j=\{1,2,3,4\}}\gamma_j$.
So the steady state can lose stability in a Hopf bifurcation close to the fold
point. This is seen in Figure~\ref{fig:gdownvup}(b) close to $(\gamma,x)=(\gamma_3,\theta_v)$ and is
similar to the behaviour observed in Cases 2 and 3 in Section~\ref{sec:gconstvup}.
Figure~\ref{fig:gdownvup}(c) shows two-parameter continuations of these bifurcations.
Apart from the Hopf bifurcations giving rise to a stable periodic orbit for $\gamma\in(\gamma_1,\gamma_2)$ the rest of the dynamics observed are remarkably similar to those seen in
Figure~\ref{fig:vup_ex1b} with $v$ increasing and constant $g$. In particular,
as the
steepness of the nonlinearities given by $m=n$ increases, the fold bifurcations approach $\gamma=\gamma_4$ and $\gamma=\gamma_3$ with the steady states at which they happen approaching $\theta_v$, the fold bifurcations disappear at a cusp point (at $\gamma=2.1535$), while the
Hopf bifurcations and stable periodic orbits can occur for smaller values of $m>0.8079$.
There is also a Bogdanov-Takens bifurcation, fold-Hopf bifurcations at which Hopf bifurcations cross the fold points, and for the fold close to $\gamma=\gamma_3$ the steady state loses stability in a subcritical Hopf bifurcation for arbitrary large $m=n$, and not at the fold bifurcation.

The most significant difference between the example shown in Figure~\ref{fig:gdownvup}
and Cases 2 and 3 from Section~\ref{sec:gconstvup} is the pair of Hopf bifurcations seen for $x\approx\theta_g$
and $\gamma\in(\gamma_1,\gamma_2)$. In Figure~\ref{fig:gdownvup}(c) the steady state is seen to lose stability
between a pair of Hopf bifurcations for $m>23$. Since for $m=n\gg0$ the function $v$ is essentially constant except for $\theta\approx\theta_v$, it follows that $v$ is close to constant for $\theta\approx\theta_g$ and so these Hopf bifurcations and the resulting periodic orbits follow the dynamics explored in Section~\ref{sec:gdownvconst}. Indeed, the dynamics associated with the Hopf bifurcations and
periodic orbits for $x\approx\theta_g$ seen in Figure~\ref{fig:gdownvup} is remarkably similar to that seen for constant $v$ and decreasing $g$ in Figure~\ref{fig:gdown_ex1} of Section~\ref{sec:gdownvconst}.
The Hopf bifurcations are supercritical, generating a stable periodic orbit which co-exists with the unstable steady state between the pair of Hopf bifurcations. The amplitude of the periodic orbit is shown for $m=n=40$ in Figure~\ref{fig:gdownvup}(b), from which we see that it is always small and shrinks to zero at the Hopf bifurcations at each end of the interval.

Figure~\ref{fig:gdownvup}(d) depicts the profile of such stable periodic orbits for a fixed
value of $\gamma$ in between the two Hopf bifurcations. As $m=n \to \infty$, the periodic orbits appear to approach a limiting profile. Notice that this stable periodic orbit
oscillates around $\theta_g$ but does not cross the other threshold at $\theta_v=0.5$.
Thus in the limiting case as $m=n\to\infty$ the delay will be constant and equal to $\tau^+$
on this periodic orbit. 
In \cite{sausage} we construct
slowly oscillating periodic solutions for both constant and threshold delays,
and using those constructions it can be seen that
the limiting profile in Figure~\ref{fig:gdownvup}(d) is a slowly oscillating periodic solution.

\subsubsection{Increasing $g$ and increasing $v$ $(g\uparrow,v\uparrow,\theta_g \neq \theta_v)$}
\label{sec:gupvup}

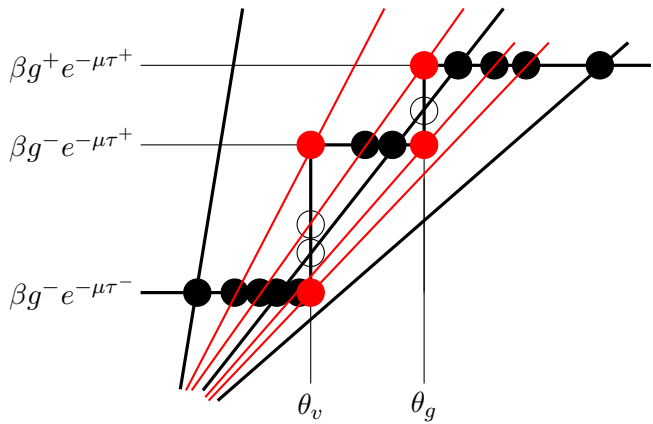
\begin{figure}[thp!]
	\centering
	\begin{tabular}{cc}
		\begin{tikzpicture}[scale = 1.5]
			\tikzstyle{line} = [-,very thick]
			\tikzstyle{dotted line}=[.]
			\tikzstyle{uline}=[.,draw=red,thick]
			\tikzstyle{arrow} = [->,line width = .4mm]
			\tikzstyle{unstable} = [red]
			\tikzstyle{stable} = [blue]
			\tikzstyle{pt} = [circle,draw=black,fill = black,minimum size = 2pt];
			\tikzstyle{upt} = [circle,draw=black,minimum size = 2pt];
			\tikzstyle{bifpt} = [circle,draw=red,fill = red,minimum size = 1pt];
			\def\arrlen{.3}
			
			%draw function
			\draw[line] (-0.5,0) to (1,0);
			\draw[line] (1,0) to (1,1.3);
			\draw[line] (1,1.3) to (2,1.3);
			\draw[line] (2,1.3) to (2,2);
			\draw[line] (2,2) to (4,2);
			\draw[dotted line] (2,0) to (2,1);
			\draw[dotted line] (-0.5,1.3) to (1,1.3);
			\draw[dotted line] (-0.5,2) to (2,2);
			\draw[dotted line] (1,-0.8) to (1,1);
			\draw[dotted line] (2,-0.8) to (2,1.3);
			
			%labels
			\node at (1,-1) {$\theta_v$};
			\node at (2,-1) {$\theta_g$};
			\node at (-1.1,2) {$\beta g^+ e^{-\mu\tau^+}$};
			\node at (-1.1,0) {$\beta g^- e^{-\mu\tau^-}$};
			\node at (-1.1,1.3) {$\beta g^- e^{-\mu\tau^+}$};
			
			%draw node		
			\node[pt] at (0,0) (11) [] {};
			\node[pt] at (0.7,0) (11) [] {};
			\node[pt] at (1.72,1.3) (12) [] {};
			\node[pt] at (2.3,2) (11) [] {};
			\node[pt] at (3.55,2) (11) [] {};
			
			\node[upt] at (1,0.35) (12) [] {};
			\node[upt] at (1,0.6) (12) [] {};
			\node[upt] at (2,1.6) (11) [] {};
			
			\node[pt] at (0.33,0) (11) [] {};
			\node[pt] at (0.55,0) (11) [] {};
			\node[pt] at (0.9,0) (11) [] {};
			\node[pt] at (1.48,1.3) (11) [] {};
			\node[pt] at (2.62,2) (11) [] {};
			\node[pt] at (2.9,2) (11) [] {};
			
			\node[bifpt] at (1,0) (8) [] {};
			\node[bifpt] at (1,1.3) (8) [] {};
			\node[bifpt] at (2,1.3) (8) [] {};
			\node[bifpt] at (2,2) (8) [] {};
			
			%draw line
			\draw[line] (-0.15,-0.85) to (0.4,2.5);
			\draw[line] (0.05,-0.86) to (2.7,2.5);
			\draw[line] (0.18,-0.95) to (3.8,2.2);
			
			%fold line
			\draw[uline] (-0.1,-0.86) to (1.65,2.5);
			\draw[uline] (-0.05,-0.86) to (2.35,2.5);
			\draw[uline] (0.07,-0.92) to (2.8,2.2);
			\draw[uline] (0.1,-0.95) to (3.1,2.2);
		\end{tikzpicture}
	\end{tabular}
\caption{Steady states of \eqref{eq:basic}, given by \eqref{eq:h}, occur at the intersections of $\xi \mapsto\beta e^{-\mu\tau(\xi)}g(\xi)$ and $\xi \mapsto\gamma\xi$.  These are illustrated for various $\gamma$ in the limiting case of \eqref{eq:gpwconst} and \eqref{eq:vpwconst} for $(g \uparrow, v \uparrow,\theta_v < \theta_g)$. The red lines denote $\gamma=\gamma_4, \gamma_2, \gamma_1, \gamma_3$ in ascending order.
}
\label{fig:6}
\end{figure}

\begin{figure}[thp!]
	\centering
	\includegraphics[scale=0.5]{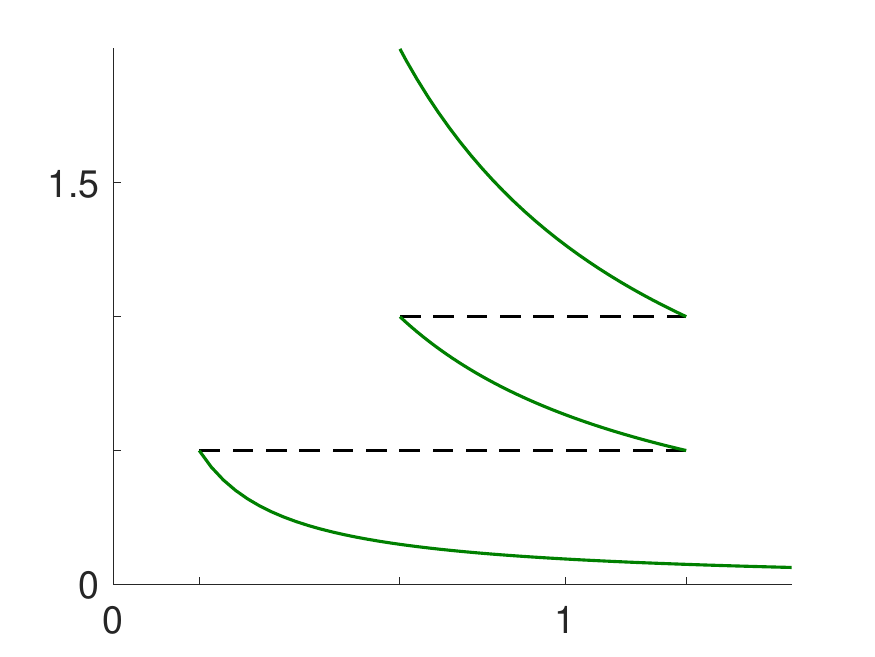}\hspace*{0.5em}\includegraphics[scale=0.5]{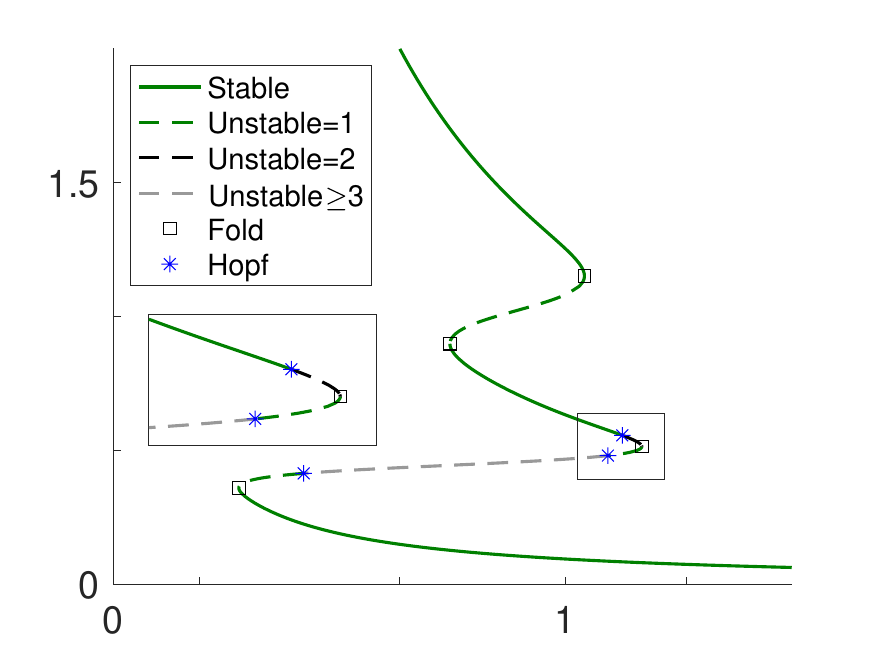}
	\put(-406,140){\rotatebox{90}{$x$}}
	\put(-240,10){$\gamma$}
	\put(-383,7){$\gamma_4$}
	\put(-335,7){$\gamma_2$}
	\put(-279,7){$\gamma_1=\gamma_3$}
	\put(-412,78){$\theta_g$}
	\put(-412,45){$\theta_v$}
	\put(-360,135){$(a)$}
	\put(-90,135){$(b)$}
	\put(-190,140){\rotatebox{90}{$x$}}
	\put(-24,10){$\gamma$}
	\put(-167,7){$\gamma_4$}
	\put(-119,7){$\gamma_2$}
	\put(-64,7){$\gamma_1=\gamma_3$}
	\put(-196,78){$\theta_g$}
	\put(-196,45){$\theta_v$}\\
	\includegraphics[scale=0.5]{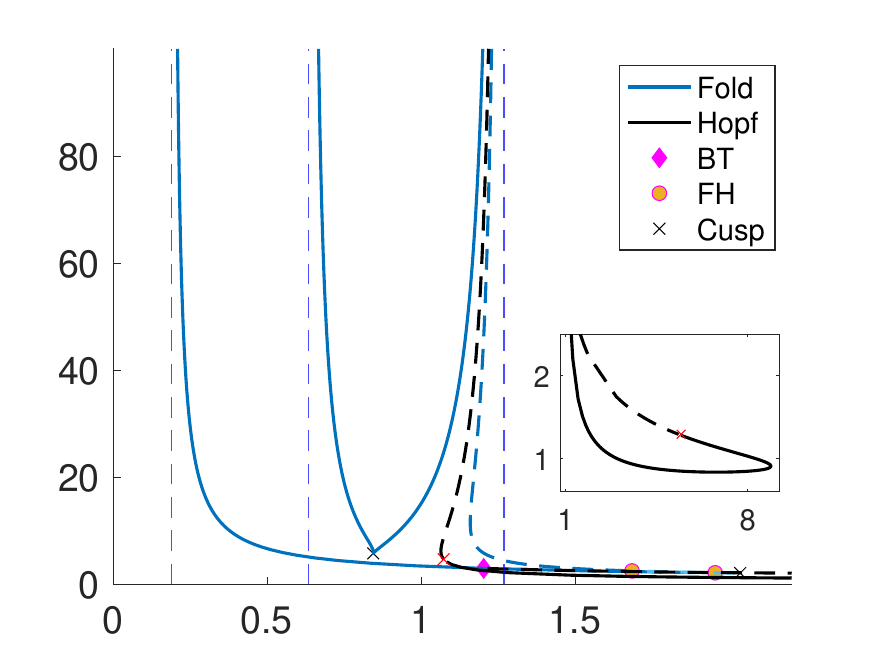}
	\put(-120,140){$(c)$}
	\put(-190,127){\rotatebox{90}{\scriptsize$m=n$}}
	\put(-24,10){$\gamma$}
	\put(-174,7){$\gamma_4$}
	\put(-140,7){$\gamma_2$}
	\put(-94,7){$\gamma_1$}
	\caption{Bifurcation diagram of \eqref{eq:basic}-\eqref{eq:thres} for $(g \uparrow, v \uparrow,\theta_v < \theta_g)$
		with parameters $\beta=1.4, \mu=0.2, g^-=0.5, g^+=1, \gamma=0.8, \theta_g=1, \theta_v=0.5, a=1, v^-=0.1$ and $v^+=2$. With these parameters
$\gamma_4=0.1895< \gamma_2=0.6334<\gamma_1=\gamma_3=1.2668$.
(a) The limiting case with $g$ defined by \eqref{eq:gpwconst} and $v$ defined by \eqref{eq:vpwconst}. The stable steady state is shown as a green solid line, and the singular steady state as a black dashed line. (b) With smooth nonlinearity $g$ and $v$ defined by \eqref{eq:vghill} with $m=n=20$. (c) Two-parameter continuations in $m=n$ and $\gamma$ of the fold and the Hopf bifurcations with the other parameters as above. There are two cusp points at $(\gamma, m)=(2.0329, 2.1053)$ and $(0.84265, 5.7871)$, a BT point at $(\gamma, m)=(1.2009, 2.9825)$, two fold-Hopf points at $(\gamma, m)=(1.6820, 2.5172)$ and $(1.9519, 2.1629)$, and two Bautin bifurcations at  $(\gamma, m)=(5.4365, 1.2928)$ and $(1.0709, 4.6890)$. }
	\label{fig:gupvup}
\end{figure}

When $v$ and $g$ are both increasing with $\theta_g \neq \theta_v$
there are up to five coexisting steady states in the limiting case as shown in Figure~\ref{fig:6}. The four corners at the respective thresholds $\theta_g$ and $\theta_v$ are associated with the emergence of fold bifurcations.
A necessary and sufficient condition to obtain five coexisting steady states is that
$(\gamma_2,\gamma_1)\cap(\gamma_4,\gamma_3)\ne\emptyset$.
Note that since $g$ is now increasing the values  $\gamma_2 < \gamma_1$ have the opposite order than in previous example, see \eqref{eq:corners_vconst}.

Figure~\ref{fig:gupvup}(a) shows such an example, with five co-existing steady states in the limiting case when
$\gamma\in(\gamma_2,\gamma_1)$.
Referring to the discussion in Sections~\ref{sec:gupvconst} and \ref{sec:gconstvup}, in the limiting case the non-singular steady states are always stable, leading to tristability of steady states in this parameter interval.
This then leads to tristability of steady states for smooth nonlinearities with $m=n$ sufficiently large, as depicted in Figure~\ref{fig:gupvup}(b).

Following the discussion in Sections~\ref{sec:gupvconst} and \ref{sec:gconstvup}, the intermediate singular steady states become unstable steady states between the fold bifurcations in the smooth case. However, the behaviour is not the same at each of the folds.
In the example presented in Figure~\ref{fig:gupvup} we have $\mu\in(\gamma_4,\gamma_3)$, and so we expect to see similar behaviour for $x\approx\theta_v$ as in Case 2 of
Section~\ref{sec:gconstvup}, while since $g$ is increasing the behaviour
for $x\approx\theta_g$ should be similar to what was observed in
Section~\ref{sec:gupvconst}, and this is what we observe.

For the fold bifurcation near  $(\gamma,\xi)=(\gamma_4,\theta_v)$,
the steady state loses stability at the fold for all $m$ sufficiently large (seen for $m=n=20$ in Figure~\ref{fig:gupvup}(b)). Since $v$ is increasing, from \eqref{eq:gam<mumodboth}
there can be no Hopf bifurcations for $\gamma<\mu-|Q(\xi)|(1+\mu\tau)$.
However,  $\lim_{m\to\infty}Q(\xi)=0$ for
$\xi\ne\theta_g$, and since $\gamma_4<\mu$ there can be no Hopf bifurcations near to the fold for $m$ sufficiently large.
The same is not true for the fold close to
$(\gamma,\xi)=(\gamma_3,\theta_v)$, where since $\gamma_3>\mu$ the steady state may lose stability in a Hopf bifurcation near to the fold point, which is indeed what is seen for all
$m$ sufficiently large. This  is similar to the behaviour seen
in the previous example in Figure~\ref{fig:gdownvup} and to Cases 2 and 3 in Section~\ref{sec:gconstvup}.

Following the theory of Sections~\ref{sec:gupvconst} and~\ref{sec:gconstvup},
as $m=n$ is increased there will also be an infinite sequence of Hopf bifurcations on the unstable branches that cross $\theta_g$ and $\theta_v$.
For the branch that crosses $x=\theta_g$ these Hopf bifurcations will be confined between the fold bifurcations as $m=n$ is increased. No Hopf bifurcations are seen on this branch in Figure~\ref{fig:gupvup}(b) because the value of $m$ is too small, with the first Hopf bifurcation only occurring for $m=n\approx61$.

\begin{figure}[thp!]
	\centering	\includegraphics[scale=0.5]{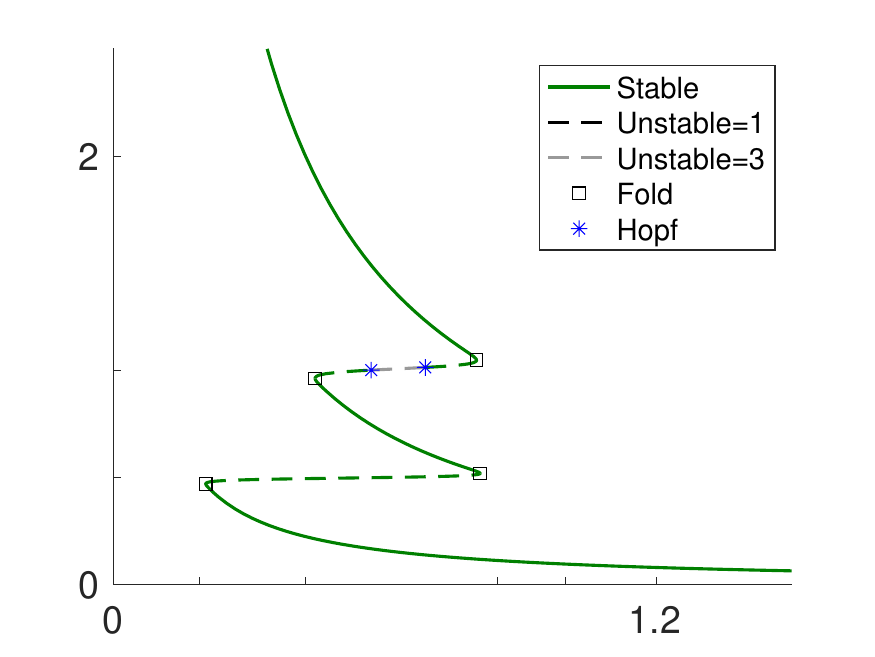}
	\put(-192,140){\rotatebox{90}{$x$}}
	\put(-195,66){$\theta_g$}
	\put(-195,39){$\theta_v$}
	\put(-30,10){$\gamma$}
	\put(-166,7){$\gamma_4$}
	\put(-142,7){$\gamma_2$}
	\put(-95,7){$\gamma_1$}
	\put(-77,7){$\mu$}
	\caption{Bifurcations of \eqref{eq:basic}-\eqref{eq:thres}
		in the case $(g \uparrow, v \uparrow,\theta_v < \theta_g)$
with smooth nonlinearities $g$ and $v$ defined by \eqref{eq:vghill} with $m=n=100$, and
other parameters $\beta=1.4$, $\mu=1$, $g^-=0.5>g^+=1$, $\theta_v=0.5<\theta_g=1$, $a=1$, and $v^-=0.5<v^+=2$. With these parameters $\gamma_4=0.1895$, $\gamma_2=0.4246$, and $\gamma_1=\gamma_3=0.8491$.}
	\label{fig:gupvup_ex2}
\end{figure}

Figure~\ref{fig:gupvup_ex2} shows another example for
$(g\uparrow,v\uparrow,\theta_g \neq \theta_v)$, but this time with $\mu>\gamma_3$ which corresponds to
Case 1 in Section~\ref{sec:gconstvup}.
In this example no Hopf bifurcations are observed on the unstable branch that crosses $\xi=\theta_v$.
This follows from \eqref{eq:gam<mumodboth} because there can be no Hopf bifurcations
for $\gamma<\mu-|Q(\xi)|(1+\mu\tau)$, and $|Q(\xi)|\to0$ as $n\to\infty$ for $\xi\ne\theta_g$. On the other hand, a pair of Hopf bifurcations is observed on the branch of unstable steady states that crosses
$\xi=\theta_g$. In this case \eqref{eq:gam<mumodboth} does not preclude Hopf bifurcations
because Proposition~\ref{prop:fxpr}(2),
with $f(x,p,r)$ given by $Q(\xi)$ with parameter $p=n$ from $g$,
shows that $|Q(\xi)|$ can be arbitrarily large for $\xi\approx\theta_g$ as $n\to\infty$.

\subsection{Two Hill functions with $\theta_g = \theta_v$}
\label{sec:onetheta}

The cases of $(g\uparrow,v\uparrow, \theta_v = \theta_g)$ and
$(g\downarrow,v\downarrow, \theta_v = \theta_g)$ where
both functions are either increasing or decreasing
are reasonably straightforward. However,  the two cases for which one function is increasing and the other decreasing,
$(g\downarrow,v\uparrow, \theta_v = \theta_g)$ and
$(g\downarrow,v\uparrow, \theta_v = \theta_g)$ are altogether more delicate and surprising.

\subsubsection{$g$ and $v$ both increasing or decreasing $(g\uparrow,v\uparrow,\theta_g=\theta_v)$
and $(g\downarrow,v\downarrow,\theta_g=\theta_v)$}
\label{sec:bothupordown}

In both these cases $\xi \mapsto\beta e^{-\mu\tau(\xi)}g(\xi)$ is monotonic, so the existence of steady states is straightforward following the theory in Section~\ref{sec:onehill}. Writing $\theta_{gv}$ for the value of $\theta$ when $\theta_g=\theta_v$, and defining
\be \label{eq:corners_onetheta}
\gamma_{13}=\frac{\beta e^{-\mu\tau^+}\! g^+}{\theta_{gv}} \quad \text{and} \quad
\gamma_{24}=\frac{\beta e^{-\mu\tau^-\!} g^-}{\theta_{gv}},
\ee
with $g$ and $v$ both decreasing, there is always a unique steady state. In the limiting case of piecewise constant nonlinearities \eqref{eq:gpwconst}-\eqref{eq:vpwconst}, this steady state is stable for
$\gamma<\gamma_{13}$ and $\gamma>\gamma_{24}$, and is singular for $\gamma\in(\gamma_{13},\gamma_{24})$.
In the smooth case, this gives rise to a unique stable steady state for very small or very large $\gamma$. The question then arises as to when the steady state may lose stability in a Hopf bifurcation for $\gamma$ in or near the interval $(\gamma_{13},\gamma_{24})$.
From \eqref{eq:gam>mumodboth} when $g$ and $v$ are both decreasing Hopf bifurcations cannot occur if
\be \label{eq:gam>muboth}
\gamma>\mu+|Q(\xi)|(1+\mu\tau)=\mu+\gamma(1+\mu\tau)\left|\frac{\xi g'(\xi)}{g(\xi)}\right|.
\ee
Thus when $g$ is a constant function, which implies that $Q=0$, there can only be Hopf bifurcations when $\gamma<\mu$ as  seen in Section~\ref{sec:gconstvdown}. In contrast,
by Proposition~\ref{prop:fxpr}, we have $|\theta_{gv} g'(\theta_{gv})/g(\theta_{gv})|\to\infty$ as $n\to\infty$
while \eqref{eq:gam>muboth} cannot be satisfied when $|\xi g'(\xi)/g(\xi)|>1$. Consequently for $n$ large there is no constraint preventing Hopf bifurcations, and we expect to see Hopf bifurcations for $\xi\approx\theta_{gv}$ for $n$ sufficiently large, just as we did in Section~\ref{sec:gdownvconst}.

\begin{figure}[thp!]
	\centering
	
\includegraphics[scale=0.5]{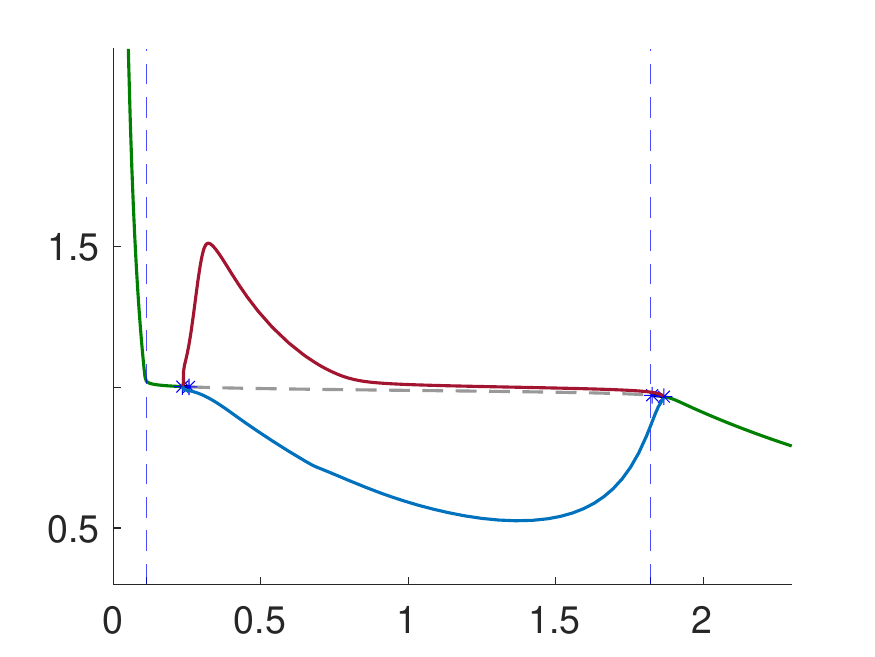}\hspace*{0.5em}\includegraphics[scale=0.5]{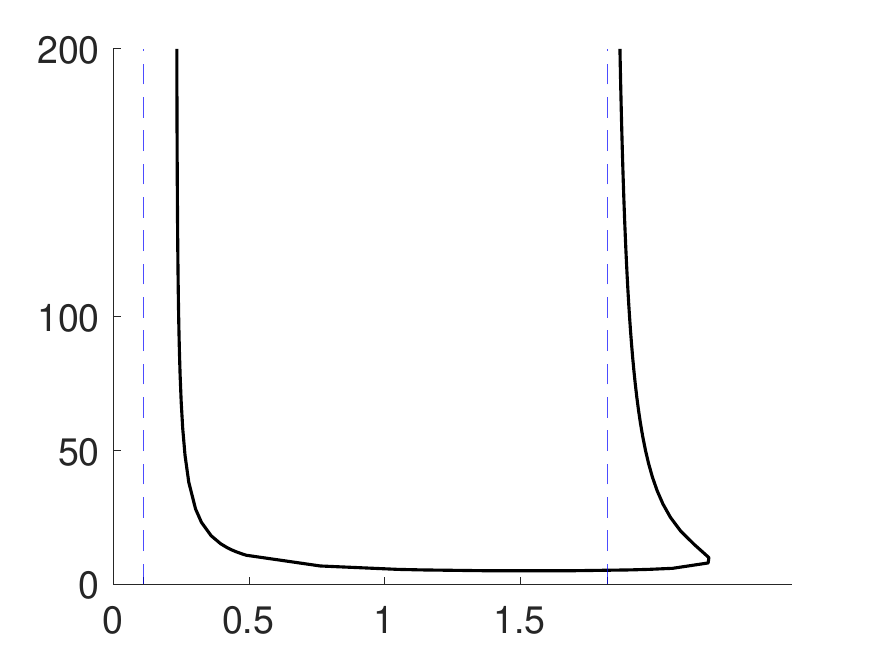}
	\put(-316,135){$(a)$}
	\put(-406,140){\rotatebox{90}{$x$}}
	\put(-396,7){$\gamma_{13}$}
	\put(-281,7){$\gamma_{24}$}
	\put(-241,10){$\gamma$}
	\put(-416,65){$\theta_{gv}$}
	\put(-114,140){$(b)$}
	\put(-190,102){\rotatebox{90}{$\scriptsize m=n$}}
    \put(-180,7){$\gamma_{13}$}
	\put(-71,7){$\gamma_{24}$}
	\put(-24,10){$\gamma$}\\
	
\includegraphics[scale=0.5]{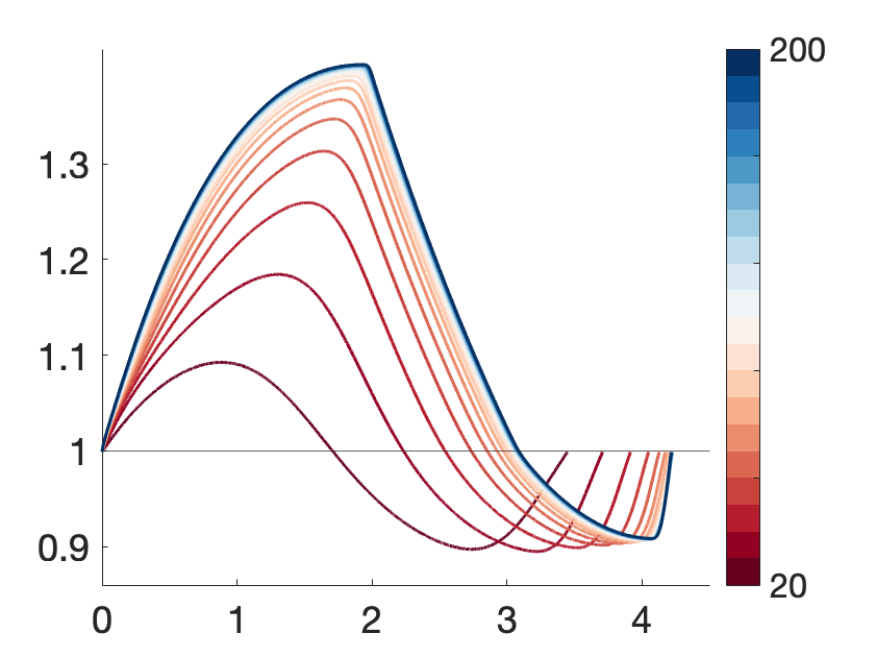}\hspace*{0.5em}\includegraphics[scale=0.5]{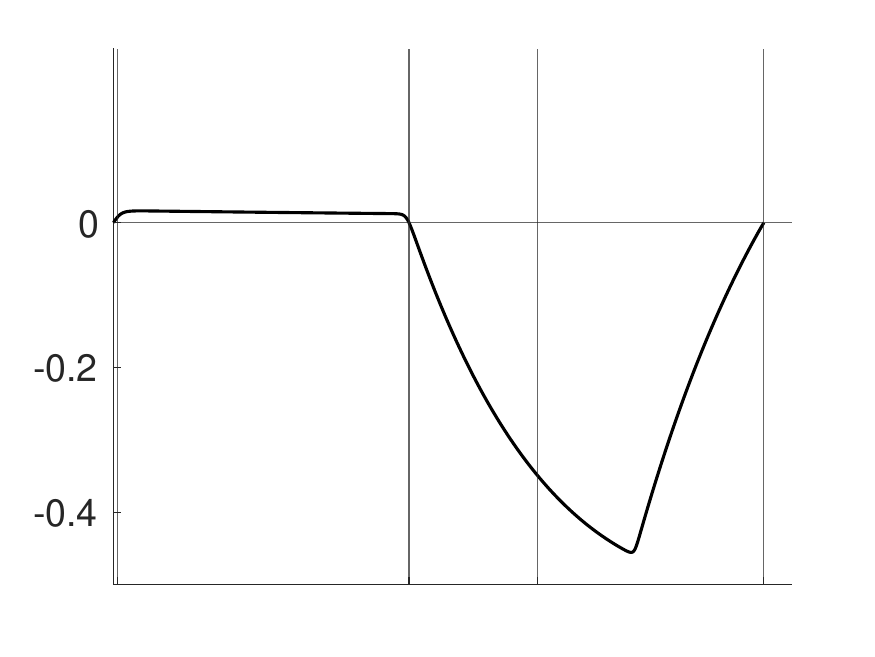}
	\put(-104,135){$(d)$}
	\put(-193,124){\rotatebox{90}{$x\!-\!x^*$}}
	\put(-192,8){$0$}
	\put(-183,8){$\scriptsize t_2\!-\!\tau(x_{t_2\!})$}
	\put(-115,8){$t_2$}
	\put(-93,8){$\scriptsize t_4\!-\!\tau(x_{t_4\!})$}
	\put(-24,8){$t_4$}
	\put(-325,140){$(c)$}
	\put(-411,140){\rotatebox{90}{$x$}}
	\put(-263,8){$t$}
	\put(-242,80){\small$m\!=\!n$}

	\caption{Bifurcation diagram of \eqref{eq:basic}-\eqref{eq:thres} for $(g\downarrow,v\downarrow,\theta_g=\theta_v)$
		with parameters $\beta=3$, $\mu=0.5$,  $\gamma=1$, $\theta_g=\theta_v=\theta_{vg}=1$, $a=1$,
$g^-=1>g^+=0.1$, and $v^-=1>v^+=0.5$. With these parameters
$\gamma_{13}=0.1104$ and $\gamma_{24}=1.8196$.
(a) With nonlinearities $g$ and $v$ defined by \eqref{eq:vghill} with $m=n=200$. The Hopf bifurcations associated with stability change occur at $(\gamma,x)=(0.2333,1.0018)$ with $\omega=2.3128$ and $(\gamma,x)=(1.8669,0.9659)$ with $\omega=2.3155$.
Stable periodic orbits are represented by maximum (in red) and minimum (in blue) of $x(t)$ on the periodic solution.
(b) Two-parameter continuations in $m=n$ and $\gamma$ of the Hopf bifurcations with the other parameters as above.
(c) Profile of the stable periodic orbits with $\gamma=0.4$. The color map indicates values of the continuation parameter $m=n$. 
(d) Profile of the stable periodic orbit with $m=200$ and $\gamma=1.3$ and the other parameters as above. The associated steady state is at $x^*=0.9848$. 
The period of the periodic orbit $T=t_4=2.8772$.}
	\label{fig:gdownvdown}
\end{figure}

Figure~\ref{fig:gdownvdown} illustrates the case of $g$ and $v$ both decreasing with $\theta_v=\theta_g=\theta_{gv}$.
Panel (a) shows the dynamics for smooth nonlinearities $g$ defined by \eqref{eq:vghill} with $m=n=200$.
In this case the steady state is unstable between two supercritical Hopf bifurcations.
These Hopf bifurcations occur close to but slightly to the right of the limiting values $\gamma_{13}$ and $\gamma_{24}$.
The resulting periodic orbit is stable
on the whole of the interval between the Hopf bifurcations.

Figure~\ref{fig:gdownvdown}(b) shows a two-parameter continuation of the curve of Hopf bifurcations at which the stable periodic orbit is created.
The amplitude plot of the periodic orbits in panel (a) of Figure~\ref{fig:gdownvdown} suggests that different behavior is seen near the two Hopf points, and this is illustrated in
the periodic solution profiles shown in panels (c) and (d).
In (c) with $\gamma=0.4$, as $m=n$ is increased the profile of the periodic orbit converges to a limiting profile (which we construct in \cite{sausage}). The situation is different with  $\gamma>1$ as shown in panel (d) with $\gamma=1.3$ and $m=n=200$. While there is again a large period periodic orbit, the profile is quite different, with $x(t)$ never much greater than $x^*$ even though there is a large segment $t\in(t_0,t_2)$ of the periodic orbit for which $x(t)>x^*$.
This large subinterval of the periodic orbit on which the solution is nearly constant
looks quite different to the periodic orbits seen in (c).

The case of $(g\uparrow,v\uparrow, \theta_v = \theta_g)$ is analogous to
the scenarios considered in Sections~\ref{sec:gupvconst} and~\ref{sec:gconstvup} which lead to a pair of fold bifurcations when \eqref{eq:vglam0} is satisfied,
and an interval of $\gamma$ values for which two stable
steady states coexist with an unstable steady state.
Because $v$ and $g$ are both increasing, the functions
$\xi \mapsto\xi g'(\xi)/g(\xi)$ and $\xi \mapsto\mu\tau(\xi)\xi v'(\xi)/v(\xi)$ are both positive,
and moreover
$\theta_{gv} g'(\theta_{gv})/g(\theta_{gv})$ and $\mu\tau(\theta_{gv})\theta_{gv} v'(\theta_{gv})/v(\theta_{gv})$
are increasing functions of $n$ and $m$, respectively.
Since the condition \eqref{eq:vglam0} is more easily
satisfied in this case than when one of these functions is zero or decreasing, fold bifurcations can occur for smaller values of $m$ or $n$ for $(g\uparrow,v\uparrow, \theta_v = \theta_g)$
than was seen for $(g\uparrow,v\leftrightarrow)$ or $(g\leftrightarrow,v\uparrow)$.
As $m=n\to\infty$ the interval of bistability converges
to $(\gamma_{24},\gamma_{13})$.
Equation~\eqref{eq:gam<mumodboth} imposes a constraint on Hopf bifurcations to occur
when $g$ and $v$ are both increasing. However, since $Q(\theta_{g})\to\infty$ as $n\to\infty$, Hopf bifurcations
can occur for all $n$ sufficiently large.

\subsubsection{$g$ and $v$ with opposing monotonicity $(g\uparrow,v\downarrow,\theta_g=\theta_v)$
and $(g\downarrow,v\uparrow,\theta_g=\theta_v)$}
\label{sec:gvopposite}

These cases  are altogether more delicate and surprising,
and need explaining in some detail.
For the smooth nonlinearities when the power $m$ or $n$ is large, and the function $g(x)$ or $v(x)$ is close to a step function, we will informally refer to the part of the function with large gradient as the interface. When $m\gg n$ the $v(x)$ function will have a much narrower interface than the function $g(x)$, while $g(x)$ will have a narrower interface when $n\gg m$. These interfaces are centred at
$x=\theta_v$ and $x=\theta_g$, so when $\theta_g=\theta_v$ they will overlap or if $m\gg n$ or $m\ll n$ the interval on which one interface occurs will be inside the interval on which the other interface occurs. When $g$ and $v$ have opposing monotonicity
the dynamics of
\eqref{eq:basic}-\eqref{eq:thres} will be very different depending on which interface is narrower.

From \eqref{eq:vglam0} there will be a fold bifurcation when
$$0=M(\xi)=
\frac{\xi g'(\xi)}{g(\xi)} + \mu\tau(\xi)\frac{\xi v'(\xi)}{v(\xi)}-1.
$$
When $v$ is increasing by Proposition~\ref{prop:fxpr}
\be \label{argument}
\lim_{m\to\infty}\frac{\theta_v v'(\theta_v)}{v(\theta_v)} =+\infty, \qquad
\mbox{ and } \qquad
\lim_{\xi\to0}\frac{\xi v'(\xi)}{v(\xi)} =\lim_{\xi\to+\infty}\frac{\xi v'(\xi)}{v(\xi)} =0,
\ee
which ensures that a fold bifurcation must occur for $m$ sufficiently large when $n$ is held constant. If $g$ is decreasing then $\xi g'(\xi)/g(\xi)<0$ and the value of $m$ required for a fold bifurcation to occur will be larger than when this term is non-negative, that is, when $g$ is constant or increasing. Those cases were considered
in Sections~\ref{sec:gconstvup} and~\ref{sec:bothupordown}.
An analogous argument to \eqref{argument} shows that when $g$ is increasing a fold bifurcation will occur for $n$ sufficiently large when $m$ is held constant.

A more delicate question is, what happens when $m\to\infty$ and $n\to\infty$ with $g$ and $v$ having opposite monotonicities. In that case one could expect that whichever of the terms $\xi g'(\xi)/g(\xi)$ or $\xi v'(\xi)/v(\xi)$ grows faster as $m,n\to\infty$ will determine where and whether fold bifurcations occur, however, as we will see, the behaviour is more nuanced than that.

We will now consider  the case $(g\downarrow,v\uparrow,\theta_g=\theta_v=\theta_{gv})$
for which $0<r_v<1<r_g$, where $r_g=g^-/g^+$ and $r_v=v^-/v^+$. Evaluating $M(\theta_{gv})$
using Proposition~\ref{prop:fxpr}(2) we obtain
\be \label{eq:Mthetagv}
M(\theta_{gv})=\frac{\theta_{gv} g'(\theta_{gv})}{g(\theta_{gv})} + \mu\tau(\theta_{gv})\frac{\theta_{gv} v'(\theta_{gv})}{v(\theta_{gv})} -1
 =
\frac{n(1-r_g)}{2(1+r_g)} + \mu\tau(\theta_{gv})\frac{m(1-r_v)}{2(1+r_v)}-1.
\ee
Then if
\be \label{eq:bothbif}
\mu\tau(\theta_{gv})\frac{(1-r_v)}{(1+r_v)}>\frac{(r_g-1)}{(1+r_g)},
\ee
it follows that $M(\theta_{gv})\to+\infty$ as $m=n\to\infty$, and in particular $M(\theta_{gv})>0$ for all $m=n$ sufficiently large. This ensures that there is a pair of fold bifurcations as $\gamma$ is varied with $m=n$ sufficiently large.

Of course, since $m$ and $n$ appear in the different nonlinearities $v$ and $g$ modelling different processes, there is no reason beyond mathematical convenience to assume that $m=n$. So, it is interesting to consider whether fold bifurcations still occur as $m\to\infty$ and $n\to\infty$ independently.

The condition \eqref{eq:bothbif} was arrived at by evaluating $M(\theta_{gv})$, which has a convenient form because of Proposition~\ref{prop:fxpr}(2). However, we will show below that it is not a necessary condition  for existence of fold bifurcations. To see this, note that for  the case we consider
we have $0<r_v<1<r_g$, and hence
\be \label{eq:thetarvghalf}
r_v^{1/2m}\theta_{gv} < \theta_{gv} < r_g^{1/2n}\theta_{gv}.
\ee
From Proposition~\ref{prop:fxpr}(4) the positive function $\xi v'(\xi)/v(\xi)$
achieves its maximum at $\xi=r_v^{1/2m}\theta_{gv} < \theta_{gv}$ and the negative function
$\xi g'(\xi)/g(\xi)$ achieves its minimum at $\xi=r_g^{1/2n}\theta_{gv}>\theta_{gv}$.
Thus, for $\xi \in  (r_v^{1/2m}\theta_{gv},r_g^{1/2n}\theta_{gv})$ we have
$\xi v'(\xi)/v(\xi)$ and $\xi g'(\xi)/g(\xi)$ as well as $\tau(\xi)$ all decreasing functions of $\xi$ and it follows that $M(\xi)$
is decreasing for $\xi\in (r_v^{1/2m}\theta_{gv},r_g^{1/2n}\theta_{gv})$
as well as satisfying $\lim_{\xi\to\pm\infty}M(\xi)=-1$.
Consequently, we expect $M(\xi)$ to obtain its maximum value for $\xi\lessapprox r_v^{1/2m}\theta_{gv}$. If this maximum value is positive, then \eqref{eq:vglam0} will have at least two solutions, indicating two fold bifurcations for different $\xi$ values, one on each side of this maximum.
While we will not consider the other case of $(g\uparrow,v\downarrow,\theta_g=\theta_v=\theta_{gv})$ in detail,
we note  that in that case  $g$ would be the increasing function,
with $r_g=g^-/g^+<1<r_v$, and for
$\xi\in (r_g^{1/2n}\theta_{gv},r_v^{1/2m}\theta_{gv})$
we would have $\xi g'(\xi)/g(\xi)$ positive and decreasing, $\xi v'(\xi)/v(\xi)$ negative and decreasing, and
$\tau(\xi)$ increasing, from which it follows that $M(\xi)$
is decreasing for $\xi\in(r_g^{1/2n}\theta_{gv},r_v^{1/2m}\theta_{gv})$, so
$M(\xi)$ would obtain its maximum value for $\xi<\theta_{gv}$.

\begin{figure}[thp!]
	\centering
	\includegraphics[scale=0.5]{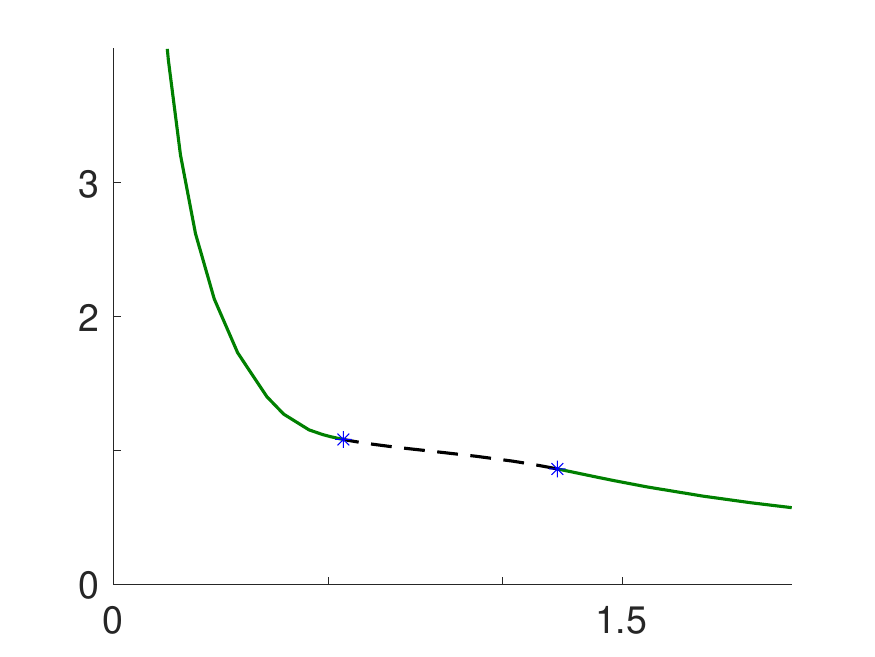}\hspace*{0.5em}\includegraphics[scale=0.5]{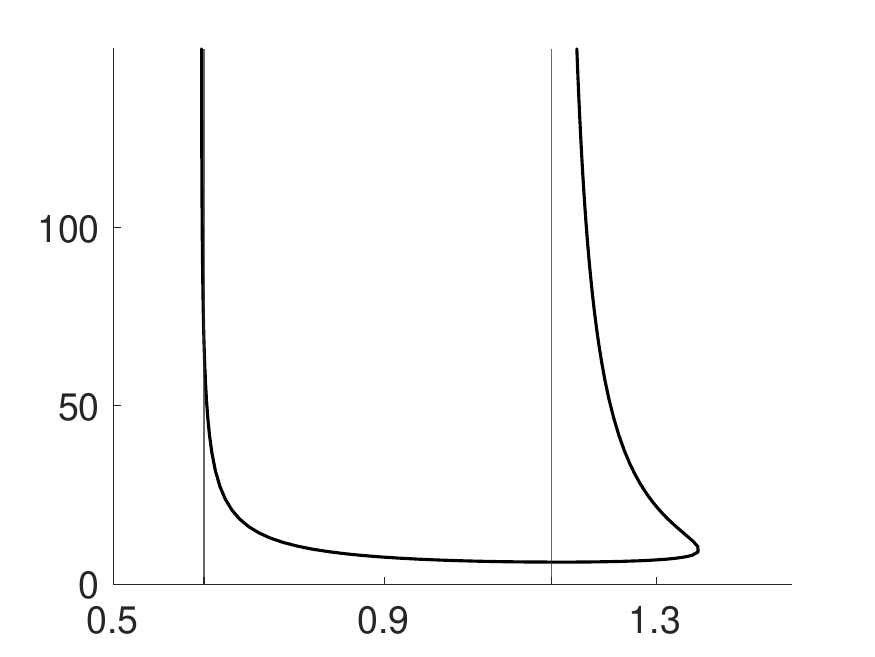}
	\put(-406,140){\rotatebox{90}{$x$}}
	\put(-416,46){$\theta_{gv}$}
	\put(-354,7){$\gamma_{13}$}
	\put(-313,7){$\gamma_{24}$}
	\put(-240,7){$\gamma$}
	\put(-335,140){$(a)$}
	\put(-125,140){$(b)$}
	\put(-190,122){\rotatebox{90}{\footnotesize$m=n$}}
	\put(-168,7){$\gamma_{13}$}
	\put(-85,7){$\gamma_{24}$}
	\put(-24,7){$\gamma$}\\
	\includegraphics[scale=0.5]{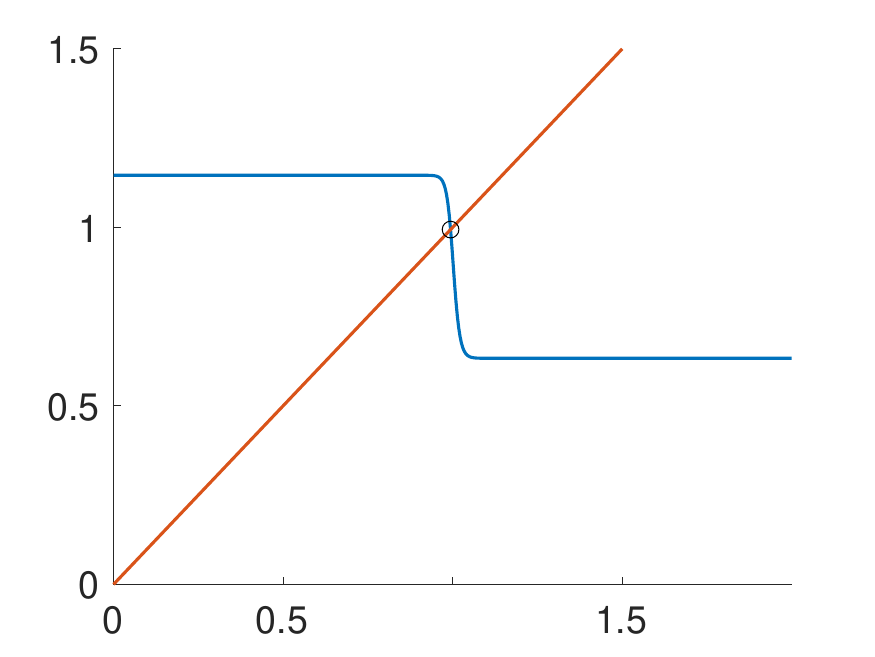}\hspace*{0.5em}\includegraphics[scale=0.5]{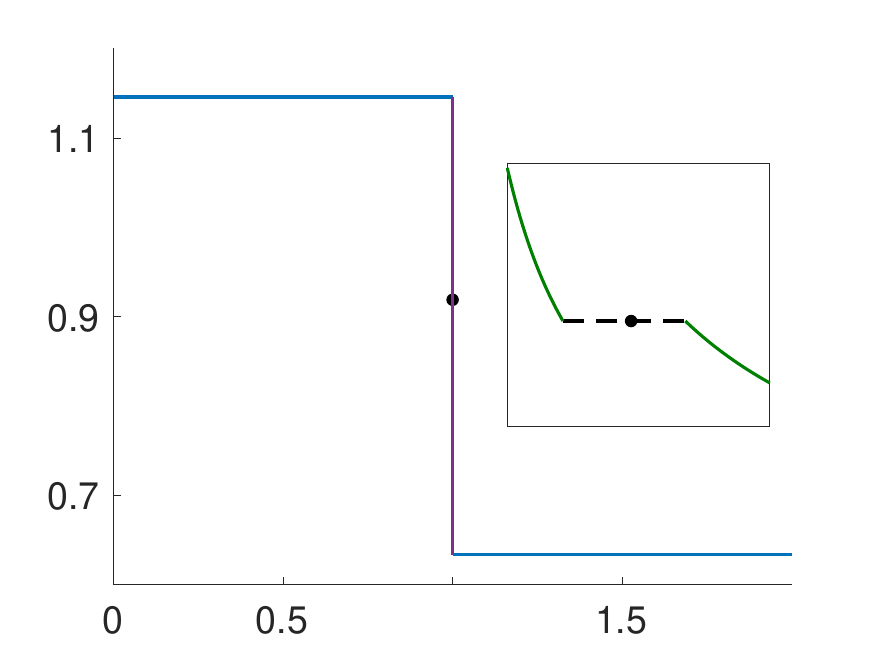}
	\put(-240,7){$\xi$}
	\put(-324,7){$\theta_{gv}$}
	\put(-350,135){$(c)$}
	\put(-85,135){$(d)$}
	\put(-24,7){$\xi$}
	\put(-104,7){$\theta_{gv}$}
	\put(-32,48){$\gamma$}
	\put(-96,110){\rotatebox{90}{$x$}}
	\caption{Bifurcations of \eqref{eq:basic}-\eqref{eq:thres} for $(g\downarrow,v\uparrow,\theta_g=\theta_v=\theta_{gv})$ with $\beta=1.4$, $\mu=0.2$, $g^-=1$, $g^+=0.5$, $\theta_g=\theta_v=\theta_{gv} =1$, $a=1$, $v^-=1$ and $v^+=2$. With these parameters
$\beta e^{-\mu\tau^-\!} g^- >  \beta e^{-\mu\tau(\theta_{gv})}\! g(\theta_{gv}) >\beta e^{-\mu\tau^+}\! g^+$.
(a) With smooth nonlinearity $g$ and $v$ defined by \eqref{eq:vghill} with $m=n=100$.
(b) Two-parameter continuations in $m=n$ and $\gamma$ of the Hopf bifurcation at which the steady state loses stability with the other parameters as above. The vertical lines at $\gamma=\gamma_{13}=0.6334$
and $\gamma=\gamma_{24}=1.1462$ denote the location of the Hopf bifurcations in the limiting case as $m=n\to\infty$.
(c) Graphs of $\xi \mapsto\beta e^{-\mu\tau(\xi)} g(\xi)$ and $\xi \mapsto\gamma\xi$  with $m=n=100$ and $\gamma=1$. Since $\xi \mapsto\beta e^{-\mu\tau(\xi)} g(\xi)$ is monotonically decreasing there is a unique intersection and hence a unique steady state for and $\gamma>0$.
(d) Graph of $\xi \mapsto\beta e^{-\mu\tau(\xi)} g(\xi)$ in the limiting case, with the black dot denoting
the value of $\beta e^{-\mu\tau(\theta_{gv})} g(\theta_{gv})$. The inset depicts the continuation of the steady states in the limiting case with $g$ and $v$ defined by \eqref{eq:gpwconst} and \eqref{eq:vpwconst}. Stable steady states are shown as green solid lines, and the singular steady state as a black dashed line. }
	\label{fig:gdownvup_ex1}
\end{figure}

To study the steady states and fold bifurcations of \eqref{eq:basic}-\eqref{eq:thres}
for $(g\downarrow,v\uparrow,\theta_g=\theta_v=\theta_{gv})$ we will  work directly with the
function $h(\xi)$ defined in \eqref{eq:h}. Suppose the parameters are chosen so that
$\beta e^{-\mu\tau^-}g^- > \beta e^{-\mu\tau^+}g^+$. Then
\be \label{eq:decreasing}
\beta e^{-\mu\tau^-}g^- = \lim_{\xi\to0}\beta e^{-\mu\tau(\xi)}g(\xi)>
\lim_{\xi\to+\infty}\beta e^{-\mu\tau(\xi)}g(\xi)=\beta e^{-\mu\tau^+}g^+.
\ee

Figure~\ref{fig:gdownvup_ex1} depicts such an example for which
$\beta e^{-\mu\tau(\xi)}g(\xi)$ is a
monotonically decreasing function of $\xi$ in both the smooth case and the limiting case as $m=n\to\infty$, so that there is always a unique steady state.
One could easily and incorrectly assume that $\xi\mapsto\beta e^{-\mu\tau(\xi)}g(\xi)$ will be
monotonically decreasing whenever \eqref{eq:decreasing} holds.
However, this is not always true, and indeed is never true when \eqref{eq:bothbif} holds. To show this, recalling
the definitions of $g$ and $v$ given by \eqref{eq:vghill} in the smooth case, notice
that at the threshold, the function values are independent of the nonlinearity $m$ or $n$, with
\begin{equation}
g(\theta_g)=\frac{g^-+g^+}{2} \quad \text{and} \quad v(\theta_v)=\frac{v^-+v^+}{2}.
\label{eq:thresvalue}
\end{equation}
Thus for $(g\downarrow,v\uparrow,\theta_g=\theta_v=\theta_{gv})$ and  $0<r_v<1<r_g$
it holds that
\begin{equation} \label{eq:gs}
g^-> g(\theta_{gv}) > g^+,
\end{equation}
while $v^+ > v(\theta_{gv}) > v^-$ implies
\begin{equation} \label{eq:vs}
\tau^- > \tau(\theta_{gv}) > \tau^+.
\end{equation}
Then \eqref{eq:bothbif} implies that
\be \label{eq:thetagvarg}
\mu\tau^-\frac{(1-r_v)}{(1+r_v)}
 > \mu\tau(\theta_{gv})\frac{(1-r_v)}{(1+r_v)}
 > \frac{r_g-1}{1+r_g}
 > \ln\Bigl(1+\frac{r_g-1}{1+r_g}\Bigr)
 = \ln\Bigl(\frac{2r_g}{1+r_g}\Bigr).
\ee
But
$$
\frac{2r_g}{1+r_g}=\frac{2g^-/g^+}{1+g^-/g^+}=\frac{2g^-}{g^++g^-}=\frac{g^-}{g(\theta_{gv})},$$
and
$$\tau^-\frac{(1-r_v)}{(1+r_v)}=\frac{a}{v^-}\Bigl(\frac{v^+-v^-}{v^-+v^+}\Bigr)
=\frac{a}{v^-}-\frac{2a}{v^-+v^+}=\tau^- - \tau(\theta_{gv}).$$
Thus
\begin{align*}
\mu\tau^-\frac{(1-r_v)}{(1+r_v)} > \ln\Bigl(\frac{2r_g}{1+r_g}\Bigr)
& \quad \Longleftrightarrow\quad
\mu(\tau^- - \tau(\theta_{gv})) > \ln(g^-/g(\theta_{gv}))\\
& \quad\Longleftrightarrow\quad
e^{-\mu(\tau(\theta_{gv})-\tau^-)} > g^-/g(\theta_{gv})\\
& \quad\Longleftrightarrow\quad
\beta e^{-\mu\tau(\theta_{gv})}g(\theta_{gv}) > \beta e^{-\mu\tau^-}g^-.
\end{align*}
Thus if the parameters are chosen so that \eqref{eq:bothbif} and \eqref{eq:decreasing} both hold then
\be \label{eq:betaexpabove}
\beta e^{-\mu\tau(\theta_{gv})}g(\theta_{gv})>\beta e^{-\mu\tau^-}g^- > \beta e^{-\mu\tau^+}g^+,
\ee
and the function $\xi \mapsto\beta e^{-\mu\tau(\xi)}g(\xi)$ is not monotonic. This was not the case for the example in
Figure~\ref{fig:gdownvup_ex1}, but below
we will demonstrate examples
where both \eqref{eq:bothbif} and \eqref{eq:betaexpabove} hold, and more interesting dynamics arise. Moreover,
the inequalities in \eqref{eq:thetagvarg} are not all tight, and we will also show that it is possible to
obtain \eqref{eq:betaexpabove} for parameters for which \eqref{eq:bothbif} does not hold.

\begin{figure}[thp!]
	\centering
	\includegraphics[scale=0.5]{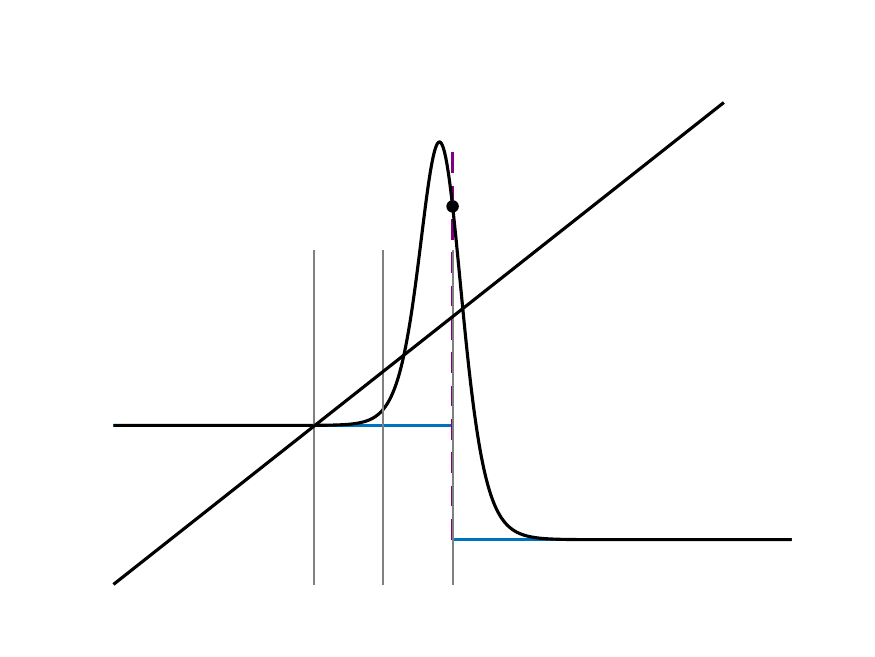}
	\put(-139,6){$\hat{\xi}$}
	\put(-121,6){$\bar{\xi}$}
	\put(-230,52){$\beta e^{-\mu\tau^-}g^-$}
	\put(-20,24){$\beta e^{-\mu\tau^+}g^+$}
	\put(-106,6){$\theta_{gv}$}
	\caption{An illustration of $\beta e^{-\mu\tau(\xi)}g(\xi)$ and $\hat{\gamma}\xi$ in the case $(g\downarrow, v\uparrow, \theta_g=\theta_v=\theta_{gv})$ for both smooth nonlinearities and in the limiting case $m=n\to\infty$ when \eqref{eq:betaexpabove} holds.}
\label{fig:thetagvlimits}
\end{figure}

Now suppose that \eqref{eq:betaexpabove} holds and consider $h(\xi)$ defined in \eqref{eq:h}. This situation is illustrated in Figure~\ref{fig:thetagvlimits}. Recall \eqref{eq:corners_onetheta}
and also let
\be \label{eq:gammathetagv}
\gamma_{gv}:=\frac{\beta e^{-\mu\tau(\theta_{gv})}g(\theta_{gv})}{\theta_{gv}}.
\ee
We will now show that when $\gamma\in(\gamma_{24},\gamma_{gv})$ for all $m$ sufficiently large there are three steady states. From \eqref{eq:h}, we have $h(0)=\beta e^{-\mu\tau^-}g^->0$ and $h(\xi)\to-\infty$ as $\xi\to+\infty$. Moreover, \eqref{eq:gammathetagv} and $\gamma<\gamma_{gv}$ implies
$$h(\theta_{gv})=\beta e^{-\mu\tau(\theta_{gv})}g(\theta_{gv})-\gamma \theta_{gv}=\gamma_{gv}\theta_{gv}-\gamma \theta_{gv}=(\gamma_{gv}-\gamma)\theta_{gv}>0.$$
From
$h(\theta_{gv})>0$
the intermediate value theorem immediately yields the existence of one steady state $\xi>\theta_{gv}$;  to show that there are three steady states it suffices to show that there exists
$\bar{\xi}\in(0,\theta_{gv})$ with $h(\bar{\xi})<0$.

For any $\hat{\gamma}\in(\gamma_{24},\gamma_{gv})$ we have
$\hat{\gamma}\theta_{gv} \in (\beta e^{-\mu\tau^-}g^-, \beta e^{-\mu\tau(\theta_{gv})}g(\theta_{gv}))$
so for some $\varepsilon > 0$ it holds that
$\hat{\gamma}\theta_{gv} = \beta e^{-\mu\tau^-}g^-+2\hat{\gamma}\varepsilon$.
This implies that the point $\hat{\xi}$ (shown in Figure~\ref{fig:thetagvlimits}) where the line $\hat{\gamma}\xi$
intersects $\beta e^{-\mu\tau^-}g^-$ satisfies
$\hat{\xi}=\beta e^{-\mu\tau^-}g^-/\hat{\gamma}=\theta_{gv}-2\varepsilon$.
Let
$$\bar{\xi}=\theta_{gv}-\varepsilon=\frac12(\hat{\xi}+\theta_{gv})\in(\hat{\xi},\theta_{gv}).$$
Then
$$
\hat{\gamma}\bar{\xi} = \hat{\gamma}(\theta_{gv}-\varepsilon)
	= \beta e^{-\mu\tau^-}g^-+2\hat{\gamma}\varepsilon-\hat{\gamma}\varepsilon
= \beta e^{-\mu\tau^-}g^-+\hat{\gamma}\varepsilon.
$$
We have $g(\bar{\xi})<g^-$ since $g$ is strictly decreasing, and then
\begin{align*}
h(\bar{\xi})& = \beta e^{-\mu\tau(\bar{\xi})}g(\bar{\xi})-\hat{\gamma}\bar{\xi}
< \beta e^{-\mu\tau(\bar{\xi})}g^- - (\beta e^{-\mu\tau^-}g^-+\hat{\gamma}\varepsilon)
 = \beta(e^{-\mu\tau(\bar{\xi})}-e^{-\mu\tau^-})g^--\hat{\gamma}\varepsilon.
\end{align*}
Since $\bar{\xi} < \theta_{gv}$, using the pointwise limit of the Hill function to its piecewise constant limit we have $\lim_{m\to\infty}\tau(\bar{\xi})=\tau^-$. It then follows from the previous equation that there exists $m>0$ such that $h(\bar{\xi})<0$,
which shows that
$h(\xi)$ has at least three sign changes, and so there are three co-existing steady states.

\begin{figure}[thp!]
	\centering
	\includegraphics[scale=0.5]{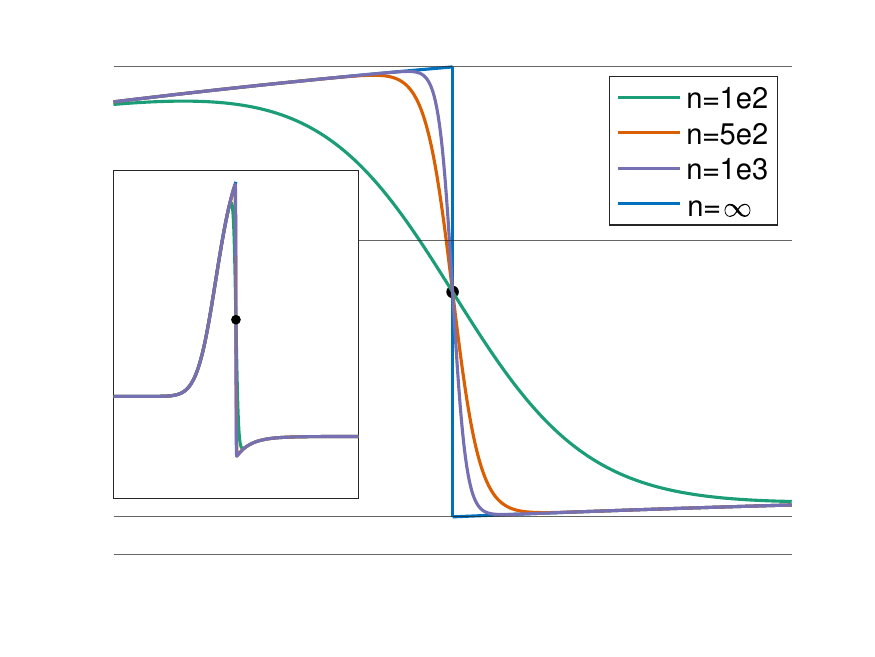}\hspace*{0.5em}\includegraphics[scale=0.5]{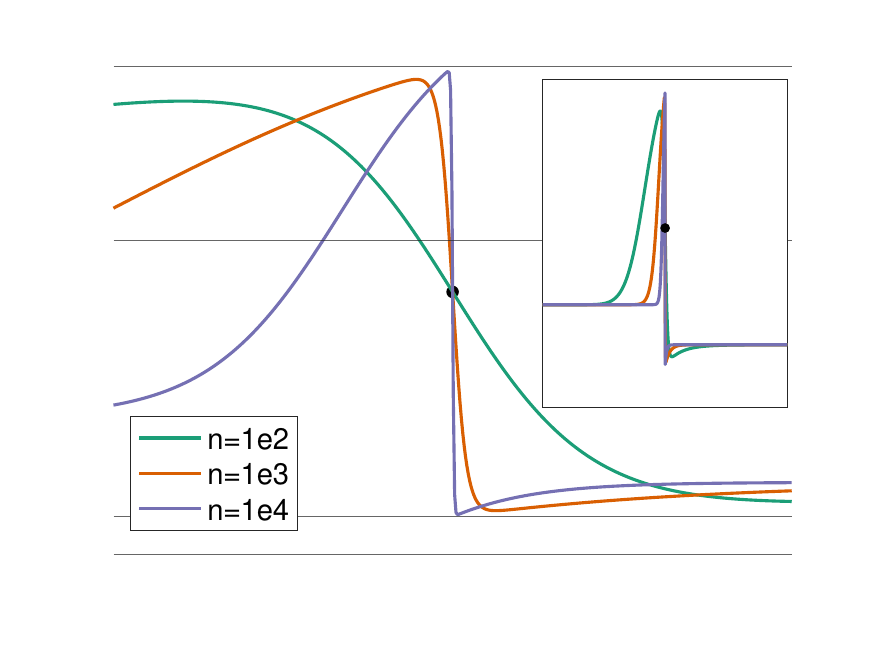}
	\put(-280,80){$(a)$}
	\put(-130,80){$(b)$}
	\put(-328,12){\footnotesize$\xi=\theta_{gv}$}
	\put(-112,12){\footnotesize$\xi=\theta_{gv}$}
	\put(-235,140){\footnotesize$\beta e^{-\mu\tau(\theta_{gv\!})}g^-$}
	\put(-237,98){\footnotesize$\beta e^{-\mu\tau^+}\!g(\theta_{gv\!})$}
	\put(-235,31){\footnotesize$\beta e^{-\mu\tau(\theta_{gv\!})}g^+$}
	\put(-237,21){\footnotesize$\beta e^{-\mu\tau^-}\!g(\theta_{gv\!})$}\\
	\includegraphics[scale=0.5]{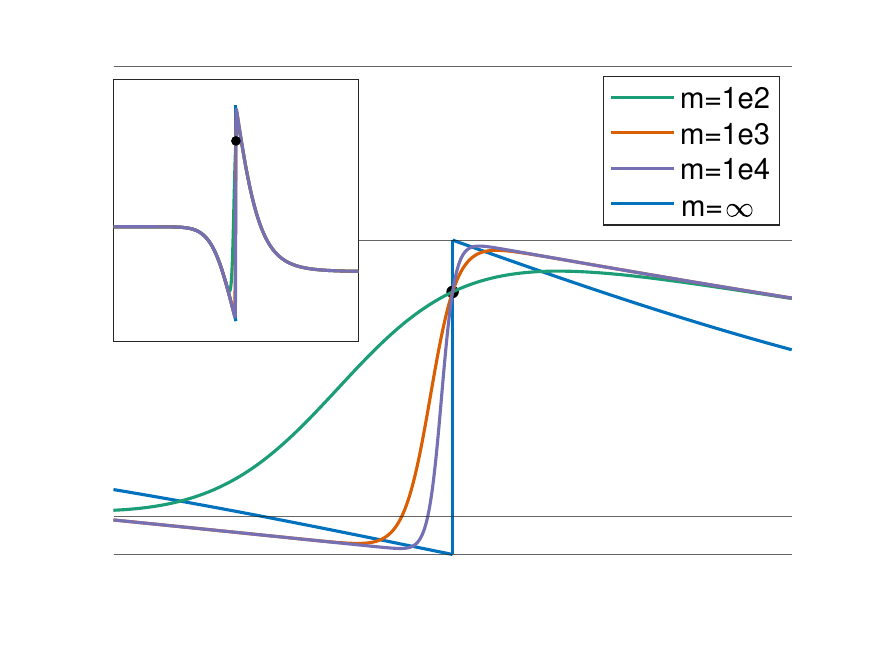}\hspace*{0.5em}\includegraphics[scale=0.5]{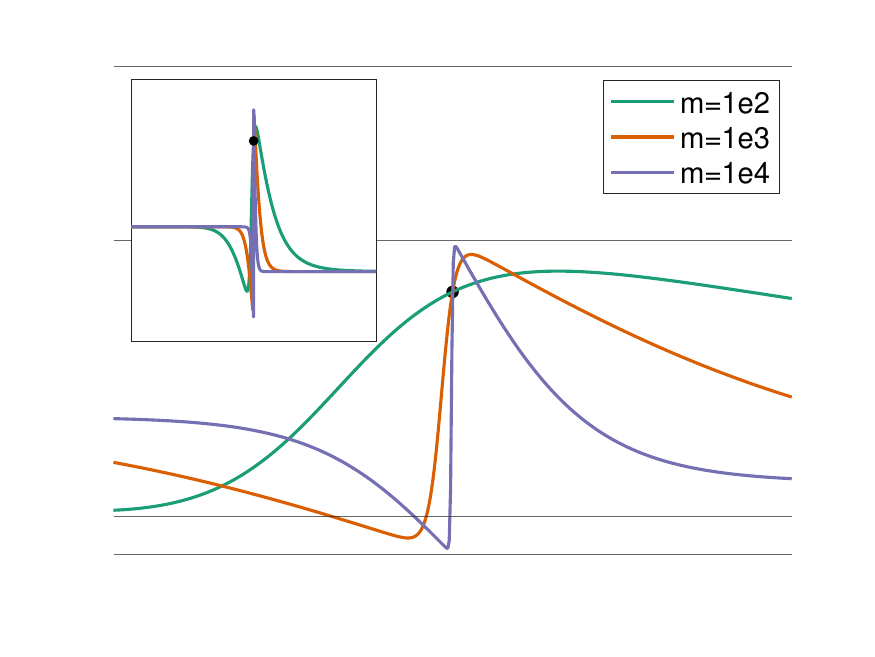}
	\put(-280,50){$(c)$}
	\put(-105,120){$(d)$}
	\put(-328,12){\footnotesize$\xi=\theta_{gv}$}
	\put(-112,12){\footnotesize$\xi=\theta_{gv}$}
	\put(-235,140){\footnotesize$\beta e^{-\mu\tau(\theta_{gv\!})}g^-$}
	\put(-237,98){\footnotesize$\beta e^{-\mu\tau^+}\!g(\theta_{gv\!})$}
	\put(-235,31){\footnotesize$\beta e^{-\mu\tau(\theta_{gv\!})}g^+$}
	\put(-237,21){\footnotesize$\beta e^{-\mu\tau^-}\!g(\theta_{gv\!})$}\\
	\caption{Illustrations of the behaviour of the function $\xi \mapsto\beta e^{-\mu\tau(\xi)} g(\xi)$
in the case $(g\downarrow, v\uparrow, \theta_g=\theta_v=\theta_{gv})$ with $\beta e^{-\mu\tau(\theta_{gv})}g(\theta_{gv})>\beta e^{-\mu\tau^-}g^->\beta e^{-\mu\tau^+}g^+$
as $m$ and/or $n$ tend to infinity, showing that different maxima and minima are possible depending on the limits taken. The insets show the function on the interval $\xi\in[0,2\theta_{gv}]$ while the main panels show the behaviour for $\xi\approx\theta_{gv}$.
The values of $m$ and $n$ are taken to be
(a) $m=10$ fixed and $n\to\infty$,
(b) $m^2=n \to \infty$,
(c) $n=10$ fixed and $m \to \infty$, and,
(d) $n=\sqrt{m} \to \infty$. The values of the other parameters are
$\beta=1.4$, $\mu=0.2$, $g^-=1$, $g^+=0.5$, $\theta_g=\theta_v =\theta_{gv} =1$, $a=1$, $v^-=0.3$ and $v^+=2$.
}
	\label{fig:thetagvonelimit}
\end{figure}

When $m>1$ and $n>1$ it is easy to verify that
$$\lim_{\xi\to0}\frac{\phantom{\xi}d}{d\xi}\left(\beta e^{-\mu\tau(\xi)}g(\xi)\right)
=
\lim_{\xi\to+\infty}\frac{\phantom{\xi}d}{d\xi}\left(\beta e^{-\mu\tau(\xi)}g(\xi)\right)=0,$$
from which it follows that there is a unique steady state for all $\gamma$ sufficiently small or large. Bifurcations from this steady state will depend on the nonlinearities $g$ and $v$.

\begin{figure}[thp!]
	\centering
	\mbox{}\hspace*{-4.5em}\includegraphics[scale=0.5]{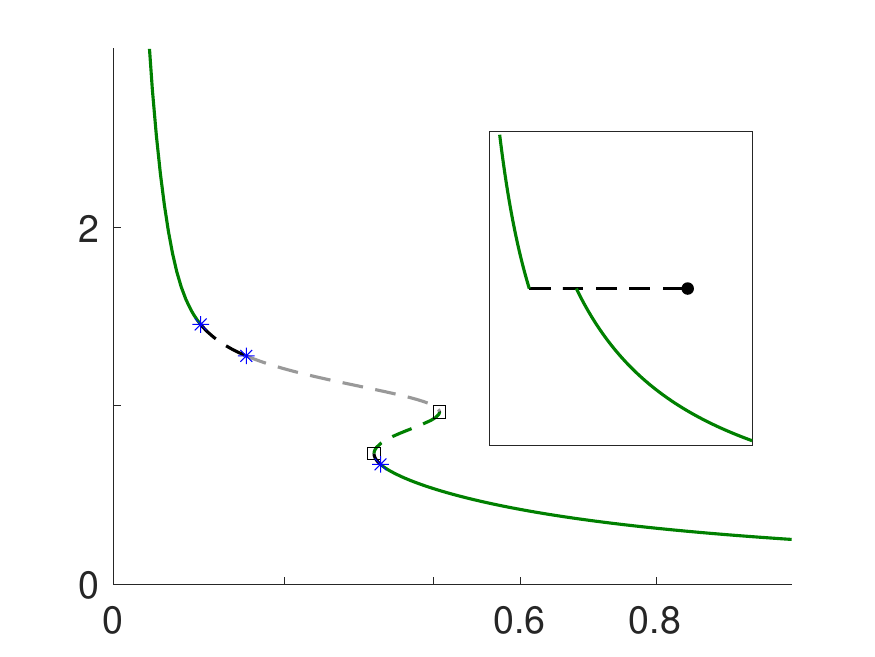}\hspace*{-0.3em}\includegraphics[scale=0.5]{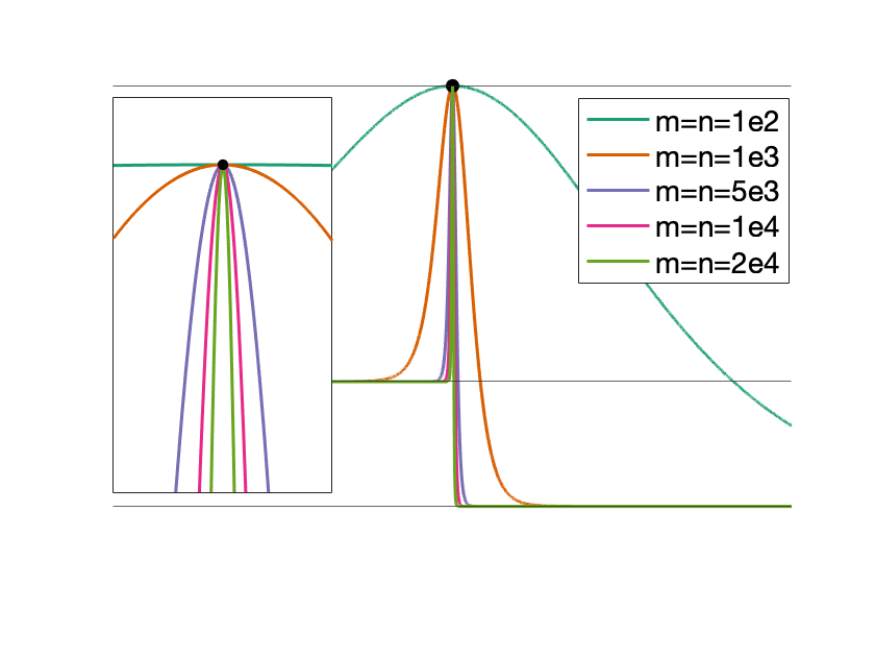}
	\put(-325,6){$\gamma_{24}$}
	\put(-290,6){$\gamma_{gv}$}
	\put(-213,44){$\gamma$}
	\put(-276,119){\rotatebox{90}{$x$}}
	\put(-310,135){$(a)$}
	\put(-120,135){$(b)$}
	\put(-100,15){\footnotesize$\xi=\theta_{gv}$}
	\put(-6,126){\scriptsize$\beta e^{-\mu\tau^{\!}(\theta_{gv\!})^{\!}}g(^{\!}\theta_{gv\!})$}
	\put(-6,53){\scriptsize$\beta e^{-\mu\tau^-}\!g^-$}
	\put(-6,25){\scriptsize$\beta e^{-\mu\tau^+}\!g^+$}
	\put(-367,140){\rotatebox{90}{$x$}}
	\put(-202,10){$\gamma$}
	\put(-377,57){$\theta_{gv}$}\\
	\includegraphics[scale=0.5]{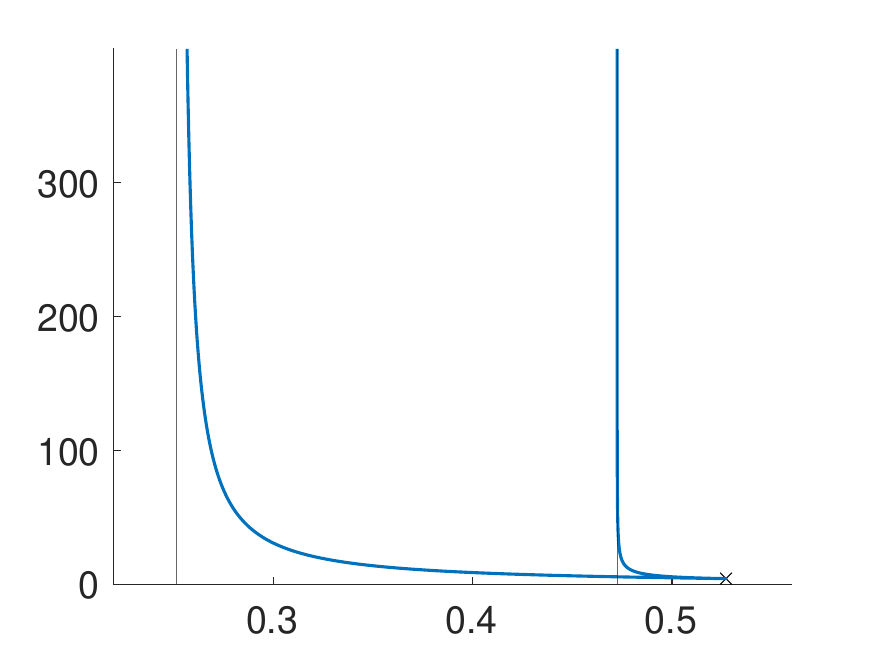}\hspace*{0em}\includegraphics[scale=0.5]{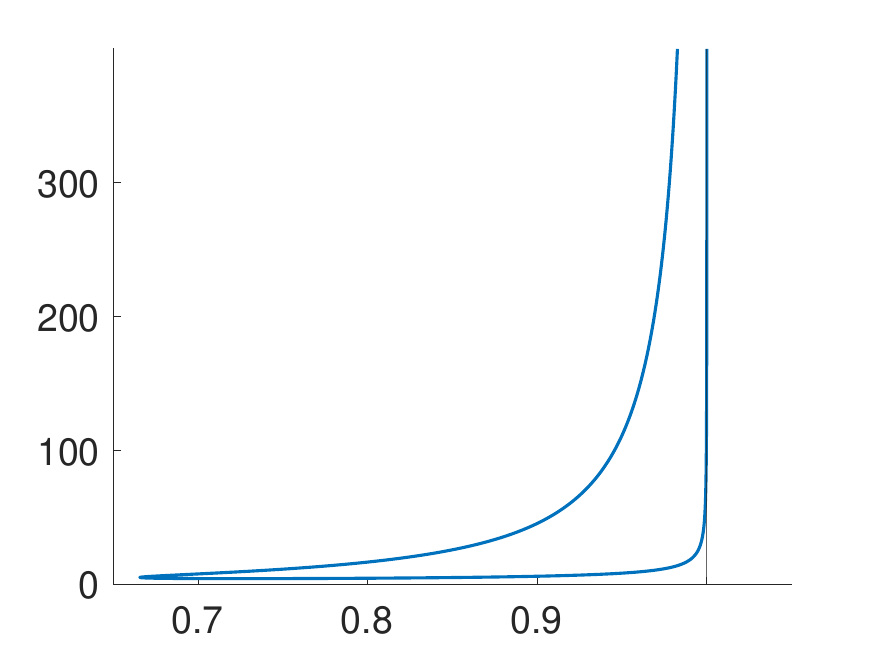}
	\put(-340,135){$(c)$}
	\put(-120,135){$(d)$}
	\put(-400,122){\rotatebox{90}{\scriptsize$m=n$}}
	\put(-240,10){$\gamma$}
	\put(-24,8){$\xi$}
    \put(-282,6){$\gamma_{gv}$}
   \put(-384,6){$\gamma_{24}$}
	\put(-190,122){\rotatebox{90}{\scriptsize$m=n$}}
	\put(-48,6){$\theta_{gv}$}
	\caption{Bifurcations of \eqref{eq:basic}-\eqref{eq:thres} in the case $(g\downarrow, v\uparrow, \theta_g=\theta_v=\theta_{gv})$ with $\beta e^{-\mu\tau(\theta_{gv})}g(\theta_{gv})>\beta e^{-\mu\tau^-\!}g^->\beta e^{-\mu\tau^+}\!g^+$ and $m=n$. Parameter values are
$\beta=10$, $\mu=0.2$, $g^-=1$, $g^+=1/10$, $\theta_{gv}=1$, $v^-=1$, $v^+=2$ and $a=18.4091$, for which $M(\theta_{gv})=-1$ when $m=n$.
(a) With smooth nonlinearity $g$ and $v$ defined by \eqref{eq:vghill} with $m=n=10$, and inset showing the limiting case as $m=n\to\infty$.
(b) The behaviour of $\xi \mapsto\beta e^{-\mu\tau(\xi)} g(\xi)$ as $m=n\to\infty$ with the inset showing that the maximum of the function is $\beta e^{-\mu\tau(\theta_{gv})}g(\theta_{gv})$ in the limiting case.
(c) Two-parameter continuation in $m=n$ and $\gamma$ of the fold bifurcation with the other parameters as above. The fold bifurcations are always associated with unstable steady states, and there is a cusp point at $(\gamma, m)=(0.5271, 4.2830)$. In the limiting case as $m=n\to\infty$ the folds occur at
$\gamma=\gamma_{24}=0.2518$ (recall \eqref{eq:corners_onetheta}) and $\gamma=\gamma_{gv}=0.4725$.
(d) Two-parameter continuation in $m=n$ and $\xi$ of the fold bifurcations with the other parameters as above.}
\label{fig:FoldsMZero}
\end{figure}

In the case of smooth nonlinearities when one of $m$ or $n$ is held fixed while the other one is increased to infinity the arguments of Sections~\ref{sec:onehill} can be adapted to show that
in the limiting case  $m\to\infty$ the fold bifurcations occur at
\begin{equation} \label{thetagvonelimitm}
\gamma_4=\frac{\beta e^{-\mu\tau^-}g(\theta_{gv})}{\theta_{gv}}
<
\gamma_3=\frac{\beta e^{-\mu\tau^+}g(\theta_{gv})}{\theta_{gv}}.
\end{equation}
with singular solutions for $\gamma\in(\gamma_4,\gamma_3)$.
Similarly, with $m$ fixed in the limiting case  $n\to\infty$, singular solutions occur for $\gamma\in(\gamma_1,\gamma_2)$
where
\begin{equation} \label{thetagvonelimitn}
\gamma_1=\frac{\beta e^{-\mu\tau(\theta_{gv})} g^+}{\theta_{gv}}
<
\gamma_2=\frac{\beta e^{-\mu\tau(\theta_{gv})} g^-}{\theta_{gv}}.
\end{equation}
With $g$ decreasing, whether fold bifurcations occur at the ends of this interval will depend also on the steepness of $v$.
These limiting cases correspond to solutions of $h(\xi)=0$ where the line $\gamma\xi$ intersects
the curve $\beta e^{-\mu\tau(\xi)}g(\xi)$ at its maxima and minima which are
illustrated in panels (a) and (c) of Figure~\ref{fig:thetagvonelimit} when one of the values $m$, $n$ is fixed and the other tends to $\infty$.

Panels (b) and (d) of Figure~\ref{fig:thetagvonelimit} illustrate what happens when $m$ and $n$ both tend to infinity, but one is increased much faster than the other, showing that the same extrema are observed as when
the more slowly increasing exponent is held fixed.
Since the maxima and minima of the nonlinearities converge to different values depending on whether $m$ or $n$ is increasing faster, and the limiting locations of the fold bifurcations are determined by the
extrema, the location of the fold bifurcations
as $m\to\infty$ and $n\to\infty$ will depend on how these limits are approached. We will investigate the dynamics for different relationships between $m$ and $n$ below.

\begin{figure}[thp!]
	\centering
	\includegraphics[scale=0.5]{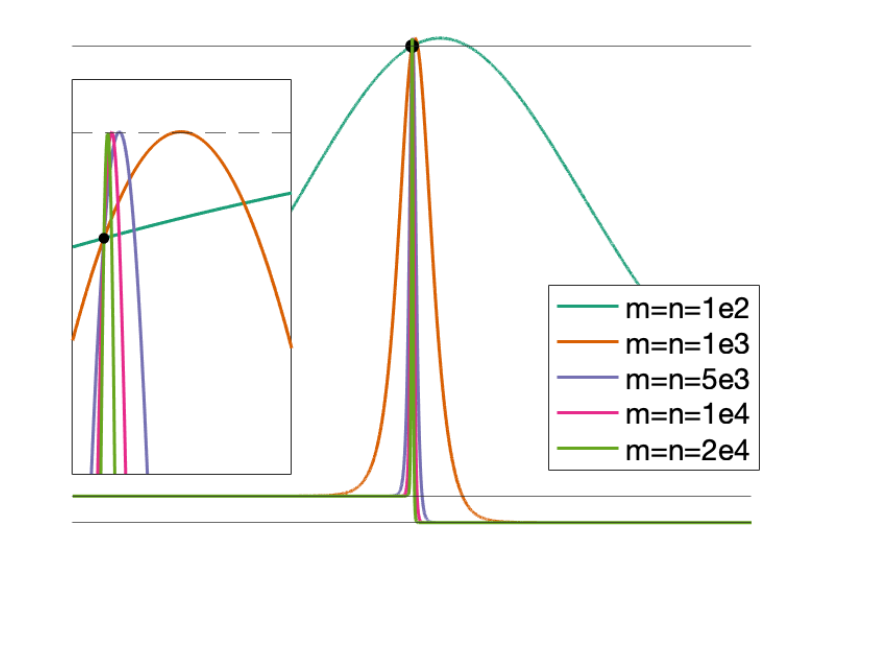}\hspace*{-2em}\includegraphics[scale=0.5]{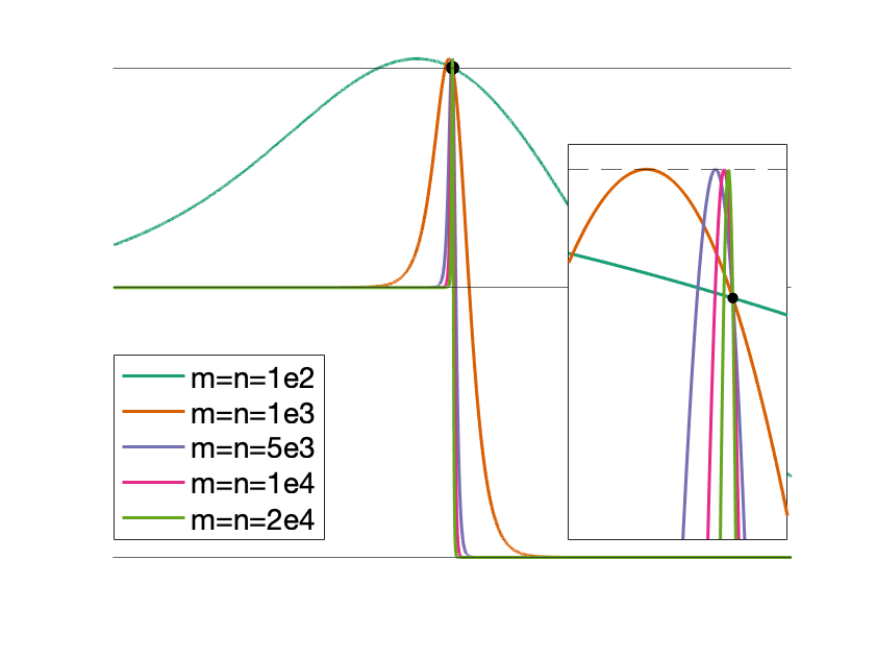}
	\put(-255,130){$(a)$}
	\put(-180,125){$(b)$}
	\put(-310,19){\footnotesize$\xi=\theta_{gv}$}
	\put(-112,12){\footnotesize$\xi=\theta_{gv}$}
	\put(-82,144){\scriptsize$\beta e^{-\mu\tau^{\!}(\theta_{gv\!})^{\!}}g(^{\!}\theta_{gv\!})$}
	\put(-182,80){\scriptsize$\beta e^{-\mu\tau^-}\!g^-$}
	\put(-20,21){\scriptsize$\beta e^{-\mu\tau^+}\!g^+$}
	\put(-378,149){\scriptsize$\beta e^{-\mu\tau^{\!}(\theta_{gv\!})^{\!}}g(^{\!}\theta_{gv\!})$}
	\put(-421,37){\scriptsize$\beta e^{-\mu\tau^-}\!g^-$}
	\put(-421,25){\scriptsize$\beta e^{-\mu\tau^+}\!g^+$}\\
	\includegraphics[scale=0.5]{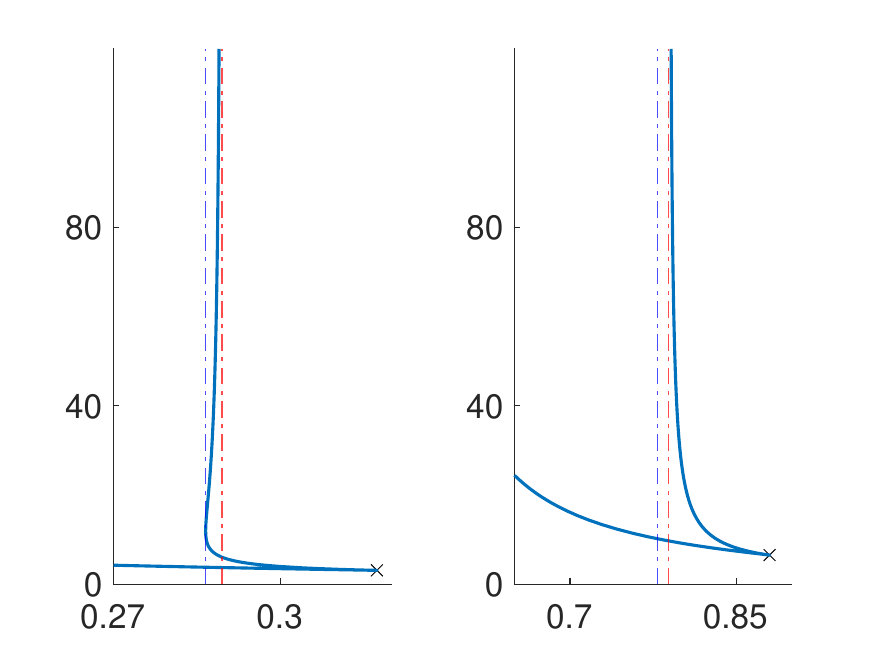}\hspace*{0.5em}\includegraphics[scale=0.5]{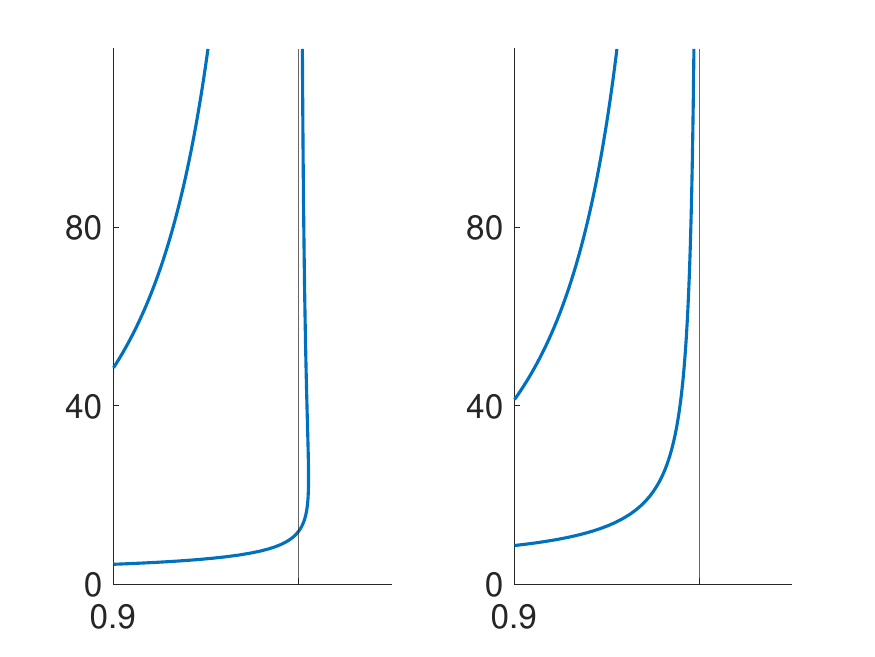}
	\put(-345,140){$(c)$}
	\put(-120,140){$(d)$}
	\put(-406,120){\footnotesize\rotatebox{90}{$m=n$}}
	\put(-309,120){\footnotesize\rotatebox{90}{$m=n$}}
	\put(-388,8){\footnotesize$\gamma_{\!gv}$}
	\put(-375,8){\footnotesize$\gamma^{*}$}
	\put(-280,8){\footnotesize$\gamma_{\!gv}$}
	\put(-267,8){\footnotesize$\gamma^*$}
	\put(-336,10){\footnotesize$\gamma$}
	\put(-240,10){\footnotesize$\gamma$}
	\put(-190,120){\rotatebox{90}{\footnotesize$m=n$}}
	\put(-93,120){\rotatebox{90}{\footnotesize$m=n$}}
	\put(-143,6){\footnotesize$\theta_{gv}$}
	\put(-47,6){\footnotesize$\theta_{gv}$}
	\put(-119,10){\footnotesize$\xi$}
	\put(-24,10){\footnotesize$\xi$}
	\caption{(a) and (b) The behaviour of $\xi \mapsto\beta e^{-\mu\tau(\xi)} g(\xi)$ as $m=n\to\infty$
in the case $(g\downarrow, v\uparrow, \theta_g=\theta_v=\theta_{gv})$ with $\beta e^{-\mu\tau(\theta_{gv})}g(\theta_{gv})>\beta e^{-\mu\tau^-}g^->\beta e^{-\mu\tau^+}g^+$ with the same parameter values as in Figure~\ref{fig:FoldsMZero} except for the value of $a$. In (a) $a=22.1591$
and in (b) $a=14.6591$. The insets show that $\lim_{m=n\to\infty}\max_\xi\{\beta e^{-\mu\tau(\xi)}g(\xi)\}>
\beta e^{-\mu\tau(\theta_{gv})}g(\theta_{gv})$, with the maximum indicated by the dashed line in the inset, and the value at $\theta_{gv}$ by the black dot. (c) and (d) show the corresponding two-parameter continuations in $m=n$ and either (c) $\gamma$ or (d) $\xi$ of the fold bifurcations with the left panel showing the case of
$a=22.1591$ and the right panel for $a=14.6591$.
The fold bifurcations are always associated with unstable steady states. In (c) the asymptote denoted $\gamma^*$ for the fold bifurcation is obtained dividing the maximum value of $\beta e^{-\mu\tau(\xi)}g(\xi)$ from (a) and (b) by $\theta_{gv}$ and is seen to be strictly larger than $\gamma_{gv}$ in both cases. }
\label{fig:FoldsMnotZero}
\end{figure}

Since the value of $\beta e^{-\mu\tau(\theta_{gv})}g(\theta_{gv})$ is independent of $m$ and $n$, the maximum of $\xi \mapsto\beta e^{-\mu\tau(\xi)}g(\xi)$ cannot be smaller than this value, which imposes the bound that the rightmost fold bifurcation must occur for $\gamma\geq\gamma_{gv}$. Using \eqref{eq:h} and \eqref{eq:vglam0} it is easy to show that $h'(\xi)=\gamma M(\xi)$ when $h(\xi)=0$, that is  at a steady state. Consequently, if $M(\theta_{gv})=0$ the right fold bifurcation will occur at $\xi=\theta_{gv}$ with $\gamma=\gamma_{gv}$, while if $M(\theta_{gv})<0$ then $h'(\theta_{gv})<0$ so the fold bifurcation occurs for $\xi<\theta_{gv}$ and $\gamma>\gamma_{gv}$. Similarly, if $M(\theta_{gv})>0$ the fold bifurcation will occur for $\xi>\theta_{gv}$ and $\gamma>\gamma_{gv}$. These situations are illustrated in Figures~\ref{fig:FoldsMZero} and~\ref{fig:FoldsMnotZero} which we now describe in more detail.

From \eqref{eq:Mthetagv}, for any parameter set with $0<r_v<1<r_g$ taking
$$m=\frac{1}{\mu\tau(\theta_{gv})}\frac{1+r_v}{1-r_v}\Bigl[2+n\frac{r_g-1}{r_g+1}\Bigr],$$
ensures that $M(\theta_{gv})=0$. However, for the example in Figure~\ref{fig:FoldsMZero}, we prefer to set $m=n$ and choose the other parameters to enforce equality in \eqref{eq:bothbif}, which ensures that $M(\theta_{gv})=-1$ for all $m=n>0$. This results in a fold bifurcation at $\gamma=\gamma_{gv}$ in the limiting case as $m=n\to\infty$, since as already noted $M(\xi)$ is decreasing at $\xi=\theta_{gv}$
and also both $\xi v'(\xi)/v(\xi)$ and $\xi g'(\xi)/g(\xi)$ become increasingly steep as $m=n\to\infty$ so the point $\xi$ at which $M(\xi)=0$ approaches $\theta_{gv}$ as  $m=n\to\infty$. This is illustrated in
Figure~\ref{fig:FoldsMZero} where panel (b) shows that the maximum of
$\xi \mapsto\beta e^{-\mu\tau(\xi)}g(\xi)$ tends to $\beta e^{-\mu\tau(\theta_{gv})}g(\theta_{gv})$ as $m=n\to\infty$ while panels (c) and (d) show that the corresponding fold bifurcation asymptotes to $\gamma=\gamma_{gv}$ and $\xi=\theta_{gv}$ as $m=n\to\infty$.

Figure~\ref{fig:FoldsMnotZero} illustrates two examples with the same parameters as in
Figure~\ref{fig:FoldsMZero}, except for the length parameter $a$.
Changing just the length parameter $a$ leaves the ratio $r_v$ unchanged, but changes the value of $\tau(\theta_{gv})$ as well as $\tau^-$ and $\tau^+$. With $a=22.1591$ and $m=n$ it follows that $M(\theta_{gv})=n/12-1$ and $\lim_{m=n\to\infty}M(\theta_{gv})=+\infty$, while
for $a=14.6591$ we have $M(\theta_{gv})=-n/12-1$ with $\lim_{m=n\to\infty}M(\theta_{gv})=-\infty$.
As Figure~\ref{fig:FoldsMnotZero}(a) and (b) show, for both of these parameter sets,
\eqref{eq:betaexpabove} is satisfied and $\xi \mapsto\beta e^{-\mu\tau(\xi)}g(\xi)$ is non-monotone.
It is also clear from the insets in these panels that in contrast to Figure~\ref{fig:FoldsMZero}, the maximum of $\xi \mapsto\beta e^{-\mu\tau(\xi)}g(\xi)$ is now strictly larger than
$\beta e^{-\mu\tau(\theta_{gv})}g(\theta_{gv})$ for all $m=n\gg0$. Consequently,
as seen in Figure~\ref{fig:FoldsMnotZero}(c),
the rightmost curve of fold bifurcations asymptotes to a value of $\gamma$ which is strictly larger than $\gamma_{gv}$ as $m=n\to\infty$. As Figure~\ref{fig:FoldsMnotZero}(d) shows in both cases the location of this fold bifurcation converges to $\theta_{gv}$ as $m=n\to\infty$. When $M(\theta_{gv})>0$ the maximum of
$\beta e^{-\mu\tau(\xi)}g(\xi)$ occurs for $\xi>\theta_{gv}$
and consequently for the fold bifurcation as $m=n\to\infty$,
$\xi\to\theta_{gv}$ from above and $\gamma\to\gamma^*$
from below for some $\gamma^*$ strictly larger than $\gamma_{gv}$.
For $M(\theta_{gv})<0$ the convergence is from the other side, but again
$\gamma\to\gamma^*$ for some $\gamma^*$ strictly larger than $\gamma_{gv}$.

We remark that for the example with $a=14.6591$ in Figure~\ref{fig:FoldsMnotZero}, because $M(\theta_{gv})<-1$ the condition \eqref{eq:bothbif}
is violated, even though the condition  \eqref{eq:betaexpabove} is satisfied, demonstrating as we claimed earlier that
\eqref{eq:bothbif} is not a necessary condition to obtain \eqref{eq:betaexpabove}.
This also shows that it is insufficient
to just evaluate the value of $M(\theta_{gv})$ to determine whether fold bifurcations occur.

\begin{figure}[thp!]
	\centering
	\includegraphics[scale=0.5]{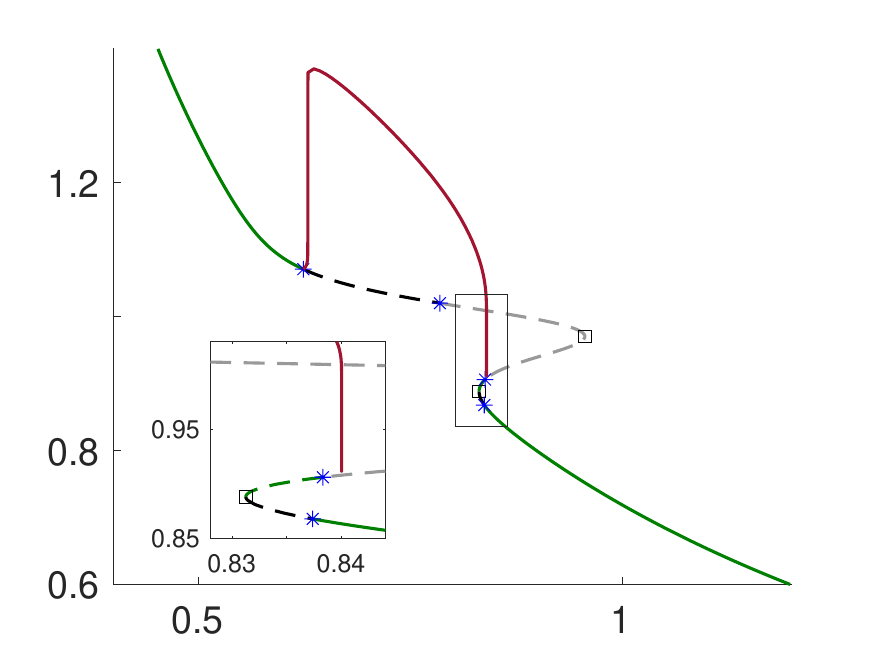}\hspace*{0.5em}\includegraphics[scale=0.5]{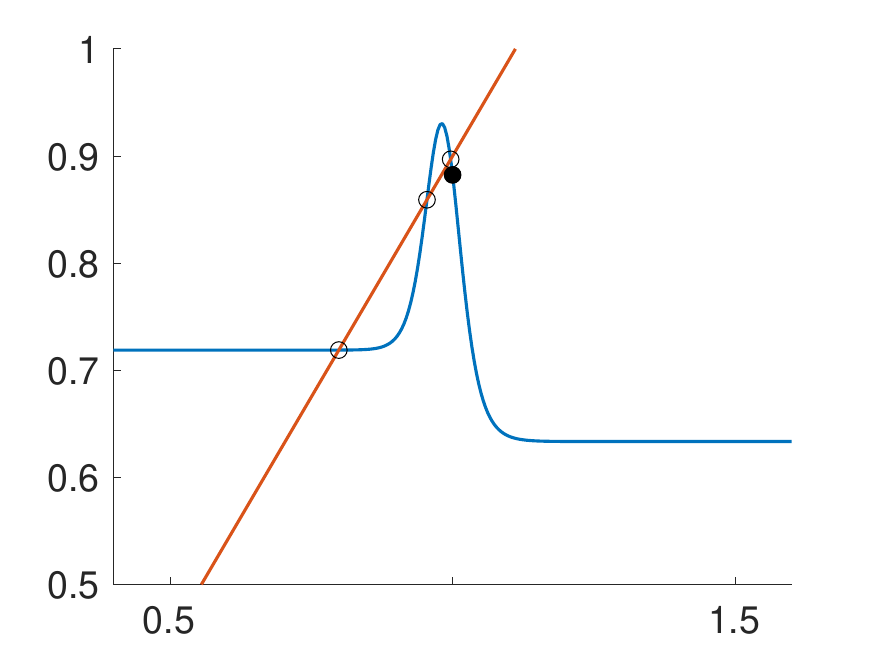}
	\put(-406,140){\rotatebox{90}{$x$}}
	\put(-240,10){$\gamma$}
	\put(-416,80){$\theta_{gv}$}
	\put(-24,8){$\xi$}
	\put(-300, 140){$(a)$}
	\put(-155, 140){$(b)$}
	\put(-106,6){$\theta_{gv}$}\\
	\includegraphics[scale=0.5]{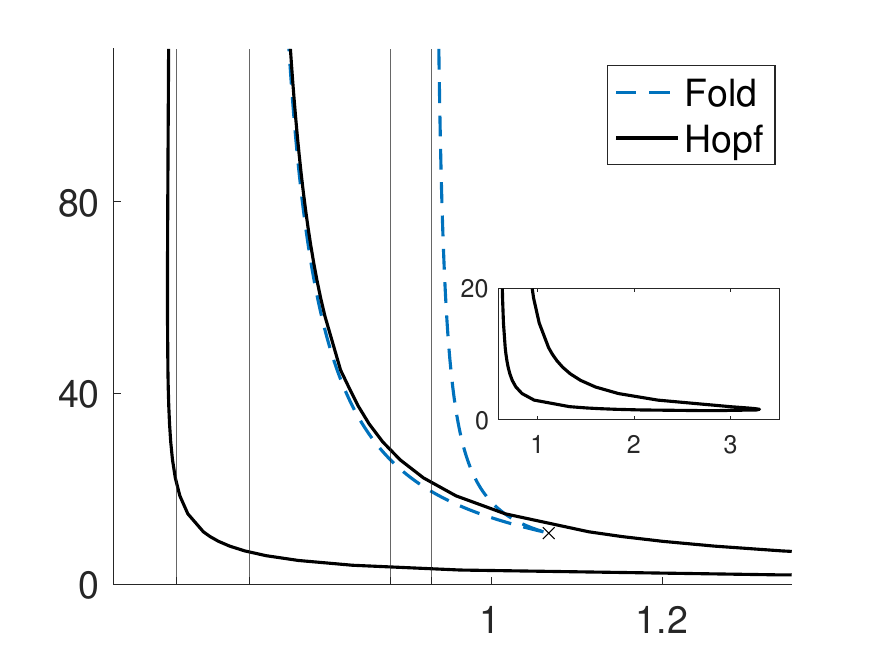}\hspace*{0.5em}\includegraphics[scale=0.5]{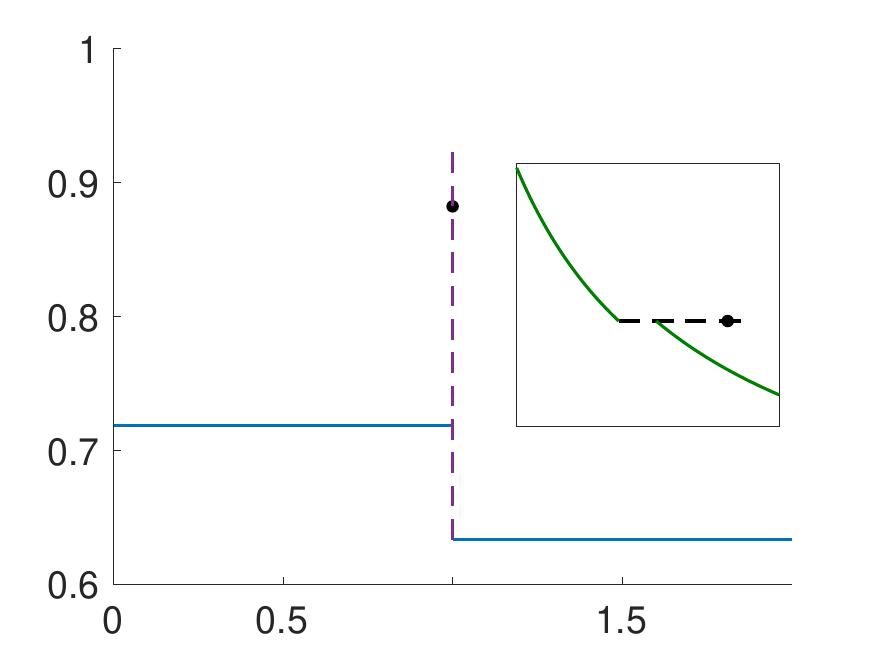}
	\put(-406,122){\rotatebox{90}{\scriptsize $m=n$}}
	\put(-391,6){$\gamma_{13}$}
	\put(-370,6){$\gamma_{24}$}
	\put(-341,6){$\gamma_{gv}$}
	\put(-240,10){$\gamma$}
	\put(-325,6){$\gamma^*$}
	\put(-310, 140){$(c)$}
	\put(-155, 140){$(d)$}
	\put(-24,10){$\xi$}
	\put(-106,6){$\theta_{gv}$}
	\put(-30,48){$\gamma$}
	\put(-92,110){\rotatebox{90}{$x$}}
	\caption{Bifurcations of \eqref{eq:basic}-\eqref{eq:thres} when $(g \downarrow, v \uparrow, \theta_g = \theta_v = \theta_{gv})$ with the same parameters as
in Figure~\ref{fig:thetagvonelimit}.
(a) With smooth nonlinearity $g$ and $v$ defined by \eqref{eq:vghill} with $m=n=40$. The stable periodic orbits are represented by the 2-norm.
(b) Steady states occur at the intersections of $\xi \mapsto\beta e^{-\mu\tau(\xi)} g(\xi)$ and
$\xi \mapsto\gamma\xi$, with $\gamma\xi$ illustrated here with $\gamma=0.9$. The black dot marks the value of
$\beta e^{-\mu\tau(\theta_{gv})} g(\theta_{gv})$.
(c) Two-parameter continuation in $m=n$ and $\gamma$ of the fold bifurcations (in blue) and Hopf bifurcations (black) with the other parameters as above.
Solid curves indicate the parts of the bifurcation branch where there are no characteristic values with positive real part (and hence a stability change at the bifurcation), and dashed lines indicate where the parts of the branch where there is at least one unstable characteristic value. In this example the steady state always loses stability in a Hopf bifurcation, and not at the fold.
The four vertical lines denote
$\gamma_{13}$ and $\gamma_{24}$ (recall \eqref{eq:corners_onetheta})
and $\gamma_{gv}$ (recall \eqref{eq:gammathetagv}), along with
$\gamma^*=0.9305$.
(d) Plot of $\xi \mapsto\beta e^{-\mu\tau(\xi)} g(\xi)$ in the limiting case
with $g$ and $v$ defined by \eqref{eq:gpwconst} and \eqref{eq:vpwconst}.
The black dot marks the value of
$\beta e^{-\mu\tau(\theta_{gv})} g(\theta_{gv})$. The dashed line is obtained by taking the Hausdorff limit of the smooth function, and extends above the point-wise limit at $\theta_{gv}$ because
$\lim_{m=n\to\infty}\max_\xi \beta e^{-\mu\tau(\xi)} g(\xi) > \beta e^{-\mu\tau(\theta_{gv})} g(\theta_{gv})$.
The inset panel depicts the continuation of steady states in the limiting case. Stable steady states are shown as green solid lines, and the singular steady state as a black dashed line. }
	\label{fig:gdownvup_ex2}
\end{figure}

In Figures~\ref{fig:gdownvup_ex2} and~\ref{fig:gdownvup_ex2_po} we return to the parameter set first considered
in Figure~\ref{fig:thetagvonelimit} for which $\beta e^{-\mu\tau(\theta)}g(\theta) > \beta e^{-\mu\tau^-}g^- > \beta e^{-\mu\tau^+}g^+$. Now, we also consider the Hopf bifurcations and resulting periodic orbits.
With these parameters $M(\theta_{gv})<0$ when $m=n>0$ with $M(\theta_{gv})\to-\infty$ as $m=n\to\infty$
and so the upper fold bifurcation approaches $\gamma^*$ for some $\gamma^*>\gamma_{gv}$ from above as
$m=n\to\infty$, as seen in Figure~\ref{fig:gdownvup_ex2}(c), similar to the left panel of
Figure~\ref{fig:FoldsMnotZero}(c). The other fold bifurcation is seen in Figure~\ref{fig:gdownvup_ex2}(c) to approach $\gamma=\gamma_{24}$ as $m=n\to\infty$, as $\xi \mapsto\beta e^{-\mu\tau(\xi)} g(\xi)$ evolves from the smooth
function shown in Figure~\ref{fig:gdownvup_ex2}(b) to its limiting form as in
Figure~\ref{fig:gdownvup_ex2}(d). This agrees  with the arguments under Figure~\ref{fig:thetagvlimits} above.

For the one parameter continuation shown in Figure~\ref{fig:gdownvup_ex2}(a) the steady state is stable for all $\gamma$ sufficiently large or sufficiently small, and loses stability in a Hopf bifurcation (and not at the fold bifurcations). A two parameter continuation of these Hopf bifurcations in Figure~\ref{fig:gdownvup_ex2}(c)
shows that they approach $\gamma_{13}$ and $\gamma_{24}$ as $m=n\to\infty$, similarly to how we saw the Hopf bifurcations were forced to the ends of the interval of singular steady state solutions in
Section~\ref{sec:gdownvconst}.

\begin{figure}[thp!]
	\centering
	\includegraphics[scale=0.5]{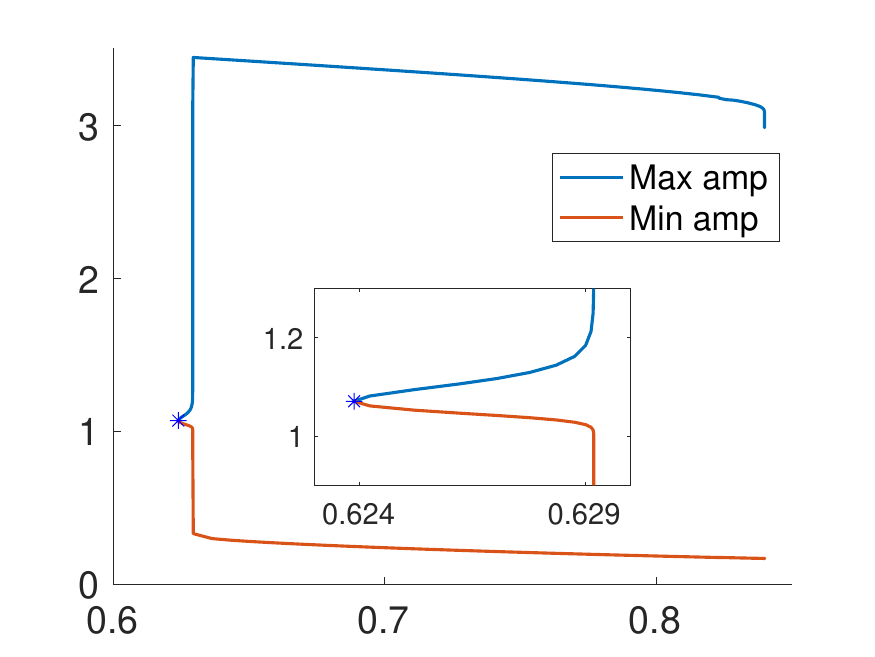}\hspace*{0.5em}\includegraphics[scale=0.5]{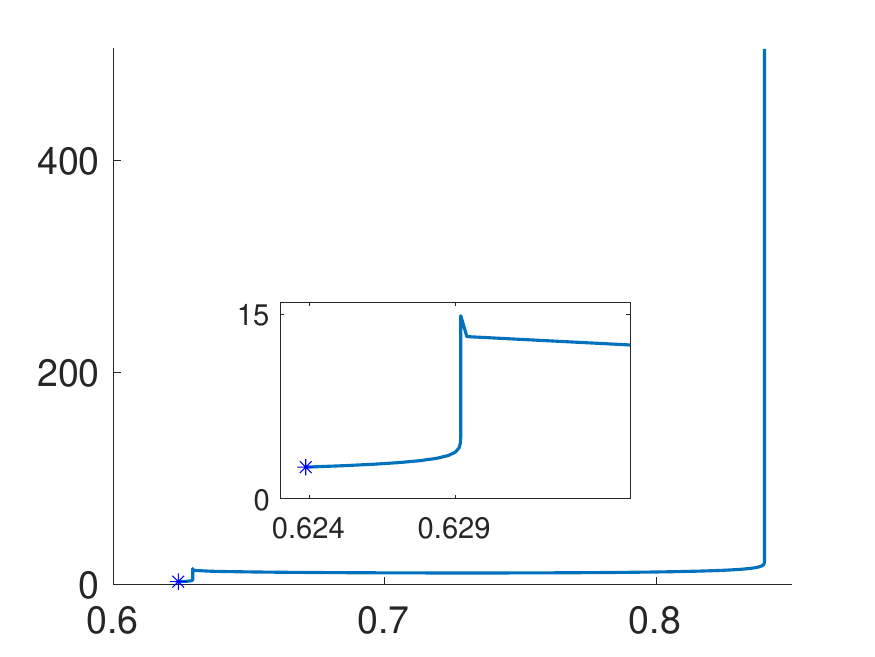}
	\put(-406,140){\rotatebox{90}{$x$}}
	\put(-240,10){$\gamma$}
	\put(-350,115){$(a)$}
	\put(-120,140){$(b)$}
	\put(-192,140){\rotatebox{90}{$T$}}
	\put(-24,10){$\gamma$}\\
	\includegraphics[scale=0.5]{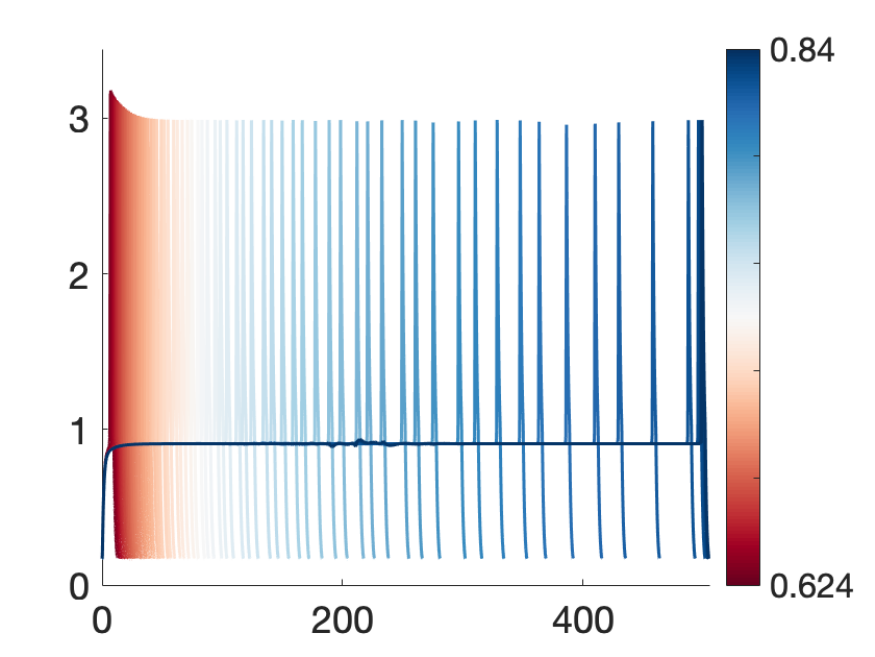}\hspace*{0.5em}\includegraphics[scale=0.5]{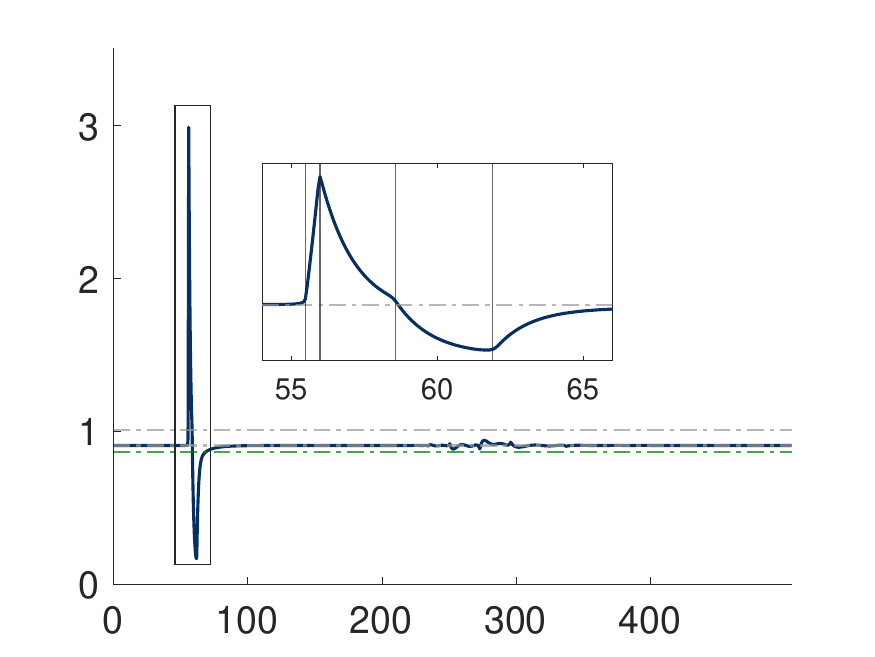}
	\put(-408,140){\rotatebox{90}{$x$}}
	\put(-258,10){$t$}
	\put(-242,80){$\gamma$}
	\put(-350,140){$(c)$}
	\put(-100,140){$(d)$}
	\put(-190,140){\rotatebox{90}{$x$}}
	\put(-24,10){$t$}
	\caption{The branch of stable periodic orbits for $(g \downarrow, v \uparrow, \theta_g = \theta_v = \theta_{gv})$ when $\gamma \in [0.624, 0.84]$ from Figure~\ref{fig:gdownvup_ex2}(a). Panel (a) shows the amplitude of the periodic orbits along the branch.
(b) The period of the orbits. (c) Profiles of the periodic solutions show that the large period orbits at the end of the branch are of relaxation oscillator type.
Panel (d) depicts the last periodic orbit found by \texttt{ddebiftool},
with the inset panel showing a zoom of the dynamics near the spike.}
	\label{fig:gdownvup_ex2_po}
\end{figure}

In Figure \ref{fig:gdownvup_ex2}(a) there are no stable steady states for
$\gamma\in(0.624, 0.837)$, so we explored the stable invariant objects in this case using both
numerical bifurcation detection and continuation 
and numerical simulation of the DDE.
Using \texttt{ddebiftool} we find that the Hopf bifurcation at $\gamma=0.624$
is supercritical and generates a branch of stable periodic orbits, which is illustrated in Figure~\ref{fig:gdownvup_ex2_po}.
There is an apparent canard explosion  \cite{krupa2016canard} 
on this branch at $\gamma\approx0.629$ where the amplitude of the periodic orbit increases dramatically. The continuation of the branch fails in \texttt{ddebiftool} at this point, but since the periodic orbits are stable we are able to continue them through the canard explosion using simulation with \texttt{ddesd}, and then continue again with
\texttt{ddebiftool} after the canard explosion, to obtain
the continuous branch of periodic solutions for $m=n=40$ shown in Figure~\ref{fig:gdownvup_ex2}(a) and Figure~\ref{fig:gdownvup_ex2_po}. After the canard explosion for $\gamma\in(0.629,0.84)$  the periodic orbit is stable with large amplitude and moderate period ($T\approx13$) and evolves slowly as $\gamma$ is varied. Then at $\gamma\approx0.84$ the period of the orbit grows dramatically while the amplitude is roughly constant. In Figure~\ref{fig:gdownvup_ex2}(a) this branch is represented by its 2-norm as  the red curve. This curve appears to terminate on the middle unstable branch of steady states when $\gamma=0.84$. However, as seen in Figure~\ref{fig:gdownvup_ex2_po}(c) and (d) the amplitude of these orbits remains large, so there is not a Hopf bifurcation at this end of the branch.

The large period orbits shown in Figure~\ref{fig:gdownvup_ex2}(c) and (d)
spend a lot of time close to the unstable steady state on the middle branch between the folds, which is itself very close to the threshold $x=\theta_{gv}$, and resemble relaxation oscillations.
This branch of periodic orbits most likely ends in a homoclinic orbit to the unstable steady state with an infinite period. There is also bistability between the stable periodic orbit and a stable steady state for a very small range of parameters  $\gamma \in [0.837, 0.84]$.

\begin{figure}[thp!]
	\centering
	\includegraphics[scale=0.5]{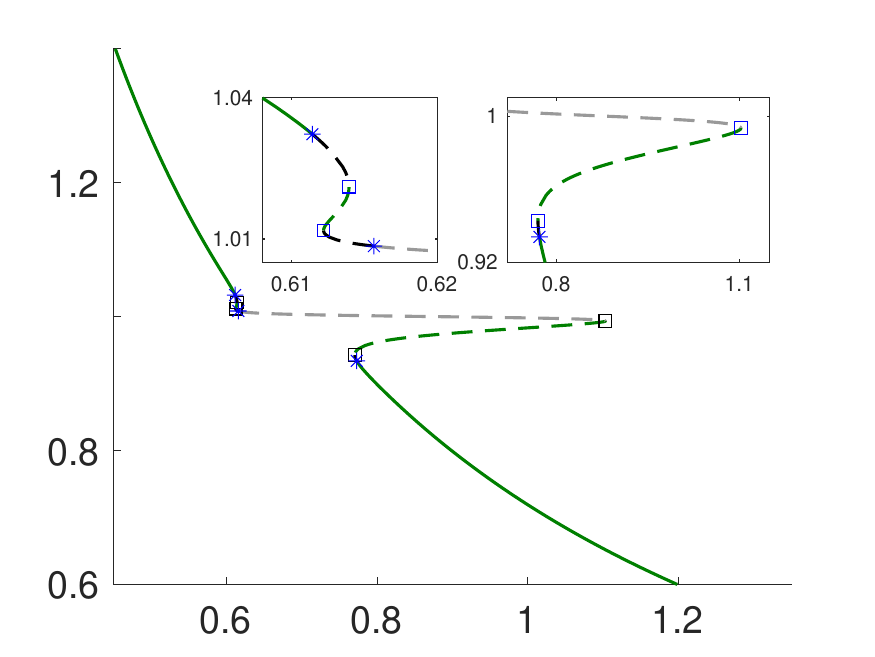}\hspace*{0.5em}\includegraphics[scale=0.5]{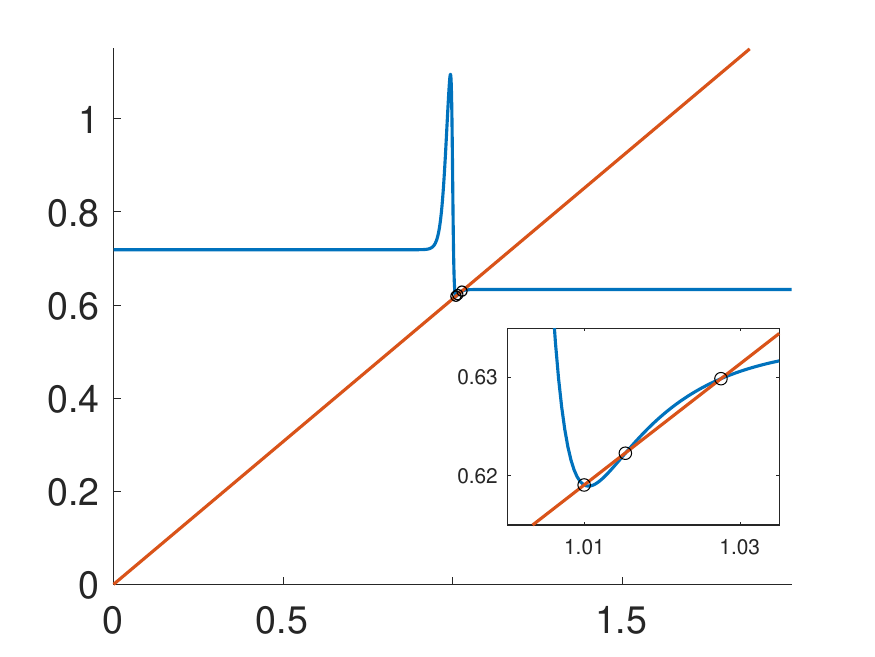}
	\put(-406,140){\rotatebox{90}{$x$}}
	\put(-240,10){$\gamma$}
	\put(-416,80){$\theta_{gv}$}
	\put(-24,10){$\xi$}
	\put(-393, 140){$(a)$}
	\put(-160, 140){$(b)$}
	\put(-106,6){$\theta_{gv}$}
\caption{(a) One parameter continuation of the steady state when $(g \downarrow, v \uparrow, \theta_g = \theta_v = \theta_{gv})$ for varying $\gamma$ with the other parameters the same as in Figure~\ref{fig:gdownvup_ex2} except $m=100$ and $n=500$ reveals four fold bifurcations.
(b) Three steady states are indicated for $\gamma=0.6130$
at the intersections of $\xi \mapsto\beta e^{-\mu\tau(\xi)} g(\xi)$ and $\gamma\xi$.}
	\label{fig:gdownvup_down3}
\end{figure}

In Figure~\ref{fig:gdownvup_ex2} there are two fold bifurcations, as we have seen in many examples when one of $g$ and $v$ is increasing. However,  as Figure~\ref{fig:gdownvup_down3} shows, it is possible to obtain four fold bifurcations by just changing the steepness of the nonlinearities and resulting width of the interfaces of the nonlinearities.
In Figure~\ref{fig:gdownvup_down3} we do this with $n\gg m\gg0$. Since $n$ is very large the function $g$ has a very sharp interface, and drops from very close to its maximum value $g^+$ to its minimum value $g^-$ while $v$
is essentially unchanged and equal to $v(\theta_{gv})$. This results in the function
$\xi \mapsto\beta e^{-\mu\tau(\xi)}g(\xi)$
having extrema close to
$\beta e^{-\mu\tau(\theta_{gv})}g^-$ and $\beta e^{-\mu\tau(\theta_{gv})}g^+$, as shown previously in Figure~\ref{fig:thetagvonelimit}(a) and (b). Outside of this narrow interface, $g$ is essentially constant,
and therefore  $\beta e^{-\mu\tau(\xi)}g(\xi)$ is increasing, since $v(\xi)$ is an increasing function.
Thus the function $\xi \mapsto\beta e^{-\mu\tau(\xi)}g(\xi)$ is increasing, then decreasing, then increasing again.
For $m$ sufficiently large (so that the derivative of $\beta e^{-\mu\tau(\xi)}g(\xi)$ is larger than $\gamma$) this creates two fold bifurcations close to the ends of the inner interface for $g$ in addition to the other two fold bifurcations close to the edges of the outer interface of $v$.

\begin{figure}[htp!]
	\centering	
\includegraphics[scale=0.5]{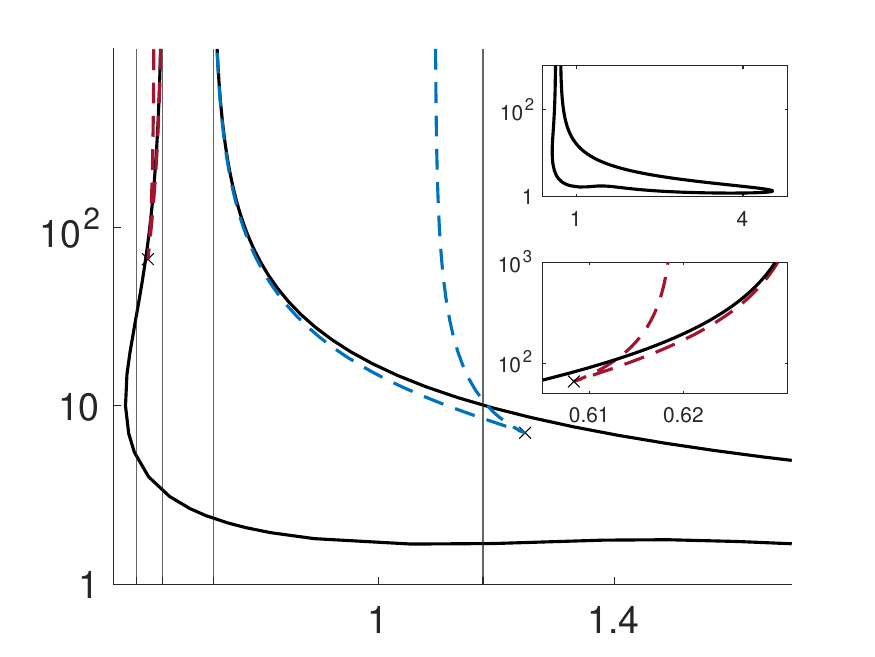}\hspace*{0.5em}\includegraphics[scale=0.5]{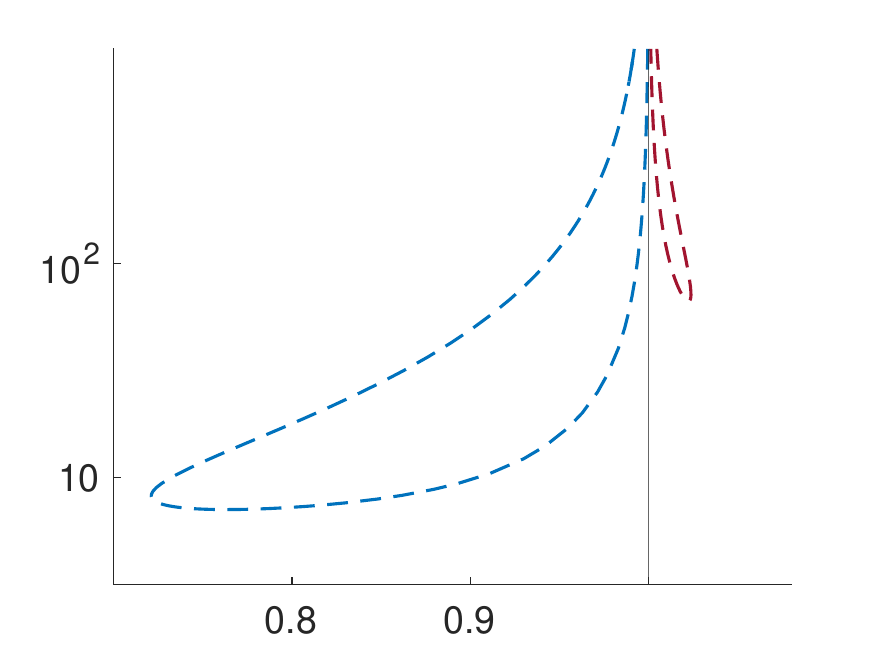}
	\put(-406,140){\rotatebox{90}{\footnotesize$m$}}
	\put(-240,10){$\gamma$}
	\put(-400,6){$\gamma_1$}
	\put(-391,6){$\gamma_{13}$}
	\put(-376,6){$\gamma_{24}$}
	\put(-315,6){$\gamma_2$}
	\put(-355, 140){$(a)$}
	\put(-120, 140){$(b)$}
	\put(-190,140){\rotatebox{90}{\footnotesize$m$}}
	\put(-24,10){$\xi$}
	\put(-60,6){$\theta_{gv}$}\\

\includegraphics[scale=0.5]{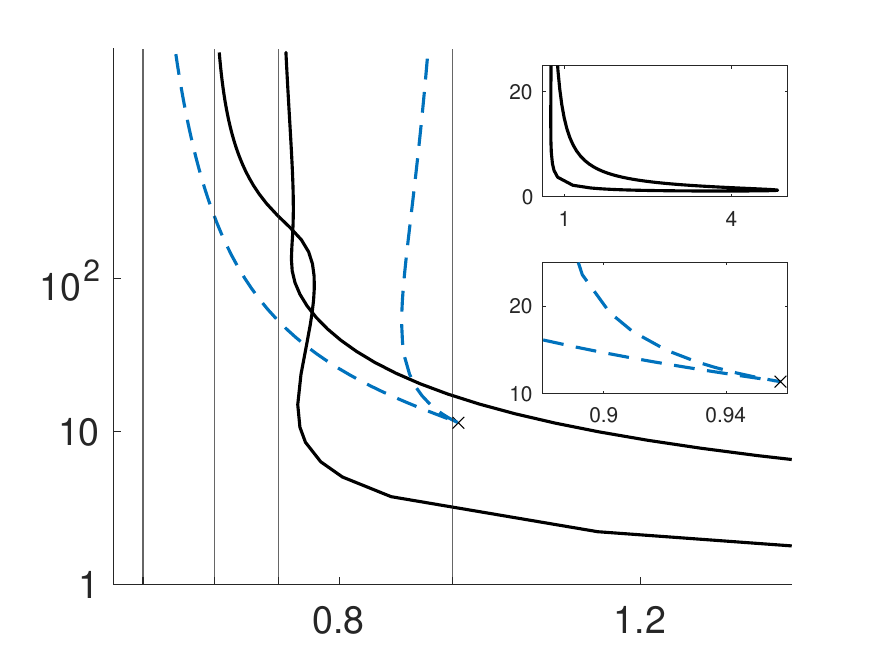}\hspace*{0.5em}\includegraphics[scale=0.5]{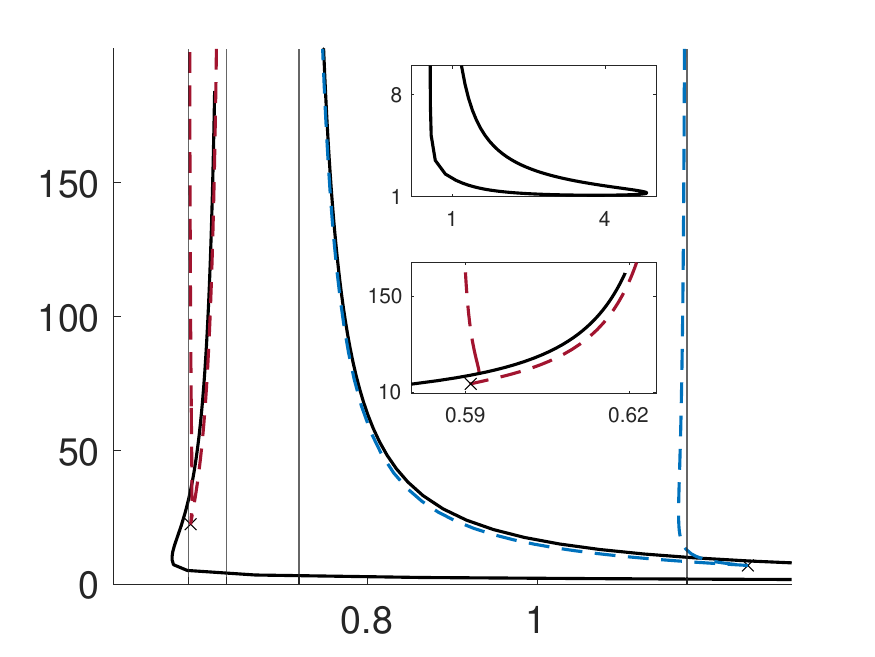}
	\put(-406,140){\rotatebox{90}{\footnotesize$m$}}
	\put(-240,10){$\gamma$}
	\put(-397,6){$\gamma_4$}
	\put(-384,6){$\gamma_{13}$}
	\put(-367,6){$\gamma_{24}$}
	\put(-321,6){$\gamma_3$}
	\put(-350, 140){$(c)$}
	\put(-131, 140){$(d)$}
	\put(-190,140){\rotatebox{90}{\footnotesize$m$}}
	\put(-24,10){$\gamma$}
    \put(-170,6){$\gamma_1$}
	\put(-160,6){$\gamma_{13}$}
	\put(-145,6){$\gamma_{24}$}
	\put(-49,6){$\gamma_2$}\\
	\caption{Two parameter continuations of the bifurcations of \eqref{eq:basic}-\eqref{eq:thres} when $(g \downarrow, v \uparrow, \theta_g = \theta_v = \theta_{gv})$ with the same parameters as in Figures~\ref{fig:thetagvonelimit},~\ref{fig:gdownvup_ex2} and~\ref{fig:gdownvup_down3}, except for $m$ and $n$. The branches of fold bifurcations are shown as blue and red curves and the Hopf bifurcations in black.
Solid curves indicate the parts of the bifurcation branch where there are no characteristic values with positive real part (and hence a stability change at the bifurcation), and dashed lines indicate where the parts of the branch where there is at least one unstable characteristic value. In all four examples the steady state always loses stability at a Hopf bifurcation, and not at the fold bifurcation.
(a) and (b) Continuation of the fold bifurcations in $\gamma$ and $m$ with $n=5m$, showing in (a) the $\gamma$ values of the fold bifurcations and (b) the value of the steady state $\xi$ at the fold bifurcation.
Cusp bifurcations occur at $(\gamma, m)=(0.6083, 66.3853)$ and $(\gamma, m)=(1.2480, 7.0681)$.
(c) and (d) are similar to (a), except in (c) $n=m^{2/3}$ and in (d) $n=m^2$. Cusp bifurcations occur in (c) at $(\gamma, m)=(0.9577, 11.3879)$, and in (d) at $(\gamma, m)=(0.5909, 22.4752)$ and $(\gamma, m)=(1.2484, 7.0591)$.}
	\label{fig:gdownvup_down4}
\end{figure}

Figure~\ref{fig:gdownvup_down4}(a) shows that with $n=5m$ the second fold bifurcation exists for all $m\ge 66.39$. As shown in Figure~\ref{fig:gdownvup_down4}(b) one pair of fold bifurcations exists for
$\xi<\theta_{gv}$ and the other for $\xi>\theta_{gv}$. This happens because $\theta_{gv}$ is in the middle of the sharp interface of $g$, the only (very short) interval on which
$\beta e^{-\mu\tau(\xi)}g(\xi)$ is decreasing, with two of the corners giving rise to the fold bifurcations on each side of $\theta_{gv}$. Figure~\ref{fig:gdownvup_down4}(b) shows that as $n=5m\to\infty$ all of the fold bifurcations are squeezed into $\xi=\theta_{gv}$, but in Figure~\ref{fig:gdownvup_down4}(a) we see that the limiting $\gamma$ values of the folds all appear to be different. Because
$\beta e^{-\mu\tau(\xi)}g(\xi)\to \beta e^{-\mu\tau^-}g^-$ as $m,n\to\infty$ for all
$\xi<\theta_{gv}$ (and to $\beta e^{-\mu\tau^+}g^+$ for $\xi>\theta_{gv}$), two of the fold bifurcations asymptote to $\gamma=\gamma_{24}$ and $\gamma=\gamma_{13}$ as $m,n\to\infty$, as shown in
Figure~\ref{fig:gdownvup_down4}(a). These are the outer two fold bifurcations for the smallest and largest $\xi$ values in Figure~\ref{fig:gdownvup_down4}(b). The asymptotic $\gamma$ values for the inner pair of fold bifurcations depends on the relative widths of the interfaces of the $g$ and $v$ functions. With $n=5m$ it appears
that the limiting $\gamma$ values are strictly inside the interval $(\gamma_1,\gamma_2)$.  Different behaviour is observed
if $m$, $n$ approach infinity with different relationships.
For example, if $n=m^2$ (as considered first in Figure~\ref{fig:thetagvonelimit}(b)) then the $g$ interface is much narrower than for $v$
and the inner folds asymptote to $\gamma=\gamma_2$ and  $\gamma=\gamma_1$ while the outer folds still asymptote to $\gamma=\gamma_{24}$ and $\gamma=\gamma_{13}$ as seen in Figure~\ref{fig:gdownvup_down4}(d).

If $n=m^{r}\to\infty$ for $r\in(0,1)$ then the behaviour of $\xi \mapsto\beta e^{-\mu\tau(\xi)}g(\xi)$ and the bifurcations are completely different, as shown in Figure~\ref{fig:thetagvonelimit}(d) and Figure~\ref{fig:gdownvup_down4}(c). In this case
$m\gg n$ so the $v$ interface is much narrower than the $g$ interface. As $g$ is decreasing this results in the function $\xi \mapsto\beta e^{-\mu\tau(\xi)}g(\xi)$ being decreasing for $\xi$ small and large, and increasing only on a small
interval about $\xi=\theta_{gv}$ corresponding to the narrow interface of $v$. In this scenario the limiting form of the function $\beta e^{-\mu\tau(\xi)}g(\xi)$ is still piecewise constant for $\xi<\theta_{gv}$ and  $\xi>\theta_{gv}$, but now the function is decreasing rather than increasing at the corners corresponding to
$\gamma_{24}$ and $\gamma_{13}$, so these corners no longer result in fold bifurcations. Instead the folds occur close to the local extrema of $\beta e^{-\mu\tau(\xi)}g(\xi)$ seen in Figure~\ref{fig:thetagvonelimit}(d),
so that, for instance, when $n=m^{2/3}\to\infty$ the fold bifurcations in Figure~\ref{fig:gdownvup_down4}(c) approach $\gamma_4$ and $\gamma_3$ defined by \eqref{thetagvonelimitm}.

Figure~\ref{fig:gdownvup_down4}(b) shows that the locations of the fold bifurcations are squeezed into $\xi=\theta_{gv}$ as $n=5m\to\infty$ and the nonlinearities in
\eqref{eq:basic}-\eqref{eq:thres} become piecewise constant for $\xi\ne\theta_{gv}$. Similar behaviour was observed when $n=m^r\to\infty$ for $r>0$ but we omit the figures. Notice also that as already observed in the previous example, the outer Hopf bifurcations at which the steady state changes stability approach $\gamma_{24}$ and $\gamma_{13}$ as in the limiting case as $n=m^r\to\infty$ in all of these examples.

\begin{figure}[thp!]
	\centering
	\includegraphics[scale=0.5]{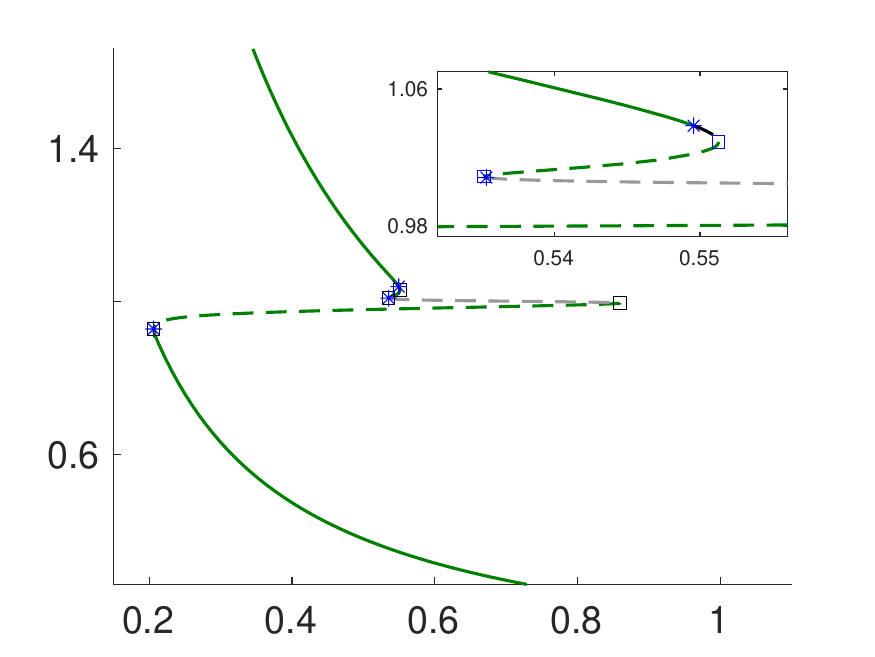}\hspace*{0.5em}\includegraphics[scale=0.5]{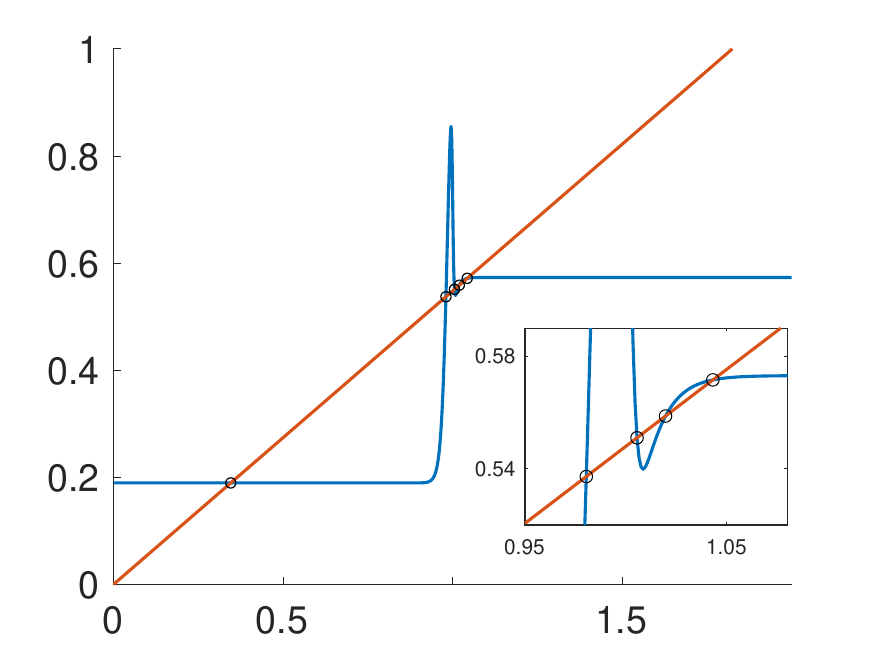}
	\put(-406,140){\rotatebox{90}{$x$}}
	\put(-240,10){$\gamma$}
	\put(-416,83){$\theta_{gv}$}
	\put(-24,10){$\xi$}
	\put(-392, 140){$(a)$}
	\put(-160, 140){$(b)$}
	\put(-106,6){$\theta_{gv}$}\\

\includegraphics[scale=0.5]{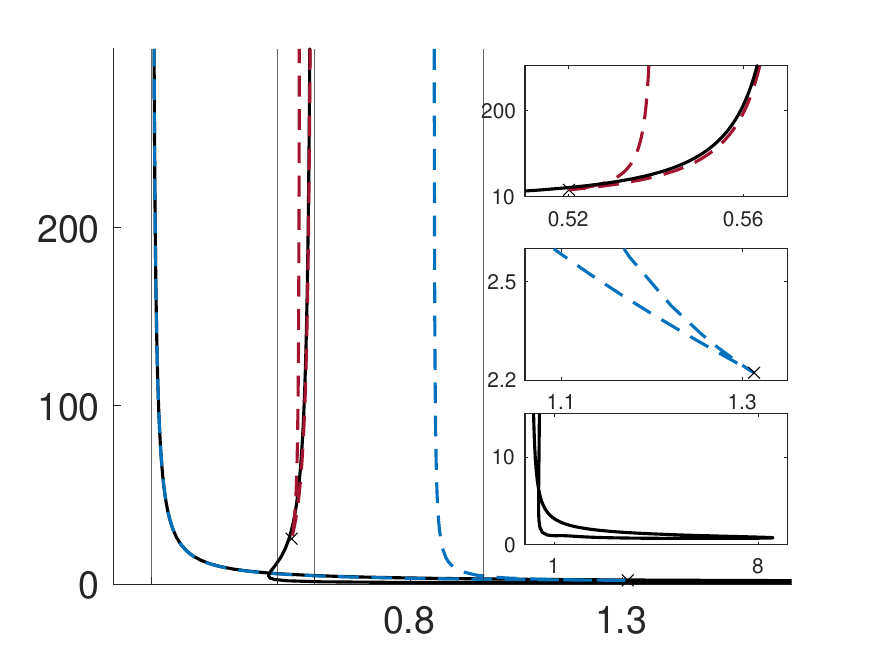}\hspace*{0.5em}\includegraphics[scale=0.5]{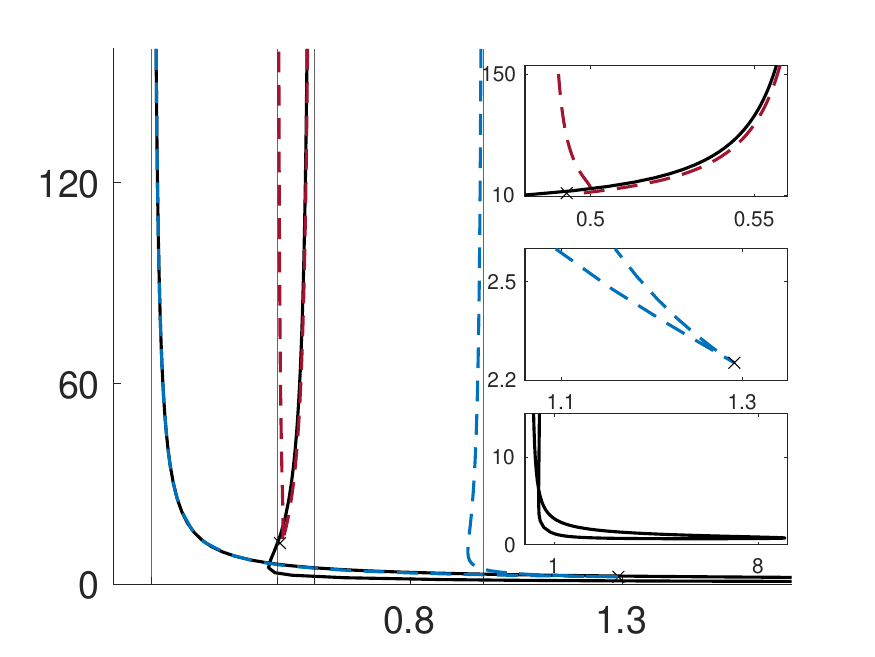}
	\put(-406,140){\rotatebox{90}{\footnotesize$m$}}
	\put(-240,10){$\gamma$}
	\put(-397,6){$\gamma_{24}$}
	\put(-366,6){$\gamma_1$}
	\put(-354,6){$\gamma_{13}$}
	\put(-314,6){$\gamma_2$}
	\put(-340, 140){$(c)$}
	\put(-190,140){\rotatebox{90}{\footnotesize$m$}}
	\put(-24,10){$\gamma$}
	\put(-179,6){$\gamma_{24}$}
	\put(-149,6){$\gamma_1$}
	\put(-138,6){$\gamma_{13}$}
	\put(-98,6){$\gamma_2$}
	\put(-119, 140){$(d)$}\\

\includegraphics[scale=0.5]{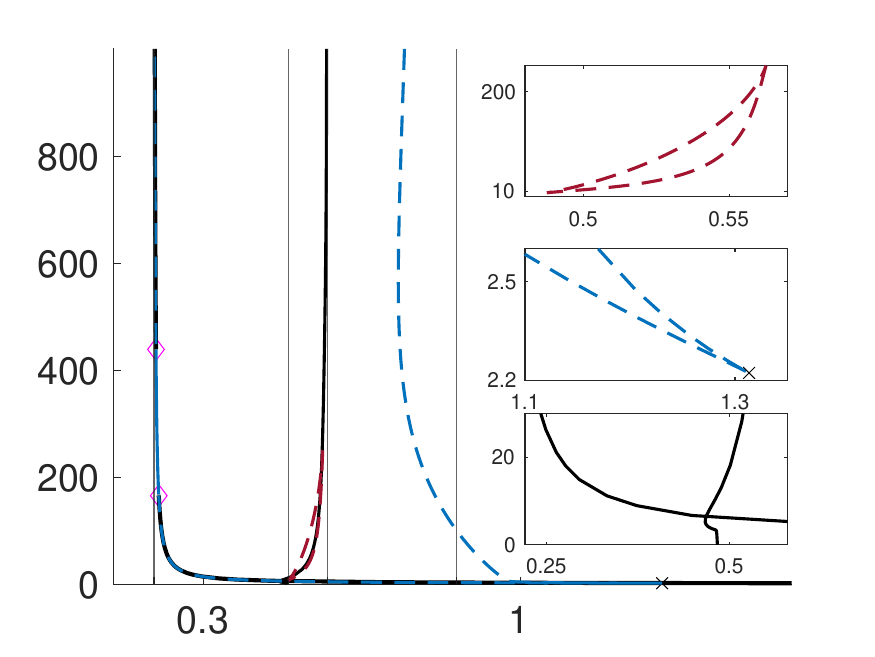}\hspace*{0.5em}\includegraphics[scale=0.5]{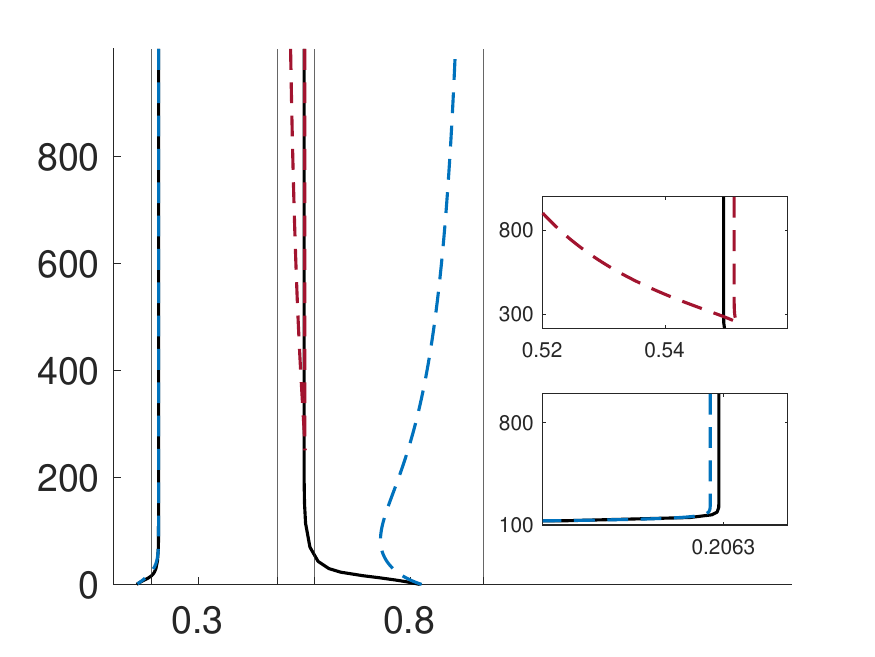}
	\put(-406,140){\rotatebox{90}{\footnotesize$m$}}
	\put(-240,10){$\gamma$}
	\put(-399,6){$\gamma_{24}$}
	\put(-364,6){$\gamma_1$}
	\put(-352,6){$\gamma_{13}$}
	\put(-320,6){$\gamma_3$}
	\put(-378, 140){$(e)$}
	\put(-125, 140){$(f)$}
	\put(-190,140){\rotatebox{90}{\footnotesize$n$}}
	\put(-24,10){$\gamma$}
	\put(-185,6){$\gamma_{24}$}
	\put(-149,6){$\gamma_1$}
	\put(-138,6){$\gamma_{13}$}
	\put(-98,6){$\gamma_2$}

\caption{Bifurcations of \eqref{eq:basic}-\eqref{eq:thres} when $(g \downarrow, v \uparrow, \theta_g = \theta_v = \theta_{gv})$ with $\beta=1.4$, $\mu=0.2$, $g^-=1$, $g^+=0.5$, $\theta_g=\theta_v=\theta_{gv}=1$, $a=1$, $v^-=0.1$ and $v^+=1$. Apart from $m$, $n$, $v^-$ and $v^+$,
these are the same parameters as in Figures~\ref{fig:thetagvonelimit}
and~\ref{fig:gdownvup_ex2}-\ref{fig:gdownvup_down4}.
(a) One parameter continuation of steady states in $\gamma$ with smooth nonlinearities $g$ and $v$ defined by \eqref{eq:vghill} with $m=100$ and $n=500$. This reveals four fold bifurcations and up to five coexisting steady states (at most two of which are stable).
(b) Illustration of the five steady states occurring in (a) at the intersections of $\xi \mapsto\beta e^{-\mu\tau(\xi)} g(\xi)$ and $\xi \mapsto\gamma\xi$ with $\gamma=0.5480$.
(c) to (f) Two parameter continuations of the fold bifurcations (red and blue) and Hopf bifurcations (black) with (c) $n=5m$ and (d) $n=m^2$ (e) $m$ varying with $n=500$ fixed and
(f) $n$ varying with $m=100$ fixed. The bifurcation curves are drawn according to the number of characteristic values with positive real part; solid for zero and dashed for one or more. Cusp points are denoted by crosses.
In (e) there is a change of stability at the fold bifurcation between two Bogdanov-Takens bifurcations (denoted by pink diamonds) at $(\gamma, m)=(0.2, 165.62)$ and $(\gamma, m)=(0.1938,438.82)$, otherwise in these examples the stability change is always at the Hopf bifurcation (as was the case in
Figure~\ref{fig:gdownvup_down4}).}
\label{fig:gdownvup_up6}
\end{figure}

Although there are four fold bifurcations in the example in Figures~\ref{fig:gdownvup_down3} and~\ref{fig:gdownvup_down4}, because of the location of the folds  it is not possible to obtain five co-existing steady states.

As a final example we change just the parameters $v^\pm$ in the $v$ function
from the previous example, to obtain a new example with
$\beta e^{-\mu\tau(\theta)}g(\theta) > \beta e^{-\mu\tau^+}g^+ > \beta e^{-\mu\tau^-}g^-$, whose dynamics are explored in Figure~\ref{fig:gdownvup_up6}.
With $m=n$ the
dynamics with one and two parameter continuations is very similar to the earlier case shown in Figure~\ref{fig:gdownvup_ex2} with a single pair of fold bifurcations and so we omit the figures here. But different choices of $m$ and $n$ lead to more interesting dynamics shown in Figure~\ref{fig:gdownvup_up6}.
One-parameter continuation in $\gamma$ with $n$ and $m$ fixed with $n\gg m\gg0$, as shown in
Figure~\ref{fig:gdownvup_up6}(a) reveals four fold bifurcations, a large interval of $\gamma$ values on which there are three coexisting steady states with two stable, and a small interval for $\gamma\in(0.5352,0.5513)$ for which there are five co-existing steady states of which one or two are stable. Figure~\ref{fig:gdownvup_up6}(b) illustrates how these five steady states arise because for $n\gg m$ the downward $g$ interface is much narrower than the interface for the increasing $v$ function (compare with Figures~\ref{fig:2} and~\ref{fig:gdownvup_down3}).
Panels (c) and (d) of Figure~\ref{fig:gdownvup_up6} show two-parameter continuations with $n=5m$ and $n=m^2$ which reveal that with these parameter constraints all four fold bifurcations persist to arbitrary large values of $m$
with one pair of fold bifurcations contained in the interval $(\gamma_1,\gamma_{13})$ and the other
in the interval $(\gamma_{24},\gamma_2)$ for all $m$ sufficiently large, just as was seen in
Figure~\ref{fig:gdownvup_down4}. But for the parameters used in Figure~\ref{fig:gdownvup_up6}
we have $(\gamma_1,\gamma_{13})\subset(\gamma_{24},\gamma_2)$ so there are five coexisting steady states between the red curves in Figure~\ref{fig:gdownvup_up6}(c) and (d) for arbitrary large $m$.

In the examples shown in Figure~\ref{fig:gdownvup_up6}(c) and (d) the stable steady states loses stability in a Hopf bifurcation just before the fold bifurcation. As in the previous examples in this section these Hopf bifurcations asymptote to $\gamma_{24}$ and $\gamma_{13}$ as $m$, $n\to\infty$.
In Figure~\ref{fig:gdownvup_up6}(e) and (f) we explore a different scenario, where we keep
one of $m$ or $n$ fixed but much larger than zero, and allow the other one to vary on the positive real line.

The value $m=0$ in Figure~\ref{fig:gdownvup_up6}(e) corresponds to the $v$ function being constant, and  since $g$ is decreasing, this corresponds to the scenario considered in Section~\ref{sec:gdownvconst}. This is why the curves of Hopf bifurcations, but not the folds, extend all the way down to $m=0$.
For $0<m\ll n$ the $g$ function has a much narrower interface than the $v$ function, so $\beta e^{-\mu\tau(\xi)}g(\xi)$ will be first increasing then decreasing and finally increasing again,
similar to many of the examples in this section.
However for
$m\gg n\gg0$ the $v$ function will have a narrower interface, so $\beta e^{-\mu\tau(\xi)}g(\xi)$
will be first decreasing, like in Figures~\ref{fig:thetagvonelimit}(d) and ~\ref{fig:gdownvup_down4}(c).
Interesting dynamics is observed with $n\gg m>0$. In particular, in Figure~\ref{fig:gdownvup_up6}(e) there is a pair of Bogdanov-Takens bifurcations,
and on the curve of fold bifurcations between these points a steady state loses stability at the fold
bifurcation, unlike the previous examples in this section, but similar to examples in Section~\ref{sec:gconstvup}.
In Figure~\ref{fig:gdownvup_up6}(f) we explore varying $n$ with $m\gg0$ fixed. Thus when $n=0$ we are in the scenario of $g$ fixed and $v$ increasing explored in Section~\ref{sec:gconstvup} which is why fold bifurcations  persist down to $n=0$ in Figure~\ref{fig:gdownvup_up6}(f). As $n$ increases, once the interface of the $g$ function becomes narrow enough there is a cusp bifurcation which leads to a second pair of fold bifurcations and five coexisting steady states for all $n$ sufficiently large. For $n$ large the folds are contained in the intervals $(\gamma_{24},\gamma_2)$ and $(\gamma_1,\gamma_{13})$, but do not asymptote to the ends of these intervals as $n\to\infty$ because $m$ is held fixed and finite.

\subsection{Summary}\label{summary two hill} 

In Section~\ref{sec:twohill} we have considered the situation where both functions $g$ and $v$ are non-constant.
We first derived formulae for the existence of both fold bifurcations \eqref{eq:vglam0} and Hopf bifurcations \eqref{eq:char_vg1}-\eqref{eq:char_vg2}, which reduce to formulae derived in Section~\ref{sec:onehill} if either $v$ or $g$ is a constant function.

In Section~\ref{sec:twothetas} we study the case when the thresholds are different; $\theta_g \not= \theta_v$.  Not surprisingly, we recover the same dynamics and bifurcation structures as in Section~\ref{sec:onehill} near each threshold when $m\gg0$, or $n\gg0$ since in these limits the behavior localizes in the area where the other function is essentially constant. In particular, in the case $(g\downarrow, v\downarrow)$ the dynamics is similar to that observed in Section~\ref{sec:gdownvconst} near $x = \theta_g$ and to that in Section~\ref{sec:gconstvdown} near $x=\theta_v$.  In Section~\ref{sec:gdownvup} we consider the example
 $(g\downarrow, v\uparrow)$ which supports bistability between an equilibrium and periodic orbit and in  Section~\ref{sec:gupvup}  we examine $(g\uparrow, v\uparrow)$  which can exhibit up to five equilibria and tristability.

 In Section~\ref{sec:onetheta} we look at the situation when $\theta_v=\theta_g$.  First in Section~\ref{sec:bothupordown} we consider a relatively straightforward situation where both functions $g$ and $v$ are either increasing $(g\uparrow, v\uparrow)$ or both decreasing $(g\downarrow, v\downarrow)$. Both of these cases are similar to their counterparts discussed in Section~\ref{sec:onehill}.

The most interesting case is when $\theta_v=\theta_g$, but  the monotonicity of $g$ and $v$ do not agree i.e.  $(g\uparrow, v\downarrow)$ and $(g\downarrow, v\uparrow)$.
This is discussed in Section~\ref{sec:gvopposite}. The main result is that which behavior dominates (i.e. the behavior observed for increasing $g$ or $v$ or behavior for decreasing $g$ or $v$) depends sensitively on the 
way in which $m$ and $n$ approach infinity. We capture this informally in a concept of ``interface width''; if $m\gg n$ then the interface of $v$ is much narrower than the interface of $g$ and vice versa.

Figure~\ref{fig:thetagvonelimit} illustrates that the function $\xi\mapsto\beta e^{-\mu \tau(\xi)}g(\xi)$ converges to different limits as $m$ and $n$ go to infinity at different rates, indicating that the fold bifurcations will occur at different places in these limits.
We also observe that additional equilibria may appear when $m$ and $n$ approach infinity at different rates (Figures~\ref{fig:gdownvup_down4} and \ref{fig:gdownvup_up6}).
Finally, we observe a canard-like explosion in amplitude of a periodic orbit that emanates from a Hopf bifurcation, see Figure~\ref{fig:gdownvup_ex2_po}.

\section {Conclusions}
\label{sec:summary}

This paper is long and complicated, and hence difficult to summarize succinctly.  Suffice to say that we have introduced a generalization of a prokaryotic  gene regulatory model, in \eqref{eq:basic}-\eqref{eq:thres}, originally developed in \cite{ghmww2020}.  The generalization has both nonlinearities in terms of feedback but also nonlinearities appearing in state dependent delays. Moreover the delays are generated by a threshold condition. It is thus somewhat novel for both the mathematical and modeling literature.  The first three sections of the paper are concerned with establishing important and relevant mathematical properties of the basic system \eqref{eq:basic}-\eqref{eq:thres}.  Section \ref{sec:semiflow} establishes properties of the semiflow generated by \eqref{eq:basic}-\eqref{eq:thres}, while Section \ref{sec:positivity} establishes conditions for  the existence of a global attractor of the semiflow.  Section \ref{sec:linearization} deals with the linearization of \eqref{eq:basic}-\eqref{eq:thres} which is of significant utility in the following Sections \ref{sec:onehill} and \ref{sec:twohill} where we examine the behaviour of \eqref{eq:basic}-\eqref{eq:thres} both numerically and analytically.

At the ends of both Sections~\ref{sec:onehill} and~\ref{sec:twohill}  we have offered general synopses of the results of each, which we will not repeat here.
As an aid to the reader,  in Table~\ref{table:figure-summary} we have listed all of the figures in this paper illustrating dynamics for various combinations of $g$ and $v$, along with the types of bifurcations that we observed numerically in each case.

\begin{table}
\begin{centering}
\begin{tabular}{|c|>{\raggedright\arraybackslash}p{5in}|}
\hline
$(g  ,v  )$ & Corresponding figure number and bifurcations \\
\hline \hline
$(g \downarrow , v\leftrightarrow)$ & \ref{fig:1}($\beta\gamma$), \ref{fig:gdown_ex1}(1,2,H,S) \\
\hline
$(g\uparrow,v \leftrightarrow)$ & \ref{fig:2}($\beta\gamma$), \ref{fig:gup_ex1}(1,2,F,H,Cu)  \\
\hline
$(g \leftrightarrow,v \downarrow)$ & \ref{fig:3}($\beta\gamma$), \ref{fig:vdown_ex3}(1,S), \ref{fig:vdown_ex2}(1,2,H,S), \ref{fig:vdown_ex1}(1,2,H,S)  \\
\hline
$(g \leftrightarrow,v \uparrow)$ & \ref{fig:4}($\beta\gamma$), \ref{fig:vup_ex3}(1,2,F,S,Cu), \ref{fig:vup_ex1a}(1,F,h,H,F$_\textrm{P}$), \ref{fig:vup_ex1b}(1,2,Ba,BT,FH,Cu), \ref{fig:vup_ex1c}(1,F,F$_\textrm{P}$,H,h), \ref{fig:vup_ex1e}(2,F,H,h,BT,Cu), \ref{fig:vup_ex1d}(1,H,h), \ref{fig:vup_ex2}(1,2,F,H,h,Cu,Ba,Fh$_2$)   \\
 \hline
\hline
$(g\downarrow ,v \uparrow)$ & \ref{fig:5}($\beta\gamma$), \ref{fig:gdownvup}(1,2,F,H,Cu,BT,FH),
\ref{fig:gdownvup_ex1}($\beta\gamma$,1,2,H),
\ref{fig:thetagvlimits}($\beta\gamma$), \ref{fig:thetagvonelimit}($\beta$),
\ref{fig:FoldsMZero}($\beta$,1,2,F,Cu),
\ref{fig:FoldsMnotZero}($\beta$,2,F,Cu),
\ref{fig:gdownvup_ex2}($\beta\gamma$,1,2,F,H,Cu),
\ref{fig:gdownvup_ex2_po}(1,H,h,Ca),
\ref{fig:gdownvup_down3}($\beta\gamma$,1,4F,H),
\ref{fig:gdownvup_down4}(2,4F,H,Cu),
\ref{fig:gdownvup_up6}($\beta\gamma$,1,2,4F,5SS,H,Cu,BT)   \\
\hline
$(g \uparrow,v \uparrow)$ & \ref{fig:6}($\beta\gamma)$, \ref{fig:gupvup}(1,2,4F,5SS,H,Ba,BT,FH),
\ref{fig:gupvup_ex2}(1,4F,5SS,H)    \\
\hline
$(g \downarrow,v \downarrow)$ & \ref{fig:gdownvdown}(1,2,H)  \\
\hline
$(g\uparrow,v \downarrow)$ &  \\
\hline
\end{tabular}
\medskip
\caption{Summary of various dynamic behaviours seen in Sections \ref{sec:onehill} and \ref{sec:twohill}.
The notation of 'n.m' refers to figure 'm' of Section 'n'  and the symbols after indicate the type of
continuation explored and the bifurcations observed. $\beta\gamma$ and $\beta$
refers to a figure in which $\beta e^{-\mu\tau}g(\xi)$ (with or without $\gamma\xi$) is plotted to show qualitatively the number of steady states of the system \eqref{eq:basic}-\eqref{eq:thres}.
1 and 2 indicate that 1 and/or 2 parameter numerical continuation was performed. The bifurcations observed are denoted by
F (Fold), F$_P$ (Fold of Periodic Orbits), H (Hopf), S (Stable-no bifurcation), and h (homoclinic) and for higher co-dimension bifurcations:
Ba (Bautin \emph{or} generalized Hopf), BT (Bogdanov-Takens), Cu (cusp), FH (fold-Hopf), Fh$_2$ (fold-homoclinic). 4F denotes examples with 4 fold bifurcations, 5SS denotes 5 co-existing steady states, and Ca a canard.
Note that 'left clicking' on a number will take you to that figure, while 'Alt $\leftarrow$' will bring you back to this table.}
\end{centering}
\label{table:figure-summary}
\end{table}

We have studied the threshold model \eqref{eq:basic},\eqref{eq:thres} directly without applying the time transformation of Smith \cite{Smith91,Smith93} to convert the equations to a distributed delay DDE with constant delay. We showed in Sections~\ref{sec:gconstvdown}-\ref{sec:gvopposite} that the dynamics of the model \eqref{eq:basic} with a threshold delay is altogether richer and more surprising than the dynamics of the corresponding constant delay model considered in Sections~\ref{sec:gdownvconst}-\ref{sec:gdownvconst}.
In so doing we demonstrated that problems with threshold delays can be analysed and studied numerically without time transforming the problems to constant delay problems. Our methods are applicable to problems with multiple delays including one or more threshold delays.

Much of the algebra in this paper is rooted in the use of monotone Hill functions for $g(\xi)$ and $v(\xi)$ which is both relevant and quite justifiable for modeling in the biological realm.  Our algebra is helped by the use of the function
$\xi\mapsto\xi f'(\xi)/f(\xi)$ which has a tractable form \eqref{eq:fxpr} when $f$ is a Hill function. Notice that if $f(\xi)=C\xi^p$
then
$$\frac{\xi f'(\xi)}{f(\xi)}=p,$$
so we can think of $\xi f'(\xi)/f(\xi)$ as a description of   a local power of a non-polynomial function.  While a similar approach could be adopted for other nonlinearities, the results would likely be quantitatively different, but we expect that for monotone functions they would not show qualitative alterations.  However, if non-monotone nonlinearities were considered then we expect that there would be substantial qualitative as well as quantitative differences.

{\bf Future directions.} 
In \cite{goldbeter2022multi} the authors examine the various dynamic patterns (what they term `multi-synchronization' or `multi-rhythmicity') that emerge from models of biochemical and cellular oscillators coupled to each other, and illustrate their considerations with examples of the circadian oscillator and the cell cycle.  While certainly of interest biologically,  perhaps  the more interesting question is whether or not one could construct a catalog, or dictionary, of possible dynamical behaviours arising from the coupling of $N$ different dynamics  generated by DDEs  as we have considered here.

In particular, consider 
\be \label{eq:basic-1}
x_i'(t)  = \beta_i e^{-\mu_i\tau_i (t )}
 \frac{v_i(x_i(t )))}{v_i(x_i (t- \tau_i (t )))}
 g_i(x_i (t -\tau_i (t ))) - \gamma_i x_i (t)
\ee
where for $t\ge 0$ the delay $\tau_i(t)$ is defined by the threshold condition
\be \label{eq:thres-1}
a_i =\int^0_{-\tau_i (t)}v_i(x_i(t+s))ds=\int^t_{t-\tau_i (t)}v_i(x_i(s))ds
\ee
for $i = 1, \ldots, N$. Assuming  that these systems are coupled through one or more of the parameters $\beta_i, \mu_i,\gamma_i, a_i$ or through the functions $\tau_i, v_i, g_i$, then what can be said about the resulting dynamics?
The paper \cite{goldbeter2022multi} considered both `series' and `parallel' coupling between their two dynamics, but one would have to precisely define what was meant by this within the context of \eqref{eq:basic-1}-\eqref{eq:thres-1}.

For $N=2$ one might consider a system
\begin{align*}
x_1'(t)  &= \beta_1 x_2 e^{-\mu_1\tau_1 (t )} \frac{v_1(x_1(t)))}{v_1(x_1 (t- \tau_1 (t )))}  g_1(x_1 (t -\tau_1 (t ))) - \gamma_1 x_1 (t)\\
x_2'(t)  &= \beta_2 x_1e^{-\mu_i\tau_2 (t )} \frac{v_2(x_2(t )))}{v_2(x_2 (t- \tau_2(t )))}  g_2(x_2 (t -\tau_2 (t ))) - \gamma_2 x_2 (t)
\end{align*}
One could regulate the strength of the coupling by an additional constant i.e. by considering the term
$ \beta_1 a_1 x_2 $ rather than just  $\beta_1 x_2$. This would allow an examination of the strength of coupling while keeping $\beta$'s the same for comparison with a single DDE oscillator.
This seems to be the simplest linear coupling; an alternative might be considering  $\beta_1(x_2), \beta_2(x_1)$ where these are Hill functions.
This coupling results in  a positive mutual feedback in the sense that more $x_1$ makes $g(x_2)$ bigger and vice versa. 

For stable oscillation in a single DDE we need at least one of the functions $v$ or $g$ to be decreasing.
Are there differences if we couple same kind oscillators  (-,+) with (-,+)  or  (+.-) with (+,-) on one hand and two different oscillators (+,-) with (-,+)?
If we mutually couple two DDEs in a way that they form a negative feedback loop, can we identify oscillations that arise due to local negative feedback (i.e. the oscillator type (-,+) and (+,-)) and the global negative feedback, that arises from the mutual coupling of the oscillators? In particular, consider two types of mutually coupled equations. First consider the coupling through  the nonlinearities $g$
\begin{align*}
x_1'(t)  &= \beta_1 e^{-\mu_1\tau_1(t)}\frac{v_1(x_1(t)))}{v_1(x_1(t-\tau_1(t)))}g_1(x_2(t-\tau_1(t)))
- \gamma_1 x_1(t) \nonumber \\
x_2'(t)  &= \beta_2 e^{-\mu_2\tau_2(t)}\frac{v_2(x_2(t)))}{v_2(x_2(t-\tau_2(t)))}g_2(x_1(t-\tau_2t))) - \gamma_2 x_2(t),
\end{align*}
where  both $v_1, v_2$ are decreasing, so that each equation can support oscillations with constant input from the other oscillator (see Section 5.3). If both $g_1, g_2$ are also decreasing this represents mutual  inhibition between two oscillators and suggests the possible  existence of bistability.

Alternatively, one can consider coupling of  the equations through the $v$ functions rather than nonlinearities $g$, with  all functions $v_1, v_2, g_1, g_2$ still decreasing:
\begin{align*}
x_1'(t)  &= \beta_1 e^{-\mu_1\tau_1(t)}\frac{v_1(x_2(t)))}{v_1(x_2(t-\tau_1(t)))}g_1(x_1(t-\tau_1(t)))
- \gamma_1 x_1(t)\\
x_2'(t)  &= \beta_2 e^{-\mu_2\tau_2(t)}\frac{v_2(x_1(t)))}{v_2(x_1(t-\tau_2(t)))}g_2(x_2(t-\tau_2t))) - \gamma_2 x_2(t),
\end{align*}

Does either system support bistability between periodic solutions? If so, does the shape of the periodic solutions reflect the fact that in mutually repressible coupled systems one component is high while the other is low?

One can also consider larger system of equations that are cyclically coupled
\begin{align} \label{eq:x1g-1}
x_1'(t)  &= \beta_1 e^{-\mu_1\tau_1(t)}\frac{v_1(x_1(t)))}{v_1(x_1(t-\tau_1(t)))}g_1(x_N(t-\tau_1(t)))
- \gamma_1 x_1(t)\\ \label{eq:xjg-1}
x_j'(t)  &= \beta_j e^{-\mu_j\tau_j(t)}\frac{v_j(x_j(t)))}{v_j(x_j(t-\tau_j(t)))}g_j(x_{j-1}(t-\tau_j(t))) - \gamma_j x_j(t), \quad j=2,\ldots,N.
\end{align}
We have already tackled a particular problem of this form in \cite{ghmww2020}
where we considered
\begin{align*}
\dfrac{dM\!}{dt}(t) & = \beta_M e^{-\mu \tau_{M}(t)} \dfrac{v_M(E(t))}{v_M(E(t-\tau_{M}(t)))} f(E(t-\tau_M(t))) -\bar\gamma_M M(t), \\
\dfrac{dI}{dt}(t) & = \beta_I e^{-\mu\tau_I(t)} \dfrac{v_I({M(t)})}{v_I(M(t-\tau_I(t)))} M(t-\tau_I(t)) -\bar\gamma_I I(t) , \\
\dfrac{dE}{dt}(t) & = \beta_E I(t) -\bar\gamma_E E(t).
\end{align*}
which is exactly of the form \eqref{eq:x1g-1},\eqref{eq:xjg-1} with $N=3$ and $g_2(x)=g_3(x)=x$ and $\mu_3=\tau_3=0$.

To conclude, the work presented here  opens many interesting questions on how the interplay between the character of the nonlinearity, state dependent delay and coupling affects the local and global dynamics of DDEs.

\appendix
\section*{Appendices}
\renewcommand{\theequation}{\thesection.\arabic{equation}}
\renewcommand\thefigure{\thesection.\arabic{figure}} 
\section{Relation of \eqref{eq:basic}-\eqref{eq:thres} to the model of Gedeon et al \cite{ghmww2020}}\label{app-reduction}

The Goodwin  model \cite{goodwin1965} for operon dynamics    considers a large population of cells, each of which contains
one copy of a particular operon.  $(M,I,E)$ respectively
denote the mRNA, intermediate protein, and effector protein  concentrations. For a generic operon
\citep{Goodwin1963,goodwin1965,Griffith68a,Griffith68b,othmer76,selgrade79}
the dynamics are assumed to be  given by
\begin{align}
    \dfrac{dM\!}{dt}  &= \beta_M {f}(E )
    -\gamma_M M ,\label{eq:mrna}\\
    \dfrac{dI}{dt}  &= \beta_I M  -\gamma_I I  ,\label{eq:intermed}\\
    \dfrac{dE}{dt}  &= \beta_E I  - \gamma_E E .\label{eq:effector}
\end{align}
The production flux $f$   of  mRNA  is assumed to be a function of the effector level $E$.  Furthermore, the model assumes that the flux of protein and metabolite production  are proportional (at rates $\beta_I,\beta_E$ respectively) to the amount of mRNA and
intermediate protein respectively. All three of the components $(M,I,E)$ are subject to degradation at rates $\gamma_M, \gamma_I, \gamma_E$.  The parameters $\beta_I,\beta_E,\gamma_M,\gamma_I$ and
 $\gamma_E$ have dimensions [time$^{-1}$].

In \cite{ghmww2020} we extended the classic Goodwin model
for the regulation of the bacterial operon to a situation in which the cells are growing exponentially at a rate $\mu$ and have finite transcriptional and translational velocities that are potentially dependent on the state of the system. For the full model of \cite{ghmww2020}, retain the notation of the original Goodwin operon model, denote the transcriptional velocity  by $v_M(E(t))$ and the translational velocity by $v_I(M(t))$, and let   $\bar \gamma_i = \gamma_i + \mu$.
Then  the extension in \cite{ghmww2020} is
\begin{align}
\dfrac{dM}{dt}
& = \beta_M e^{-\mu \tau_M(t)} \dfrac{v_M(E)}{v_M(E(t-\tau_M(t)))}
f(E(t-\tau_M(t))) -\bar \gamma_M M, \label{eq:mrna-delay-var}\\
\dfrac{dI}{dt}
& = \beta_I e^{-\mu\tau_I(t)} \dfrac{v_I({M})}{v_I(M(t-\tau_I(t)))} M(t-\tau_I(t)) -\bar \gamma_I I , \label{eq:intermed-delay-var}\\
\dfrac{dE}{dt} & = \beta_E I -\bar \gamma_E E. \label{eq:effector-delay-var}
\end{align}
These equations are supplemented by the two additional equations  which implicitly define the delays  $\tau_M$ and $\tau_I$   by threshold conditions, namely
\begin{align}
a_M &= \int_{t-\tau_M(t)}^{t} v_M(E(s)) ds=\int_{-\tau_M(t)}^{0} v_M(E(t+s)) ds
\label{eq:delay by stateM}\\
a_I &= \int_{t-\tau_I(t)}^{t} v_I(M(s)) ds=\int_{-\tau_I(t)}^{0} v_I(M(t+s)) ds.
\label{eq:delay by stateI}
\end{align}

In our extended model \cite{ghmww2020}, as in the original Goodwin \cite{Goodwin1963} formulation, the function $f$ is a monotone increasing function for an inducible operon, and for a repressible operon $f$ is a monotone decreasing function.

To make the transition from the full  model \eqref{eq:mrna-delay-var}-\eqref{eq:effector-delay-var} presented in \cite{ghmww2020} to the apparently simpler situation of \eqref{eq:basic}-\eqref{eq:thres} considered here is relatively straightforward.  We simply need two assumptions:

\ben
\item We first assume that the translational velocity  $v_I$ is not regulated but is  large with respect to the other characteristic times in the system so the translational delay $\tau_I \simeq 0$.

\item Our second assumption involves the existence of fast and slow variables \citep[Section 2.3]{mackey2016simple} so there is a clear separation of time scales.
\een
With these assumptions and the substitutions:
$E \to x$, $\beta \to \bar \gamma_I \dfrac{\beta_M \beta_I \beta_E }{\bar \gamma_M \bar \gamma_I \bar \gamma_E}$ (or $\beta \to \bar \gamma_E \dfrac{\beta_M \beta_I \beta_E }{\bar \gamma_M \bar \gamma_I \bar \gamma_E}$), $f \to g$, and $\tau_M(t) \to \tau(t)$ we immediately obtain the system \eqref{eq:basic}-\eqref{eq:thres} that we study here.

One might think that \eqref{eq:basic} is somewhat novel but in fact similar formulations are available in different situations.
Below, in Appendix \ref{app-other} we give several  other examples of models in the spirit of \eqref{eq:basic}, but which differ in details that may or may not offer significant differences in behaviour.

 %%%%%%%%%%%%%%%%%%%%%%%%%%

\section{Other examples}\label{app-other}  In this appendix, we mention four different examples of previously published studies which can be thought of as extensions of the considerations in this paper.

\subsection*{A bistable genetic switch}

The two types of operon dynamics originally considered were classified as either repressible (in which the production flux function $f$ in \eqref{eq:mrna} is a decreasing function of its argument), or inducible (in which $f$ is an increasing function of its argument).  In the language of dynamical systems these are negative or positive feedback systems.  However, there is a third type of fundamental dynamical entity found in  prokaryotes, namely the so called bistable switch in which the effector produced by operon $X$ inhibits the transcription of DNA from operon $Y$ and vice versa (which we might denote as a $(-,-)$ system).  The paradigmatic molecular biology example of a bistable switch due to reciprocal
negative feedback is the bacteriophage (or phage) $\lambda$, which is a virus capable of
infecting E. coli bacteria. Originally described in \cite{jacob-1961}  and very nicely treated in \cite{ptashne-1986}, it is but one of many examples of mutually
inhibitory bistable switches that have been found since.  Models of this process are to be found in \cite{grigorov1967model}, \cite{Santilan200475}, and \cite{cherry-2000} among others.

Consider a simple model for this process in which the dynamics of the intermediate of both the $X$ and $Y$ operons is fast relative to the dynamics of the corresponding mRNA $M_X$ and $M_Y$, and effectors $E_X$ and $E_Y$.  Then we can write down a reduced version of the model \eqref{eq:mrna-delay-var}-\eqref{eq:effector-delay-var} in the form
\begin{align}
\dfrac{dM_X}{dt}
& = \beta_{M_X} e^{-\mu \tau_{M_X}(t)} \dfrac{v_{M_X}(E_Y)}{v_{M_X}(E_Y(t-\tau_{M_X}(t)))}
f_X(E_Y(t-\tau_{M_X}(t))) -\bar \gamma_{M_X} M_X, \label{eq:mrna-delay-var-X}\\
\dfrac{dE_X}{dt} & = \beta_{E_X} M_X -\bar \gamma_{E_X} E_X \label{eq:effector-delay-var-X}\\
\dfrac{dM_Y}{dt}
& = \beta_{M_Y} e^{-\mu \tau_{M_Y}(t)} \dfrac{v_{M_Y}(E_X)}{v_{M_Y}(E_X(t-\tau_{M_Y}(t)))}
f(E_X(t-\tau_{M_Y}(t))) -\bar \gamma_{M_Y} M_Y, \label{eq:mrna-delay-var-Y}\\
\dfrac{dE_Y}{dt} & = \beta_{E_Y} M_Y -\bar \gamma_{E_Y} E_Y. \label{eq:effector-delay-var-Y}
\end{align}

Analysis of the network dynamics of ordinary differential equation systems~\cite{plahte1995,gouze1998} suggests that the  system
\begin{align*}
\dfrac{dM_X}{dt}
& = \beta_{M_X} f_X(E_Y) -\bar \gamma_{M_X} M_X, \\
\dfrac{dE_X}{dt} & = \beta_{E_X} M_X -\bar \gamma_{E_X} E_X \\
\dfrac{dM_Y}{dt}
& = \beta_{M_Y} f(E_X) -\bar \gamma_{M_Y}\\
\dfrac{dE_Y}{dt} & = \beta_{E_Y} M_Y -\bar \gamma_{E_Y} E_Y,
\end{align*}
corresponding to \eqref{eq:mrna-delay-var-X}-\eqref{eq:effector-delay-var-Y}
in both $(++)$ and $(--)$ cases is a positive cyclic feedback system~\cite{Gedeon98} and will display bistability~\cite{plahte1995,gouze1998}. The recent preprint \cite{richard2023complete} carefully analyzes the boundary of bistability regions in both $(++)$ and $(--)$ cases.

Based on the results from~\cite{ghmww2020}, inclusion of  distributed delays could add additional equilibria, resulting in multistability. Adding delays also can change the range of bistability in the parameter space.

System $(+-)$ is a negative feedback system where we expect that  the trivial equilibrium will lose stability through Hopf bifurcation  if the slope(s) of nonlinearities  are sufficiently steep at the equilibrium. Adding delays may result in additional stable equilibria and secondary Hopf bifurcations, as was observed in~\cite{ghmww2020}.

\subsection*{A forest growth model}

In \citep{magal2017competition}, the authors considered an age structured  model for the growth of a single tree species forest.  They showed that the dynamics of the number of adult trees $A$ is governed by
\be
\label{eq:forest-1}
\dfrac{dA(t)}{dt} = e^{-\mu \tau(t)} \dfrac{f(A(t))}{f(A(t-\tau(t))}  r b(A(t-\tau(t)))  - \gamma A(t)
\ee
in conjunction with the condition
\be
\label{eq:forest-2}
\int_{t-\tau(t)}^t f(A(s)) ds = \bar s.
\ee
In \eqref{eq:forest-1} $\mu$ is the mortality rate of saplings, $f$ is the velocity of maturation of saplings, $r$ is the birth rate, $b$ is the reproduction (birth) function and $\gamma$ is the mortality rate of adults.

\subsection*{The $G_0$ cell cycle model}

The original $G_0$ cell cycle model proposed by Burns and Tannock in \cite{burns1970existence} is illustrated in Figure \ref{fig:ppsc}, and captures the essence of what is known of the cellular replication process at a intermediate (cellular or non-molecular) level of sophistication and knowledge.  The proliferating phase cells are denoted by $P$ while the resting $G_0$ phase cells are denoted by $N$.
\begin{figure}
\centering
\includegraphics[width = 0.5 \linewidth,angle=0]{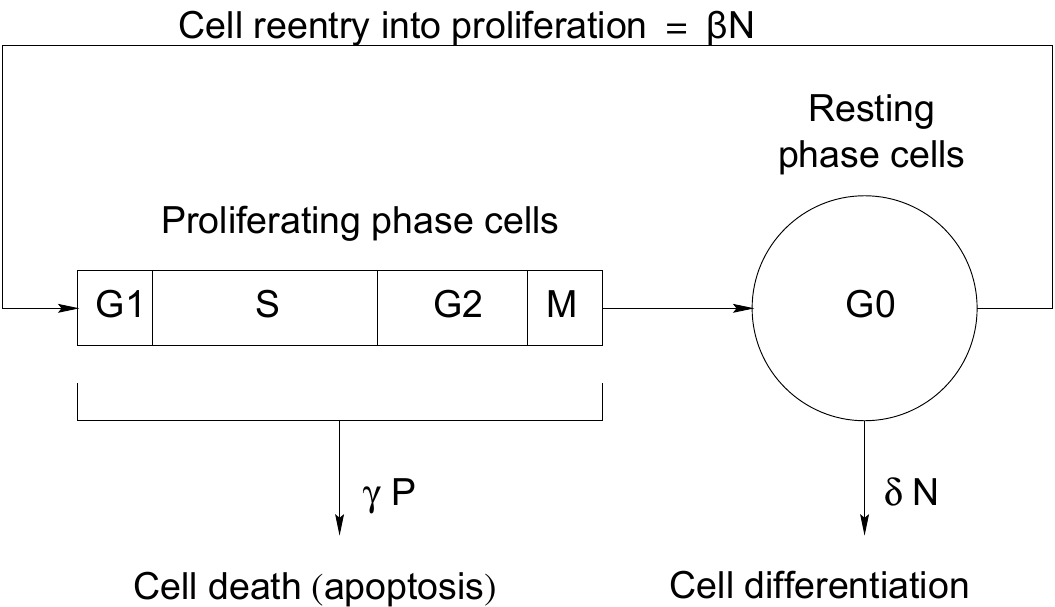}
\caption{The Burns and Tannock \cite{burns1970existence} model for the cell cycle consisting of a resting phase ($G_0$) and the proliferating phase $P$ with the sub-phases $G_1$, $S$ (DNA synthesis), $G_2$, and $M$ (mitosis and cytokinesis).  It is assumed that cells can die from the proliferating phase at a rate $\gamma$ and exit into the differentiation pathway from $G_0$ at a rate $\delta$. Redrawn from \cite{mackey1978unified}.  }
\label{fig:ppsc}
\end{figure}

In the elaboration of \cite{mackey1978unified}, it is presumed that cells die from the proliferative phase at a rate $\gamma$ so the flux of cells to death is $\gamma P$, and differentiate from $G_0$ at a rate $\delta$ so the differentiation flux is $\delta N$.  Cells in the $G_0$ phase can, in addition to differentiating, re-enter the proliferative phase at a rate $\beta$ so the flux of cells into proliferation is $\beta N$.  Cells that enter the proliferative phase are assumed to proceed through the stages  $G_1$, $S$, $G_2$, and $M$ in a lock step fashion that takes a time $\tau$ to complete if death does not intervene.  Once mitosis is completed, cytokinesis produces two daughter cells that then enter $G_0$.

Based on the assumption that the rate $\beta$ of cell entry from $G_0$ into $P$,  is a function of the size $N$ of $G_0$, then it is straightforward to show that the dynamics of the augmented Burns/Tannock cell cycle model are governed by the differential delay equation
\be
\dfrac{dN}{dt} = -[\delta + \beta(N)] N  + 2 e^{- \gamma \tau} N_\tau \beta(N_\tau),
\label{eq:burns-tannock}
\ee
where we have used the notation $N \equiv N(t)$ and $N_\tau \equiv N(t-\tau)$.  Equation \eqref{eq:burns-tannock} has been the subject of an extensive bifurcation analysis in the work of \cite{de2019dynamics}.

Now we consider a slightly modified cell cycle model identical to the Burns/Tannock model with the additional wrinkle that the velocity $v$ with which cells move through the proliferative phase of the cell cycle is  under the control of the number $N$ of non-proliferative cells.

\begin{rem}
One would think that the velocity of movement through the proliferative phase $P$ would be an increasing function of decreased $N$ which would lead to a delay $\tau$ that decreases as $N$ decreases.  This would lead to cell density dependent inter-division times.
\end{rem}

 If we take the number of proliferative phase cells at time $t$ and age $a$ to be $p(t,m)$, the maximum age at cytokinesis to be $a_m$,  and the velocity with which they move through the cell cycle to be $v(N(t))$, then the evolution equation for $p(t,m)$ is given by
\be
\label{eq:BT-new-1}
\dfrac{\partial p(t,m)}{\partial t} + v(N(t)) \dfrac{\partial p(t,m)}{\partial m} = -\gamma p(t,m)
\ee
and we have the initial condition
\be
\label{eq:BT-IC}
p(t,m=0) = N(t) \beta(N(t)).
\ee
Following the same derivation procedure as in  \cite{ghmww2020},  the final equation for the dynamics of the non-proliferative cellular population $N$ is given by
\be
\label{eq:BT-final}
\dfrac{dN(t)}{dt} = 2 e^{-\gamma \tau(t)} \dfrac{v(N(t))}{v(N(t-\tau(t)))} N(t-\tau(t)) \beta (N(t-\tau(t)) - (\delta + \beta(N(t))) N(t).
\ee
We also have the additional condition that
\be
\label{eq:BT-max}
a_m = \int_{t - \tau(t)}^t v(N(s)) ds.
\ee

\begin{rem} Note that, not unsurprisingly, the augmented cell cycle model is identical in formulation with the forest growth model of \cite{magal2017competition}.
\end{rem}

\subsection*{A model for recurrent inhibition}

\begin{figure}
\centering
\includegraphics[width = 0.5\linewidth,angle=0]{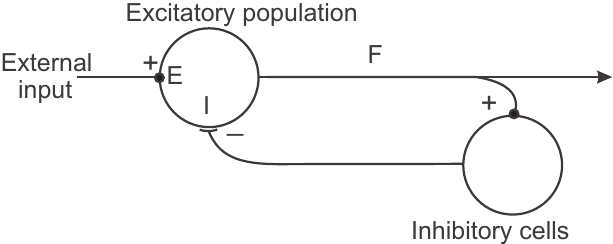}
\caption{A schematic representation of the recurrent inhibition circuit considered in \cite{eurich2002recurrent}.  This figure is modified from the original publication.  }
\label{fig:RI}
\end{figure}

A model for the dynamics of a recurrent inhibitory neural feedback circuit
is considered in \cite{eurich2002recurrent},
however the model was incomplete omitting the velocity ratio term which appears in all of the other models here.
The term $v(x(t))/v(x(t-\tau(t)))$ in \eqref{eq:basic} is essential as shown in
\cite{Bernard2016,Craig2016,ghmww2020} from conservation and flux arguments. Here, we complete 
the model of \cite{eurich2002recurrent}.

To capture the spirit of the recurrent inhibitory neural circuitry and offer a complete formulation 
incorporating the ratio term,
we consider the situation with dynamics described by
\begin{equation}
\frac{dV(t)}{dt} 
= \gamma(E-V(t)) - \cfrac{\Sigma(V(t))}{\Sigma(V(t-\tau(t)))} G(F(V(t-\tau(t)), 
\label{finaldynamics-new}
\end{equation}
where $V= E - I$ is the net potential in the neuron, $E (I) $ is the excitatory (inhibitory) potential, $\Sigma$ is the velocity of propagation of the action potentials around the inhibitory loop back to the soma, and $G$ is the response of the neuron to the recurrent inhibitory drive. $F \approx \alpha V$ ($\alpha > 0$) is the firing frequency of the cell, an increasing function of the postsynaptic potential $V$.  In conjunction with \eqref{finaldynamics-new}, if the length of the recurrent inhibitory pathway is $L$ we have the ancillary  condition
\be
L = \int_{t-\tau(t)}^t \Sigma(V(s))ds.
\label{eq:RI-delay}
\ee

In \cite{eurich2002recurrent} it was argued that $\Sigma$ is an increasing function of the postsynaptic potential $V$ so, as a consequence the delay time $\tau$ is a decreasing function of $V$.  It was further noted that, on physiological grounds, $G$ is an increasing function of the firing frequency $F$ for small values of $F$ and decreasing with increasing $F$ past a certain maximum.

This model can be further generalized to a state dependent DDE with distributed state dependent delays.
This generalization relaxes the assumption that the length $L$ is constant. A more realistic assumption is that the lengths are distributed in population of neurons described by a distribution $K_L$. Note that this is equivalent to having variable firing threshold at the soma that receives the inhibitory input.
The model takes the form
\begin{equation}
\frac{dV(t)}{dt} 
= \gamma(E-V(t)) - \int_0^\infty \cfrac{\Sigma(V(t))}{\Sigma(V(t-\phi(t)))} G(F(V(t-\phi(t)) K_L\left (\int_{t-\phi}^t \Sigma(V(s)) ds \right ) d\phi, 
\label{finaldynamics-new2}
\end{equation}
where $V= E - I$ is the net potential in the neuron, $E (I) $ is the excitatory (inhibitory) potential, $\Sigma$ is the velocity of propagation of the action potentials around the inhibitory loop back to the soma, and $G$ is the response of the neuron to the recurrent inhibitory drive. $F \approx \alpha V$ ($\alpha > 0$) is the firing frequency of the cell, an increasing function of the postsynaptic potential $V$.  To derive this equation we note that the  distribution of the  recurrent loop lengths $L$ is analogous to the distributed maturation times in Kendrick-McCormack age structured models analyzed in~\cite{cassidy2019equivalences}. The analog to the Kendrick-McCormack  model of the age distributed population is
\begin{align*}
& \partial_t E(t,s)   + \partial_s (\Sigma(V(t)) E(s,t)) = -h(s) E(s,t) \\
&\Sigma(V(t))E(t,0) = G(F(V(t))), \qquad E(t_0,s) = f(s), s \in [0,\infty)
\end{align*}
where $s$ is a position along a loop, $E(t,s)$ is the action potential at position $s$ of the loop and the  propagation velocity $\Sigma$ depends on the voltage at time $t$. The  term $h(s)$ is the discharge of the action potential  in the soma  when the circuit has length $s$. The function $h(s)$ can be expressed (see \cite{cassidy2019equivalences}) using the distribution $K_L$ of circuit lengths as
\[ h(s(t)) = \frac{K_L(s(t))}{1-\int_0^\sigma K_L(s(\sigma)) d\sigma} \frac{d}{dt}s(t) .\]
Solving the partial differential equation along the characteristics
\[  \frac{d}{d\phi} t(\phi) = 1, \qquad \frac{d}{dt}s(t) = \Sigma (V(t))\]
 gives the equation (\ref{finaldynamics-new2}). For details of the derivation see~\cite{cassidy2019equivalences}.

\section{Numerical Techniques}\label{app-numerics}

Our numerical techniques are documented in detail in \cite{ghmww2020,wendy-msc,wendy-phd,ifacs22}, so here we will only briefly summarise them, and note some extensions of the previous methods. 

All of our computations are performed using MATLAB \cite{Matlab}. Mainly we perform numerical continuation and bifurcation analyses using the MATLAB package \texttt{ddebiftool} \cite{ddebiftool}. Existing versions of \texttt{ddebiftool} are only implemented for discrete delays, and cannot be 
applied directly to problems with threshold delay. 
Since the largest possible delay is $a/v_0$, to implement threshold delay problems in 
\texttt{ddebiftool} we introduce up to 100 dummy constant delays to discretize the time interval $[t-a/v_0,t]$ on a equally spaced mesh. 
The threshold delay $\tau(t)$ that satisfies \eqref{eq:thres} can then be computed be applying
numerical quadrature. The implementation is explained in detail in \cite{wendy-phd,ifacs22},
and is a refinement of our earlier methods \cite{ghmww2020,wendy-msc}.

One of the features that makes \texttt{ddebiftool}, and numerical continuation in general, so powerful is its ability to follow stable and unstable branches of solutions equally well.
Occasionally we also use numerical simulation of the DDE to find stable solutions. The standard MATLAB solver for state-dependent DDEs is \texttt{ddesd} \cite{ddesd}, but this is also only implemented for discrete delays. To use it to solve \eqref{eq:basic}-\eqref{eq:thres} as an initial value problem we first differentiate the threshold condition \eqref{eq:thres} to obtain
\be \label{eq:thresdiff}
\dfrac{d\tau(t)}{dt} =
 1 - \dfrac{v(x(t))}{v(x(t-\tau(t)))},
\ee
and then solve \eqref{eq:basic},\eqref{eq:thresdiff} using \texttt{ddesd}.
See \cite{ghmww2020,wendy-msc,ifacs22} for implementation details, 
and also \cite{wendy-phd} for an investigation of the differences between 
\eqref{eq:basic},\eqref{eq:thres} and \eqref{eq:basic},\eqref{eq:thresdiff}.

The numerical collocation underpinning the \texttt{ddebiftool} differential equation solver, is best suited to smooth nonlinearities and solutions. When $m$ or $n$ is large and one of the Hill functions
in \eqref{eq:vghill} approaches a step function issues may arise in the computations. Here we mention 
some tricks we applied to compute the figures in those cases.

To circumvent underflow, overflow and division by zero errors in MATLAB when evaluating
$\beta e^{-\mu\tau(\xi)}g(\xi)$ near $\theta_g$ or $\theta_v$ (such as in Figure~\ref{fig:thetagvonelimit}), we rewrite the Hill function as 
\begin{equation*}
	g(x)= \left\{
	\begin {aligned}
	\dfrac{g^{-} + g^{+} (x/\theta_g)^n}{1+(x/\theta_g)^n}, \quad x \leq \theta_g, \\
	\dfrac{g^{-}(\theta_g/x)^n + g^{+}}{(\theta_g/x)^n+1}, \quad x>\theta_g, 
\end{aligned}\right.
\end{equation*}
and similarly for $v(x)$. It can still be delicate to continue branches of fold bifurcations 
when $m$ or $n$ are large.
In practice, when needed, we apply two different techniques for extending branches of fold bifurcations to very large values of $m$ and/or $n$. 
Firstly, the
MATLAB nonlinear system solver \texttt{fsolve} \cite{fsolve} can be used
to solve \eqref{eq:vglam0} for each fixed value of $m$ or $n$,
to extend the branch to large values of these nonlinearity parameters.
Alternatively, since $M(\xi)$ changes sign at the fold bifurcation, these can be revealed 
by performing
a contour plot of $M(\xi)$ 
in the $(\xi, m)$ or $(\xi, n)$ plane and displaying only the $M(\xi)=0$ curve. 
Furthermore, to perform two-parameter continuation of the fold bifurcations in $(\gamma, m)$ or $(\gamma, n)$, we evaluate the right hand side of \eqref{eq:gamvg} to get the corresponding $\gamma$ for the fold bifurcation at $\xi$. We apply such tricks extensively in Section~\ref{sec:gvopposite}.  

We were unable to use \texttt{ddebiftool} subroutines for detecting codimension-two bifurcations along the fold and the Hopf curves. This was likely due to the complexity of the stability computations with the augmented system \texttt{ddebiftool} uses for two-parameter continuation combined with the large number of delays in our implementation. 
To find the codimension-two bifurcation points 
we compute the stability of the points on the codimension-one bifurcation branches and track the number of characteristic values with positive real part along the branch. On the fold branch, a fold-Hopf bifurcation occurs when a complex conjugate pair of characteristic values cross the imaginary axis.
Fold-Hopf points are also found on a branch of Hopf bifurcations at points where a single real characteristic value changes sign. 
A Bogdanov-Takens point is characterised by a single real characteristic value changing sign on a fold branch, at a point in parameter space where a branch of Hopf bifurcations also terminates. 
Note that we do not compute the normal form coefficients of these bifurcations.

\bibliographystyle{siamplain}

\end{document}